\newtheorem{theorem}{Theorem}[section]
\newtheorem{lemma}[theorem]{Lemma}
\newtheorem{prop}[theorem]{Proposition}
\newtheorem{cor}[theorem]{Corollary}
\newtheorem{conjecture}[theorem]{Conjecture}
\theoremstyle{definition}
\newtheorem{definition}[theorem]{Definition}
\newtheorem{example}[theorem]{Example}
\newtheorem{notation}[theorem]{Notation}
\theoremstyle{remark}
\newtheorem{remark}[theorem]{Remark}
\numberwithin{equation}{section}
\newcommand{\frakg}{{\mathfrak g}}
\newcommand{\frakh}{{\mathfrak h}}
\newcommand{\frakm}{{\mathfrak m}}
\newcommand{\frakA}{{\mathfrak A}}
\newcommand{\frakB}{{\mathfrak B}}
\newcommand{\frakS}{{\mathfrak S}}
\newcommand{\frakW}{{\mathfrak W}}
\newcommand{\bA}{{\mathbb A}}
\newcommand{\bB}{{\mathbb B}}
\newcommand{\bC}{{\mathbb C}}
\newcommand{\bD}{{\mathbb D}}
\newcommand{\bF}{{\mathbb F}}
\newcommand{\bG}{{\mathbb G}}
\newcommand{\bL}{{\mathbb L}}
\newcommand{\bM}{{\mathbb M}}
\newcommand{\bN}{{\mathbb N}}
\newcommand{\bO}{{\mathbb O}}
\newcommand{\bP}{{\mathbb P}}
\newcommand{\bQ}{{\mathbb Q}}
\newcommand{\bR}{{\mathbb R}}
\newcommand{\bS}{{\mathbb S}}
\newcommand{\bT}{{\mathbb T}}
\newcommand{\bZ}{{\mathbb Z}}
\newcommand{\mC}{{\mathcal C}}
\newcommand{\mD}{{\mathcal D}}
\newcommand{\mE}{{\mathcal E}}
\newcommand{\mF}{{\mathcal F}}
\newcommand{\mG}{{\mathcal G}}
\newcommand{\mK}{{\mathcal K}}
\newcommand{\mL}{{\mathcal L}}
\newcommand{\mN}{{\mathcal N}}
\newcommand{\mO}{{\mathcal O}}
\newcommand{\mP}{{\mathcal P}}
\newcommand{\mR}{{\mathcal R}}
\newcommand{\mU}{{\mathcal U}}
\newcommand{\mW}{{\mathcal W}}
\newcommand{\mX}{{\mathcal X}}
\newcommand{\al}{{\alpha}}
\newcommand{\ga}{{\gamma}}
\newcommand{\Ga}{{\Gamma}} 
\newcommand{\la}{{\lambda}}
\newcommand{\La}{{\Lambda}}
\newcommand{\Aff}{{\mathbf{Aff}}}   % Category of affine schemes, affine spaces, etc.
\newcommand{\Alg}{{\mathbf{Alg}}}  % Category of algebras
\newcommand{\Aut}{{\on{Aut}}}
\newcommand{\CA}{\mathbf{CAlg}}   % Category of commutative algebras
\newcommand{\cha}{{\on{char}}}       % charateristic
\newcommand{\Coh}{{\on{Coh}}}        % coherent sheaves
\newcommand{\End}{{\on{End}}}
\newcommand{\Gr}{{\on{Gr}}}   % graded, grassmannian
\newcommand{\heart}{\ensuremath\heartsuit}
\newcommand{\Hom}{{\on{Hom}}}
\newcommand{\Ind}{{\on{Ind}}}
\newcommand{\Lie}{{\on{Lie}}}
\newcommand{\Map}{{\on{Map}}}
\newcommand{\Mod}{{\mathbf{Mod}}}
\newcommand{\Mon}{{\mathbf{Mon}}}   % Category of monoids, monoidal categories
\newcommand{\Nm}{{\on{Nm}}}      % Norm
\newcommand{\Ql}{{\overline{\bQ}_\ell}}
\newcommand{\pr}{{\on{pr}}}         % Projection
\newcommand{\Pro}{{\on{Pro}}}    % Pro-objects
\newcommand{\Rep}{{\on{Rep}}}
\newcommand{\res}{{\on{res}}}
\newcommand{\Res}{{\on{Res}}}
\newcommand{\Sets}{\mathbf{Sets}}   % Category of sets
\newcommand{\Shv}{{\on{Shv}}}      
\newcommand{\Spc}{\mathbf{Spc}}  % Category of spaces
\newcommand{\Spf}{{\on{Spf}}}
\newcommand{\Tor}{{\mathbf{Tor}}}  % torsor
\newcommand{\Tr}{{\on{Tr}}}   % Trace
\newcommand{\xch}{\mathbb{X}^\bullet}      % Weight lattice
\newcommand{\xcoch}{\mathbb{X}_\bullet}  % Coweight lattice
\newcommand{\Bun}{{\on{Bun}}}
\newcommand{\loc}{{\on{loc}}}
\newcommand{\Loc}{{\on{Loc}}}
\newcommand{\Hk}{{\on{Hk}}}
\newcommand{\Sh}{{\on{Sh}}}   % Shimura variety
\newcommand{\Sht}{{\on{Sht}}}  % Moduli of Shtukas
\newcommand{\GL}{{\on{GL}}}
\newcommand{\PGL}{{\on{PGL}}}
\newcommand{\SL}{{\on{SL}}}
\newcommand{\Sp}{{\on{Sp}}}
\newcommand{\SO}{{\on{SO}}}
\newcommand{\ab}{{\on{ab}}}    % Abelianization
\newcommand{\ad}{{\on{ad}}}          
\newcommand{\Ad}{{\on{Ad}}}
\newcommand{\s}{{\on{sc}}}        % simply connected
\newcommand{\Sat}{{\on{Sat}}}   % Satake
\newcommand{\St}{{\on{St}}}   %  Steinberg, stable
\newcommand{\ta}{{\on{tame}}}
\newcommand{\un}{{\on{unip}}}
\newcommand{\ur}{{\on{ur}}}
\newcommand{\Wh}{{\mathbf{W}}}
\newcommand{\TS}{{\mathbf{TS}}}
\newcommand{\Pin}{{\mathrm{Pin}}}
\newcommand{\iso}{{\on{iso}}}
\newcommand{\Ani}{{\mathbf{Ani}}}
\newcommand{\col}{\mathbf{Col}}
\newcommand{\FM}{{\on{FM}}}
\newcommand{\FFS}{{\mathbf{FFS}}}
\newcommand{\FFG}{{\mathbf{FFG}}}
\newcommand{\FFM}{{\mathbf{FFM}}}
\newcommand{\on}{\operatorname}
\newcommand{\quash}[1]{}  %%Anything in \quash is ignored
\begin{document}

% \title[short text for running head]{full title}
\title{Coherent sheaves on the stack of Langlands parameters}

%    Only \author and \address are required; other information is
%    optional.  Remove any unused author tags.

%    author one information
% \author[short version for running head]{name for top of paper}
\author{Xinwen Zhu}
\address{Department of Mathematics, Stanford University, USA}
%\curraddr{}
\email{zhuxw@stanford.edu}
\thanks{The author is partially supported by NSF under agreement Nos. DMS-1902239 and a Simons fellowship.}

%    author two information
%\author{}
%\address{}
%\curraddr{}
%\email{}
%\thanks{}

%    The 2020 edition of the Mathematics Subject Classification is
%    the current definitive version.
\subjclass[2020]{Primary }

\date{}

\begin{abstract}
We construct the stacks of arithmetic Langlands parameters in the local ($\ell\neq p$) and global function field settings.
We formulate a few conjectures on some hypothetical coherent sheaves on these stacks, and explain their roles played in the local and global Langlands program. 
We survey some known results as evidences of these conjectures.
\end{abstract}

\maketitle

\tableofcontents

\section{Introduction}
In recent years, it has become increasingly clear that there should exist certain (complexes of) coherent sheaves $\frakA$ on the stacks of local and global arithmetic Langlands parameters. These sheaves are expected to largely govern the Langlands correspondence and and allow one to formulate local-global compatibilities within the arithmetic Langlands program. The existence of such objects is already suggested by the work of Emerton-Helm \cite{EH} and Helm \cite{He16}, under the framework of the local Langlands correspondence in families.\footnote{There are similar $\frakA$ appearing in the $p$-adic local Langlands program, see \cite{EGH} for a discussion.} This idea has been further developed recently by Hellmann \cite{Hel}. On the other hand, after the work of V. Lafforgue and Genestier-Lafforgue \cite{L,GL}, these ideas become more concrete, and powerful tools from the geometric Langlands program are now available to realize (part of) them. Indeed, it is expected that the entire arithmetic local Langlands correspondence over a non-archimedean local field admits a categorical incarnation (see, for instance, \cite[4.2]{Ga} for some indications). The existence of such coherent sheaves fits naturally into this categorical framework, as we aim to explain in this article.
In a related direction, the work of Fargues-Scholze \cite{FS} on the geometrization of the local Langlands correspondence is also closely aligned with these ideas and likewise points to a categorical form of the arithmetic local Langlands correspondence.  From a global perspective, the existence of $\frakA$ serves as a guiding principle in the author's joint work with Xiao \cite{XZ} on the geometric realization of the Jacquet-Langlands correspondence via the cohomology of Shimura varieties. In another direction, a crude form of such a coherent sheaf appears in the author's work with V. Lafforgue \cite{LZ}, where it is used to describe the elliptic part of the cohomology of Shtukas in the framework of the Arthur-Kottwitz conjectures.

In this article, we formulate several precise conjectures related to the hypothetical sheaves $\frakA$ and survey known results, including explicit conjectural descriptions of $\frakA$ in some special (but particularly important) cases, along with their roles in local-global compatibility. We also propose a conjectural categorical form of the local arithmetic Langlands correspondence, which provides a conceptual justification for the expected existence of such $\frakA$.
In order to formulate these conjectures,  we discuss the construction and some properties of the moduli stacks of local Langlands parameters (in the case $\ell \neq p$) and global Langlands parameters (in the function field setting). 
We note that some of the ideas presented in this article have been informally shared among experts for several years.\footnote{Indeed, around the time the first version of this article was made public, several related works appeared. See, for example, \cite{Hel,DHKM,BCHN,AGK,FS}.} It is the author's intent to make some of these ideas more precise and to commit them to writing.

This article naturally divides into two parts. Sections \ref{S: RepSp} and \ref{S: stack Loc} are devoted to a general study of moduli spaces of representations and the construction of moduli spaces of Langlands parameters. Since the results in these sections are original, we provide detailed proofs of nearly all assertions. Section \ref{S: coh on st} is dedicated to formulating our main conjectures. It includes some original results (such as Theorem \ref{T: gen LZ}), for which we again give detailed proofs. At the same time, this section also surveys known or forthcoming results that provide evidence for our conjectures, and as such, has a more expository character in places.

\medskip

\noindent\bf Acknowledgement \rm The author would like to thank R. Bezrukavnikov, M. Emerton, T. Hemo, L. Xiao, Z. Yun for many discussions during preparing the article. He would like to thank M. Emerton and T. Feng for inspiring discussions which leads to Conjecture \ref{C: derSat}, and D. Ben-Zvi for discussions around Conjecture \ref{C: REnd of Spr}. 
He would like to thank P. Scholze for pointing out several inaccuracies in the early draft of the article, M. Emerton for many valuable comments and suggestions, and D. Hansen for feedbacks.

\section{Representation space}\label{S: RepSp}

Let $M$ be an affine group scheme over a commutative ring $\Lambda$ and $\Ga$ an abstract group. It is well-known that there is an affine scheme ${}^{cl}\mR_{\Ga,M}$ over $\Lambda$ such that for every $\Lambda$-algebra $A$, the set ${}^{cl}\mR_{\Ga,M}(A)$ classifies group homomorphisms from $\Ga$ to $M(A)$.  Namely, one first considers the functor over $\Lambda$ classifying all maps from $\Ga$ to $M(A)$ as  \emph{sets}. This functor is obviously represented by the self product $M^\Ga$ of $M$ over $\Ga$. The imposition of the condition that these set maps be group homomorphisms defines ${}^{cl}\mR_{\Ga,M}$ as a closed subscheme of $M^\Ga$. 

The first issue is well known: Galois groups are profinite groups, and one must consider continuous representations of them, subject to certain additional properties. We will address this issue in Section \ref{SS: Continuous rep}. Roughly speaking, by imposing the continuity condition, we obtain ind-schemes whose completions at closed points recover the usual framed deformation spaces of representations of profinite groups. However, such spaces might not possess good global geometry in general (see Example \ref{E: disc cont moduli}). Nonetheless, in the cases considered in Section \ref{S: stack Loc}, these spaces "glue" all deformation spaces together in a reasonable manner.

The second issue concerns the fact that the equations defining ${}^{cl}\mR_{\Ga,M}\subset M^\Ga$ typically do not form a ``regular sequence," which can lead to non-trivial derived structures on ${}^{cl}\mR_{\Ga,M}$. At various points in the sequel, we will need to keep in mind the potential derived structure of these spaces. Thus, we will review the construction of derived objects in Section \ref{SS: dermon}. This construction is certainly well known (see, for example, \cite{To, SV}), but our approach will be inspired by \cite{L}, following a review of the derived category of monoids in Section \ref{SS: der mon}.

\subsection{The derived category of monoids}\label{SS: der mon}
Our goal is to define a derived geometric object $\mR_{\Ga,M}$ parameterizing homomorphisms from $\Ga$ to $M$. 
To achieve this, it is convenient to begin with a more general framework by considering homomorphisms of monoids.
The idea is to move from the category $\Mon$ of monoids to its derived category. 
As $\Mon$ is non-abelian, we must adopt the notion of non-abelian derived categories in the sense of Quillen, as developed by Lurie using the language of $\infty$-categories \cite[5.5.8]{Lu1}. We will first recall some general theory and then specialize to the examples relevant to our context.

In the sequel, we call $(\infty,1)$-categories just by $\infty$-categories, and regard ordinary categories as $\infty$-categories in the usual way.  
Let $\Spc$ denote the $\infty$-category of spaces, containing the category $\Sets$ of sets as a full subcategory (by regarding sets as discrete spaces).
The inclusion $\Sets\to\Spc$ admits a left adjoint $\pi_0:\Spc\to\Sets$ which preserves finite products.
If $x,y$ are two objects in an $\infty$-category $\mC$, we write $\Map_{\mC}(x,y)\in\Spc$ for the space of maps from $x$ to $y$. (We use this notation even if $\mC$ is an ordinary category, in which case this space is discrete.) All functors are understood in the $\infty$-categorical setting (and therefore are derived). 
Let $\on{Fun}(\mC,\mD)$ denote the $\infty$-category of functors between two $\infty$-categories $\mC$ and $\mD$.  Let $\Delta$ be the (ordinary) simplex category. We refer to \cite{Lu1} for foundations of $\infty$-categories. 

We find it is instructive to adopt Clausen-Scholze's point of view to start with. For an ordinary category $\mC$ admitting colimits, let $\mC^{\on{cp}}$ denote its full subcategory of compact projective objects in $\mC$, i.e. those $x\in \mC$ such that $\on{Map}_{\mC}(x,-)$ commutes with filtered colimits and reflexive coequalizers. This is a category admitting finite coproducts, so one can define its non-abelian derived category $\mP_{\Sigma}(\mC^{\on{cp}})$ (\cite[5.5.8.8]{Lu1}), which is the full subcategory of $\on{Fun}((\mC^{\on{cp}})^{\on{op}},\Spc)$ consisting those functors that preserve finite products\footnote{We implicitly assume that $\mC^{\on{cp}}$ is small, which is the case for all examples we encounter.}. If $\mC$ is generated by $\mC^{\on{cp}}$ under $1$-categorical colimits (informally this means objects in $\mC$ can be obtained from objects in $\mC^{\on{cp}}$ by taking ``unions" (filtered colimits) and ``presentations" (reflexive coequalizers)), then  
$\mP_{\Sigma}(\mC^{\on{cp}})$ is called the $\infty$-category of anima of $\mC$ by Clausen-Scholze, and is denoted by $\Ani(\mC)$. (See \cite[\S 5]{Sch} for an account.)
We sometimes also just call it the derived category of $\mC$. Objects in $\Ani(\mC)$ can be generated by $\mC^{\on{cp}}$ under $\infty$-categorical colimits.
Note that if $\mC$ has a symmetric monoidal structure such that the tensor product preserves colimits separately in each variable, and that the symmetric monoidal structure restricts to a symmetric monoidal structure on $\mC^{\on{cp}}$, then $\Ani(\mC)$ is naturally a symmetric monoidal $\infty$-category and the tensor product preserves colimits separately in each variable (\cite[4.8.1.10]{Lu2}). 

There is a fully faithful embedding $\mC\subset \Ani(\mC)$, by regarding $\mC$ as the category of finite-product preserving functors $(\mC^{\on{cp}})^{\on{op}}\to \Spc$ factoring as $(\mC^{\on{cp}})^{\on{op}}\to \Sets\subset \Spc$. It admits a left adjoint $\pi_0: \Ani(\mC)\to \mC$ induced by $\pi_0:\Spc\to\Sets$. 
More generally, for each $n\geq 0$, there is the $n$-truncation functor $\tau_{\leq m}: \Ani(\mC)\to  {}_{\leq m}\Ani(\mC)$, where
 for an $\infty$-category $\mC$, ${}_{\leq m}\mC$ denotes the full subcategory of $m$-truncated objects of $\mC$ (\cite[5.5.6.1]{Lu1}), which is a left adjoint of the natural inclusion functor ${}_{\leq m}\Ani(\mC)\subset\Ani(\mC)$ (\cite[5.5.6.18]{Lu1}). 
The following are some basic examples.
\begin{example}\label{Ex: anima}
\begin{enumerate}
\item If $\mC=\Sets$, equipped with the Cartesian symmetric monoidal structure (i.e. tensor products are given by products), then $\mC^{\on{cp}}$ is the category $\Sets_f$ of finite sets, and $\Ani:=\Ani(\Sets)\cong \Spc$ (\cite[5.5.8.24]{Lu1}), equipped with the Cartesian symmetric monoidal structure. Because of this natural equivalence, we will use $\Ani$ and $\Spc$ interchangeably in the sequel.

\item Let $\Lambda$ be a commutative ring. If $\mC=\Mod_\Lambda^{\heart}$ is the abelian category of $\Lambda$-modules, equipped with the usual tensor product structure, then $\mC^{\on{cp}}$ is the category of finite projective $\Lambda$-modules and $\Ani(\Mod_\Lambda^{\heart})$ is equivalent to the derived category $\Mod_\Lambda^{\leq 0}:=D^{\leq 0}(\Mod_\Lambda^{\heart})$ of connective complexes of $\Lambda$-modules (i.e. those complexes whose cohomology vanish in positive degrees\footnote{In the paper, we use cohomological convention for complexes in the stable $\infty$-category $\Mod_\Lambda$ of $\Lambda$-modules. So for $N\in \Mod_\Lambda$, we write $H^iN=\pi_{-i}N$, and $N[j]$ for the object satisfying $H^i(N[j])=H^{i+j}N$. The usual truncation functors in homological algebras are written as $\tau^{\leq n}, \tau^{\geq n}: \Mod_\Lambda\to\Mod_\Lambda$, which is different from the truncation functor $\tau_{\leq m}$ as in \cite[5.5.6.18]{Lu1}. However, the restriction of $\tau^{\geq -m}$ to $\Mod_\Lambda^{\leq 0}$ is isomorphic to $\tau_{\leq m}$.}), equipped with the usual symmetric monoidal structure (\cite[5.5.8.21]{Lu1} and \cite[5.1.6]{Sch}). 
\end{enumerate}
\end{example}

The example we need is the category of monoids $\mC=\Mon$.  This category admits all small colimits, and is generated under colimits by its compact projective objects, which are finitely freely generated monoids.
For a finite set $I$, let $\FM(I)$ denote the free monoid generated by $I$. Let  $\FFM$ be the full subcategory spanned by these $\FM(I)$s.
 For a monoid $\Ga$, let $\FFM/\Ga$ denote the corresponding slice category: I.e. objects are pairs of the form $(\FM(I), u: \FM(I)\to \Ga)$ and morphisms from $(\FM(I),u)$ to $(\FM(J),v)$ are monoid homomorphisms $f: \FM(I)\to\FM(J)$ such that $u=vf$. We note that the category $\FFM/\Ga$ is not filtered, but is sifted (see \cite[5.5.8.1]{Lu1} for this notion), as coproducts exist in $\FFM/\Ga$.
There is a canonical isomorphism in $\Mon$
\begin{equation}\label{E: colim}
\varinjlim_{\FFM/\Ga}\FM(I)\xrightarrow{\cong} \Ga.
\end{equation}
This isomorphism can also be understood in $\Ani(\Mon)$, via the fully embedding $\Mon\subset\Ani(\Mon)$,
as $\Ani(\Mon)=\mP_{\Sigma}(\FFM)$. 

On the other hand, for an $\infty$-category $\mC$ admitting finite products, there is the $\infty$-category $\Mon(\mC)$ of monoid objects in $\mC$, which by definition is the full subcategory of the category 
\[
\mC_{\Delta}:=\on{Fun}(\Delta^{\on{op}},\mC)
\] 
of simplicial objects in $\mC$, consisting of those $X_\bullet$
such that for every $[n]\in\Delta$, the map
\begin{equation*}\label{E: segal}
X([n])\to X(\{0,1\})\times X(\{1,2\})\times\cdots\times X(\{n-1,n\})=X([1])^n
\end{equation*}
induced by $[1]\cong \{i-1,i\}\subset \{0,1,\ldots,n\}=[n]$, is an isomorphism in $\mC$ (\cite[4.1.2.5]{Lu2}). 
For example, if $\mC=\Sets$, then $\Mon\cong \Mon(\Sets)$ via the usual Milnor construction: for $\Ga\in\Mon$, the corresponding object in $\Mon(\Sets)$ is the nerve of the category with a unique object whose endomorphism monoid is $\Ga$ (\cite[4.1.2.4]{Lu2}). 
Then the fully faithful embedding $\Sets\subset\Spc$ induces a fully faithful embedding $\Mon\subset \Mon(\Spc)$ (as both of which are full subcategories of $\Spc_{\Delta}$). 

\begin{example}\label{R: monoid, segal space}
\quash{Recall that there is a fully faithful embedding from the $\infty$-category of (small) $\infty$-categories to $\Spc_{\Delta}$, sending $\mC$ to the simplicial space assigning $[n]\mapsto \on{Fun}(\Delta^n,\mC)^\simeq$, the largest Kan complex inside $\on{Fun}(\Delta^n,\mC)$.  The essential image of this functor consists of the so-called complete Segal spaces. In this way, every $\infty$-category with one object gives a monoid object in $\Spc$. }
By \cite[4.7.1]{Lu2}, given an object $x$ in an $\infty$-category $\mC$, there is a monoid $\End_\mC(x)\in \Mon(\Spc)$, whose value at $[1]\in\Delta$ is isomorphic to $\Map_\mC(x,x)$, which is universal among all objects in $\Mon(\Spc)$ that act on $x$. We call it the derived endomorphism monoid of $x$, 
\end{example}

Now we have two $\infty$-categories that can be regarded as a derived version of $\Mon$. Fortunately, they are canonically equivalent.
\begin{lemma}\label{L: MonAni}
There is a canonical equivalence $\Ani(\Mon)\cong \Mon(\Spc)$.
\end{lemma}
\begin{proof}
We consider a more general situation. Let $\mC$ be a(n ordinary) cocomplete symmetric monoidal category as before (i.e. $\mC$ is generated by $\mC^{\on{cp}}$ under colimits and the tensor product preserves colimits separately in each variable). Then it makes sense to talk about the ($\infty$-)category $\mathbf{Alg}(-)$ of its associative (a.k.a $E_1$-)algebra objects in $\mC$ and $\Ani(\mC)$ (\cite[2.1.3]{Lu2}). Using \cite[7.2.4.27]{Lu2} and Lemma \ref{L: free alg} below, we obtain a canonical equivalence  
\begin{equation*}\label{E: AlgAni}
\Ani(\Alg(\mC))\cong \Alg(\Ani(\mC)).
\end{equation*} 
The lemma follows by letting $\mC=\Sets$ and identifying associative algebra objects with monoid objects when the ambient symmetric monoidal structure is Cartesian (\cite[2.4.2, 4.1.2.10]{Lu2}).  
\end{proof}

To state the following lemma, recall from \cite[3.1.3]{Lu2} that for $(-)=\mC$ or $\Ani(\mC)$, the forgetful functor from $\mathbf{Alg}(-)\to (-)$ admits a left adjoint $\on{Fr}_{(-)}$, given by the free algebra construction.
\begin{lemma}\label{L: free alg}
For every $X\in \mC^{\on{cp}}$, the image of $\on{Fr}_\mC(X)$ under the functor $\mathbf{Alg}(\mC)\to \mathbf{Alg}(\Ani(\mC))$ is canonically isomorphic to $\on{Fr}_{\Ani(\mC)}(X)$.
\end{lemma}
We note that this lemma is specific to $E_1$-algebras, as the analogous statement for $E_\infty$-algebras is well-known to be false in general\footnote{We thank P. Scholze for pointing out this.}.
\begin{proof}
We regard $\on{Fr}_{\mC}(X)$ as an object in $\Alg(\Ani(\mC))$.
Then there is a canonical morphism $\on{Fr}_{\Ani(\mC)}(X)\to \on{Fr}_{\mC}(X)$ given by adjunction. To show that it is an isomorphism, we can apply the forgetful functor $\mathbf{Alg}(\Ani(\mC))\to \Ani(\mC)$, as this functor is conservative (\cite[3.2.2.6]{Lu2}). Now in $\Ani(\mC)$, both objects are given by $\sqcup_{n\geq 0} X^{\otimes n}$, by combining \cite[3.1.3.13]{Lu2} with the fact that the embedding $\mC^{\on{cp}}\to \Ani(\mC)$ is monoidal and preserves finite coproducts.
\end{proof}

Here is the corollary we need. It can be regarded as a canonical ``projective resolution" of an object in $\Mon(\Spc)$. 
See \cite[2.1.5]{GKRV} for a closely related statement (with a different proof).
\begin{cor}\label{L: colim by FFM}
The isomorphism \eqref{E: colim} holds in $\Mon(\Spc)$. In particular, for every $X_\bullet\in\Mon(\Spc)$,
\begin{equation}\label{E: resol map}
\on{Map}_{\Mon(\Spc)}(\Ga,X_\bullet)=\varprojlim_{(\FFM/\Ga)^{\on{op}}}\on{Map}_{\Mon(\Spc)}(\FM(I),X_\bullet)= \varprojlim_{(\FFM/\Ga)^{\on{op}}}X([1])^I.
\end{equation}
\end{cor}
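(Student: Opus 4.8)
The plan is to treat the two assertions in turn, leaning on the $\infty$-categorical identifications established above. For the first, I would combine Example~\ref{Ex: anima}(1), which gives $\Ani\cong\Spc$, with Lemma~\ref{L: MonAni}, which gives $\Ani(\Mon)\cong\Mon(\Ani)$, to obtain canonical equivalences $\Mon(\Spc)\cong\Mon(\Ani)\cong\Ani(\Mon)=\mP_\Sigma(\FFM)$, compatible with the fully faithful embedding of the ordinary category $\Mon$; on the $\mP_\Sigma(\FFM)$ side this embedding identifies $\Mon$ with the full subcategory ${}_{\leq 0}\mP_\Sigma(\FFM)$ of $0$-truncated objects, i.e.\ with the product-preserving presheaves $\FFM^{\on{op}}\to\Spc$ taking values in $\Sets$. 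Thus it suffices to exhibit the isomorphism \eqref{E: colim} inside $\mP_\Sigma(\FFM)$.

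The cleanest route is the co-Yoneda (density) presentation, which is essentially the argument indicated in the text. In the presheaf category $\Fun(\FFM^{\on{op}},\Spc)$, every object $X$ is canonically the colimit of the tautological diagram indexed by its category of elements $\FFM_{/X}$ (\cite[5.1.5.3]{Lu1}). Taking $X=\Ga$: since all the mapping spaces $\Map_{\mP_\Sigma(\FFM)}(\FM(I),\Ga)=\Hom_{\Mon}(\FM(I),\Ga)$ are discrete, the slice $\FFM_{/\Ga}$ is just the ordinary category $\FFM/\Ga$, and the tautological diagram is precisely $(\FM(I),u)\mapsto\FM(I)$; hence $\Ga\cong\varinjlim_{\FFM/\Ga}\FM(I)$ in $\Fun(\FFM^{\on{op}},\Spc)$. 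Now $\FFM/\Ga$ has finite coproducts, hence is sifted (as noted just before the statement), and $\mP_\Sigma(\FFM)$ is closed under sifted colimits in $\Fun(\FFM^{\on{op}},\Spc)$ (\cite[5.5.8.10]{Lu1}); therefore the same colimit computes $\Ga$ in $\mP_\Sigma(\FFM)=\Ani(\Mon)\cong\Mon(\Spc)$, which is \eqref{E: colim} there. Alternatively, one can deduce the $\Mon(\Spc)$-version directly from the $\Mon$-version already proved, using that $\FFM/\Ga$ is sifted together with the standard fact that the inclusion $\Mon={}_{\leq 0}\Ani(\Mon)\hookrightarrow\Ani(\Mon)$ preserves sifted colimits.

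For the displayed formula \eqref{E: resol map}, the first equality is immediate, since $\Map_{\Mon(\Spc)}(-,X_\bullet)$ converts the colimit \eqref{E: colim} into the limit of mapping spaces over $(\FFM/\Ga)^{\on{op}}$. For the second, I would compute $\Map_{\Mon(\Spc)}(\FM(I),X_\bullet)\cong X([1])^I$ for each object $(\FM(I),u)$ of $\FFM/\Ga$, noting that $I$ is then a finite set. Indeed $\FM(I)=\on{Fr}_{\Mon}(I)$, and, identifying $\Mon(\Spc)$ with $\Alg(\Ani)$ as in the proof of Lemma~\ref{L: MonAni}, Lemma~\ref{L: free alg} shows that the image of $\on{Fr}_{\Mon}(I)$ in $\Mon(\Spc)$ is the free monoid object $\on{Fr}_{\Mon(\Spc)}(I)$ on the (discrete) anima $I$. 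Since $\on{Fr}_{\Mon(\Spc)}$ is left adjoint to the forgetful functor $X_\bullet\mapsto X([1])$, this gives $\Map_{\Mon(\Spc)}(\FM(I),X_\bullet)=\Map_{\Spc}(I,X([1]))$, which is $X([1])^I$ because $I=\coprod_{i\in I}\ast$. Feeding this into the first equality yields \eqref{E: resol map}.

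The step I expect to be the main obstacle is making the density argument genuinely rigorous: one has to be careful that the $\infty$-categorical slice $\FFM_{/\Ga}$ collapses to the honest $1$-category $\FFM/\Ga$ (which is exactly where $0$-truncatedness of $\Ga$ enters, via the Yoneda lemma) and that the relevant colimit, being sifted, is really computed identically in $\Fun(\FFM^{\on{op}},\Spc)$ and in the reflective subcategory $\mP_\Sigma(\FFM)$. The remaining manipulations — passing from a colimit to a limit of mapping spaces, and the free/forgetful adjunction for monoids — are purely formal.
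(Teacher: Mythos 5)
Your main argument is correct and unpacks exactly what the paper leaves implicit: identify $\Mon(\Spc)\cong\Ani(\Mon)=\mP_\Sigma(\FFM)$ via Lemma~\ref{L: MonAni} and Example~\ref{Ex: anima}(1); obtain \eqref{E: colim} there by the co-Yoneda/density argument (using that $\Ga$ is $0$-truncated so the slice collapses to $\FFM/\Ga$, and that $\FFM/\Ga$ is sifted so the colimit is computed the same way in $\mP_\Sigma(\FFM)$ as in all presheaves); and derive \eqref{E: resol map} from the free--forgetful adjunction together with Lemma~\ref{L: free alg}.

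One caveat on your ``Alternatively'' remark: the inclusion $\Mon={}_{\leq 0}\Ani(\Mon)\hookrightarrow\Ani(\Mon)$ does \emph{not} preserve sifted colimits in general, so that shortcut is not available as stated. (Compare $\Mod_k^\heart\hookrightarrow\Mod_k^{\leq 0}$, where the colimit of a simplicial module is the normalized chain complex, not just $H_0$; or the paper's own Example~\ref{Ex: Zp over Fp}.) What \emph{is} true — and is exactly what your main density argument proves — is that for a diagram taking values in compact projectives $\FFM\subset\Mon$ and indexed by the sifted category $\FFM/\Ga$, the colimit in $\Ani(\Mon)$ happens to be $0$-truncated; that is a feature of this particular diagram, not of the inclusion functor. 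So the main proof stands, but the ``standard fact'' invoked in the alternative is false as a general claim.
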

Of course, \eqref{E: colim} holds for every $\Ga\in\Mon(\Spc)$ except that in this case $\FFM/\Ga$ might no longer be an ordinary category.

\begin{remark}
There are variants of the above discussions, by replacing monoid objects by group or semigroup objects in a category $\mC$. Following \cite[5.2.6.2,4.1.2.12]{Lu2}, we regard group objects as grouplike monoid objects and semigroup objects as non-unital monoid objects, and denote the corresponding categories by $\Mon^{\on{gp}}(\mC)$ and $\Mon^{\on{nu}}(\mC)$ respectively (and omit $\mC$ from the notation if $\mC=\Sets$). 
For $?=\on{gp}$ or $\on{nu}$, compact projective objects of $\Mon^?$ are still finitely freely generated ones. Following \cite{Wei}, we denote the corresponding subcategories by $\FFG$ and $\FFS$ respectively. 
We still have $\Ani(\Mon^?)\cong \Mon^?(\Spc)$ and therefore analogous Corollary \ref{L: colim by FFM}. Indeed, the semigroup case can be proved similarly, and the group case follows from Lemma \ref{L: MonAni} and \cite[5.2.6.4]{Lu2} (and in fact is already contained in \cite[5.2.6.10, 5.2.6.21]{Lu2}).

There are natural forgetful functors $\Mon^{\on{gp}}(\Spc)\to \Mon(\Spc)\to \Mon^{\on{nu}}(\Spc)$. The first and the composition functors are fully faithful. In our application, we will mainly consider spaces of maps between groups so we can calculate them in any of these three categories. 
\end{remark}

\begin{remark}\label{rem: Delta to FFM}
There is a natural faithful functor $\col: \Delta\to \FFM$ sending $[n]$ to $\FM(\{x_1,\ldots,x_n\})$ and $f: [n]\to [m]$ to $\col(f):\FM(\{x_1,\ldots,x_n\})\to \FM(\{y_1,\ldots,y_m\})$ defined by
\[
\col(f)(x_i)=\left\{\begin{array}{ll} y_{f(i)}y_{f(i)+1}\cdots y_{f(i+1)-1} & f(i+1)>f(i) \\ e & f(i+1)=f(i). \end{array}\right.
\] 
It is not difficult to see that $\col$ is cofinal. In addition, $\col^{\on{op}}$ induces a functor 
\[
\on{Fun}(\FFM^{\on{op}}, \Spc)\to \on{Fun}(\Delta^{\on{op}},\Spc)
\] 
which restricts to the equivalence in Lemma \ref{L: MonAni}.
\end{remark}

\subsection{The derived representation space}\label{SS: dermon}
We fix a commutative ring $\Lambda$. Let $\CA_\Lambda^\heart$ denote the (ordinary) category of commutative $\Lambda$-algebras,  and we will sometimes refer to the objects in $\CA^\heart_\Lambda$ as classical $\Lambda$-algebras.
We let $\CA_\Lambda=\Ani(\CA^\heart_\Lambda)$ be its derived category and, following Clausen-Scholze, we call objects in $\CA_\Lambda$ animated $\Lambda$-algebras.\footnote{This category is denoted by $\CA_\Lambda^\Delta$ in \cite[\S 25]{Lu3}, where its objects are traditionally called simplicial $\Lambda$-algebras. However, we will reserve the notation $\CA_\Lambda^\Delta$ for cosimplicial objects in $\CA_\Lambda=\Ani(\CA^\heart_\Lambda)$.} We have a natural forgetful functor 
$$\CA_\Lambda=\Ani(\CA^\heart_\Lambda)\to\Ani(\Mod^\heart_\Lambda)\cong \Mod_\Lambda^{\leq 0},$$ which is conservative preserving limits and sifted colimits (by combining \cite[25.1.2.2]{Lu3} with \cite[3.2.2.1,3.2.2.6,,3.2.3.1]{Lu2}). For an animated $\Lambda$-algebra $A$, we write $\pi_i(A)$ for $(-i)$th cohomology of its underlying $\Lambda$-module.
An animated $\Lambda$-algebra $A$ is called truncated if it belongs to ${}_{\leq m}\CA_\Lambda$ for some $m<\infty$, which is equivalent to saying $\pi_i(A)=0$ for $i>m$.

Let $\Aff_\Lambda$ (resp. $\mathbf{DAff}_\Lambda$) denote the opposite of $\CA^\heart_\Lambda$ (resp. $\CA_\Lambda$). 
Objects in $\Aff_\Lambda$ will be called classical affine $\Lambda$-schemes, or simply affine $\Lambda$-schemes, and objects in $\mathbf{DAff}_\Lambda$ will be called derived affine $\Lambda$-schemes, or animated $\Lambda$-affine schemes. 
Given $A\in \CA_\Lambda$, the corresponding object in $\mathbf{DAff}_\Lambda$ is denoted by $\on{Spec} A$ as usual, and given $X\in \mathbf{DAff}_\Lambda$, the corresponding object in $\CA_\Lambda$ is denoted by $\Lambda[X]$, called the ring of regular functions on $X$.
For $X=\on{Spec} A$, we write ${}^{cl}X$ for the underlying classical affine scheme $\on{Spec} \pi_0(A)$.  We say an affine $\Lambda$-scheme $\on{Spec} A$ is ($m$-)truncated if $A$ is ($m$-)truncated. (Note that this is different from $\on{Spec} A$ being an $m$-truncated object in $\mathbf{DAff}_\Lambda$.)

Let $M$ be a classical affine flat monoid scheme over $\Lambda$. It is an object in $\Mon(\Aff_\Lambda)$. Then the functor $\CA^\heart_\Lambda\to\Mon$ defined by $M$ extends to a (sifted colimit preserving) functor 
$$\CA_\Lambda=\Ani(\CA^\heart_\Lambda)\to \Ani(\Mon)\cong \Mon(\Spc),$$ 
still denoted by $M$. Unveiling the definition, for $A\in\CA_\Lambda$, $M(A)\in\Mon(\Spc)$ is the simplicial space given by 
$$[n]\in\Delta\mapsto \Map_{\CA_\Lambda}(\Lambda[M^n],A)\cong \Map_{\CA_\Lambda}(\Lambda[M],A)^n.$$

\begin{definition}\label{D: der hom space}
For $\Ga\in \Mon(\Spc)$, we define
\begin{equation}\label{E: der hom space2}
\mR_{\Ga,M}: \CA_\Lambda\to \Spc,\quad A\mapsto \Map_{\Mon(\Spc)}(\Ga,M(A)).
\end{equation}
\end{definition}

\begin{remark}
Our definition is equivalent to the one in \cite[\S 3.2]{To}.  Let 
\[
\CA_\Lambda^\Delta=\on{Fun}(\Delta,\CA_\Lambda)
\] 
be the category of cosimplicial objects in $\CA_\Lambda$. 
We have
\begin{equation}\label{R: cohom pres}
\Map(\Ga, M(A))=\Map_{\Spc_\Delta}\bigl(\Ga^\bullet, \Map_{\CA_\Lambda}(\Lambda[M^\bullet],A)\bigr)\cong \Map_{\CA_\Lambda^\Delta}\bigl(\Lambda[M^\bullet], C(\Ga^\bullet,A)\bigr),
\end{equation}
where $C(\Ga^\bullet,A)\in\CA_\La^\Delta$ is the object representing the functor 
\[
(\CA_\La^\Delta)^{\mathrm{op}}\to\Spc,\quad B^\bullet\mapsto \Map_{\Spc_\Delta}\bigl(\Ga^\bullet, \Map_{\CA_\Lambda}(B^\bullet,A)\bigr).
\] 
It is easy to see that the $n$th term of $C(\Ga^\bullet,A)$ is the animated $\La$-algebra given by
\begin{equation}\label{E: exp space}
C(\Ga^n,A):=\varprojlim_{\Ga^n}A=A^{\Ga^n},
\end{equation}
the $\La$-algebra of maps from $\Ga^n$ to $A$ (see \cite[5.5.2.6]{Lu1} for this notion in the $\infty$-categorical setting). 

On the other hand,
if $M$ is a group scheme so $M(A)$ is grouplike, by \cite[5.2.6.10, 5.2.6.13]{Lu2}
taking the geometric realizations (of simplicial spaces) induces an equivalence
\begin{equation}\label{E: ptsp}
\Map_{\Mon(\Spc)}(\Ga, M(A))\to \Map_{\Spc_{*}}(|\Ga|, |M(A)|),
\end{equation}
where $\Spc_{*}$ denote the $\infty$-category of pointed spaces (\cite[1.4.2.5]{Lu2}). 
Therefore, our definition also agrees with the definition of (framed) derived moduli space of representations as in \cite[\S 5]{SV}. (The geometric realization $|\cdot|$ is denoted by $B(\cdot)$ in \emph{loc. cit.})
\end{remark}

Using the ``resolution" of $\Ga$ from Corollary \ref{L: colim by FFM} 
we immediately arrive the following presentation of $\mR_{\Ga,M}$, which in particular implies the representability of $\mR_{\Ga,M}$ as a derived affine scheme. 

\begin{prop}\label{P: appr by free}
There is a natural isomorphism 
\[
\mR_{\Ga,M}\cong \varprojlim_{(\FFM/\Ga)^{\on{op}}} M^I, 
\]
where the limit is taken in $\mathbf{DAff}_\Lambda$.
Consequently, there is a canonical isomorphism in $\CA_\Lambda$
\begin{equation}\label{E: FFM alg}
\Lambda[\mR_{\Ga,M}]\cong \varinjlim_{\FFM/\Ga} \Lambda[M^I]. 
\end{equation}
\end{prop}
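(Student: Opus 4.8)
The plan is to transport the colimit presentation of $\Ga$ from Corollary~\ref{L: colim by FFM} through the various functors involved in Definition~\ref{D: der hom space}, using that all functors in sight are understood $\infty$-categorically and hence commute with the relevant (co)limits. First I would recall that, by Corollary~\ref{L: colim by FFM}, the isomorphism \eqref{E: colim} holds in $\Mon(\Spc)$, i.e. $\varinjlim_{\FFM/\Ga}\FM(I)\xrightarrow{\cong}\Ga$, where the colimit is taken over the sifted category $\FFM/\Ga$. Applying $\Map_{\Mon(\Spc)}(-,M(A))$, which sends colimits in the first variable to limits, gives for each $A\in\CA_k$
\[
\mR_{\Ga,M}(A)=\Map_{\Mon(\Spc)}(\Ga,M(A))\cong\varprojlim_{(\FFM/\Ga)^{\on{op}}}\Map_{\Mon(\Spc)}(\FM(I),M(A)).
\]
Since $\FM(I)$ is the free monoid on the finite set $I$, the mapping space out of it into any monoid object is computed by evaluation on generators: $\Map_{\Mon(\Spc)}(\FM(I),M(A))\cong M(A)^I\cong M^I(A)$, where $M^I$ denotes the $I$-fold product of $M$ in $\mathbf{DAff}_k$ (equivalently, $k[M^I]\cong k[M]^{\otimes I}$ is the $I$-fold coproduct in $\CA_k$). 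This is the same identity already used in the displayed formula \eqref{E: resol map} of Corollary~\ref{L: colim by FFM}, namely $\Map_{\Mon(\Spc)}(\FM(I),X_\bullet)=X([1])^I$, applied to $X_\bullet=M(A)$.

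Next I would assemble these pointwise identifications into an equivalence of functors $\CA_k\to\Spc$. The assignment $A\mapsto M^I(A)$ is, by definition, the functor of points of the derived affine scheme $M^I$; and a limit of functors of points of derived affine schemes over a diagram indexed by $(\FFM/\Ga)^{\on{op}}$ is again the functor of points of the corresponding limit in $\mathbf{DAff}_k$, because the Yoneda embedding $\mathbf{DAff}_k\hookrightarrow\on{Fun}(\CA_k,\Spc)$ preserves limits and $\mathbf{DAff}_k=\CA_k^{\on{op}}$ has all limits (dually, $\CA_k=\Ani(\CA^\heart_k)$ has all colimits). Naturality of all the identifications in $A$ — which follows since each step (the colimit presentation of $\Ga$, the adjunction defining $\Map_{\Mon(\Spc)}$, and the free-monoid computation) is functorial — then yields the isomorphism $\mR_{\Ga,M}\cong\varprojlim_{(\FFM/\Ga)^{\on{op}}}M^I$ in $\mathbf{DAff}_k$, and in particular shows $\mR_{\Ga,M}$ is representable by a derived affine scheme. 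Passing to rings of functions, which is the equivalence $\mathbf{DAff}_k^{\on{op}}\cong\CA_k$ and hence turns the limit into a colimit, gives \eqref{E: FFM alg}: $k[\mR_{\Ga,M}]\cong\varinjlim_{\FFM/\Ga}k[M^I]$.

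The main obstacle — or rather the one point needing genuine care — is the interchange of the limit over $(\FFM/\Ga)^{\on{op}}$ with the functor-of-points/Yoneda picture, i.e. making sure the pointwise limits glue to a limit in $\mathbf{DAff}_k$ rather than merely in presheaves. This is where one uses that $\CA_k$ is cocomplete (being $\Ani$ of an ordinary cocomplete category) so that $\varinjlim_{\FFM/\Ga}k[M^I]$ exists in $\CA_k$, and that the Yoneda functor $\Spec:\CA_k^{\on{op}}\to\on{Fun}(\CA_k,\Spc)$, $B\mapsto\Map_{\CA_k}(B,-)$, preserves limits — so that $\Spec\bigl(\varinjlim k[M^I]\bigr)\simeq\varprojlim\Spec k[M^I]$ as functors. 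Everything else is formal: the identification $\Map_{\Mon(\Spc)}(\FM(I),M(A))\simeq M(A)^I$ is the Segal/free-object computation already recorded, and the reduction of $\Map_{\Mon(\Spc)}(\Ga,-)$ to a limit over the resolution is exactly Corollary~\ref{L: colim by FFM}; sifitedness of $\FFM/\Ga$ is not even needed here, only that \eqref{E: colim} is a colimit. I would close by remarking, as the text does after \eqref{E: resol map}, that nothing in the argument uses that $\Ga$ is an ordinary monoid, so the same presentation holds for any $\Ga\in\Mon(\Spc)$ with $\FFM/\Ga$ interpreted as the appropriate $\infty$-categorical slice.
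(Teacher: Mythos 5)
Your proposal is correct and follows essentially the same approach as the paper, which derives the presentation as an immediate consequence of Corollary~\ref{L: colim by FFM}: apply $\Map_{\Mon(\Spc)}(-,M(A))$ to the colimit $\varinjlim_{\FFM/\Ga}\FM(I)\cong\Ga$, identify $\Map_{\Mon(\Spc)}(\FM(I),M(A))\cong M^I(A)$ via freeness, and observe the pointwise limit of affine schemes assembles to a limit in $\mathbf{DAff}_k$ since $\Spec$ preserves limits. You have simply spelled out the formal steps the paper leaves implicit.
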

As mentioned before, $\FFM/\Ga$ is not a filtered category, even if $\Ga$ is discrete. Therefore, although each $\Lambda[M^I]$ sits only in cohomological degree zero, this may not be the case for $\Lambda[\mR_{\Ga,M}]$. 
\begin{example}\label{E: free}
If $\Ga=\FM(I)$, $\mR_{\FM(I),M}\cong {}^{cl}\mR_{\FM(I),M}\cong M^I$. 
This is consistent with the intuition: since no relation is imposed if $\Ga$ is free, there shouldn't exist non-trivial derived structure of ${}^{cl}\mR_{\Ga,M}$ in this case. 
\end{example}

\begin{remark}\label{R: FFM scheme}
\begin{enumerate}
\item The above result suggests the following generalization, which is useful for the discussion of pseudorepresentations. 
Consider  $\CA_\Lambda^{\FFM}:=\on{Fun}(\FFM,\CA_\Lambda)$, the category of $\FFM$-algebras in the sense of \cite{Wei}. We denote an object in this category as $A^\bullet$. For a finite non-empty set $I$, we write $A^I$ for the image of $\FM(I)$ under the functor $A^\bullet$. For example, associated to $M\in \Mon(\Aff_\Lambda)$ there is an $\FFM$-algebra $\La[M]^\bullet$ sending $\FM(I)$ to $\La[M^I]$. On the other hand, let $B\in \CA_\Lambda$, and let $\Ga\in\Mon(\Spc)$. We regard $B$ as the constant functor $\FFM/\Ga\to \CA_\Lambda$ with value $B$. Its right Kan extension along $\FFM/\Ga\to \FFM$ is nothing but the $\FFM$-algebra $\FM(I)\mapsto C(\Ga^I,B)$. 

Now, for an $\FFM$-algebra $A^\bullet$ and $\Ga\in\Mon(\Spc)$, we may define
\begin{equation*}\label{E: hom FFM}
\mR_{\Ga,\on{Spec} A^\bullet}:= \varprojlim_{\FM(I)\in (\FFM/\Ga)^{\on{op}}} \on{Spec} A^I, \quad \mbox{ so } \ \ \Lambda[\mR_{\Ga, \on{Spec} A^\bullet}]=\varinjlim_{\FFM/\Ga} A^I.
\end{equation*}
When $A^\bullet=\La[M]^\bullet$, then $\mR_{\Ga,\on{Spec} A^\bullet}$ recovers $\mR_{\Ga,M}$.

Note that for every $B\in \CA_\Lambda$, we have 
\begin{equation}\label{E: hom FFMalg}
\Map_{\CA_\Lambda}(\Lambda[\mR_{\Ga, \on{Spec} A^\bullet}],B)\cong \Map_{\CA_\Lambda^{\FFM/\Ga}}(A^\bullet, B)\cong \Map_{\CA_\Lambda^{\FFM}}\bigl(A^\bullet, C(\Ga^\bullet,B)\bigr).
\end{equation}
As a corollary, we see that the functor $\col$ from Remark \ref{rem: Delta to FFM} induces an isomorphism
\[
\Map_{\CA_\Lambda^{\FFM}}\bigl(\La[M]^\bullet, C(\Ga^\bullet,B)\bigr)\to \Map_{\CA_\Lambda^{\Delta}}\bigl(\La[M]^\bullet, C(\Ga^\bullet,B)\bigr).
\]
We do not know whether this is true if $\La[M]^\bullet$ and $C(\Ga^\bullet,B)$ are replaced by more general $\FFM$-algebras.

\item One can replace $\FFM$ by $\FFS$ or by $\FFG$ as considered in \cite{Wei}. We shall not repeat such a remark again.
\end{enumerate}
\end{remark}

Let us come back to $\mR_{\Ga,M}$ and discuss certain vector bundles on it. For simplicity, from now on we assume that $\Ga$ is discrete, i.e. an object in $\Mon$.
This is enough for our purpose and simplifies the discussions below. As in the preceding discussion, we identify it with a category with a unique object and then a simplicial set via the Milnor construction.

We refer to \cite[\S 25.2.1]{Lu3} for the theory of modules over animated rings (see \cite[5.1]{Sch} for some further elaborations). 
For an animated $\Lambda$-algebra $A$, let $\Mod_A$ denote the $\infty$-category of $A$-modules, and $\Mod_A^{\leq 0}$ the full subcategory of connective objects. 
If $A$ is classical, $\Mod_A^{\leq 0}$ is also equivalent to $\Ani(\Mod_A^\heart)$, as introduced before.
We also call $A$-modules as quasi-coherent sheaves on $\on{Spec} A$.
  
Now, for a representation $W$ of $M$ on a finite projective $\Lambda$-module, let ${}_{\Ga}W$ denote the (trivial) vector bundle $\Lambda[\mR_{\Ga,M}]\otimes_\Lambda W$ on $\mR_{\Ga,M}$. We shall equip ${}_{\Ga}W$ with an action of $\Ga$, or more precisely construct a canonical morphism in $\Mon(\Spc)$
\begin{equation}\label{E: Ga act W}
  \Ga\to \End ({}_{\Ga}W).
\end{equation}  
Here $\End ({}_{\Ga}W)\in \Mon(\Spc)$ denotes the derived endomorphism ring (Example \ref{R: monoid, segal space}) of ${}_{\Ga}W$, regarded as a connective quasi-coherent sheaf on $\mR_{\Ga,M}$. 

In the sequel, we denote ${}_{\FM(I)}W$ by ${}_IW$ for simplicity. Note that there is a canonical isomorphism
$\varinjlim_{\FFM/\Ga} \End ({}_{I}W)\to \End ({}_{\Ga}W)$ in $\Mon(\Spc)$.
Then by Corollary \ref{L: colim by FFM}, it is enough to construct, for every $u:\FM(I)\to \Ga$, a morphism $\FM(I)\to \End ({}_IW)$, compatible with morphisms in $\FFM/\Ga$. We note that this last compatibility can be checked at the ordinary categorical level.

Next via the inclusion $\{i\}\subset I$, it is enough to assume that $I=\{1\}$ and to construct an endomorphism of ${}_{\{1\}}W$ on $M$, i.e. a $\Lambda[M]$-linear endomorphism of $\Lambda[M]\otimes W$. But this is nothing but the coaction map
\begin{equation}\label{E:coact}
\on{coact}: W\to \Lambda[M]\otimes_\Lambda W.
\end{equation} 
This finishes the construction of \eqref{E: Ga act W}.

\begin{remark}\label{R: fiber action}
\begin{enumerate}
\item\label{R: fiber action-1} Here is a more concrete description of the action \eqref{E: Ga act W} of $\Ga$ on fibers of ${}_\Ga W$. The representation $W$ induces a homomorphism $M\to \End(W)$ of monoid scheme over $\Lambda$, where $\End(W)(A)=\End_{\Mod_A^{\leq 0}}(W\otimes A) \in\Mon(\Spc)$.
Let $\on{Spec} A\to \mR_{\Ga,M}$ be a point of $\mR_{\Ga,M}$, corresponding to a homomorphism $\rho:\Ga\to M(A)$. The fiber of ${}_\Ga W$ over $\rho$, usually denoted by $W_\rho$, is just $W\otimes_\Lambda A$, on which $\Ga$ acts via $\Ga\xrightarrow{\rho} M(A)\to \End(W)(A)$. In \eqref{R: cohom pres}, we interpret $\rho$ as a map of cosimplicial algebras $\Lambda[M^\bullet]\to C(\Ga^\bullet,A)$. In the same spirit, we
may also interpret this action as a cosimplicial module $C(\Ga^\bullet,W_\rho)$ over $C(\Ga^\bullet, A)$  (and therefore over $A$) as follows. The coaction \eqref{E:coact} extends to a cosimplicial module $\Lambda[M^\bullet]\otimes_\Lambda W$ over $\Lambda[M^\bullet]$. Then $C(\Ga^\bullet,W_\rho)$ is its the base change along $\rho$.

\item If $W$ is a representation of $M^J$ for a finite set $J$, then ${}_{\Ga}W$ admits an action by $\Ga^J$, by first applying the above construct to $\mR_{\Ga^J,M^J}$ and then pulling the $\Ga^J$-action on ${}_{\Ga^J}W$ back along the morphism $\mR_{\Ga,M}\to \mR_{\Ga^J,M^J}$.
\end{enumerate}
\end{remark}

We can interpret \eqref{E: Ga act W} as a functor from $\Ga$ to the category of quasi-coherent sheaves on $\mR_{\Ga,M}$ by sending the unique object of $\Ga$ to ${}_\Ga W$ (see Example \ref{R: monoid, segal space}). 

\begin{definition}
The ``universal" homology of $\Ga$ with coefficient in $W$ is the complex of quasi-coherent sheaves on $\mR_{\Ga,M}$ defined by
\begin{equation*}\label{E:uni homology}
C_*(\Ga, {}_\Ga W):=\varinjlim_{\Ga} {}_{\Ga}W.
\end{equation*}
\end{definition}
Since tensor product preserves colimits, the (derived) pullback of $C_*(\Ga,{}_\Ga W)$ along $\on{Spec} A\to \mR_{\Ga,M}$ given by $\rho:\Ga\to M(A)$ as in Remark \ref{R: fiber action} is just the complex in $\Mod^{\leq 0}_A$ computing 
$\varinjlim_{\Ga} W_\rho$. If $A$ is classical, this is nothing but the usual homology of $\Ga$ with coefficient $W_\rho$. 

There is a canonical isomorphism
\begin{equation}\label{E: group hom}
C_*(\Ga,{}_\Ga W)\cong \varinjlim_{\FFM/\Ga}  \Lambda[\mR_{\Ga,M}]\otimes_{\Lambda[M^I]} C_*(\FM(I),{}_I W) 
\end{equation}
constructed using Corollary \ref{L: colim by FFM},
\begin{eqnarray*}
\varinjlim_{\Ga}{}_\Ga W & \cong &\varinjlim_{\FFM/\Ga}\varinjlim_{ \FM(I)}\Lambda[\mR_{\Ga,M}]\otimes_{\Lambda[M^I]}  {}_IW\\
                                             & \cong &\varinjlim_{\FFM/\Ga}\Lambda[\mR_{\Ga,M}]\otimes_{\Lambda[M^I]}  \varinjlim_{ \FM(I)} {}_IW.
\end{eqnarray*}

It is convenient to consider a reduced version of $C_*$. By definition, there is a natural map ${}_\Ga W\to C_*(\Ga, {}_\Ga W)$. We denote its fiber in the category of quasi-coherent sheaves on $\mR_{\Ga,M}$ by $\overline{C}_*(\Ga,{}_\Ga W)[-1]$, so we have the distinguished triangle 
\begin{equation}\label{E: cofib}
\overline{C}_*(\Ga,{}_\Ga W)[-1]\to {}_\Ga W\to C_*(\Ga, {}_\Ga W)\to.
\end{equation} 
Then \eqref{E: group hom} holds with $C_*$ replaced by $\overline{C}_*$. The advantage to consider the reduced version is that we have the following canonical isomorphism
\begin{equation}\label{E: H1 free}
{}_IW^{\oplus I}\cong \overline{C}_*(\FM(I),{}_IW)[-1],
\end{equation} 
obtained from the calculation of homology of free monoids by the following two-term complex (in \emph{cohomological} degree $[-1,0]$)
\[
\bigoplus_{i\in I}{}_IW\xrightarrow{\oplus_{i\in I} (\ga_i-1)} {}_IW,
\]
where $\ga_i$ denotes the generator of $\FM(I)$ corresponding to $i\in I$. 
In particular, $\overline{C}_*(\FM(I),{}_IW)[-1]$ sits in the \emph{abelian} category of quasi-coherent sheaves on $\mR_{\FM(I),M}\cong M^I$. 

Now let $f:\FM(I)\to \FM(J)$ be a monoid morphism. It induces a morphism between homology
$\Lambda[M^J]\otimes_{\Lambda[M^I]}\overline{C}_*(\FM(I),{}_IW)[-1]\to \overline{C}_*(\FM(J),{}_JW)[-1]$. Under the isomorphism \eqref{E: H1 free}, it is given by a $\Lambda[M^I]$-linear map
\begin{equation}\label{E: Cf}
{}_IW^{\oplus I}\to {}_JW^{\oplus J},
\end{equation}
which we now describe more explicitly.
Note that every such $f:\FM(I)\to \FM(J)$ is compositions of maps of the following two types:
\begin{itemize}
\item $f$ sends generators of $\FM(I)$ to generators or the unit of $\FM(J)$, i.e. $f$ is induced by a map of pointed sets $I\cup\{*\}\to J\cup\{*\}$;
\item $f: \FM(\{1,\ldots,n\})\to \FM(\{1,\ldots,n+1\})$ sending $\ga_i\to \ga_i$ for $i\leq n-1$ and $f(\ga_n)=\ga_{n}\ga_{n+1}$.
\end{itemize}
Therefore, it is enough to understand \eqref{E: Cf} in these two cases separately. Unveiling the construction of \eqref{E: H1 free}, we see that
in the first case, it is given by
\begin{equation}\label{E: Cf1}
(w_i)_{i\in I}\in {}_IW^{\oplus I}\mapsto (v_j)_{j\in J}\in {}_JW^{\oplus J},\quad v_j=\sum_{i\in f^{-1}(j)} 1\otimes w_i,
\end{equation}
and in the second case, it is given by
\begin{equation}\label{E: Cf2}
(w_i)\in {}_{\{1,\ldots,n\}}W^{\oplus n}\mapsto (v_j)\in {}_{\{1,\ldots,n+1\}}W^{\oplus (n+1)},\quad v_i= 1\otimes w_i, i\leq n, \ \  v_{n+1}=\ga_n(1\otimes w_n).
\end{equation}

Now we can compute the cotangent complex on $\mR_{\Ga,M}$ when $M$ is an affine smooth group scheme over $\Lambda$. 
Let $\Ad^*$ denote the coadjoint representation of $M$ on the dual of the Lie algebra $\frakm$ of $M$. 

We recall that for an animated $\Lambda$-algebra $A$, the (algebraic) cotangent complex $\bL_{A}$ is a connective $A$-module such that for every $A\to B$ and a connective $B$-module $V$ 
\[
\Map_{\Mod^{\leq 0}_A}(\bL_{A},V)\cong \Map_{{\CA_\Lambda}_{/B}}(A, B\oplus V),
\]
where $B\oplus V\to B$ denotes the trivial square zero extension of $B$  by $V$ in $\CA_\Lambda$, and ${\CA_\Lambda}_{/B}$ denotes the category of animated $\Lambda$-algebras with a $\Lambda$-algebra map to $B$. See \cite[25.3.1,25.3.2]{Lu3} for a detailed account.
If $A$ is a classical smooth $\Lambda$-algebra, then $\bL_A\cong \pi_0(\bL_A)=\Omega_A$ is just the K\"ahler differential of $A$. If $A\to B$ is a morphism in $\CA_\Lambda$, there is a natural morphism $B\otimes_A\bL_A\to \bL_B$ in $\Mod^{\leq 0}_B$ and the relative cotangent complex $\bL_{B/A}$ is defined as its fiber. 

\begin{prop}\label{P: cotan frame}
Assume that $M$ is an affine smooth group scheme over $\Lambda$. 
For every $\Ga$, the cotangent complex of $\mR_{\Ga,M}$ is canonically isomorphic to $\overline{C}_*(\Ga, {}_\Ga\Ad^*)[-1]$.
\end{prop}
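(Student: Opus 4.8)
The plan is to compute $\bL_{\mR_{\Ga,M}}$ by exploiting the presentation $k[\mR_{\Ga,M}]\cong \varinjlim_{\FFM/\Ga} k[M^I]$ from Proposition \ref{P: appr by free} and the fact that the cotangent complex functor, being a left adjoint construction, commutes with colimits. More precisely, since $\mR_{\Ga,M} = \varprojlim_{(\FFM/\Ga)^{\on{op}}} M^I$ in $\mathbf{DAff}_k$, base change of cotangent complexes gives
\[
\bL_{\mR_{\Ga,M}} \cong \varinjlim_{\FFM/\Ga}\ k[\mR_{\Ga,M}]\otimes_{k[M^I]} \bL_{M^I}.
\]
So the strategy is: (1) identify $\bL_{M^I}$ for the free case; (2) match the transition maps in this colimit with the transition maps computing $\overline{C}_*(\Ga,{}_\Ga\Ad^*)[-1]$ via \eqref{E: group hom} and \eqref{E: H1 free}; (3) conclude by uniqueness of colimits.

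First I would handle the case $\Ga = \FM(I)$. Here $\mR_{\FM(I),M}\cong M^I$ by Example \ref{E: free}, and since $M$ is smooth affine, $\bL_{M^I} \cong \Omega_{M^I}$ is an honest vector bundle. Using the trivialization of $\Omega_M$ by left-invariant (or right-invariant) differentials, $\Omega_M \cong k[M]\otimes_k \frakm^* = {}_{\{1\}}\Ad^*$ as a $k[M]$-module — wait, one must be careful: the natural $M$-equivariant structure on $\Omega_M$ under the trivialization is the coadjoint action only after the appropriate left/right translation bookkeeping, but in any case $\Omega_{M^I} \cong \bigoplus_{i\in I} {}_I\Ad^*$ as $k[M^I]$-modules, where the $i$-th summand carries the coadjoint $M$-action through the $i$-th factor. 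This is precisely the right-hand side of \eqref{E: H1 free} with $W = \Ad^*$, i.e. $\overline{C}_*(\FM(I),{}_I\Ad^*)[-1]$. Concretely: the two-term complex $\bigoplus_{i\in I}{}_I\Ad^* \xrightarrow{\oplus(\ga_i - 1)}{}_I\Ad^*$ has $H^0 = 0$ (as $\ga_i - 1$ acting on functions on $M^I$, after identifying via the group structure, is surjective with the expected kernel) and $H^{-1} = \bigoplus_{i\in I}\Omega_{M^I}$. So the reduced homology and the cotangent complex of $M^I$ agree.

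The main obstacle — and the step deserving the most care — is verifying that the transition maps agree under the colimit. On the cotangent-complex side, a morphism $f:\FM(I)\to\FM(J)$ in $\FFM/\Ga$ induces $\mR_{\FM(J),M}\to\mR_{\FM(I),M}$, i.e. $M^J\to M^I$, and hence a base-change map $k[M^J]\otimes_{k[M^I]}\Omega_{M^I}\to\Omega_{M^J}$; one must check this is the same $k[M^I]$-linear map \eqref{E: Cf} described in formulas \eqref{E: Cf1} and \eqref{E: Cf2} (it suffices to check on the two generating types of morphisms). For the pointed-set type this is the differential of the corresponding partial-projection / diagonal map on $M^{\bullet}$ and reduces to \eqref{E: Cf1}; for the "comultiplication" type $\ga_n\mapsto\ga_n\ga_{n+1}$ it is the differential of $(m_1,\dots,m_{n+1})\mapsto(m_1,\dots,m_{n-1},m_nm_{n+1})$, and one computes that under the left-invariant trivialization the last coordinate picks up the factor $\ga_n$ acting via $\Ad^*$, matching \eqref{E: Cf2}. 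Once both sides are shown to be functors $\FFM/\Ga\to\Mod_k$ with the same values and the same maps on the two generating morphism types, they are isomorphic as functors, so their colimits agree, and combining with \eqref{E: group hom} (reduced version) gives $\bL_{\mR_{\Ga,M}}\cong\overline{C}_*(\Ga,{}_\Ga\Ad^*)[-1]$. I should also confirm the identification is canonical, i.e. independent of the left/right trivialization choice up to the coadjoint twist, which is exactly what makes $\Ad^*$ (rather than a trivial $\frakm^*$) appear.
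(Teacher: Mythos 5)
Your proposal follows the same route as the paper: apply the colimit formula for cotangent complexes to the presentation $k[\mR_{\Ga,M}]\cong\varinjlim_{\FFM/\Ga}k[M^I]$, identify $\Omega_{M^I}$ with $({}_I\Ad^*)^{\oplus I}\cong\overline{C}_*(\FM(I),{}_I\Ad^*)[-1]$ via left-invariant differentials, and verify that the transition maps for $f:\FM(I)\to\FM(J)$ agree with \eqref{E: Cf1}--\eqref{E: Cf2} by checking on the two generating morphism types; this is exactly the commutative diagram \eqref{E: diff vs coadj} in the paper. (One small index slip: $\overline{C}_*(\FM(I),{}_IW)[-1]$ is concentrated in cohomological degree $0$, not $-1$, but this does not affect the argument.)
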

\begin{proof}
Note that if $A=\varinjlim A_i$ is a colimit in $\CA_\Lambda$, then 
\begin{equation}\label{E: colim cotan}
\bL_{A}\cong \varinjlim (A\otimes_{A_i}\bL_{A_i}). 
\end{equation}
We apply this to $\Lambda[\mR_{\Ga,M}]=\varinjlim_{\FFM/\Ga}\Lambda[M^I]$.
By comparing \eqref{E: group hom} with \eqref{E: colim cotan}, it is enough to establish, for every $f: \FM(I)\to \FM(J)$, the following commutative diagram (in the abelian category of $\Lambda[M^J]$-modules) 
\begin{equation}\label{E: diff vs coadj}
\xymatrix{
\Lambda[M^J]\otimes_{\Lambda[M^I]} ({}_{I}\Ad^*)^{\oplus I}\ar[r]\ar_\cong[d]&   ({}_{J}\Ad^*)^{\oplus J} \ar^\cong[d]\\
\Lambda[M^J]\otimes_{\Lambda[M^I]}\Omega_{M^I/k}\ar[r]&\Omega_{M^J}.
}
\end{equation}
Now if we identify $\Omega_{M}$ with $\Lambda[M]\otimes \Ad^*$ by regarding $\Ad^*$ as the space of left invariant differentials, then the vertical isomorphisms become clear and the commutativity of the diagram follows from explicit computations exhibited in \eqref{E: Cf1} and \eqref{E: Cf2}.
\end{proof}

\begin{remark}\label{R: tan compl}
Sometimes it is convenient to pass to the linear dual of the cotangent complex of  $\mR_{\Ga,M}$. Given $\rho: \Ga\to M(A)$, the tangent space $\bT_\rho\mR_{\Ga,M}$ of $\mR_{\Ga,M}$ at $\rho$ is the $A$-linear dual of $\bL_{\mR_{\Ga,M}}|_\rho$ (regarded as an object in $\Mod_A$), which is isomorphic to
 $\overline{C}^*(\Ga,\Ad_\rho)[1]$. Here
\[
C^*(\Ga,\Ad_\rho):=\varprojlim_{ \Ga} \Ad_\rho,
\]
with limit taking in $\Mod_A$, and $\overline{C}^*(\Ga,\Ad_\rho)[1]$ is its reduced version, i.e. the cofiber of $C^*(\Ga,\Ad_\rho)\to \Ad_\rho$. If $A$ is classical, this is the usual cohomology of $\Ga$ with coefficient in the adjoint representation $\Ad$ of $M$. Note that for a representation $W$ of $M$, $C^*(\Ga,W_\rho)$ can be identified with the totalization of the cosimplicial $A$-module $C(\Ga^\bullet, W_\rho)$ from Remark \ref{R: fiber action} \eqref{R: fiber action-1}.
\end{remark}

Let us move to the next topic.
Note that if $\Ga$ is finitely generated and $\Lambda$ is noetherian, then the non-derived space ${}^{cl}\mR_{\Ga,M}$ is of finite type over $\Lambda$. Indeed, by choosing a surjective map $\FM(I)\to \Ga$,  ${}^{cl}\mR_{\Ga,M}$ is realized as a closed subscheme of ${}^{cl}\mR_{\FM(I),M}\cong M^I$.
Now we discuss similar statements for $\mR_{\Ga,M}$. 

Recall that for a compactly generated $\infty$-category $\mC$, an object $c$ is called almost compact if for every $n\geq 0$, $\tau_{\leq n}c$ is compact in ${}_{\leq n}\mC$ (\cite[7.2.4.8]{Lu2}). Almost compact objects in $\CA_\Lambda$ are also called almost of finite presentation 
and for an animated $\Lambda$-algebra $A$, almost compact objects in $\Mod_A^{\leq 0}$ are also called connective almost perfect $A$-modules. 
If $\Lambda$ is noetherian, $A$ is almost of finite presentation over $\Lambda$ if and only if $\pi_0(A)$ is a finitely generated $\Lambda$-algebra and each $\pi_i(A)$ is a finitely generated $\pi_0(A)$-module (\cite[3.1.5]{Lu4}).
In particular, if $\Lambda$ is noetherian, a classical $\Lambda$-algebra of finite type is almost of finite presentation, when regarded as an animated $\Lambda$-algebra.

On the other hand, recall that a group (even a monoid) $\Ga$ is called of type $FP_\infty(k)$ if the trivial $k\Ga$-module admits a resolution $P^\bullet\to k$ with each term finite projective $k\Ga$-module, where $k\Ga$ denotes the group (or monoid) algebra of $\Ga$. For example, finite groups are always of type $FP_\infty(k)$. More generally, if the classifying space of $\Ga$ can be realized as a CW complex with finitely many cells in each degree $n\geq 0$ (such a group is called of type $F_\infty$), then $\Ga$ is of type $FP_\infty(k)$.

\begin{prop}\label{P: almost fin}  
Assume that $\Lambda$ is noetherian, and $M$ is a smooth affine group scheme over $\Lambda$.
If $\Ga$ is finitely generated of type $FP_\infty(k)$, then $\mR_{\Ga,M}$ is almost of finite presentation over $\Lambda$.
\end{prop}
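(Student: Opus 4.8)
The plan is to reduce the statement to the corresponding fact about the algebra of regular functions, namely that $k[\mR_{\Ga,M}]$ is almost of finite presentation over $k$, and then to exploit the colimit presentation $k[\mR_{\Ga,M}] \cong \varinjlim_{\FFM/\Ga} k[M^I]$ from Proposition \ref{P: appr by free}. By the noetherian criterion \cite[3.1.5]{Lu4} cited above, it suffices to show that $\pi_0(k[\mR_{\Ga,M}])$ is a finitely generated $k$-algebra and each $\pi_i(k[\mR_{\Ga,M}])$ is a finitely generated module over it. The statement about $\pi_0$ is already handled: $\pi_0(k[\mR_{\Ga,M}])$ is the coordinate ring of ${}^{cl}\mR_{\Ga,M}$, which as recalled just before the proposition is a closed subscheme of $M^I$ for a finite generating set $I$ of $\Ga$, hence of finite type over $k$. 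So the real content is the finite generation of the higher homotopy groups.

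First I would pick a free resolution witnessing the $FP_\infty(k)$ hypothesis, but it is cleaner to work geometrically: choose a surjection $u\colon \FM(I)\twoheadrightarrow \Ga$ with $I$ finite, giving a closed immersion $\mR_{\Ga,M}\hookrightarrow \mR_{\FM(I),M} \cong M^I$ (at the classical level) and a map $k[M^I]\to k[\mR_{\Ga,M}]$ of animated $k$-algebras. I would then argue that $k[\mR_{\Ga,M}]$ is almost of finite presentation \emph{as a $k[M^I]$-algebra}, equivalently that it is a connective almost perfect $k[M^I]$-module; combined with $M^I$ being of finite type over the noetherian ring $k$, transitivity of "almost of finite presentation" (\cite[7.2.4.x]{Lu2}) then gives the claim over $k$. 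To control the module structure I would use the cotangent complex: by Proposition \ref{P: cotan frame}, $\bL_{\mR_{\Ga,M}} \cong \overline{C}_*(\Ga,{}_\Ga\Ad^*)[-1]$, and by the transitivity triangle $\bL_{\mR_{\Ga,M}/M^I}$ is controlled by $\overline{C}_*(\Ga,{}_\Ga\Ad^*)[-1]$ together with the Kähler differentials of $M^I$. The $FP_\infty(k)$ hypothesis is exactly what guarantees that $\overline{C}_*(\Ga,{}_\Ga\Ad^*)$ — whose pullback to any point $\rho$ computes group homology $C_*(\Ga,\Ad_\rho)$, which is a perfect complex of $\pi_0$-modules precisely under $FP_\infty$ — is an almost perfect quasi-coherent sheaf on $\mR_{\Ga,M}$; here one uses that $\Ad^*$ is a finite projective $k$-module and that homology commutes with the relevant base change (using \eqref{E: group hom}, where each $C_*(\FM(I),{}_I W)$ is the explicit two-term complex \eqref{E: H1 free}). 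Given that $\bL_{\mR_{\Ga,M}/M^I}$ is almost perfect and ${}^{cl}\mR_{\Ga,M}\hookrightarrow M^I$ is a finitely presented closed immersion (so that in particular $\pi_0$ is finitely generated over $k[M^I]$), one concludes by the standard inductive argument — using the fibre sequence relating $\bL$ and the Postnikov tower, or directly \cite[7.4.3.18]{Lu2}-type results — that $k[\mR_{\Ga,M}]$ is almost of finite presentation over $k[M^I]$.

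An alternative, perhaps more self-contained route that I would keep in reserve: use the presentation $k[\mR_{\Ga,M}] \cong \varinjlim_{\FFM/\Ga} k[M^I]$ directly. Pick a sub-poset of $\FFM/\Ga$ coming from a finite partial free resolution of $\Ga$ of length $n$, compute the corresponding finite colimit $B_n$ (which is visibly almost of finite presentation, being a finite colimit of finitely presented $k$-algebras $k[M^{I_j}]$), and show that $B_n \to k[\mR_{\Ga,M}]$ is an isomorphism on $\pi_i$ for $i < n$. The $FP_\infty(k)$ hypothesis is what lets one extend the partial resolution arbitrarily far, so that each individual homotopy group stabilizes at a finite stage; since each $B_n$ has finitely generated homotopy groups over noetherian $k$, so does the colimit in each fixed degree. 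Either way, \textbf{the main obstacle} is the bookkeeping that translates the homological finiteness of $\Ga$ (a statement about the trivial module $k$ over $k\Ga$) into connective almost-perfectness of the quasi-coherent sheaf $\overline{C}_*(\Ga,{}_\Ga\Ad^*)$ on the derived scheme $\mR_{\Ga,M}$ uniformly in the point $\rho$ — i.e. checking that the "universal homology" inherits the finiteness, using that $\Ad^*$ is finite projective over $k$ and that formation of $C_*$ is compatible with the base changes appearing in \eqref{E: group hom}; once that is in place, the cotangent complex criterion for almost finite presentation does the rest.
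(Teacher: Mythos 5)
Your proposal is correct and takes essentially the same route as the paper: reduce to showing that the cotangent complex $\overline{C}_*(\Ga,{}_\Ga\Ad^*)[-1]$ is almost perfect (via the criterion relating almost finite presentation to finite generation of $\pi_0$ plus almost perfectness of $\bL$), and then use the $FP_\infty(k)$ hypothesis to see that its pullback to any classical $k$-algebra is a connective complex of finite projective modules, hence almost perfect, which suffices by the descent criterion for almost perfectness along classical points. The only cosmetic difference is that you factor through the surjection $\FM(I)\twoheadrightarrow\Ga$ and invoke transitivity of "almost of finite presentation" over $k[M^I]$, whereas the paper applies the cotangent complex criterion directly over $k$; this detour is harmless but unnecessary since the absolute cotangent complex already has the form you need.
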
 
\begin{proof}
As $\Ga$ is finitely generated, ${}^{cl}\mR_{\Ga,M}$ is of finite type.
Using \cite[3.2.18]{Lu4} and Proposition \ref{P: cotan frame}, it is enough to show that $\overline{C}_*(\Ga, {}_{\Ga}\Ad^*)[-1]$ is almost perfect.  As $\Ga$ is of type $FP_\infty(k)$, the pullback of this complex to every \emph{classical} $\Lambda$-algebra $A$ is a connective complex with each term finite projective $A$-module, and therefore is almost perfect. This implies that $\overline{C}_*(\Ga, {}_{\Ga}\Ad^*)[-1]$ is almost perfect by \cite[2.7.3.2]{Lu3}.
\end{proof}
\begin{remark}
There are also refined notions such as animated $\Lambda$-algebras of finite generation of order $n$ and groups of type $FP_n(k)$. One can use these notions to formulate a refined version of the above proposition.
\end{remark}

\begin{prop}\label{P: Classical}
Assumptions are as in Proposition \ref{P: almost fin}. Let $d$ denote the relative dimension of $M$ over $\Lambda$.
In addition, assume that for every field valued point $\on{Spec} \kappa\to\mR_{\Ga,M}$ given by a representation $\rho:\Ga\to M(\kappa)$, we have
$$H_i(\Ga, \Ad_{\rho}^*)=0  \mbox{ for } i>2,\quad \mbox{ and } \quad \dim_\kappa {}^{cl}\mR_{\Ga,M}\leq d-\dim (-1)^{i}H_i(\Ga, \Ad_\rho^*),$$ 
where $\dim_\kappa {}^{cl}\mR_{\Ga,M}$ denotes the relative dimension of  ${}^{cl}\mR_{\Ga,M}$ over $\Lambda$ at $\kappa$.
Then $\mR_{\Ga,M}={}^{cl}\mR_{\Ga,M}$ is a local complete intersection. In this case, it is smooth at a geometric point $\rho\in \mR_{\Ga,M}$ if and only if $\mR_{\Ga,M}$ is flat at $\rho$ over $\Lambda$ 
and $H_2(\Ga, \Ad_\rho^*)=0$.
\end{prop}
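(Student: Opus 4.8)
The plan is to deduce everything from the cotangent complex formula of Proposition~\ref{P: cotan frame} together with standard facts about local complete intersections via the cotangent complex. First I would observe that by Proposition~\ref{P: cotan frame}, for any field-valued point $\rho:\Ga\to M(\kappa)$ the pullback $\bL_{\mR_{\Ga,M}}|_\rho$ is $\overline{C}_*(\Ga,\Ad^*_\rho)[-1]$, whose cohomology groups are (up to a shift by one) the reduced homology groups $\overline{H}_i(\Ga,\Ad^*_\rho)$; concretely $H^{-j}(\bL_{\mR_{\Ga,M}}|_\rho)=H_{j+1}(\Ga,\Ad^*_\rho)$ for $j\geq 1$, and $H^0(\bL_{\mR_{\Ga,M}}|_\rho)=H_1(\Ga,\Ad^*_\rho)$ while $H^{-1}$ receives a contribution from $H_2$ (the reduced-ness only modifies degree zero, replacing $H_0$ by the cokernel of $\Ad^*_\rho\to C_*$, which vanishes when $\Ga$ is connected as a space but in general is the coinvariants; the point is that the terms in cohomological degrees $\leq -1$ are exactly $H_i(\Ga,\Ad^*_\rho)$ for $i\geq 2$). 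Thus the hypothesis $H_i(\Ga,\Ad^*_\rho)=0$ for $i>2$ says precisely that $\bL_{\mR_{\Ga,M}}|_\rho$ has Tor-amplitude in $[-1,0]$, i.e. is perfect of amplitude $[-1,0]$ at every field-valued point. By Proposition~\ref{P: almost fin}, $\mR_{\Ga,M}$ is almost of finite presentation over $k$, and since $\bL$ has bounded Tor-amplitude fiberwise it is in fact of finite presentation with $\bL_{\mR_{\Ga,M}}$ perfect of Tor-amplitude $[-1,0]$ (one reduces to checking at classical points, using that $\overline{C}_*(\Ga,{}_\Ga\Ad^*)[-1]$ pulled back to any classical $A$ is a two-term complex of finite projectives by the $FP_\infty$ hypothesis and the presentation \eqref{E: group hom}, \eqref{E: H1 free}).

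Next I would invoke the standard criterion: a derived affine scheme $X=\Spec A$ almost of finite presentation over $k$, with $\bL_{A/k}$ of Tor-amplitude in $[-1,0]$, such that ${}^{cl}X$ is flat over $k$ of the ``expected'' relative dimension, is classical and a relative local complete intersection over $k$. The mechanism is Euler characteristic: étale-locally $X$ is cut out in a smooth $k$-scheme $Y$ of relative dimension $N$ by $N - \dim^{cl}X$ equations (choosing $N$ large), giving a Koszul-type presentation; the relative cotangent complex then sits in a triangle $(\mathcal{N}^\vee)[1]\to \bL_{Y/k}|_X\to \bL_{X/k}$ and, computing ranks of $\bL_{X/k}|_\rho$ fiberwise, the alternating sum of the dimensions of $H_i(\Ga,\Ad^*_\rho)$ must equal $d - \dim_\kappa{}^{cl}\mR_{\Ga,M}$; the hypothesis $\dim_\kappa{}^{cl}\mR\leq d - \sum(-1)^i\dim H_i$ forces equality and forces the defining sequence to be regular, hence ${}^{cl}X$ is a complete intersection and $X={}^{cl}X$. (Here I use that $M^I$ is smooth of relative dimension $|I|\,d$, that ${}^{cl}\mR_{\Ga,M}\hookrightarrow M^I$ is cut out by the relations coming from a finite presentation of $\Ga$, and that $\chi$ of the two-term complex \eqref{E: H1 free} under the maps \eqref{E: Cf1}, \eqref{E: Cf2} computes the expected codimension.) The cleanest packaging is via \cite[Lemma~3.2.18]{Lu4} or the Avramov-style characterization of l.c.i. morphisms by the cotangent complex being perfect of amplitude $[-1,0]$, combined with ``classical + l.c.i. $\Leftrightarrow$ the fiberwise Euler characteristic of $\bL$ matches the codimension.''

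Finally, for the smoothness criterion at a geometric point $\rho$: $\mR_{\Ga,M}$ is smooth at $\rho$ over $k$ iff $\bL_{\mR_{\Ga,M}/k}|_\rho$ is a projective module concentrated in degree zero, i.e. iff $H^{-1}(\bL|_\rho)=H_2(\Ga,\Ad^*_\rho)=0$ \emph{and} $H^0(\bL|_\rho)=H_1(\Ga,\Ad^*_\rho)$ is a vector space of the locally constant expected rank. Since we have already shown $X$ is classical and l.c.i., the vanishing of $H_2(\Ga,\Ad^*_\rho)$ makes $\bL|_\rho$ a finite projective module in degree zero, and then smoothness over $k$ is equivalent to flatness over $k$ at $\rho$ — this is the usual fact that for a finitely presented morphism, l.c.i.\ with $\bL$ perfect in degree zero plus flatness gives smoothness (the fiber is then a regular scheme of the expected dimension, and flatness propagates this to $X$). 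Conversely smoothness trivially implies both flatness and $H_2=0$. The main obstacle I anticipate is the careful bookkeeping in the middle step: extracting from \eqref{E: group hom}–\eqref{E: Cf2} that ${}^{cl}\mR_{\Ga,M}$ is \emph{literally} cut out by the expected number of equations forming a regular sequence (rather than merely having the right dimension), and making the Euler-characteristic comparison between the fiberwise ranks of $\bL_{X/k}$ and the codimension watertight — essentially reproving the classical statement that a finite-type scheme of the expected dimension defined inside a smooth scheme by that many equations is a complete intersection, now upgraded to kill the derived structure. Everything else is a direct application of the cotangent-complex formalism already set up in the excerpt.
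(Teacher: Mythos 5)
Your proposal is correct and takes essentially the same route as the paper's proof: choose a surjection $\FM(I)\twoheadrightarrow\Ga$, use quasi-smoothness to present $\mR_{\Ga,M}$ Zariski-locally as a derived zero locus $\Spec A\times_{\bA^m}\{0\}$ inside a smooth open $\Spec A\subset M^I$, then combine Krull's Hauptidealsatz with the Euler-characteristic identity $\chi(\bL_{\mR_{\Ga,M}}|_\rho)=d-\sum_i(-1)^i\dim H_i(\Ga,\Ad^*_\rho)=\dim_\kappa M^I-m$ and the dimension hypothesis to force the $m$ defining equations to form a regular sequence, which simultaneously kills the derived structure and establishes the l.c.i.\ property; the smoothness criterion then falls out of the same identity combined with the usual flatness-plus-Jacobian criterion. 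One small slip: $H^0(\bL_{\mR_{\Ga,M}}|_\rho)\cong\Omega_{{}^{cl}\mR_{\Ga,M}}\otimes\kappa$ is not literally $H_1(\Ga,\Ad^*_\rho)$ but rather an extension built from $H_1$ and part of $\Ad^*_\rho$ via the fiber sequence \eqref{E: cofib}; this is harmless since your argument only uses the Euler characteristic and the cohomology of $\bL$ in degrees $\leq -1$.
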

\begin{proof}
By our assumption, $\mR_{\Ga,M}$ is almost finitely presented over $\Lambda$ and its cotangent complex has Tor-amplitude $\leq 1$. So it is quasi-smooth in the sense of \cite[3.4.15]{Lu4} (see also \cite[2.1.3]{AG} when $\Lambda$ is a characteristic zero field). 
We choose a surjective map $\FM(I)\to \Ga$, inducing a morphism $\mR_{\Ga,M}\to \mR_{\FM(I),M}$. It follows from arguments as in \emph{loc. cit.} that Zariski locally on $M^I$, meaning after replacing $M^I$ by an open subscheme $\on{Spec} A\subset M^I$ and $\mR_{\Ga,M}$ by $\on{Spec} B:= \on{Spec} A\times_{M^I}\mR_{\Ga,M}$,
there is a morphism $\on{Spec} A\to \bA^m:=\on{Spec} \Lambda[x_1,\ldots,x_m]$ such that $\on{Spec} B\cong \on{Spec} A\times_{\bA^m}\{0\}$. 
In particular, $\dim_\kappa {}^{cl}\mR_{\Ga,M}\geq \dim_\kappa M^I-m$ at every field valued point $\kappa$ of $\on{Spec} B$.
On the other hand, the distinguished triangles
$B\otimes_A\bL_A\to \bL_B\to \bL_{B/A}$ implies that
for every point $\kappa$ of $\on{Spec} B$, 
\[
\dim_\kappa M^I-m=d- \sum_i(-1)^{i}\dim  H_i(\Ga, \Ad_\rho^*).
\]
It follows from our assumption that $\dim_\kappa {}^{cl}\mR_{\Ga,M}= \dim_\kappa M^I-m$. This implies that $\mR_{\Ga,M}={}^{cl}\mR_{\Ga,M}$ is a local complete intersection.

Finally, $\mR_{\Ga,M}$ is smooth at $\rho$ if and only if it is flat and $\dim(\Omega_{\mR_{\Ga,M}}\otimes \kappa)=\dim_{\kappa}\mR_{\Ga,M}$. But the last condition is equivalent to $H_2(\Ga, \Ad_\rho^*)=0$ by the above equality.
\end{proof}

Up to now, we are focusing on the so-called framed representation space. Let us also briefly discuss representation stacks.
First, by a prestack over $\Lambda$, we mean a(n accessible)\footnote{This is a set theoretic assumption (see \cite[5.4.2.5]{Lu1}). Alternatively, we can bound the size of algebras we are considering.} 
functor $\mF: \CA_\Lambda\to \Spc$. All prestacks over $\Lambda$ form an $\infty$-category $\on{Fun}(\CA_\Lambda,\Spc)$.  
A  prestack is a called a stack if it is a sheaf with respect to the \'etale topology on $\CA_\Lambda$. We write $\Shv(\CA_\Lambda)$ for the full subcategory of  $\on{Fun}(\CA_\Lambda,\Spc)$ consisting of stacks. As in the classical situation, via the Yoneda embedding, $\mathbf{DAff}_\Lambda$ form a full subcategory of $\Shv(\CA_\Lambda)$.
A derived Artin stack over $\Lambda$ is a stack satisfying certain properties.  For a (pre)stack $\mF$, we let ${}^{cl}\mF$ denote its restriction to the classical $\Lambda$-algebras, called its underlying classical (pre)stack. Note that $\mF=\on{Spec} A$, then ${}^{cl}\mF$ is represented by $\on{Spec} \pi_0(A)$, which is consistent with our previous definition of ${}^{cl}\on{Spec} A$.
We refer to \cite[\S 5]{Lu4} for precise definitions and some further discussions.

Now assume that there is a smooth affine group scheme $H$ over $\Lambda$ that acts on $M$ by monoid automorphisms. It gives rises to a simplicial object in $\Mon(\mathbf{Aff}_\Lambda)$ by assigning $[n]\in\Delta\mapsto H^n\times M$ (with the monoid structure coming from $M$) and by assigning various face maps coming from the action map and the projection maps as usual. Then applying the construction \eqref{E: der hom space2} gives a simplicial derived affine schemes (with degeneracy maps omitted)
\begin{equation}\label{E: H action}
\cdots\ \substack{\longrightarrow\\[-1em] \longrightarrow \\[-1em] \longrightarrow\\[-1em] \longrightarrow}\ H\times H\times \mR_{\Ga,M}\ \substack{\longrightarrow\\[-1em] \longrightarrow \\[-1em] \longrightarrow}\ H\times \mR_{\Ga,M}\ \substack{\longrightarrow\\[-1em] \longrightarrow }\ \mR_{\Ga,M},
\end{equation} 
which amounts to an action of $H$ on $\mR_{\Ga,M}$.
\begin{definition}
Let $\mR_{\Ga,M/H}:=\mR_{\Ga,M}/H$ be the quotient stack of the above $H$-action, i.e. the geometric realization of \eqref{E: H action} in $\Shv(\CA_\Lambda)$.  
If $M=H$ on which $H$ acts by conjugation, we write $\mX_{\Ga,H}$ for $\mR_{\Ga,H/H}$
and call it the $H$-representation stack of $\Ga$. 
\end{definition}
\begin{remark}
Clearly ${}^{cl}\mX_{\Ga,H}$ is the usual representation stack studied in literature. In particular, for an algebraically closed field $\kappa$, the $\kappa$-points of $\mX_{\Ga,H}$ classify homomorphisms $\Ga\to H(\kappa)$ up to $H(\kappa)$-conjugacy. 
 In general, $\mX_{\Ga,H}: \CA_\Lambda\to \Spc$ is the \'etale sheafification of the functor sending $A$ to $\Map_{\Spc}(|\Ga|,|H(A)|)$ (compare with \eqref{E: ptsp}). 
\end{remark}

Now suppose that $W$ is a representation of $M\rtimes H$ (on a finite projective $\Lambda$-module), i.e. the coaction morphism \eqref{E: Ga act W} is an $H$-module morphism. In this case the vector bundle ${}_\Ga W$ equipped with the action of $\Ga$ descends to $\mR_{\Ga,M/H}$, denoted by the same notation. In addition, $C_*(\Ga, {}_\Ga W)$ also descends to a complex of quasi-coherent sheaves on $\mR_{\Ga,M/H}$. Indeed, this is clear if $\Ga=\FM(I)$, and the general case reduces to the free case by Corollary \ref{L: colim by FFM}.
Again, in the example $M=H$ with the conjugation action, the coaction map \eqref{E:coact} is automatically $H$-equivariant for every $H$-module $W$. 
In particular, the coadjoint representation of $H$ gives a vector bundle ${}_\Ga\Ad^*$ on $\mX_{\Ga,H}$ equipped with a $\Ga$-action.
We have the isomorphism 
\begin{equation*}\label{E: cotan X}
\bL_{\mX_{\Ga,H}}\cong  C_*(\Ga, {}_\Ga\Ad^*)[-1].
\end{equation*}
This follows from Proposition \ref{P: cotan frame} by comparing \eqref{E: cofib} with the usual distinguished triangle of cotangent complexes related to the morphism $\pi: \mR_{\Ga,H}\to \mX_{\Ga,H}$.

Our last topic of this subsection is the coarse moduli and moduli of pseudorepresentations. Let $\Ga,M,H$ be as above. We will assume that $\Lambda$ is noetherian and $H$ is a connected reductive group over $\Lambda$.
Recall that if $M=H$ acting on itself by conjugation, the GIT quotient of ${}^{cl}\mR_{\Ga,H}$ by $H$ is usually called the $H$-character variety of $\Ga$ (at least if $\Ga$ is finitely generated and $\Lambda$ is a field). 
Similarly, in our more general context, we can make the following definition.
\begin{definition}
The character variety of $\mR_{\Ga,M/H}$, denoted by $\mC_{\Ga,M/H}$, is the geometric realization of \eqref{E: H action} in $\mathbf{DAff}_\Lambda$. So $\Lambda[\mC_{\Ga,M/H}]=\Lambda[\mR_{\Ga,M}]^H$ is the $H$-invariants of $\Lambda[\mR_{\Ga,M}]$ in $\CA_\Lambda$ (i.e. totalization of the cosimplicial objects in $\CA_\Lambda$ obtained from \eqref{E: H action} by passing to the opposite).
\end{definition}
If $\mR_{\Ga,M}$ is classical, then $\mC_{\Ga,M/H}$ is classical and is isomorphic to the usual GIT quotient $\mR_{\Ga,M}/\!\!/H$ of $\mR_{\Ga,M}$ by $H$ in $\Aff_\Lambda$, so $\Lambda[\mC_{\Ga,M/H}]$ is isomorphic to the \emph{non-derived} $H$-invariants of $\Lambda[\mR_{\Ga,M}]$. In general if $\mR_{\Ga,M}$ is not classical, we would still like to say that $\Lambda[\mC_{\Ga,M/H}]$ is isomorphic to the $H$-invariants of $\Lambda[\mR_{\Ga,M}]$ in appropriate sense. Here is one way to make this precise. Recall that there is notion of $E_\infty$-$\Lambda$-algebras, which are commutative algebra objects in the symmetric monoidal category $\Mod_\La$. (See \cite[Chap 7]{Lu2} for a detailed account.) For example, the ring of global functions $\Ga(\mR_{\Ga,M/H},\mO)$ of $\mR_{\Ga,M/H}$ is an $E_\infty$-$\Lambda$-algebra, which in fact is isomorphic to the $H$-invariants of $\Lambda[\mR_{\Ga,M}]$ in the category of $E_\infty$-$\Lambda$-algebras.
There is a natural functor from $\CA_\La$ to the category of $E_\infty$-$\La$-algebras. (See \cite[\S 25.1]{Lu3}.) Then
the image of $\Lambda[\mC_{\Ga,M/H}]$ under this functor can be identified with $\tau^{\leq 0} \Ga(\mR_{\Ga,M/H},\mO)$.

\begin{prop}\label{P: fp of char var}
If $\mR_{\Ga,M}$ is $m$-truncated for some $m$ and is almost of finite presentation over $\Lambda$, so is $\mC_{\Ga,M/H}$. 
\end{prop}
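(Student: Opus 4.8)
The plan is to compute the homotopy groups of $k[\mC_{\Ga,M/H}]$ from the cosimplicial algebra defining it, reading off truncatedness directly and reducing the finiteness to the cohomological finite generation theorem for $H$.

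Write $A:=k[\mR_{\Ga,M}]$. By hypothesis $A$ is $m$-truncated and almost of finite presentation over the noetherian ring $k$, so $\pi_0(A)$ is a finitely generated $k$-algebra, each $\pi_i(A)$ is a finitely generated $\pi_0(A)$-module, $\pi_i(A)=0$ for $i>m$, and all of these carry a rational $H$-action (they are the quasi-coherent sheaves $\pi_i(\mO_{\mR_{\Ga,M/H}})$, supported on the classical substack $[\Spec\pi_0(A)/H]$). By construction $k[\mC_{\Ga,M/H}]$ is the connective cover $\tau^{\ge 0}$ of the totalization of the cosimplicial $k$-algebra $A^\bullet$ with $A^n=k[H^n\times\mR_{\Ga,M}]=k[H]^{\otimes n}\otimes_k A$; here I use that $H$ is smooth, hence $k[H]$ is $k$-flat, which also gives $\pi_t(A^n)=k[H]^{\otimes n}\otimes_k\pi_t(A)$. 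The cosimplicial abelian group $\pi_t(A^\bullet)$ is then the cochain complex computing the rational cohomology $H^\bullet(H,\pi_t(A))$, so in the Bousfield--Kan spectral sequence
\[
E_2^{s,t}=\pi^s\bigl(\pi_t(A^\bullet)\bigr)=H^s(H,\pi_t(A))\ \Longrightarrow\ \pi_{t-s}(\on{Tot}(A^\bullet))
\]
the $E_2$-page vanishes unless $s\ge 0$ and $0\le t\le m$. Consequently only finitely many terms contribute in any given degree, the spectral sequence converges, $\pi_n(\on{Tot}(A^\bullet))=0$ for $n>m$ --- so $\mC_{\Ga,M/H}$ is already $m$-truncated --- and for $0\le n\le m$ the group $\pi_n(\on{Tot}(A^\bullet))=\pi_n(k[\mC_{\Ga,M/H}])$ carries a finite multiplicative filtration whose graded pieces are subquotients of the groups $H^{t-n}(H,\pi_t(A))$ with $n\le t\le m$.

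It remains to prove that $\pi_0(\on{Tot}(A^\bullet))$ is a finitely generated $k$-algebra and each $\pi_n(\on{Tot}(A^\bullet))$ ($1\le n\le m$) a finitely generated module over it, for by \cite[3.1.5]{Lu4} this is exactly almost of finite presentation. The key input is the cohomological finite generation theorem of Touz\'e and van der Kallen: for a reductive group $H$ over a noetherian ring $k$ and a finitely generated commutative $k$-algebra $R$ carrying a rational $H$-action, $H^\bullet(H,R)$ is a finitely generated $k$-algebra; applying this to the square-zero extension $R\oplus N$ and tracking the extra grading, one deduces that for a finitely generated $H$-equivariant $R$-module $N$ the graded module $H^\bullet(H,N)$ is finitely generated over $H^\bullet(H,R)$, and hence (since $H^\bullet(H,R)$ is generated over its degree-zero part $R^H$ by finitely many homogeneous elements) each $H^j(H,N)$ is a finitely generated $R^H$-module. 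Taking $R=\pi_0(A)$ and $N=\pi_i(A)$, every graded piece arising above becomes a finitely generated $\pi_0(A)^H$-module; combined with the finite generation of the classical invariant ring $\pi_0(A)^H$ itself (Nagata--Haboush), elementary ring-theoretic manipulations --- a ring with a nilpotent ideal that is finitely generated as a module and has finitely generated quotient is a finitely generated algebra --- promote this to the finite generation of $\pi_0(\on{Tot}(A^\bullet))$ as a $k$-algebra and of each $\pi_n(\on{Tot}(A^\bullet))$ as a module over it. More structurally: $\mR_{\Ga,M/H}\to BH$ is affine, so $\Gamma(\mR_{\Ga,M/H},\mO)=\Gamma(BH,\mA)$ for an $m$-truncated, almost-of-finite-presentation algebra object $\mA$ on $BH$ (namely $A$ with its $H$-action), and what is really being used is that, for reductive $H$, $\tau^{\ge 0}\Gamma(BH,-)$ preserves $m$-truncated almost-of-finite-presentation objects --- which is precisely cohomological finite generation.

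I expect the genuine difficulty to lie entirely in this appeal to cohomological finite generation and in the associated bookkeeping: in positive characteristic $H^{>0}(H,-)$ does not vanish, the totalization is not simply the levelwise invariants, and $\pi_0(\on{Tot}(A^\bullet))$ need not equal the classical invariant ring $\pi_0(A)^H$ --- it can acquire extra nilpotents coming from the higher cohomology of $H$ paired against $\pi_i(A)$ for $i\ge 1$ --- so one must be attentive to which ring the various cohomology groups are finitely generated over, and it is exactly here that the Touz\'e--van der Kallen theorem is needed. In characteristic zero the argument collapses: $H^{>0}(H,-)=0$, $\on{Tot}(A^\bullet)$ is the levelwise $H$-invariants $A^H$ with $\pi_n(A^H)=\pi_n(A)^H$, and one needs only the classical finite generation of invariants of finitely generated algebras and modules.
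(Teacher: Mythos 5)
Your argument is essentially the paper's proof, made explicit: the paper also runs the spectral sequence with $E_2^{s,t}=H^s(H,\pi_t(A))$, cites Franjou--van der Kallen for finite generation of $\pi_0(A)^H$ over $k$, and reduces the module finiteness to the cohomological finite generation theorem (van der Kallen, \cite[10.5]{TVDK}). Your fuller account of the bookkeeping (truncation, the filtration on $\pi_0$ of the totalization, and the caveat that it need not coincide with $\pi_0(A)^H$ in positive characteristic) is a correct elaboration of the same route.
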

\begin{proof}
Write $A=\Lambda[\mR_{\Ga,M}]$ for simplicity. It is known that $\pi_0(A)^H$ is finitely generated over $\Lambda$. (For this generality, see \cite{FVDK}.)
By a spectral sequence argument, it is enough to show that $H^i(H, \pi_j(A))$ is a finitely generated $\pi_0(A)^H$-module. But this follows from \cite[10.5]{TVDK}.
\end{proof}
 
Now, let $\Lambda[M^\bullet/\!\!/H]$ be the $\FFM$-algebra sending $\FM(I)$ to $\Lambda[\mC_{\FM(I), M/H}]\cong \Lambda[M^I]^H$ (Remark \ref{R: FFM scheme}). 
\begin{definition}\label{D: psrep and ex}
The moduli of pseudorepresentations of $\mR_{\Ga,M/H}$ is the derived affine scheme over $\Lambda$ defined by 
\[
\mR_{\Ga,M^\bullet/\!\!/H}:=\varprojlim_{(\FFM/\Ga)^{\on{op}}} (M^I/\!\!/H). 
\]
We call $\Lambda[\mR_{\Ga,M^\bullet/\!\!/H}]=\varinjlim_{\FFM/\Ga} \Lambda[M^I]^H$ 
the excursion algebra associated to $\mR_{\Ga,M/H}$.
\end{definition}

\begin{remark}
If $M=H$ with the adjoint action, by \eqref{E: hom FFMalg} giving a homomorphism $\Lambda[\mR_{\Ga,M^\bullet/\!\!/H}]\to A$ (say $A$ classical) is the same as giving an $H(A)$-valued pseudo representation of $\Ga$, in the sense of Lafforgue \cite[11.3, 11.7]{L}. This justifies the choice of our terminology. The underlying classical scheme ${}^{cl}\mR_{\Ga,M^\bullet/\!\!/H}$ plays an auxiliary but important role in the following discussions. On the other hand, we will avoid to use $\mR_{\Ga,M^\bullet/\!\!/H}$ as we understand very little about it as a derived scheme. 
\end{remark}

Tautologically, there are natural morphisms 
\begin{equation}\label{E: Tr map}
    \Tr: \mR_{\Ga,M/H}\to \mC_{\Ga,M/H}\to \mR_{\Ga,M^\bullet/\!\!/H}.
\end{equation}
If $M=H$ with the adjoint action, this is just the map sending a representation to its associated pseudorepresentation. The induced map of ring of regular functions is explicitly given by
\begin{equation}\label{E: coarse to pseudo}
    \Lambda[\mR_{\Ga,M^\bullet/\!\!/H}]=\varinjlim_{\FFM/\Ga} \Lambda[M^I]^H\to (\varinjlim_{\FFM/\Ga} \Lambda[M^I])^H=\Lambda[\mC_{\Ga,M/H}].   
\end{equation}

\begin{remark}\label{R: kpt of pseudo}
If $\Lambda$ is a field of characteristic zero, \eqref{E: coarse to pseudo} is an isomorphism since taking $H$-invariants commutes with arbitrary colimits. 
If $\Ga=\FM(I)$, this is also an isomorphism as $\FFM/\Ga$ admits a final object.
We have no reason to believe this is the case if $\on{char} \La=p>0$ and $\Ga$ is general. However, if $\Lambda$ is a perfect field and $\mR_{\Ga,M}$ is truncated, then the induced map $\mC_{\Ga,M/H}(k)\to \mR_{\Ga,M^\bullet/\!\!/H}(k)$ is still a bijection. 
\end{remark}

\subsection{Some examples}
For later applications, we specialize the above general discussions to some concrete situations. 
Let $\Lambda$ be a Dedekind domain (including the case of a field), and $M$ an affine smooth group scheme over $\Lambda$ with the neutral connected component $M^\circ$ reductive over $\Lambda$. 

The following two statements easily follow from Proposition \ref{P: Classical}. 
\begin{prop}\label{E: H fin etale}
If $\Ga$ is a finitely generated group and $M$ is (finite) \'etale over $\Lambda$, then $\mR_{\Ga,M}={}^{cl}\mR_{\Ga,M}$ is (finite) \'etale over $\Lambda$. 
\end{prop}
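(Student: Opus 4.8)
The plan is to read the statement off the cotangent complex computation of Proposition \ref{P: cotan frame}; the essential observation is that an étale group scheme has vanishing Lie algebra, so that the relevant cotangent complex vanishes, and the rest is checking that $\mR_{\Ga,M}$ is of finite presentation so that this forces étaleness.

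First I would observe that, since $M$ is étale over $k$, it has relative dimension $d = 0$ and its Lie algebra $\frakm$ is the zero module; hence the coadjoint representation $\Ad^*$ is zero and the vector bundle ${}_\Ga\Ad^*$ on $\mR_{\Ga,M}$ vanishes. Proposition \ref{P: cotan frame} then gives $\bL_{\mR_{\Ga,M}/k} \cong \overline{C}_*(\Ga, {}_\Ga\Ad^*)[-1] = 0$. Next, since $\Ga$ is finitely generated and $k$ is noetherian, choosing a surjection $\FM(I) \twoheadrightarrow \Ga$ with $I$ finite exhibits ${}^{cl}\mR_{\Ga,M}$ as a closed subscheme of ${}^{cl}\mR_{\FM(I),M} \cong M^I$ (Example \ref{E: free}), so $\pi_0(k[\mR_{\Ga,M}])$ is a finitely generated $k$-algebra. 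As $\bL_{\mR_{\Ga,M}/k} = 0$ is in particular almost perfect, the argument in the proof of Proposition \ref{P: almost fin} (via \cite{Lu4}) shows that $\mR_{\Ga,M}$ is almost of finite presentation over $k$; since moreover its cotangent complex is perfect, $\mR_{\Ga,M}$ is of finite presentation over $k$. Finally, a morphism of animated rings that is of finite presentation with vanishing relative cotangent complex is étale (\cite{Lu2, Lu4}), so $k \to k[\mR_{\Ga,M}]$ is étale; in particular $k[\mR_{\Ga,M}]$ is discrete, whence $\mR_{\Ga,M} = {}^{cl}\mR_{\Ga,M}$, and it is étale over $k$. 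If $M$ is moreover finite étale, then $M^I$ is finite over $k$, hence so is its closed subscheme ${}^{cl}\mR_{\Ga,M}$, which is therefore finite étale over $k$.

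The computation itself is immediate once one notes $\frakm = 0$; the only step requiring care is making sure the conclusion is about the derived scheme $\mR_{\Ga,M}$ and not merely its classical truncation, i.e. upgrading ``vanishing cotangent complex'' to ``étale'', which is where finite generation of $\Ga$ (to control $\pi_0$) and the finite-presentation criterion enter. One could alternatively invoke Proposition \ref{P: Classical}, since with $d = 0$ and $\Ad^* = 0$ all of its numerical hypotheses are trivially met (the dimension bound holding because ${}^{cl}\mR_{\Ga,M}$ is closed in the zero-dimensional-fibered $M^I$), provided one first notes, as in that proof, that the $FP_\infty$-type hypothesis there is used only to control $\overline{C}_*(\Ga, {}_\Ga\Ad^*)$, which is trivial in the present situation.
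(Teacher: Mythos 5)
Your argument is correct. The paper gives no proof, remarking only that the statement ``easily follows from Proposition~\ref{P: Classical}'' --- which is exactly your alternative route, and you rightly observe that the $FP_\infty(k)$ hypothesis in Proposition~\ref{P: Classical} is not literally assumed here but is vacuous once $\Ad^*=0$, since it enters only via Proposition~\ref{P: almost fin} to make $\overline{C}_*(\Ga,{}_{\Ga}\Ad^*)$ almost perfect. Your primary argument is slightly more direct: going through Proposition~\ref{P: Classical} reduces smoothness to flatness (since $H_2=0$ is automatic), and flatness then still needs a short further step (a local complete intersection over $k$ with fibers of constant dimension zero is flat, as in the proof of Proposition~\ref{P: tame stack}), whereas $\bL_{\mR_{\Ga,M}}=0$ together with finite presentation gives \'etale, hence flat, in one stroke. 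Both routes are valid; yours simply shortens the chain of implications.
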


\begin{prop}\label{P: finite group good case}
Assume that $\Ga$ is finite whose order is invertible in $\Lambda$. Then $\mR_{\Ga,M}={}^{cl}\mR_{\Ga,M}$ is smooth of finite type over $\Lambda$. Let $\rho: \Ga\to M(\mO)$ be a homomorphism where $\mO$ is an \'etale $\Lambda$-algebra, and let $Z_M(\rho)$ be its centralizer in $M_\mO$.
Then the morphism $M_\mO/Z_M(\rho)\to \mR_{\Ga,M}\otimes_\Lambda\mO$ induced by the conjugation of $\rho$ by $M$ is an open and closed embedding.
\end{prop}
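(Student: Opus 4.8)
The plan is to deduce the first assertion directly from the cotangent complex computation of \S\ref{SS: dermon}, and the second by combining an étale–monomorphism argument for the orbit map with a valuative-criterion argument for closedness.

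\emph{Smoothness and classicality of $\mR_{\Ga,M}$.} Since $\Ga$ is finite it is finitely generated and of type $FP_\infty(k)$, so Proposition \ref{P: almost fin} applies and $\mR_{\Ga,M}$ is almost of finite presentation over the noetherian ring $k$. I would then compute the cotangent complex fiberwise. By Proposition \ref{P: cotan frame}, $\bL_{\mR_{\Ga,M}/k}\cong\overline C_*(\Ga,{}_\Ga\Ad^*)[-1]$, and for a field-valued point $\rho:\Spec\kappa\to\mR_{\Ga,M}$ its derived pullback is $\overline C_*(\Ga,\Ad^*_\rho)[-1]$. The crucial point is that $\kappa$, being a $k$-algebra, has $|\Ga|$ invertible in it; hence $\kappa\Ga$ is semisimple and $H_i(\Ga,V)=0$ for every $\kappa\Ga$-module $V$ and every $i\geq 1$. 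Consequently $C_*(\Ga,\Ad^*_\rho)$ reduces to the coinvariants $(\Ad^*_\rho)_\Ga$ in degree $0$, the canonical map $\Ad^*_\rho\to(\Ad^*_\rho)_\Ga$ is a split surjection of $\kappa$-vector spaces, and so $\bL_{\mR_{\Ga,M}/k}|_\rho$ is its kernel, a finite-dimensional $\kappa$-vector space placed in degree $0$. Thus $\bL_{\mR_{\Ga,M}/k}$ is a connective almost perfect complex with all fibers at field-valued points concentrated in degree $0$; by the fiberwise criterion for Tor-amplitude it is therefore finite locally free in degree $0$, i.e.\ a vector bundle. A derived affine scheme almost of finite presentation over $k$ with vector-bundle cotangent complex is of finite presentation and smooth over $k$, hence flat over $k$, hence — as $k$ is classical — classical; so $\mR_{\Ga,M}={}^{cl}\mR_{\Ga,M}$ is smooth of finite type. (Alternatively one can feed $\dim_\kappa{}^{cl}\mR_{\Ga,M}\leq\dim T_\rho{}^{cl}\mR_{\Ga,M}=\dim Z^1(\Ga,\Ad_\rho)=d-\dim(\Ad_\rho)^\Ga$, together with flatness, into Proposition \ref{P: Classical}.)

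\emph{The orbit map is an open immersion.} Write $Y:=\mR_{\Ga,M}\otimes_k\mO$ and let $G:=M_\mO$ act on $Y$ by conjugation; the map in question is the orbit map $j:G/Z_M(\rho)\to Y$ of the point $\rho\in Y(\mO)$. Since $|\Ga|$ is invertible in $\mO$, the subgroup scheme $Z_M(\rho)=(M_\mO)^\Ga$ — the fixed points of a linearly reductive group acting on a smooth scheme — is smooth over $\mO$, and $G/Z_M(\rho)$ is a smooth $\mO$-scheme. By the definition of $Z_M(\rho)$ as the stabilizer, $j$ is a monomorphism (in particular quasi-finite and unramified). To see $j$ is étale it suffices, by $G$-equivariance and smoothness of $G\to G/Z_M(\rho)$, to check that the relative differential of $j$ over $\mO$ is an isomorphism at the base point; and there it is the map $\frakm/\frakz\to T_\rho Y=Z^1(\Ga,\Ad_\rho)$ sending $v$ to the coboundary $\ga\mapsto\Ad_{\rho(\ga)^{-1}}v-v$ (cf.\ Remark \ref{R: tan compl}), which is the isomorphism $\frakm/\frakz\xrightarrow{\cong}B^1(\Ga,\Ad_\rho)=Z^1(\Ga,\Ad_\rho)$, the last equality because $H^1(\Ga,\Ad_\rho)=0$. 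An étale monomorphism is an open immersion, so $U_\rho:=j(G/Z_M(\rho))$ is open in $Y$ and $j$ is an isomorphism onto it.

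\emph{Closedness of $U_\rho$.} It remains to show $U_\rho$ is closed in $Y$. As $Y$ is noetherian and $U_\rho$ is constructible, it is enough to check stability under specialization, i.e.\ (valuative criterion, and since replacing a discrete valuation ring by one dominating it in a finite field extension does not affect the conclusion) that for a DVR $V$ with fraction field $K$, residue field $\kappa$, and $|\Ga|$ invertible in $V$, given $\rho_V:\Ga\to M(V)$ with $(\rho_V)_K$ conjugate to $\rho_K$ under $M(K)$, the reduction $(\rho_V)_\kappa$ is $M(\bar\kappa)$-conjugate to $\rho_\kappa$; this places the closed point of $\Spec V$ into $U_\rho$, since $(U_\rho)_{\bar\kappa}$ is the $M_{\bar\kappa}$-orbit of $\rho_{\bar\kappa}$ and $U_\rho$ is open. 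For this I would pass to the associated pseudorepresentations via the map $\Tr$ of \eqref{E: Tr map} for the conjugation action of $M$ on itself, landing in the affine scheme $\mR_{\Ga,M^\bullet/\!\!/M}$, whose formation commutes with base change and which is separated over $k$. Since $(\rho_V)_K$ and $\rho_K$ are $M(K)$-conjugate, the two associated $V$-points of $\mR_{\Ga,M^\bullet/\!\!/M}$ agree over $K$, hence over $V$ (separatedness and schematic density of $\Spec K$ in $\Spec V$), hence over $\kappa$. Finally, because $|\Ga|$ is invertible, every homomorphism $\Ga\to M(\bar\kappa)$ has linearly reductive — hence $M$-completely reducible — image, so its conjugation orbit in $\mR_{\Ga,M}\otimes\bar\kappa$ is closed (Richardson; Bate--Martin--Röhrle for non-connected $M$), whence $\mR_{\Ga,M^\bullet/\!\!/M}(\bar\kappa)$ classifies $M(\bar\kappa)$-conjugacy classes (by Lafforgue's theory of pseudorepresentations together with Remark \ref{R: kpt of pseudo}); so $(\rho_V)_\kappa$ and $\rho_\kappa$, having the same pseudorepresentation, are $M(\bar\kappa)$-conjugate, as needed. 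The first assertion is thus essentially formal given the machinery of \S\ref{SS: dermon}; the substantive point, and the one requiring genuine care, is this closedness — equivalently the rigidity ``generically conjugate $\Rightarrow$ specially conjugate'' over a DVR with $|\Ga|$ invertible — where one must be careful about the interaction of (pseudo)characters with non-flat base change and, when $M$ is disconnected, about the passage from Richardson's closed-orbit theorem to its non-connected form.
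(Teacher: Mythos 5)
The paper gives no explicit proof of this proposition, stating only that it ``easily follows from Proposition \ref{P: Classical}''; so there is no official argument to match word-for-word. Your reconstruction is correct and does essentially what is expected.

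A few notes on the differences. For the first assertion, you deduce smoothness and classicality directly from the cotangent-complex computation (vector-bundle cotangent complex, plus almost finite presentation, implies smooth hence classical), whereas the route the paper signals is to feed the vanishing of $H_i(\Ga,\Ad_\rho^*)$ for $i\geq 1$ and the resulting Euler-characteristic bound $\dim_\kappa{}^{cl}\mR_{\Ga,M}\leq d-\dim(\Ad_\rho^*)_\Ga$ into Proposition \ref{P: Classical} and then use $H_2=0$ for smoothness; you note this alternative parenthetically, and the two are equivalent. Your direct argument has the small advantage of giving flatness and classicality simultaneously without a separate fiber-dimension comparison. For the second assertion, the paper gives nothing, and your argument (étale monomorphism via $H^1=0$ and smoothness of $Z_M(\rho)$, then closedness via pseudorepresentations and the valuative criterion, using that completely reducible representations with equal pseudorepresentations are conjugate, per Lafforgue/BHKT as cited in Remark \ref{R: decom finite group R}) is a sound reconstruction; the closed-orbit input for possibly disconnected $M$ is correctly flagged as Bate--Martin--R\"ohrle rather than Richardson. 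Two small places where you rightly take care: the need to extend the DVR since ``$y'\in U_\rho$'' a priori only gives conjugacy after a field extension, and the fact that pseudorepresentations commute with arbitrary base change so the non-flatness of $V\to\kappa$ is harmless. The proof is correct as written.
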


\begin{remark}\label{R: comp red}
We keep the assumption of the proposition. In addition, assume that $M/M^\circ$ is finite \'etale over $\Lambda$. Let $E$ be the fractional field of $\Lambda$. We expect that every conjugacy class of homomorphisms from $\Ga\to M(\overline E)$ admits a representative defined over a finite \'etale extension of $\Lambda$. If so, there will exist a finite \'etale extension $\mO$ of $\Lambda$, such that 
$$\mR_{\Ga,M}\otimes\mO\simeq \sqcup_{\rho} M_\mO/Z_M(\rho),$$ 
where $\rho$ range over a set of representatives of homomorphisms from $\Ga$ to $M(\overline E)$ up to conjugacy.

We are not able to prove such statement in general, except when $M=\GL_m$ or when $\Ga$ is solvable. The first situation follows from the fact that $k\Ga$ is a finite free semisimple algebra over $\Lambda$.
Next we assume that $\Ga$ is solvable but $M$ general. Let $T$ be a maximal torus of $M$ over $\Lambda$. Then up to conjugation we may assume that $\rho:\Ga\to M(\overline E)$ factors as $\rho: \Ga\to N_M(T)(\overline E)$, where $N_M(T)$ is the normalizer of $T$ in $M$. This follows from \cite[thm. 2]{BS} if $\on{char} E=0$ and a lifting argument if $\on{char} E>0$. 
Now, let $m$ be the order of $\Ga$, and let $N_M(T)[m]$ denote the closed subscheme of elements of $N_M(T)$ of order dividing $m$. As this is a finite \'etale scheme over $\Lambda$, our claim follows. 
\end{remark}

If the order of $\Ga$ is not invertible in $\Lambda$, then the situation is much more complicated. 

\begin{example}\label{Ex: Zp over Fp}
Even in the simplest case $\La=\bF_p$, $\Ga=\bZ/p$ and $M=\bG_m$, we have 
\[
\mR_{\bZ/p,\bG_m}\neq {}^{cl}\mR_{\bZ/p,\bG_m}\cong \bG_m[p]
\] 
(which is not smooth). The fact that $\mR_{\bZ/p,\bG_m}\neq {}^{cl}\mR_{\bZ/p,\bG_m}$ also reflects the point that, although $\bZ/p$ is the pushout of the diagram 
\begin{equation}
\xymatrix{
\bZ\ar^{p}[r]\ar[d] & \bZ\\
\{*\}&  
}
\end{equation}
in $\Mon=\Mon(\Sets)$, this is not the case in $\Mon(\Spc)$. Indeed, for $p=2$ let $\Ga'$ be the pushout of this diagram in $\Mon(\Spc)$. Then the geometric realization $|\Ga'|$ (as in \cite[5.2.6.10, 5.2.6.13]{Lu2}) is homotopy equivalent to the real projective plane.
\end{example}

For discussions in the sequel, we record the following result about the moduli of pseudorepresentations of finite groups. 

\begin{prop}\label{P: fin pseudo rep}
Assume that $\Ga$ is finite, and that $M/M^\circ$ is finite \'etale over $\Lambda$. Assume that $H$ acts on $M$ by conjugation through a homomorphism $H \to M^\circ$ such that the composed map $H \to M^\circ\to M^\circ_\ad$ is surjective, where $M^\circ_\ad$ is the adjoint quotient of $M^\circ$.
Then ${}^{cl}\mR_{\Ga, M^\bullet/\!\!/H}$ is finite over $\Lambda$. If the order of $\Ga$ is invertible in $\Lambda$, then $\mC_{\Ga,M/H}={}^{cl}\mC_{\Ga,M/H}$ is finite \'etale over $\Lambda$. 
\end{prop}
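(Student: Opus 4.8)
\emph{Overview.} The plan is to reduce both statements to finiteness of rings of invariants; the first needs in addition an ``integrality'' input forced by $|\Ga|<\infty$, and the second then follows from Proposition \ref{P: finite group good case} and a short étaleness check.

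\emph{First statement.} I would prove it by showing that $R:=\pi_0 k[\mR_{\Ga,M^\bullet/\!\!/H}]=\varinjlim_{\FFM/\Ga}k[M^I]^{M^\circ_\ad,\mathrm{cl}}$ is a finite $k$-module. First pick a finite generating set $I_0$ of $\Ga$, giving a surjection $\FM(I_0)\twoheadrightarrow\Ga$; since $\FM(I_0)$ is free, every object of $\FFM/\Ga$ admits a morphism to $(\FM(I_0),u)$, so this object is weakly terminal in $\FFM/\Ga$ and the canonical map $k[M^{I_0}]^{M^\circ_\ad,\mathrm{cl}}\to R$ is surjective. As $M^\circ_\ad$ is reductive and $k$ is Noetherian, $k[M^{I_0}]^{M^\circ_\ad}$ is a finitely generated $k$-algebra, hence so is $R$, and it remains to see that $R$ is integral over $k$. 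For this I would use a first-fundamental-theorem description of the invariants — cleanest after fixing a faithful representation $M\hookrightarrow\GL_N$ — to generate $k[M^{I_0}]^{M^\circ_\ad}$ by ``word-trace'' functions $(g_i)\mapsto\mathrm{tr}_W(g_{i_1}\cdots g_{i_r})$ with $W\in\Rep M$. Because a representation is a homomorphism, the multiplication morphisms of $\FFM/\Ga$ (Corollary \ref{L: colim by FFM}) identify such a word with the single element $\sigma=u(\ga_{i_1})\cdots u(\ga_{i_r})\in\Ga$ it maps to, so the image of the word-trace in $R$ is the image of $\mathrm{tr}_W$ from the object $(\FM\{1\},\ga_1\mapsto\sigma)$; and since $\sigma^{|\Ga|}=e$, the evaluation map ${}^{cl}\mR_{\Ga,M}\to M$, $\rho\mapsto\rho(\sigma)$, factors through the torsion subscheme $M[|\Ga|]\subset M$, on which $\mathrm{tr}_W$ is integral over $k$: a $|\Ga|$-torsion element acts on $W$ with eigenvalues that are $|\Ga|$-th roots of unity, so $\mathrm{tr}_W$ is a root of an explicit monic polynomial with coefficients in $\bZ\subseteq k$. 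Pushing this monic relation through the colimit shows each generator of $R$ is integral over $k$, and a finitely generated $k$-algebra integral over $k$ is a finite $k$-module. (Equivalently, via $M\hookrightarrow\GL_N$ one realizes $R$ as a finite module over the ring of $N$-dimensional determinant laws of the finite $k$-algebra $k[\Ga]$, which is classically known to be finite over $k$.)

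\emph{Second statement.} Assume $|\Ga|$ is invertible in $k$. By Proposition \ref{P: finite group good case}, $\mR:=\mR_{\Ga,M}={}^{cl}\mR_{\Ga,M}$ is smooth of finite type over $k$, so by the discussion preceding that proposition $\mC_{\Ga,M/H}={}^{cl}\mC_{\Ga,M/H}=\mR/\!\!/H=\Spec B$ with $B:=\mathcal O(\mR)^{M^\circ_\ad,\mathrm{cl}}$. Then: $B$ is finite over $k$ by the argument of the first statement (restricting the word-trace functions to $\mR$ again reduces them to traces of torsion elements of $M$); $B$ is $k$-flat, being a subring of the $k$-flat algebra $\mathcal O(\mR)$ (flatness being torsion-freeness over the Dedekind ring $k$); and $B$ is saturated in $\mathcal O(\mR)$ (if $\pi^n x$ is $M^\circ_\ad$-invariant for a uniformizer $\pi$, then so is $x$, by torsion-freeness of $\mathcal O(\mR)$), so for every geometric point $\bar\kappa$ of $\Spec k$ the map $B\otimes_k\bar\kappa\to\mathcal O(\mR)\otimes_k\bar\kappa=\mathcal O(\mR_{\bar\kappa})$ is injective and hence $B\otimes_k\bar\kappa$ is reduced, since $\mR_{\bar\kappa}$ is smooth. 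A finite flat $k$-algebra with geometrically reduced fibres is finite étale over $k$, which is the claim.

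\emph{Main obstacle.} The one genuinely non-formal point is the integrality step in the first statement: upgrading ``the trace of a $|\Ga|$-torsion element of $M$ is a sum of roots of unity'' to an honest monic equation over $k$ for the relevant generator of $R=\varinjlim_{\FFM/\Ga}k[M^I]^{M^\circ_\ad}$. This requires both the first-fundamental-theorem description of the invariants (cleanest via a faithful representation, where it is the classical finiteness of determinant laws of a finite algebra) and careful bookkeeping of the identifications imposed by the multiplication morphisms of $\FFM/\Ga$. Everything else — weak terminality in $\FFM/\Ga$, finite generation of invariants over a Noetherian ring, and the fibrewise criterion for finite étaleness — is routine.
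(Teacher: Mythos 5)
The weak-terminality observation and the torsion-eigenvalue integrality argument are both correct, and your étale argument via saturation and geometrically reduced fibres is a clean, somewhat more explicit route than the paper's (which just cites Propositions \ref{P: finite group good case} and \ref{P: fp of char var}). But the first statement has a genuine gap, and it is precisely at the place you flag as the ``main obstacle''.

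You reduce the integrality claim to the assertion that $k[M^{I_0}]^{M^\circ_{\ad}}$ is generated by word-trace functions $(g_i)\mapsto\tr_W(g_{i_1}\cdots g_{i_r})$ for $W\in\Rep M$, or equivalently that $R$ is a finite module over the ring of $N$-dimensional determinant laws of $k[\Ga]$ via a faithful $M\hookrightarrow\GL_N$. Neither version is free. The first-fundamental-theorem description of $k[M^n]^{M^\circ_{\ad}}$ by traces is a theorem over $\bC$ (Procesi-type), but over a general Dedekind base (or a field of positive characteristic) it is not available: for classical groups the FFT already requires contractions with the invariant bilinear form, and over $\bZ[1/p]$ one cannot always express these as traces of word polynomials, let alone control the cokernel of $k[\GL_N^n]^{\GL_N}\to k[M^n]^{M^\circ_{\ad}}$. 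The equivalent ``$R$ is finite over the $\GL_N$-pseudocharacter ring'' reformulation is exactly the statement that the morphism $\phi_n\colon M^n/\!\!/H\to \GL_N^n/\!\!/\GL_N$ is finite, and this is the hard technical core of the paper's proof. The paper establishes it by passing to a finite étale extension so that $M/M^\circ$ is constant, then showing on each connected component $M^n_{\underline a}$ that $\phi_{n,\underline a}$ is quasi-finite between normal finite-type schemes, applying Zariski's main theorem to factor it as (open immersion) $\circ$ (finite) $\circ$ (closed immersion), and then proving the open immersion is an isomorphism on every geometric fibre of $\Spec k$ using power surjectivity (in the sense of van der Kallen) and the finiteness over fields due to Vinberg and Martin. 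That argument is what turns ``traces of torsion elements are integral over $\bZ$'' into a proof of integrality for all of $R$; your proposal asserts the conclusion without it.

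Once that finiteness of $\phi_n$ is supplied, the rest of your argument for the first statement — weak terminality of $(\FM(I_0),u)$ giving a surjection $k[M^{I_0}]^{M^\circ_{\ad}}\to R$, finite generation of invariants via Franjou–van der Kallen, reduction of word-traces to single elements $\sigma\in\Ga$ via multiplication morphisms in $\FFM/\Ga$, and the monic relation on $\tr_W|_{M[|\Ga|]}$ — is sound and is in the same spirit as the paper's reduction to the cyclic case and to the equalizer of $[n]$ and the constant map $I$ on $\GL_m/\!\!/\GL_m$. Your second statement is correct; note only that the paper deduces it more cheaply from Propositions \ref{P: finite group good case} and \ref{P: fp of char var}, whereas you give a self-contained argument (saturation of $B\subset\mathcal{O}(\mR)$ and geometric reducedness of fibres), which is perfectly valid and arguably more transparent.
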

\begin{proof}
If the order of $\Ga$ is invertible in $\Lambda$, then $\mC_{\Ga,M/H}={}^{cl}\mC_{\Ga,M/H}$ is \'etale over $\Lambda$ by
Proposition \ref{P: finite group good case} and \ref{P: fp of char var}. In this case $\Lambda[\mC_{\Ga,M/H}]$ is finitely generated over $\Lambda$ and is integral over $\pi_0\Lambda[\mR_{\Ga,M^\bullet/\!\!/H}]$. Therefore,
it is enough to prove the first statement.

We first consider the case $M=H=\GL_m$. Let 
$\chi_i\in \Lambda[\GL_m]^{\GL_m}$
be the character of the $i$th wedge representation of $\GL_m$. For each $\ga\in \Ga$, let $\chi_{i,\ga}\in \Lambda[{}^{cl}\mR_{\Ga, M^\bullet/\!\!/H}]$ be the image of $\chi_i$ under the map $\Lambda[\GL_n]^{\GL_n}\to \Lambda[{}^{cl}\mR_{\Ga, M^\bullet/\!\!/H}]$ corresponding to the map $\bN=\FM(\{1\})\to\Ga$ induced by $\ga$. As the $\FFM$-algebra $\Lambda[\GL_m^\bullet]^{\GL_m}$ is generated by $\{\chi_i\}_i$ by \cite{Do}, $\Lambda[{}^{cl}\mR_{\Ga, M^\bullet/\!\!/H}]$ is generated by these $\{\chi_{i,\ga}\}_{i,\ga}$ as $\Lambda$-algebra. Therefore, to show that $\Lambda[{}^{cl}\mR_{\Ga, \GL_m^\bullet/\!\!/\GL_m}]$ is finite over $\Lambda$, it is enough to show that every $\chi_{i,\ga}$ is integral over $\Lambda$. Therefore, we may assume that $\Ga=\langle\ga\rangle$ with $\ga$ being of order $n$, which can be realized as the coequalizer $\bN\substack{n\\ \rightrightarrows \\ 0}\bN$ in $\Mon$ (but not in $\Mon(\Spc)$ see Remark \ref{Ex: Zp over Fp}). Therefore,
${}^{cl}\mR_{\langle\ga\rangle, \GL_m^\bullet/\!\!/\GL_m}$ is isomorphic to the equalizer of 
\[
\GL_m/\!\!/\GL_m\substack{X\mapsto X^n\\[.5em] \longrightarrow\\[-1em]  \longrightarrow\\[.5em]  X\mapsto I}\GL_m/\!\!/\GL_m,
\]
which is easily seen to be finite.

Now assume that $M$ is general. To prove the result, we are free to pass to a (finite type) flat extension of $\Lambda$. So we may assume that $M/M^\circ$ is finite constant and $M^\circ$ is split. Then we may choose a faithful representation $\phi:M\to \GL(V)$, with $V$ finite projective. By \cite{Cotner} (see also \cite{Vin, Ma} when $\Lambda$ is a field), the induced map $\phi_n: M^n/\!\!/H\to \GL_m^n/\!\!/\GL_m$ is finite for any $n$. This implies that $\Lambda[{}^{cl}\mR_{\Ga,M^\bullet/\!\!/H}]$ is finite over $\Lambda[{}^{cl}\mR_{\Ga,\GL_m^\bullet/\!\!/\GL_m}]$, and therefore is finite over $\Lambda$, as desired.
\end{proof}

\begin{remark}\label{R: decom finite group R}
Let us assume that $\Lambda$ is an algebraically closed field. 
Then the above proposition implies that $\mR_{\Ga,M}$ decomposes into open and closed subschemes
\[
\mR_{\Ga,M}=\sqcup_{\Theta} \mR_{\Ga,M}^{\Theta},
\]
indexed by $\Lambda$-points $\Theta$ of $ \mR_{\Ga,M^\bullet/\!\!/H}$, such that $\Tr(\rho_x)=\Theta$ for every $\rho_x: \Ga\to M$ corresponding to a geometric point $x\in \mR_{\Ga,M}^{\Theta}$. 
By  \cite[11.7]{L} and \cite[4.5]{BHKT},
$\Lambda$-points of $\mR_{\Ga,M^\bullet/\!\!/H}$ classify $M$-completely reducible representation of $\Ga$ (in the sense of \cite[3.5]{BHKT}) up to $H$-conjugacy. So the semisimplification of $\rho_x$ up to $H$-conjugacy is constant along $\mR_{\Ga,M}^{\Theta}$.
For example, if $M=M^\circ$ and $\Theta$ is the pseudorepresentation corresponding to the trivial representation, then ${}^{cl}\mR_{\Ga,M}^{\Theta}$ classifies those $\rho_x$ such that the image $\rho_x(\Ga)$ is contained in a unipotent subgroup of $M$.
\end{remark}

Let $q=p^r$ for some $r\in\bZ_{> 0}$.
We consider the following group (sometimes called the $q$-tame group)
\begin{equation}\label{E: tame Galois}
   \Ga_q:=\langle \sigma, \tau\mid \sigma\tau \sigma^{-1}=\tau^q\rangle. 
\end{equation}
It contains a normal subgroup $\tau^{\bZ[1/p]}$ and the quotient of $\Ga_q$ by this subgroup is $\langle\sigma\rangle\cong\bZ$.

\begin{prop}\label{P: tame stack} 
Let $\Lambda$ be a Dedekind domain over $\bZ[1/p]$. Then $\mR_{\Ga_q,M}={}^{cl}\mR_{\Ga_q,M}$. It is equidimensional of dimension $\dim M^\circ$, flat over $\Lambda$, and is a local complete intersection. It is dualizing complex (relative to $\Lambda$) is trivial (i.e. isomorphic to the structural sheaf).
\end{prop}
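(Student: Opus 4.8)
The plan is to deduce almost everything from Proposition~\ref{P: Classical} applied to $\Ga=\Ga_q$, and then add the flatness over $k$ and the identification of the dualizing complex by hand. First I must check that $\Ga_q$ is finitely generated of type $FP_\infty(k)$. It is generated by $\sigma,\tau$, and it is a one-relator group whose relator $\sigma\tau\sigma^{-1}\tau^{-q}$ is not a proper power in the free group on $\sigma,\tau$: in its cyclic word the letter $\sigma$ occurs exactly once, so that word has no period dividing its length. By the Lyndon identity theorem the presentation $2$-complex $Y$ of $\Ga_q$ is therefore aspherical, and since $Y$ is a finite complex with one $0$-cell, two $1$-cells and one $2$-cell, $\Ga_q$ is of type $FP$ over every $k$; the cellular chain complex of the universal cover of $Y$ furnishes a resolution $0\to k\Ga_q\to k\Ga_q^{\oplus 2}\to k\Ga_q\to k\to 0$, whence $\on{cd}_k\Ga_q\le 2$ and, for every finite-dimensional representation $V$ of $\Ga_q$ over a field, $H_i(\Ga_q,V)=0$ for $i>2$ and $\sum_i(-1)^i\dim H_i(\Ga_q,V)=(1-2+1)\dim V=0$. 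Hence the hypotheses of Proposition~\ref{P: almost fin} hold, the cohomological hypotheses of Proposition~\ref{P: Classical} hold for every $\rho$, and writing $d:=\dim M=\dim M^\circ$, the only thing still needed to invoke Proposition~\ref{P: Classical} is the estimate $\dim_\kappa{}^{cl}\mR_{\Ga_q,M}\le d$ at every field-valued point.

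This dimension bound is the heart of the argument, and I would prove it by computing $\dim\bigl({}^{cl}\mR_{\Ga_q,M}\otimes\bar\kappa\bigr)$ directly. A geometric point is a pair $(S,T)$ with $STS^{-1}=T^q$; projecting to $T$, the image is $\{T: T \text{ is } M\text{-conjugate to } T^q\}$ and the fibre over such a $T$ is the $Z_M(T)$-torsor $\{S:STS^{-1}=T^q\}$. The key claim is that $\{T:T\sim T^q\}$ is a \emph{finite} union of conjugacy classes. Indeed, if $T\sim T^q$ then its semisimple part $T_{\mathrm{ss}}$ satisfies $T_{\mathrm{ss}}\sim T_{\mathrm{ss}}^q$, so in a faithful representation $M\hookrightarrow\GL_n$ the eigenvalue multiset of $T_{\mathrm{ss}}$ is permuted by $x\mapsto x^q$ and hence consists of roots of unity of order dividing $N:=\on{lcm}_{1\le m\le n}(q^m-1)$, which is prime to $p$; thus $T_{\mathrm{ss}}^N=e$, and semisimple elements of $M(\bar\kappa)$ of order dividing $N$ form finitely many conjugacy classes. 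For each such class $[t]$ the elements $T=tu$ with $u$ unipotent in the connected reductive group $Z_M(t)^\circ$ fall into finitely many classes, because a connected reductive group over $\bar\kappa$ has only finitely many unipotent orbits (and $Z_M(t)/Z_M(t)^\circ$ is finite). Over each such class $[T_0]$ the corresponding locally closed piece of ${}^{cl}\mR_{\Ga_q,M}$ fibres over the orbit of $T_0$ (of dimension $d-\dim Z_M(T_0)$) with fibres of dimension $\dim Z_M(T_0)$, so it is pure of dimension $d$; as there are finitely many such pieces, ${}^{cl}\mR_{\Ga_q,M}\otimes\bar\kappa$ is pure of dimension $d=\dim M^\circ$. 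Proposition~\ref{P: Classical} then gives $\mR_{\Ga_q,M}={}^{cl}\mR_{\Ga_q,M}$, a local complete intersection, equidimensional of relative dimension $\dim M^\circ$ over $k$ (the Euler characteristic correction vanishing).

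For the remaining assertions I would argue as follows. Since $M$ is smooth, ${}^{cl}\mR_{\Ga_q,M}$ is Zariski-locally on $M\times M$ cut out by a regular sequence $f_1,\dots,f_d$ — its codimension being the expected $d$ — and by the previous step every fibre ${}^{cl}\mR_{\Ga_q,M}\otimes\kappa(s)$ over $s\in\Spec k$ still has pure dimension $d$, so the $f_i$ restrict to a regular sequence on each (Cohen--Macaulay) fibre $(M\times M)_s$; the fibrewise criterion of flatness, applied over the flat (smooth) scheme $M\times M$, then yields flatness over $k$. Finally, ${}^{cl}\mR_{\Ga_q,M}\subset M\times M$ is exactly $F^{-1}(e)$ for the morphism $F(S,T)=STS^{-1}T^{-q}$; its codimension being the expected one, the natural surjection from the pullback along $F$ of the conormal bundle $\frakm^{*}$ of $e$ in $M$ onto the conormal bundle of ${}^{cl}\mR_{\Ga_q,M}$ in $M\times M$ is a surjection between locally free sheaves of the same rank $d$, hence an isomorphism, so the latter conormal bundle is $\mO\otimes_k\frakm^{*}$. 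Combining this with the left-invariant trivialization $\Omega_{M/k}\cong\mO_M\otimes_k\frakm^{*}$, the adjunction formula for the relative dualizing sheaf along ${}^{cl}\mR_{\Ga_q,M}\hookrightarrow M\times M\to\Spec k$ gives $\omega_{{}^{cl}\mR_{\Ga_q,M}/k}\cong\mO\otimes_k\wedge^{d}\frakm^{*}$, the pullback of a line bundle on $\Spec k$, which is trivial — for instance $\wedge^{d}\frakm^{*}$ is free whenever $M^\circ$ is split — so the dualizing complex is isomorphic to the structure sheaf.

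The main obstacle is the dimension bound of the second paragraph, and within it the finiteness of the set of conjugacy classes $\{T:T\sim T^q\}$: the finiteness of the semisimple classes with $t\sim t^q$, and the reduction of the unipotent part to the finiteness of unipotent orbits in a connected reductive group. Granting that, the local-complete-intersection, dimension, flatness and dualizing-complex statements all follow formally from Propositions~\ref{P: almost fin} and \ref{P: Classical} together with standard properties of regular immersions and of invariant differential forms.
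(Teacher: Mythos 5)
Your proof takes essentially the same route as the paper's: produce the length-$2$ free resolution of $k$ over $k\Ga_q$, bound the fiber dimension of ${}^{cl}\mR_{\Ga_q,M}$ by $\dim M$, invoke Proposition~\ref{P: Classical}, and deduce flatness and the dualizing complex from the local-complete-intersection structure. Where the paper simply writes down the explicit resolution \eqref{E: resol Gaq} and cites \cite[Prop.~E.4.2]{LTXZZ} for the finiteness of conjugacy classes in $M^{[q]}$ and the resulting dimension count, you supply self-contained justifications: the Lyndon identity theorem (from the observation that the relator $\sigma\tau\sigma^{-1}\tau^{-q}$ is not a proper power) to produce the resolution, and a direct argument — reduction modulo $\ell$cm of $q^m-1$ on the semisimple part, plus finiteness of unipotent orbits in the centralizer — for the finiteness of $\{T : T\sim T^q\}$. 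Both additions are correct and are in fact essentially the same ingredients that \cite{LTXZZ} uses. Your flatness argument (the fibrewise criterion applied to a regular sequence in $M\times M$) is a slight repackaging of the paper's ``equidimensional fibers over a regular base'' argument and is fine.

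One small caveat, which applies equally to the paper: both routes compute the relative dualizing sheaf to be $\mO\otimes_k\wedge^d\frakm^*$ — the paper via $\det\bL_{\mR}$ from the resolution \eqref{E: resol Gaq} and Proposition~\ref{P: cotan frame} (the complex has terms of ranks $d, 2d$, so its determinant is $\det(\Ad^*)$), you via the conormal bundle and adjunction along ${}^{cl}\mR_{\Ga_q,M}\hookrightarrow M\times M$. You note that $\wedge^d\frakm^*$ is free when $M^\circ$ is split, but over a Dedekind domain with nontrivial Picard group and $M^\circ$ a nonsplit form, $\wedge^d\Lie(M^\circ)$ need not be a trivial line bundle on $\Spec k$ (a priori one only gets $2$-torsion from an invariant form). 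In the applications in the paper $M={}^cG$ is the base change of a Chevalley-type group from $\bZ$, so $\frakm$ is free and the issue is moot, but strictly speaking the blanket assertion that the dualizing complex is isomorphic to the structure sheaf needs either this freeness or an additional argument. This gap is inherited from the paper's own formulation, not introduced by you.
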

\begin{proof}
Except $\mR_{\Ga,M}={}^{cl}\mR_{\Ga,M}$, this is proved in \cite[Prop. 3.3.2]{LTXZZ} in this generality\footnote{The prototype of the argument is probably due to D. Helm.}. We briefly review some ingredients needed later, and explain how to apply Proposition \ref{P: Classical} in this situation.

Let $\chi: M\to M/\!\!/M=\on{Spec} \Lambda[M]^M$ denote the adjoint quotient map. 
For every $m\in\bZ_{\geq 0}$,
the $m$-power morphism $M\to M, \ h\to h^m$ is equivariant with respect to conjugation action and therefore induces a morphism 
$$[m]:M/\!\!/M\to M/\!\!/M.$$ 
Let $(M/\!\!/M)^{[m]}$ 
denote the (classical) fixed point subscheme of $[m]$, and
let $M^{[m]}:=\chi^{-1}((M/\!\!/M)^{[m]})$, which is a closed subscheme of $M$ stable under conjugation. Note that the morphism $\mR_{\Ga_q,M}\to M$ induced by the inclusion $\langle\tau\rangle\subset\Ga_q$ factors through $\mR_{\Ga_q,M}\to M^{[q]}\subset M$.

As explained in \cite[Prop. 3.3.2]{LTXZZ}, over an algebraically closed field $K$ over $\Lambda$, there are only finitely many conjugacy classes in $M^{[q]}(K)$, and from this one deduces that over $K$, $\dim {}^{cl}\mR_{\Ga,M}\otimes K=\dim M_K$. It follows that $\dim {}^{cl}\mR_{\Ga,M}=\dim M$.

On the other hand, we have the following resolution of $\Lambda$ as right $\La\Ga_q$-modules
\begin{equation}\label{E: resol Gaq}
0\to \La\Ga_q\xrightarrow{(1-(\sum_{j<q} \tau^j)\sigma, \tau-1)} \La\Ga_q\oplus \La\Ga_q \xrightarrow{(1-\tau,1-\sigma)} \La\Ga_q\to \La\to 0.
\end{equation}
Therefore, $H_i(\Ga_q, \Ad^*_\rho)=0$ for every $i>2$ and $\dim (-1)^i H_i(\Ga_q,\Ad^*_\rho)=0$. We now apply Proposition \ref{P: Classical} to conclude that $\mR_{\Ga,M}={}^{cl}\mR_{\Ga,M}$ is a local complete intersection. As fibers of ${}^{cl}\mR_{\Ga,M}$ over $\Lambda$ are equidimensional of the same dimension, ${}^{cl}\mR_{\Ga,M}$ is flat over $\Lambda$. Finally, as the dualizing complex of a local complete intersection can be computed as the determinant of its cotangent complex, we see that the dualizing complex of $\mR_{\Ga,M}$ is trivial by \eqref{E: resol Gaq}.
\end{proof}

\begin{remark}\label{R: LocB}
For any smooth affine group scheme $M$ (not necessarily reductive) over $\Lambda$, $\mR_{\Ga_q,M}$ is always quasi-smooth with trivial dualizing complex, by Proposition \ref{P: Classical} and \eqref{E: resol Gaq}.
However if $\dim {}^{cl}\mR_{\Ga_q,M}>\dim M$, then $\mR_{\Ga_q,M}\neq {}^{cl}\mR_{\Ga_q,M}$. 
For example, let $M=B_n$ be the group of determinant one $n\times n$-upper triangular matrices. Then the derived structure on $ {}^{cl}\mR_{\Ga_q,B_n}$ is non-trivial when $n$ is large, even for $\La=\bC$. Indeed, the underlying classical scheme ${}^{cl}\mR_{\Ga_q,B_n}$ has dimension $>\dim B_n$. This is essentially due to the fact that the number of $B_n$-orbits in the set of strictly upper triangular matrices is not finite when $n\geq 6$ (\cite{K}). We note that the possible non-trivial derived structure of this scheme does play a role in our discussion in \S \ref{SS: CohSpr}. 

A similar argument also shows the following. Let $\Ga=\Ga_g$ be the fundamental group of a genus $g$ compact Riemann surface. Then $\mR_{\Ga_g,M}={}^{cl}\mR_{\Ga_g,M}$ if $g\geq 2$ and $M$ is semisimple. Otherwise, $\mR_{\Ga_g,M}$ has non-trivial derived structure. In particular, the scheme $\mR_{\Ga_1,M}$, usually called the commuting scheme of $M$, is always derived.
\end{remark}

Now we put Proposition \ref{P: finite group good case} and \ref{P: tame stack} together.
\begin{prop}\label{P: tame and wild stack}
Let $\Ga=Q\rtimes \Ga_q$ where $Q$ is a finite $p$-group. Let $\La=\bZ[1/p]$ and assume that $M/M^{\circ}$ is finite \'etale over $\Lambda$. Then $\mR_{\Ga,M}$ is classical, of finite type, and flat over $\Lambda$. In addition, it is equidimensional of dimension $\dim M$, and is a local complete intersection. Its dualizing complex (relative to $\Lambda$) is trivial.
\end{prop}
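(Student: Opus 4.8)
\emph{Approach.} The plan is to verify the hypotheses of Proposition \ref{P: Classical} for $\Ga=Q\rtimes\Ga_q$ and then to compute the dualizing complex exactly as in the last step of the proof of Proposition \ref{P: tame stack}. Two observations set things up. Since $Q$ is a finite $p$-group its order is invertible in $k=\bZ[1/p]$, so the trivial module splits off $kQ$ and $Q$ is of type $FP_\infty(k)$; combined with the fact that $\Ga_q$ is of type $FP_\infty(k)$ via the length-two free resolution \eqref{E: resol Gaq}, one gets (extension of $FP_\infty$ by $FP_\infty$) that $\Ga$ is finitely generated of type $FP_\infty(k)$, so Proposition \ref{P: almost fin} applies. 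Moreover, for the extension $1\to Q\to\Ga\to\Ga_q\to 1$ and any $\rho\colon\Ga\to M(\kappa)$ over a field $\kappa$ over $k$ (necessarily of residue characteristic $\neq p$), the Hochschild--Serre spectral sequence degenerates because $H_{>0}(Q,-)=0$, so $H_i(\Ga,\Ad^*_\rho)\cong H_i(\Ga_q,(\Ad^*_\rho)_Q)$; by \eqref{E: resol Gaq} this vanishes for $i>2$ and has vanishing Euler characteristic. Thus the cohomological hypotheses of Proposition \ref{P: Classical} hold and what remains is the a priori bound $\dim_\kappa{}^{cl}\mR_{\Ga,M}\leq\dim M$ at every geometric point.

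\emph{The dimension bound, the main obstacle.} I would argue via the restriction morphism $r\colon{}^{cl}\mR_{\Ga,M}\otimes\bar\kappa\to{}^{cl}\mR_{Q,M}\otimes\bar\kappa$. By Proposition \ref{P: finite group good case} the target is smooth and is a finite disjoint union of $M_{\bar\kappa}$-orbits $M_{\bar\kappa}/Z_M(\rho_0(Q))$ through representations $\rho_0\colon Q\to M(\bar\kappa)$, with $Z_M(\rho_0(Q))$ smooth since $\rho_0(Q)$ is linearly reductive. Unwinding the semidirect product, the fibre $r^{-1}(\rho_0)$ is the scheme of homomorphisms $\rho_1\colon\Ga_q\to M_{\bar\kappa}$ with $\rho_1(\gamma)\rho_0(q)\rho_1(\gamma)^{-1}=\rho_0(\gamma\cdot q)$ for all $\gamma,q$; in particular each $\rho_1(\gamma)$ lies in the closed subgroup $L\subseteq M_{\bar\kappa}$ of those $g$ with $c_g\circ\rho_0=\rho_0\circ\alpha$ for some $\alpha$ in the finite image of $\Ga_q\to\Aut(Q)$. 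This $L$ is a union of finitely many cosets of $Z_M(\rho_0(Q))$, so $L^\circ=Z_M(\rho_0(Q))^\circ$ is reductive and $L/L^\circ$ is finite \'etale; hence Proposition \ref{P: tame stack}, applied to $L$ over the field $\bar\kappa$, gives $\dim\mR_{\Ga_q,L}=\dim L^\circ=\dim Z_M(\rho_0(Q))$. Since $r^{-1}(\rho_0)$ is a closed subscheme of $\mR_{\Ga_q,L}$ and the orbit of $\rho_0$ has dimension $\dim M-\dim Z_M(\rho_0(Q))$, the preimage of each orbit has dimension $\leq\dim M$, and finiteness of the set of orbits gives the bound. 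The genuinely new work here is choosing $L$ correctly and controlling the interaction between the $\Ga_q$-action on $Q$ and the possibly disconnected centralizer $Z_M(\rho_0(Q))$; the rest follows patterns already established for $\Ga_q$ and for finite groups of order invertible in $k$.

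\emph{Conclusion and dualizing complex.} With the bound in hand Proposition \ref{P: Classical} yields that $\mR_{\Ga,M}={}^{cl}\mR_{\Ga,M}$ is a local complete intersection, and its proof moreover shows the relative dimension over $k$ is everywhere $\dim M-\sum_i(-1)^i\dim H_i(\Ga,\Ad^*_\rho)=\dim M$; so $\mR_{\Ga,M}$ is classical, of finite type (as $\Ga$ is finitely generated), equidimensional of dimension $\dim M$, and, having equidimensional fibres over the Dedekind domain $k$, flat over $k$. For the dualizing complex I would copy the mechanism of Proposition \ref{P: tame stack}: by Proposition \ref{P: cotan frame} one has $\bL_{\mR_{\Ga,M}/k}\simeq\overline C_*(\Ga,{}_\Ga\Ad^*)[-1]$, and averaging the universal $Q$-action by $|Q|^{-1}\sum_{q\in Q}\rho(q)$ produces a global idempotent on ${}_\Ga\Ad^*$ with image a direct summand $({}_\Ga\Ad^*)_Q$; the same degeneration as in the first paragraph identifies $C_*(\Ga,{}_\Ga\Ad^*)$ with $C_*(\Ga_q,({}_\Ga\Ad^*)_Q)$, which by \eqref{E: resol Gaq} is represented by the three-term complex $[\,({}_\Ga\Ad^*)_Q\to({}_\Ga\Ad^*)_Q^{\oplus2}\to({}_\Ga\Ad^*)_Q\,]$ of vector bundles, and hence has determinant $\mO_{\mR_{\Ga,M}}$. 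Since $k=\bZ[1/p]$ is a principal ideal domain the trivial bundle ${}_\Ga\Ad^*$ also has trivial determinant, so the triangle \eqref{E: cofib} forces $\det\bL_{\mR_{\Ga,M}/k}\cong\mO_{\mR_{\Ga,M}}$; as $\mR_{\Ga,M}$ is a local complete intersection over $k$, its relative dualizing complex is this determinant up to shift, hence trivial.
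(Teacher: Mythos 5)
Your proof is correct, but it takes a different route from the paper's. Both arguments start from the restriction morphism $r\colon{}^{cl}\mR_{\Ga,M}\to{}^{cl}\mR_{Q,M}$ and ultimately rest on Proposition \ref{P: tame stack}, but the way the fiber over a point $\rho_0\colon Q\to M$ is handled differs substantially. The paper identifies the fiber ${}^{cl}\mR^{\rho_0}_{\Ga,M}$ precisely as an open subscheme of ${}^{cl}\mR_{\Ga_q,N_M(\rho_0)}$ (via the auxiliary open locus $U\subset\mR_{\Ga_q,\pi_0(N_M(\rho_0))}$), and then all of the asserted properties --- finite type, flatness, equidimensionality, lci, trivial dualizing complex --- fall out at once from Proposition \ref{P: tame stack}, since openness preserves each of them. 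You instead only bound the fiber from above: you embed $r^{-1}(\rho_0)$ as a closed subscheme of $\mR_{\Ga_q,L}$ for a finite-index overgroup $L\supseteq Z_M(\rho_0(Q))$, which yields the dimension estimate $\dim_\kappa{}^{cl}\mR_{\Ga,M}\leq\dim M$ needed to invoke Proposition \ref{P: Classical}, and then you argue flatness by miracle flatness and the triviality of the dualizing complex by a direct determinant-of-cotangent-complex computation mirroring the last step of Proposition \ref{P: tame stack}. The paper's route is shorter because the identification (rather than bound) of the fiber does all the work simultaneously; your route is arguably more transparent about which hypotheses of Proposition \ref{P: Classical} are being verified and why, at the cost of repeating the dualizing-complex calculation. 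A minor remark: the paper works with $\rho_0$ defined over \'etale $\bZ[1/p]$-algebras $\mO$ (using Remark \ref{R: comp red}, valid here since $Q$ is solvable) rather than over $\bar\kappa$; you only use the geometric-fiber decomposition of $\mR_{Q,M}\otimes\bar\kappa$, which suffices for the dimension bound but would not suffice for the paper's stronger fiber-identification step.
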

\begin{proof}
The inclusion $Q\subset \Ga$ induces a morphism $\mR_{\Ga,M}\to \mR_{Q,M}$. Using Proposition \ref{P: finite group good case}, Proposition \ref{P: Classical} and the fact that $H_i(\Ga, \Ad_\rho^*)\cong H_i(\Ga_q, (\Ad_\rho^*)^{\rho(Q)})$, it is enough to show that for every $\rho_0: Q\to M(\mO)$ defined over some \'etale $\bZ[1/p]$-algebra $\mO$,
\[
{}^{cl}\mR^{\rho_0}_{\Ga,M}:= {}^{cl}\mR_{\Ga,M}\times_{{}^{cl}\mR_{Q,M}}\bigl\{\rho_0\bigr\}
\]
is of finite type and flat over $\mO$, is equidimensional of dimension $=\dim Z_{M}(\rho_0)$, and is a local complete intersection with trivial dualizing complex.

Let $N_{M}(\rho_0)$ be the normalizer of $\rho_0$ in $M_\mO$. It is a smooth affine group scheme over $\mO$ and $N_{M}(\rho_0)^\circ=Z_{M}(\rho_0)^\circ$ is connected reductive (\cite[thm. 2.1]{PY}). 
The quotient $\pi_0(N_{M}(\rho_0))=N_{M}(\rho_0)/N_{M}(\rho_0)^\circ$ is \'etale over $\mO$, which acts on the constant group $\rho_0(Q)$ over $\mO$. Consider the subfunctor $U\subset \mR_{\Ga_q,\pi_0(N_{M}(\rho_0))}$ consisting of those $\rho: \Ga_q\to \pi_0(N_M(\rho_0))$ such that the composition $\Ga_q\to \pi_0(N_M(\rho_0))\to \Aut(\rho_0(Q))$ is compatible with the action of $\Ga_q$ on $Q$. This is open in $\mR_{\Ga_q,\pi_0(N_{M}(\rho_0))}$.
Then ${}^{cl}\mR^{\rho_0}_{\Ga,M}\cong  {}^{cl}\mR_{\Ga_q, N_{M}(\rho_0)}\times_{\mR_{\Ga_q,\pi_0(N_{M}(\rho_0))}}U$ is open. Therefore, the desired statement follows from Proposition \ref{P: tame stack}.
\end{proof}

Of course, as in Remark \ref{R: LocB}, for $\Ga$ as in Proposition \ref{P: tame and wild stack} but $M$ not necessarily reductive, $\mR_{\Ga,M}$ is still quasi-smooth with trivial dualizing complex, although it may not be classical. 

\subsection{Continuous representations}
\label{SS: Continuous rep}
In the Langlands program, we need to study continuous representations of profinite groups, rather than arbitrary representations of abstract groups.
We address this issue in this subsection.

We fix the coefficient ring $\La=\mO_E$ to be finite integrally closed over $\bZ_\ell$. Let $\varpi$ be a uniformizer of $\mO_E$, and let $\kappa_E$ denote the residue field. We write $\mO_{E,r}$ for $\mO_{E}/\varpi^r$.
Let $M$ be a flat affine monoid scheme over $\mO_E$ and $H$ a smooth affine group scheme over $\mO_E$ that acts on $M$ by monoid automorphisms. 
Let $M_r=M\otimes\mO_{E,r}, \ H_r=H\otimes\mO_{E,r}$. 
Let $\Ga$ be a locally profinite group.
Examples include Galois groups, as well as Weil groups of non-archimedean local fields and global function fields. 
For such $\Ga$, we will give a definition of moduli $\mR^c_{\Ga,M_r}$ of (framed) continuous homomorphisms from $\Ga$ to $M_r$ over $\mO_{E,r}$, and then define $\mR^c_{\Ga,M}$ over $\on{\Spf}\mO_E$ as their inductive limit.
We shall remark that these spaces may not have good global geometry in general (see Example \ref{E: disc cont moduli}) and for certain $\Ga$, there might be ``more correct moduli spaces of representations of $\Ga$" (see Remark \ref{R: negative side}). But as we shall see in the next section,
if $\Ga$ is the Weil group of a non-archimedean local field of residue characteristic $\neq \ell$, or of a global function field of characteristic $\neq\ell$, these definitions should give the correct objects in the Langlands program.\footnote{The case of number fields will be studied in an ongoing project with M. Emerton \cite{EZ}.} At the end of this subsection, we also discuss a possible extension of $\mR^c_{\Ga,M}$ from $\on{\Spf}\mO_E$ to $\on{Spec}\mO_E$. We shall mention that such extension is tailored to the situations considered in the next section, and may not be sufficient for some other considerations.

Our definition of $\mR^c_{\Ga,M_r}$ is based on the expression \eqref{R: cohom pres}, with the space of maps $C(\Ga^\bullet, A)$ (see \eqref{E: exp space}) replaced by appropriately defined space of continuous maps $C_{cts}(\Ga^\bullet, A)$ in the derived setting, which we first explain.

Recall that by the Stone duality, there is a fully faithful embedding $\Pro(\Sets_f)\to \mathbf{Top}$ from the (ordinary) category of profinite sets to the (ordinary) category of topological spaces with essential image consisting of compact Hausdorff totally disconnected spaces. For a disjoint union of profinite sets $S$ regarded as topological space, and an $\mO_{E,r}$-module $V$ regarded as a discrete topological space, let $C_{cts}(S,V)$ be the $\mO_{E,r}$-module of all continuous maps from $S$ to $V$.

\begin{lemma}\label{L: cont S to dis}
Let $S$ be a disjoint union of profinite sets. Then the functor 
$\Mod_{\mO_{E,r}}^\heart\to \Mod_{\mO_{E,r}}^\heart$ sending  $V$ to $C_{cts}(S,V)$ 
is a lax symmetric monoidal exact additive functor. 
Therefore, it extends to a $t$-exact lax symmetric monoidal functor
\begin{equation}\label{E:CctsMr}
C_{cts}(S,-):\Mod_{\mO_{E,r}}\to\Mod_{\mO_{E,r}},
\end{equation}
which lifts to nilcomplete finite limit preserving functor
\begin{equation}\label{E:CctsAr}
   C_{cts}(S,-): \CA_{\mO_{E,r}}\to \CA_{\mO_{E,r}}.
\end{equation}
If $S$ is profinite, then \eqref{E:CctsMr} preserves all colimits and \eqref{E:CctsAr} preserves sifted colimits.
\end{lemma}
\begin{proof}
If we write $S=\sqcup_{j\in J} S_j$ with $S_j$ profinite and $S_j=\varprojlim_{i\in I_j} S_{ij}$ is a projective limit of finite sets over some cofiltered category $I_j$, then for $V\in \Mod_{\mO_{E,r}}^\heart$,
\begin{equation}\label{E: comp Ccts}
C_{cts}(S,V)=\prod_{j\in J} C_{cts}(S_j,V)=\prod_{j\in J}\varinjlim_{i\in I_j^{\on{op}}} V^{S_{ij}}. 
\end{equation}
As $\Mod_{\mO_{E,r}}^\heart$ satisfies Grothendieck axiom (AB$4^*$), (AB$5$), exactness follows. In addition, if $S$ is profinite, then $C_{cts}(S,-)$ preserves all direct sums and therefore all colimits. The extension of the functor to $\Mod_{\mO_{E,r}}$ is immediate. 

Now we have a functor $C_{cts}(S,-): \CA_{\mO_{E,r}}^\heart\to \CA_{\mO_{E,r}}^\heart$. If $S$ is profinite, it preserves sifted colimits as the forgetful functor $\CA_{\mO_{E,r}}^\heart\to \Mod_{\mO_{E,r}}^\heart$ is conservative preserving limits and sifted colimits. Taking the animation gives \eqref{E:CctsAr} in this case, which preserves sifted colimits and lifts \eqref{E:CctsMr}. Finally, if $S=\sqcup_{j\in J} S_j$ with $S_j$ profinite, then $C_{cts}(S,-)=\prod_{j\in J}C_{cts}(S_j,-)$. The rest assertions are clear.
\end{proof}

\begin{remark}\label{R: cont map, trun}
\begin{enumerate}
\item We note that formula \eqref{E: comp Ccts} computes $C_{cts}(S,A)$ for truncated $\mO_{E,r}$-algebras $A$. 
Together with nilcompleteness, one may compute $C_{cts}(S,A)$ for any $A$. 
\item\label{R: cont map, trun-2} By regarding $S$ as an abstract set, there is the natural transformation $C_{cts}(S,-)\to C(S,-)$, which induces injective maps when evaluated at classical $\mO_{E,r}$-algebras.
\item In the above construction, one may replace a locally profinite set $S$ by a simplicial locally profinite set $S_\bullet$. Then we obtain $C_{cts}(S^\bullet, -): \CA_{\mO_{E,r}}\to \CA_{\mO_{E,r}}^{\Delta}$. The corresponding cosimplicial animated algebra sends $[n]$ to $C_{cts}(S^n,-)$.
\end{enumerate}
\end{remark}

Now we can give the definition of $\mR^c_{\Ga,M_r}$. 
As $\Ga$ is a locally profinite group, it is a disjoint union of profinite sets so we can apply the above formalism to each $\Ga^n$. Therefore, for every $A\in\CA_{\mO_{E,r}}$, we have a cosimplicial object in $\CA_{\mO_{E,r}}$, $[n]\mapsto C_{cts}(\Ga^n, A)$. 
On the other hand, as $M$ is a flat affine monoid, $[n]\mapsto \mO_{E}[M^n]$ is a cosimplicial object in $\CA_{\mO_{E}}$.

\begin{definition}\label{D: cont rep sp}
We define the $M$-valued continuous representation space of $\Ga$ over $\mO_{E,r}$ as
\[
\mR^{c}_{\Ga,M_r}: \CA_{\mO_{E,r}}\to \Spc,\quad A\mapsto \on{Map}_{\CA_{\mO_{E}}^{\Delta}}\bigl(\mO_{E,r}[M^\bullet], C_{cts}(\Ga^\bullet,A)\bigr).
\]
Regarding $\mR^c_{\Ga,M_r}$ as a prestack over $\mO_E$, there is the obvious morphism $\mR^c_{\Ga,M_r}\to \mR^c_{\Ga,M_{r+1}}$ over $\mO_E$ and we define
\[
\mR^{c}_{\Ga,M}=\varinjlim \mR^c_{\Ga,M_r}: \CA_{\mO_{E}}\to \Spc,\quad A\mapsto \varinjlim_r \mR^c_{\Ga,M_r}(A\otimes_{\mO_{E}}\mO_{E,r}).
\]
Note that the structural morphism $\mR^c_{\Ga,M}\to\on{Spec}\mO_E$ factors as $\mR^c_{\Ga,M}\to \varinjlim_r \on{Spec} \mO_{E,r}=\on{Spf}\mO_E$.

For each $r$, the group $H_r$ acts on $\mR^c_{\Ga,M_r}$ in the sense that there is a simplicial diagram similar to \eqref{E: H action} (with $\mR_{\Ga,M}$ replaced by $\mR^{c}_{\Ga,M_r}$) 
and therefore we define the continuous representation stack $\mR^{c}_{\Ga,M_r/H_r}$ over $\mO_{E,r}$ as the quotient stack, and $\mR^c_{\Ga,M/H}=\varinjlim_r \mR^c_{\Ga,M_r/H_r}$ over $\on{Spf}\mO_E$.
\end{definition}

To justify the definition, first note by Remark \ref{R: cont map, trun} \eqref{R: cont map, trun-2} and \eqref{R: cohom pres}, there are natural morphisms
\begin{equation}\label{E: forget cont}
\mR^c_{\Ga,M}\to \mR_{\Ga,M},\quad \mR^c_{\Ga,M/H}\to \mR_{\Ga,M/H}
\end{equation}
where $\Ga$ is regarded as an abstract group in $\mR_{\Ga,M}$ and in $\mR_{\Ga,M/H}$. Therefore,  for every $\mO_E$-algebra $A$ in which $\varpi$ is nilpotent, an $A$-point of $\mR^c_{\Ga,M}$ does give a representation $\rho:\Ga\to M(A)$. The following lemma justifies the continuity of $\rho$.
\begin{lemma}\label{L: cont rho}
Assume that $A$ is classical. If $M(A)$ is equipped with the discrete topology, then
\[
\mR^c_{\Ga,M}(A)=\bigl\{\mbox{continuous homomorphisms } \rho: \Ga\to M(A) \bigr\}.
\]
\end{lemma}
\begin{proof}
For a classical $\mO_{E,r}$-algebra $A$, the induced map $\mR^c_{\Ga,M}(A)\to \mR_{\Ga,M}(A)$ is injective with image consisting of those $\bigl(\rho:\Ga\to M(A)\bigr)\in \mR_{\Ga,M}(A)$ such that for every $f\in\mO_{E,r}[M]$, the map $f\circ\rho: \Ga\to A$ is continuous, where $A$ is equipped with the discrete topology. The lemma follows.
\end{proof}

Now, suppose we can write $\Ga=\varprojlim \Ga_j$ as a projective limit, with each $\Ga_j$ discrete and $\Ga_j\to \Ga_{j'}$ surjective with finite kernel. Then we have the obvious morphism 
\begin{equation}\label{E: indpre}
\varinjlim_r\varinjlim_j \mR_{\Ga_j,M_r}=\varinjlim_r\varinjlim_j \mR^c_{\Ga_j,M_r}\to \varinjlim_r\mR^c_{\Ga,M_r}=\mR^c_{\Ga,M}.
\end{equation}
The above discussion implies that ${}^{cl}\mR^c_{\Ga,M}=\varinjlim_r\varinjlim_j {}^{cl}\mR_{\Ga_j,M_r}$ is represented by an ind-affine scheme. 

\quash{\begin{prop}\label{P: classical cont rep}
The map \eqref{E: indpre}
\end{prop}

\begin{proof}
For a classical $\mO_{E,r}$-algebra $A$, we have
\begin{eqnarray*}
\on{Map}_{\CA_{\mO_{E,r}}^{\Delta}}(\mO_{E,r}[M_r^\bullet], C_{cts}(\Ga^\bullet,A)) & = & \Map_{\CA_{\mO_{E,r}}^\Delta}\bigl(\mO_{E,r}[M_r^\bullet],\varinjlim_jC(\Ga_j^\bullet,A)\bigr)\\
                                                                                                                                      & = & \varinjlim_j \Map_{\CA_{\mO_{E,r}}^\Delta}\bigl(\mO_{E,r}[M_r^\bullet],C(\Ga_j^\bullet,A)\bigr)\\
                                                                                                                                      & = & \varinjlim_j \mR_{\Ga_j,M_r}(A).
\end{eqnarray*}
Here the first isomorphism follows from  
$C_{cts}(\Ga^n,A)=\varinjlim_iC(\Ga_j^n,A)$ by \eqref{E: comp Ccts}. 
The second isomorphism follows as if $B^\bullet$ is a cosimplicial object in $\CA_{\mO_{E,r}}^\heart$, then
 $\Map_{\CA_{\mO_{E,r}}^\Delta}\bigl(\mO_{E,r}[M_r^\bullet],B^\bullet\bigr)\subset \Map_{\CA_{\mO_{E,r}}}(\mO_{E,r}[M],B([1]))$. The last isomorphism follows from \eqref{R: cohom pres}.
\end{proof}

In particular if $A$ is a classical $\mO_E$-algebra in which $\varpi$ is nilpotent, then $A$-points of $\mR^{c}_{\Gamma, M}$ form the set of continuous homomorphisms from $\Gamma$ to $M(A)$ (equipped with the discrete topology). }
\begin{remark}\label{R: formal and rigid}
Let $\on{Spf}A=\varinjlim_j \on{Spec}(A/I^j)$ be a classical formal scheme over $\on{Spf}\mO_E$, where $I$ is a finitely generated ideal of definition of $A$ containing $\varpi$. Then 
$$\Map(\on{Spf} A, \mR^{c}_{\Gamma, M})=\underleftarrow\lim_j\mR^{c}_{\Gamma, M}(A/I^j)\subset \underleftarrow\lim_j\mR_{\Gamma, M}(A/I^j)=\mR_{\Ga,M}(A_I^\wedge)$$ 
consists of continuous homomorphisms from $\Gamma$ to $M(A_I^\wedge)$, where $A_I^\wedge$ is the $I$-adic completion of $A$, equipped with the $I$-adic topology. So  ${}^{cl}\mR^{c}_{\Gamma, M}$ coincides with the space considered in \cite[3.1]{WE} (when $M=\GL_m$).
 
We may also take the rigid generic fiber of ${}^{cl}\mR^{c}_{\Gamma, M}$, or the adic space over $\on{Spa}(E,\mO_E)$ (as in \cite[2.2]{SW}), 
denoted by ${}^{cl}\mR_{\Gamma, M}^{c,\ad}$. It is the sheafification (with respect to the Zariski topology on the category of affinoid $(E,\mO_E)$-algebras) of the presheaf:
\[
(A,A^+)\mapsto \varinjlim_{A_0\subset A^+}\mR_{\Gamma, M}^{c}(\on{Spf} A_0)=\varinjlim_{A_0\subset A^+}\varprojlim_j\mR_{\Gamma, M}^{c}(A_0/\varpi^j),
\]
where $A_0$ range over open and bounded subrings of $A^+$.
For example, if $\Ga$ is a profinite group, then $E$-points of ${}^{cl}\mR_{\Gamma, M}^{c,\ad}$ are the set of continuous homomorphisms from $\Ga$ to $M(E)$, where the latter is equipped with the usual $\varpi$-adic topology. So ${}^{cl}\mR_{\Gamma, M}^{c,\ad}$ probably coincides with the space considered in \cite[\S 2]{An} (when $M=\GL_m$). 
\end{remark}

For a representation $W$ of $M$ on a finite projective $\mO_{E,r}$-module, we have the vector bundle ${}_{\Ga}W$ on $\mR^c_{\Ga,M}$ and on $\mR^c_{\Ga,M/H}$ equipped with $\Ga\to \End({}_{\Ga}W)$ as in \eqref{E: Ga act W}, obtained by pulling back of the corresponding objects on $\mR_{\Ga,M}$ and on $\mR_{\Ga,M/H}$ along the morphisms \eqref{E: forget cont}. If $\rho\in \mR^c_{\Ga,M}(A)$, then the pullback of ${}_{\Ga}W$ to $\on{Spec} A$, denoted by $W_\rho$ is equipped with an action $\Ga\to \End_{\Mod_A^{\leq 0}}(W_\rho)$. This action should be continuous in an appropriate sense.
One way to make this precise is by noticing that there is  a cosimplicial module $C_{cts}(\Ga^\bullet, W_\rho)$ over $C_{cts}(\Ga^\bullet,A)$ constructed in a way as in Remark \ref{R: fiber action} \eqref{R: fiber action-1}. 
As in Remark \ref{R: tan compl} , we may consider the totalization $C^*_{cts}(\Ga,W_\rho)$ of $C_{cts}(\Ga^\bullet, W_\rho)$ (in $\Mod_A$). If $A$ is classical, this is the cochain complex computing the continuous cohomology of $\Ga$ with coefficient in $W_\rho$. Let $\overline{C}^*_{cts}(\Ga,W_\rho)[1]$ denote its reduced version.

Now we study the infinitesimal geometry of $\mR^{c}_{\Ga,M}$. We assume that $M$ is an affine smooth group scheme over $\mO_E$.
\begin{prop}\label{E: tangent space}
The functor $\mR^c_{\Ga,M_r}: \CA_{\mO_{E,r}}\to\Spc$ is nilcomplete and preserves finite limits. If $A$ is truncated, then 
the tangent space  of $\mR^c_{\Ga,M_r}$ at an $A$-point $\rho$ is
$\bT_\rho\mR^c_{\Ga,M_r}=\overline{C}^*_{cts}(\Ga,\Ad_\rho)[1]$. 
\end{prop}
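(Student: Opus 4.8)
The plan is to compute each asserted property from the defining formula
\[
\mR^c_{\Ga,M_r}(A)=\on{Map}_{\CA_{\mO_E}^{\Delta}}\bigl(\mO_{E,r}[M^\bullet], C_{cts}(\Ga^\bullet,A)\bigr),
\]
together with what we already know about the functors $C_{cts}(\Ga^n,-)$ from Lemma \ref{L: cont S to dis} and about the framed representation space $\mR_{\Ga_j,M_r}$ from \S\ref{SS: dermon}. First I would record the two soft properties. For nilcompleteness: $C_{cts}(\Ga^n,-):\CA_{\mO_{E,r}}\to\CA_{\mO_{E,r}}$ is nilcomplete by Lemma \ref{L: cont S to dis}, i.e. $C_{cts}(\Ga^n,A)\xrightarrow{\cong}\varprojlim_m C_{cts}(\Ga^n,\tau_{\leq m}A)$; since $\mO_{E,r}[M^\bullet]$ is a fixed cosimplicial object and $\on{Map}$ out of it commutes with the limit over $\Delta$ and with $\varprojlim_m$, nilcompleteness of $\mR^c_{\Ga,M_r}$ follows. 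For preservation of finite limits: again by Lemma \ref{L: cont S to dis} each $C_{cts}(\Ga^n,-)$ preserves finite limits, hence so does $A\mapsto C_{cts}(\Ga^\bullet,A)$ as a functor to $\CA_{\mO_E}^{\Delta}$, and then $\on{Map}_{\CA_{\mO_E}^{\Delta}}(\mO_{E,r}[M^\bullet],-)$ preserves limits, so the composite does. (One should note $\mR^c_{\Ga,M_r}$ visibly preserves finite products already from the product formula \eqref{E: comp Ccts} for $C_{cts}$.)

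The substantive part is the tangent space computation, and the strategy is to imitate the proof of Proposition \ref{P: cotan frame} (and Remark \ref{R: tan compl}), replacing the presentation $k[\mR_{\Ga,M}]=\varinjlim_{\FFM/\Ga}k[M^I]$ by the cosimplicial presentation. Fix a truncated $A\in\CA_{\mO_{E,r}}$ and $\rho\in\mR^c_{\Ga,M_r}(A)$, corresponding to a map of cosimplicial algebras $\mO_{E,r}[M^\bullet]\to C_{cts}(\Ga^\bullet,A)$. For a connective perfect $A$-module $V$, the tangent space is
\[
\Omega^\infty\bigl(\bT_\rho\mR^c_{\Ga,M_r}\otimes_AV\bigr)=\Map\bigl(\Spec(A\oplus V),\mR^c_{\Ga,M_r}\bigr)\times_{\Map(\Spec A,\mR^c_{\Ga,M_r})}\{\rho\}.
\]
Writing the mapping space out of the cosimplicial object $\mO_{E,r}[M^\bullet]$ as a limit over the twisted arrow category $\on{TwArr}(\Delta)$ (as in Lemma \ref{L: tang of Rsc} in the quashed text, which is the model to follow), the fiber over $\rho$ becomes
\[
\varprojlim_{\on{TwArr}(\Delta)^{\on{op}}}\Map_{\Mod_{\mO_{E,r}[M^r]}^{\leq 0}}\bigl(\Omega_{M^r},C_{cts}(\Ga^s,V)\bigr),
\]
using that $M^r$ is smooth so its cotangent complex is the Kähler differentials, and that $C_{cts}(\Ga^s,A\oplus V)=C_{cts}(\Ga^s,A)\oplus C_{cts}(\Ga^s,V)$ by additivity of $C_{cts}(\Ga^s,-)$ (Lemma \ref{L: cont S to dis}). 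Identifying $\Omega_{M^r}$ with $\mO_{E,r}[M^r]\otimes(\frakm^*)^{\oplus r}$ via left-invariant differentials, this simplifies to
\[
\varprojlim_{\on{TwArr}(\Delta)^{\on{op}}}\Map_{\Mod_{\mO_{E,r}}^{\leq 0}}\bigl(\mO_{E,r}^{\oplus r},C_{cts}(\Ga^s,\frakm\otimes_{\mO_{E,r}}A)[?]\bigr),
\]
where the twisting by $\rho$ enters exactly as in formulas \eqref{E: Cf1}, \eqref{E: Cf2}, and the whole twisted-arrow limit reassembles, via Dold–Kan, into the (reduced) totalization of the cosimplicial $A$-module $C_{cts}(\Ga^\bullet,\Ad_\rho)$; that is, it is $\overline{C}^*_{cts}(\Ga,\Ad_\rho)[1]\otimes_AV$ in the appropriate degree, identifying $\bT_\rho\mR^c_{\Ga,M_r}\cong\overline{C}^*_{cts}(\Ga,\Ad_\rho)[1]$.

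The main obstacle I anticipate is the bookkeeping in the twisted-arrow-category manipulation: making precise that the limit of mapping spaces out of a cosimplicial object, base-changed along $\rho$, computes the totalization of the pulled-back cosimplicial module, and that under Dold–Kan this totalization is exactly the continuous cochain complex $C^*_{cts}(\Ga,\Ad_\rho)$ (one must check the cosimplicial structure maps coming from the Segal/Milnor description of $\Ga$ and from \eqref{E: Cf1}--\eqref{E: Cf2} match the usual bar differential). This is essentially the content of Lemma \ref{L: tang of Rsc}, so the honest approach is to prove that lemma carefully and then deduce the present statement as the special case where the coefficient algebra is reduced mod $\varpi^r$; the truncatedness hypothesis on $A$ is used precisely so that \eqref{E: comp Ccts} computes $C_{cts}(\Ga^s,A)$ on the nose and so that the relevant modules are perfect, exactly as in the footnote to \cite[8.1.2.2]{Lu3}. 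The smoothness of $M$ is what guarantees $\bL_{M^r}=\Omega_{M^r}$ sits in degree zero, so no higher correction terms appear and the answer is genuinely $\overline{C}^*_{cts}(\Ga,\Ad_\rho)[1]$ rather than something with extra homotopy.
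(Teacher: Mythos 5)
Your proposal is correct and follows essentially the same path the paper takes: express the fiber over $\rho$ as a limit over $\on{TwArr}(\Delta)^{\on{op}}$, trivialize $\Omega_{M^r_r}$ by left-invariant differentials, and reassemble the result via Dold--Kan into $\tau^{\leq 0}\bigl(\overline{C}^*_{cts}(\Ga,\Ad_\rho\otimes V)[1]\bigr)$. The one step you flag as "bookkeeping to be checked"---that the cosimplicial structure maps on $\Omega_{M^\bullet_r}$ match the twisted $\Ad_\rho$-structure---is exactly what the paper isolates as the explicit isomorphism \eqref{E: simp cotang} of cosimplicial modules, $\Omega_{M_r^\bullet}\cong (\mO_{E,r}[M^\bullet]\otimes\Ad^*)\otimes_{\mO_{E,r}[M^\bullet]}K(\mO_{E,r}[M^\bullet],-1)$, given by $(\omega_1,\ldots,\omega_n)\mapsto(\omega_1,\ga_1\omega_2,\ldots,\ga_1\cdots\ga_{n-1}\omega_n)$; writing this down is the only substantive piece missing from your sketch.
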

\begin{proof}
As $C_{cts}(S,-): \CA_{\mO_{E,r}}\to \CA_{\mO_{E,r}}$ is nilcomplete and preserves finite limits, so is $\mR^c_{\Ga,M_r}$. To prove the last assertion, it is enough to show that for
$\rho\in \mR^c_{\Ga,M}(A)$ with $A\in\CA_{\mO_{E,r}}$, and for any connective $A$-module $V$, we have
\begin{equation}\label{E: cotang comp Rc}
 \mR^c_{\Ga,M_r}(A\oplus V)\times_{\mR^c_{\Ga,M_r}(A)}\bigl\{\rho\bigr\}\cong \tau^{\leq 0}\bigl(\overline{C}_{cts}^*(\Ga, \Ad_\rho\otimes V)[1]\bigr).
\end{equation}

To prove this, we start by recalling the following construction. Let $K(\bZ,1)$ be the simplicial abelian group associated to the cochian complex $\bZ[1]$ under the classical Dold-Kan correspondence. Its underlying simplicial set can be obtained by applying the Milnor construction to $\bZ$ (regarded as a monoid). So $K(\bZ,1)([n])=\bZ^{\oplus n}$.
Let $K(\bZ,-1)$ be the cosimplicial abelian group assigning $[n]$ to the $\bZ$-linear dual of $K(\bZ,1)([n])$.
Let $N^\bullet\in (\Mod_\bZ^{\geq m})^{\Delta}$ be a cosimplicial object in $\Mod_\bZ^{\geq m}$ (for some integer $m$), then by the (dual) Dold-Kan correspondence,
\begin{equation}\label{E: cosim ab grp}
\Map_{\Mod_\bZ^\Delta}(K(\bZ,-1), N^\bullet)= \tau^{\leq 0}(\overline{N}^*[1]).
\end{equation}
Here $\overline{N}^*$ is the complex obtained from $N^\bullet$ by the following procedure. There is a natural morphism $N^\bullet\to N([0])$, where $N([0])$ is regarded as a constant cosimplicial cochain complex. Then $\overline{N}^*$ is totalization of the complex associated to the fiber of $N^\bullet\to N([0])$. 

 If $B^\bullet\in\CA_{\mO_{E,r}}^\Delta$, we denote by $K(B^\bullet,-1)$ the base change of $K(\bZ,-1)$  along $\bZ\to B^\bullet$ (where $\bZ$ is regarded as the constant cosimplicial algebra $\bZ$), i.e. $K(B^\bullet,-1)([n])=K(\bZ,-1)([n])\otimes B([n])$.

Now consider the cosimplicial module $[n]\mapsto \Omega_{M_r^n}$ over the cosimplicial algebra $\mO_{E,r}[M_r^\bullet]$, denoted by $\Omega_{M_r^\bullet}$. We claim that there is a natural isomorphism in the (ordinary) category of cosimplicial modules over $\mO_{E,r}[M^\bullet]$,
\begin{equation}\label{E: simp cotang}
 \Omega_{M_r^\bullet}\cong (\mO_{E,r}[M^\bullet]\otimes \Ad^*)\otimes_{\mO_{E,r}[M^\bullet]} K(\mO_{E,r}[M^\bullet],-1), 
\end{equation}
where $(\mO_{E,r}[M^\bullet]\otimes \Ad^*)$ is the cosimplicial modules over $\mO_{E,r}[M^\bullet]$ induced by the coadjoint representation $\Ad^*$ (see Remark \ref{R: fiber action} \eqref{R: fiber action-1}).
Namely, the right hand side of \eqref{E: simp cotang}, when evaluated at the simplex $[n]$, is canonically isomorphic to $(\mO_{E,r}[M^n]\otimes \Ad^*)^{\oplus n}$. On the other hand,
we can also identify 
$\Omega_{M^n_r}\cong ({}_{\FM(\{1,2,\ldots,n\})}\Ad^*)^{\oplus n}\cong (\mO_{E,r}[M^n]\otimes \Ad^*)^{\oplus n}$ as in \eqref{E: Cf1} \eqref{E: Cf2} \eqref{E: diff vs coadj}. 
Then using notations there, the desired isomorphism, when evaluated at $[n]$, is given by
\[
(\mO_{E,r}[M^n]\otimes \Ad^*)^{\oplus n}\simeq (\mO_{E,r}[M^n]\otimes \Ad^*)^{\oplus n},\quad (\omega_1,\ldots,\omega_n)\mapsto (\omega_1, \ga_1\omega_2,\ga_1\ga_2\omega_3,\ldots,\ga_1\cdots\ga_{n-1}\omega_n).
\]

Let $\on{TwArr}(\Delta)$ denote the twisted arrow category of $\Delta$ (\cite[5.2.1]{Lu2}): its objects are morphisms $[m]\to [n]$ in $\Delta$ and morphisms from $f':[m']\to [n']$ to $f:[m]\to [n]$ are pairs of maps $(g:[m']\to [m], h:[n]\to [n'])$ such that $f'=hfg$. Consider the functor 
\begin{equation*}
\begin{split}
 \mF: \ & \on{TwArr}(\Delta)^{\on{op}}\to \Spc,\quad ([m]\to [n])\mapsto \\
             & \Map_{\CA_{\mO_{E,r}}}\bigl(\mO_{E,r}[M_r^m],C_{cts}(\Ga^n,A\oplus V)\bigr)\times_{\Map_{\CA_{\mO_{E,r}}}\bigl(\mO_{E,r}[M_r^m],C_{cts}(\Ga^n,A)\bigr)}\{\rho_{m,n}\}\\
        = \  & \Map_{\Mod_{\mO_{E,r}[M^m]}}\bigl(\Omega_{M^m_r}, C_{cts}(\Ga^n,V)\bigr),  
\end{split}
\end{equation*}
where $\rho_{m,n}$ is the point in $\Map_{\CA_{\mO_{E,r}}}\bigl(\mO_{E,r}[M^m],C_{cts}(\Ga^n,A)\bigr)$ determined by $\rho$. Using \cite[1.3.12]{GKRV}, we can rewrite the left hand side of \eqref{E: cotang comp Rc} as $\varprojlim_{\on{TwArr}(\Delta)^{\on{op}}} \mF$, which by \eqref{E: simp cotang} can be rewritten as
\[
\Map_{\mO_{E,r}[M^\bullet]}\bigl(\Omega_{M_r^\bullet}, C_{cts}(\Ga^\bullet,V)\bigr)\cong \Map_{\mO_{E,r}[M^\bullet]}\bigl(K(\mO_{E,r}[M^\bullet],-1), C_{cts}(\Ga^\bullet,\Ad_\rho\otimes V)\bigr).
\]
which by \eqref{E: cosim ab grp} is isomorphic to the right hand side of  \eqref{E: cotang comp Rc}.
\end{proof}

\begin{prop}\label{P: trun value}
If $A$ is a truncated $\mO_{E,r}$-algebra, then \eqref{E: indpre} induces an isomorphism 
\begin{equation}\label{E: trun Apt}
\mR^c_{\Ga,M_r}(A)=\varinjlim_j\mR_{\Ga_j,M_r}(A).
\end{equation}
If $\Ga$ is profinite, then for each $m$ the restriction functor $\mR^c_{\Ga,M_r}: {}_{\leq m}\CA_{\mO_{E,r}}\to \Spc$ commutes with filtered colimits.
\end{prop}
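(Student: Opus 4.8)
The plan is to prove the first assertion directly and then deduce the second from it together with a finiteness input.

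\emph{First assertion.} Fix $m$ so that $A$ is $m$-truncated, and write $\Ga=\varprojlim_j\Ga_j$ as in \eqref{E: indpre}; since $\Ga^n=\varprojlim_j\Ga_j^n$ with surjective transition maps, the computation \eqref{E: comp Ccts} gives $C_{cts}(\Ga^n,A)=\varinjlim_jA^{\Ga_j^n}$ in $\CA_{\mO_{E,r}}^\heart$, and these identifications are compatible as $[n]$ varies (the cofiltered index category is the same for all $n$, and the cosimplicial structure maps come from the monoid structures of the $\Ga_j$). Hence $C_{cts}(\Ga^\bullet,A)=\varinjlim_jC(\Ga_j^\bullet,A)$ is a filtered colimit of cosimplicial $m$-truncated $\mO_{E,r}$-algebras, and by \eqref{R: cohom pres} the assertion is exactly that $\Map_{\CA_{\mO_{E,r}}^\Delta}\bigl(\mO_{E,r}[M^\bullet],-\bigr)$ commutes with this particular filtered colimit. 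The cleanest route to this is to observe that, $M$ being a smooth affine group scheme over $\mO_E$ and hence of finite presentation, each $\mO_{E,r}[M^n]=\mO_{E,r}[M]^{\otimes n}$ is a compact object of ${}_{\leq m}\CA_{\mO_{E,r}}$; combined with the fact that a map of cosimplicial algebras out of $\mO_{E,r}[M^\bullet]$ — equivalently, an $\FFM$-algebra map out of $\FM(I)\mapsto\mO_{E,r}[M^I]$ — is pinned down by a condition involving only the terms in degrees $\leq 2$ (compare \cite[Thm.~9]{Wei}), this shows $\mO_{E,r}[M^\bullet]$ is compact in ${}_{\leq m}\CA_{\mO_{E,r}}^\Delta$ and yields the interchange, whence $\mR^c_{\Ga,M_r}(A)=\varinjlim_j\mR_{\Ga_j,M_r}(A)$.

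A more hands-on alternative, which makes the deformation-theoretic content visible and avoids that compactness statement, is as follows. Put $F=\varinjlim_j\mR_{\Ga_j,M_r}$ and $G=\mR^c_{\Ga,M_r}$, viewed as functors on ${}_{\leq m}\CA_{\mO_{E,r}}$; we must show \eqref{E: indpre} is an equivalence $F\to G$. Both preserve finite limits: $G$ by Proposition \ref{E: tangent space}, and $F$ because each $\mR_{\Ga_j,M_r}$ is affine and filtered colimits commute with finite limits in $\Spc$; in particular both are infinitesimally cohesive. On classical algebras $F(A)=\varinjlim_j\Hom_{\Mon}(\Ga_j,M(A))$ is the set of homomorphisms $\Ga\to M(A)$ that factor through some $\Ga_j$, while $G(A)$ is the set of continuous homomorphisms for the discrete topology on $M(A)$ by Lemma \ref{L: cont rho}; these agree, since such a continuous homomorphism has open kernel, which therefore contains $\ker(\Ga\to\Ga_j)$ for some $j$. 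On tangent complexes, Proposition \ref{E: tangent space} gives $\bT_\rho G=\overline{C}^*_{cts}(\Ga,\Ad_\rho)[1]$, while Proposition \ref{P: cotan frame} and Remark \ref{R: tan compl}, together with the fact that the tangent complex commutes with filtered colimits, give $\bT_\rho F=\varinjlim_j\overline{C}^*(\Ga_j,\Ad_\rho)[1]=\overline{C}^*_{cts}(\Ga,\Ad_\rho)[1]$. A natural transformation of infinitesimally cohesive functors which is an equivalence on classical algebras and on tangent complexes is an equivalence on every $m$-truncated algebra, by climbing the Postnikov tower of $A$ through its square-zero stages.

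\emph{Second assertion.} When $\Ga$ is profinite the $\Ga_j$ are finite, hence finitely generated of type $FP_\infty(\mO_{E,r})$, and $\mO_{E,r}$ is noetherian, so Proposition \ref{P: almost fin} shows each $\mR_{\Ga_j,M_r}$ is almost of finite presentation over $\mO_{E,r}$; equivalently $\mR_{\Ga_j,M_r}\colon{}_{\leq m}\CA_{\mO_{E,r}}\to\Spc$ commutes with filtered colimits. By the first assertion $\mR^c_{\Ga,M_r}$ restricted to ${}_{\leq m}\CA_{\mO_{E,r}}$ is $\varinjlim_j\mR_{\Ga_j,M_r}$, and a filtered colimit of functors that each commute with filtered colimits again commutes with filtered colimits. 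I expect the main obstacle to be precisely the interchange used in the first assertion — the compactness of the cosimplicial algebra $\mO_{E,r}[M^\bullet]$ on $m$-truncated algebras; once that (or, on the alternative route, the cohesion bookkeeping) is in place, the rest is routine manipulation of filtered colimits and square-zero extensions.
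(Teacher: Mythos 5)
Your second, ``hands-on'' route is in substance the argument the paper gives: check the two functors agree on $\pi_0$, observe that both commute with finite limits (Proposition~\ref{E: tangent space} for $\mR^c_{\Ga,M_r}$, affineness plus filtered-colimit/finite-limit commutation for $\varinjlim_j\mR_{\Ga_j,M_r}$), then climb the Postnikov tower $A=\tau_{\leq m}A\to\cdots\to\pi_0(A)$ via the square-zero pullback squares and reduce to comparing tangent complexes at each stage. The one place your write-up elides a real point is the equality $\varinjlim_j \overline{C}^*(\Ga_j,\Ad_\rho\otimes\pi_i(A)) = \overline{C}^*_{cts}(\Ga,\Ad_\rho\otimes\pi_i(A))$, which you state as if it followed simply from ``the tangent complex commutes with filtered colimits.'' That inner equality is exactly what has to be proved: the totalization defining $C^*_{cts}$ is an infinite limit over $\Delta$, and since $\Ga^n$ is a (possibly infinite) disjoint union of profinite sets, $C_{cts}(\Ga^n,-)$ is a product of filtered colimits; getting this to match $\varinjlim_j C^*(\Ga_j,-)$ requires interchanging that product with the filtered colimit over $j$, which the paper justifies by \eqref{E: comp Ccts} together with Grothendieck's axiom (AB$6$) for $\Mod^\heart_{\mO_{E,r}}$. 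You should make that interchange explicit rather than folding it into ``commutes with filtered colimits.''

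Your first ``cleanest route'' I would not trust as stated. The claim that $\mO_{E,r}[M^\bullet]$ is compact in ${}_{\leq m}\CA_{\mO_{E,r}}^\Delta$ does not follow from compactness of each $\mO_{E,r}[M^n]$, since a limit over $\on{TwArr}(\Delta)^{\on{op}}$ is not finite. The reduction ``a cosimplicial-algebra map out of $\mO_{E,r}[M^\bullet]$ is pinned down by data in degrees $\leq 2$'' is a $1$-categorical Segal-type statement; in the animated setting, with an $m$-truncated target, one would instead need a coskeletality bound growing with $m$, and that has to be argued, not asserted. Nor does \cite[Thm.~9]{Wei} (a finite-generation statement for classical $\FFM$-algebras) supply it. Indeed, the paper is deliberately cautious about exactly this kind of compactness claim -- see the surrounding remark about $\mO_{E,r}[M_r^\bullet]^{H_r}$ and the map \eqref{E: indpreps} -- which is why it opts for the Postnikov-tower argument rather than a compactness shortcut. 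The treatment of the second assertion (finiteness of each $\Ga_j$ plus Proposition~\ref{P: almost fin}, or directly by induction using that $C_{cts}(S,-)$ preserves filtered colimits for profinite $S$) is the same as the paper's.
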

\begin{proof}
We temporarily denote $\varinjlim_j\mR^c_{\Ga_j,M_r}$ by $\widetilde{\mR}^{c}_{\Ga,M_r}$. 
We already see that \eqref{E: trun Apt} induces an isomorphism at the level of classical points.
Now assume that $A$ is $m$-truncated. We have the Postnikov tower $A=\tau_{\leq m}A\to \tau_{\leq m-1}A\to \cdots\to \tau_{\leq 0}A=\pi_0(A)$ and the following pullback diagram (see \cite[7.4.1.29]{Lu2} for the case of $E_\infty$-algebras which also holds for animated algebras)
\[
\xymatrix{
\tau_{\leq i}A\ar[r]\ar[d]& \tau_{\leq i-1}A\ar[d]\\
\tau_{\leq i-1}A\ar[r] & \tau_{\leq i-1}A\oplus \pi_{i}(A)[i+1].
}\]
As both  $\widetilde{\mR}^{c}_{\Ga,M_r}$ and $\mR^c_{\Ga,M_r}$ commute with finite limits, by induction on $m$ and by Remark \ref{R: tan compl}  and \eqref{E: cotang comp Rc}, to prove \eqref{E: trun Apt} it is enough to show that 
\[
\varinjlim_j C^*(\Ga_j, \Ad_\rho\otimes \pi_i(A))\cong C_{cts}^*(\Ga,\Ad_\rho\otimes \pi_i(A))
\]
for every $\rho\in \mR^c_{\Ga,M_r}(\pi_0(A))=\varinjlim_j\mR_{\Ga_j,M_r}(\pi_0(A))$. But this follows from \eqref{E: comp Ccts} and the isomorphism $\prod_{j\in J}\varinjlim_{i\in I_j^{\on{op}}} V^{S_{ij}}\cong \varinjlim_{(i\in I_j^{\on{op}})} \prod_{j\in J}V^{S_{ij}}$ (as $\Mod_{\mO_{E,r}}^\heart$ is an abelian category satisfying Grothendieck's axiom (AB$6$)).

For the last statement, we note that if $\Ga$ is profinite then each $\Ga_j$ is finite so $\mR_{\Ga_j,M_r}$ when restricted to ${}_{\leq m}\CA_{\mO_{E,r}}$ commutes with filtered colimits (Proposition \ref{P: almost fin}). Therefore, $\mR^c_{\Ga,M_r}: {}_{\leq m}\CA_{\mO_{E,r}}\to \Spc$ also commutes with filtered colimits. Alternatively, one can prove this directly by induction on $m$, again using the Postnikov tower and that $C_{cts}(S,-)$ commutes with filtered colimits when $S$ is profinite (Lemma \ref{L: cont S to dis}).
\end{proof}
\begin{remark}
The proposition shows that $\mR^c_{\Ga,M_r}$ is an ind-affine scheme in the sense of \cite[1.4.2]{GR}. Note that \eqref{E: trun Apt} may not hold for general $A$. Instead, $\mR^c_{\Ga,M_r}(A)=\varprojlim_m\varinjlim_j\mR^c_{\Ga_j,M_r}(\tau_{\leq m}A)$, as $\mR^c_{\Ga,M_r}$ is nilcomplete. This can be used as an alternative definition of $\mR^c_{\Ga,M_r}$.
\end{remark}

Now we can relate $\mR_{\Gamma, M}^{c}$ with the usual deformation space (and its derived version as in \cite{SV}).

We fix a closed point $x$ of ${}^{cl}\mR_{\Gamma, M}^{c}$, corresponding to
$\bar\rho: \Ga\to M(\kappa)$, where $\kappa$ is the residue field of $x$, which is algebraic over $\kappa_E$.  Let $\mathbf{Art}_{\mO_E,\kappa}$ denote the category of local Artinian $\mO_E$-algebras with residue field algebraic over $\kappa$, and $\CA^{\on{Art}}_{\mO_E,\kappa}\subset \CA_{\mO_E}$ the $\infty$-category of animated $\mO_E$-algebras $A$, such that $\pi_0(A)\in\mathbf{Art}_{\mO_E,\kappa}$, and such that $\bigoplus_i\pi_i(A)$ is a finitely generated $\pi_0(A)$-module. In particular, every $A\in \CA^{\on{Art}}_{\mO_E,\kappa}$ is truncated.

Following \cite[8.1.6.1]{Lu3}, we denote the formal completion $(\mR_{\Gamma, M}^{c})_x^\wedge$ of $\mR_{\Gamma, M}^{c}$ at $x$ as the functor sending an animated ring $A$ over $\on{Spf}\mO_E$ to the subspace of $(\mR_{\Gamma, M}^{c})(A)$ consisting of those $\on{Spec} A\to \mR_{\Gamma, M}^{c}$ such that every point of $\on{Spec} \pi_0(A)$ maps to $x$.  Its restriction to $\CA^{\on{Art}}_{\mO_E,\kappa}\subset \CA_{\mO_E}$, also denoted by $\on{Def}_{\bar\rho}^\Box$, is the functor
\[
  \CA^{\on{Art}}_{\mO_E,\kappa}\to \Spc, \quad A\mapsto \mR_{\Ga,M}^{c}(A)\times_{\mR_{\Gamma, M}^{c}(\kappa_A)}\{\bar\rho\}. 
\]
This recovers the deformation functor defined in \cite[\S 5]{SV}.
Its further restriction to $\mathbf{Art}_{\mO_E,\kappa}$, denoted by ${}^{cl}\on{Def}_{\bar\rho}^\Box$, 
is identified with the classical framed deformation functor of $\bar\rho$
\[
\mathbf{Art}_{\mO_E,\kappa}\to \Sets,\quad A\mapsto  \Bigl\{\mbox{Continuous homomorphism }  \rho: \Ga\to M(A)\mid \rho\otimes_A\kappa_A=\bar \rho\otimes_{\kappa}\kappa_A\Bigr\}.
\]

Similarly, we have the formal completion $(\mR_{\Gamma_{i}, M_{n}})_{x}^\wedge$ of each $\mR_{\Gamma_{i}, M_{n}}$ at $x$.
By \cite[8.1.2.2]{Lu3}\footnote{The proof is written for $E_\infty$-rings, but  it works for animated rings, with $A\{t_n\}$ in \emph{loc. cit.} replaced by the usual polynomial ring $A[t_n]$. In addition, in this case each $A_n$ in \emph{loc. cit} is perfect as an $A$-module.}, each 
$(\mR_{\Gamma_{i}, M_{n}})_{x}^\wedge\simeq \varinjlim_j\on{Spec} A_j$ 
is represented by a derived affine ind-scheme with $A_j\in \CA^{\on{Art}}_{\mO_E,\kappa}$. Then
$(\mR_{\Gamma, M}^{c})_x^\wedge$, which is isomorphic to
$\varinjlim_{i,n} (\mR_{\Gamma_{i}, M_{n}})_{x}^\wedge$,  is also represented by a derived affine ind-scheme over $\on{Spf}\mO_E$. 
Combining the above discussions with \eqref{E: tangent space}, we recover the following statement from \cite{SV}.
\begin{prop}The functor $\on{Def}^{\Box}_{\bar\rho}$ is prorepresentable, whose tangent complex is $\overline{C}_{cts}^*(\Ga, \Ad_
\rho)[1]$.
\end{prop}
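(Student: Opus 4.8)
The idea is to realize $\on{Def}^\Box_{\bar\rho}$ as the restriction to $\CA^{\on{Art}}_{\mO_E,\kappa}$ of the formal completion of the ind-affine scheme $\mR^c_{\Ga,M}$ at the closed point $x$, and then to apply the Schlessinger--Lurie recognition criterion for formal moduli problems (see \cite[\S 8.1]{Lu3}, and compare \cite[\S 5]{SV}): a functor $F:\CA^{\on{Art}}_{\mO_E,\kappa}\to\Spc$ with $F(\kappa)\simeq *$ that is infinitesimally cohesive --- i.e.\ carries any pullback $A\times_B C$ of objects of $\CA^{\on{Art}}_{\mO_E,\kappa}$ with $A\to B$ a small (square-zero) extension to a pullback of spaces --- is prorepresentable, and its tangent complex is then recovered by evaluating $F$ on the trivial square-zero extensions $\kappa\oplus\kappa[n]$.

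I would first observe that every object $A$ of $\CA^{\on{Art}}_{\mO_E,\kappa}$ is truncated and $\varpi$-nilpotent, hence an $\mO_{E,r}$-algebra for all $r\gg 0$, so that the colimit defining $\mR^c_{\Ga,M}$ is eventually constant on $\CA^{\on{Art}}_{\mO_E,\kappa}$ and $\on{Def}^\Box_{\bar\rho}$ is there computed as the fiber of $\mR^c_{\Ga,M_r}$ over $\bar\rho$ for $r\gg 0$. By Proposition \ref{E: tangent space} the functor $\mR^c_{\Ga,M_r}$ is nilcomplete and preserves finite limits; since taking the fiber over the fixed classical point $\bar\rho$ again preserves the relevant pullbacks, $\on{Def}^\Box_{\bar\rho}$ is infinitesimally cohesive, and it is pointed by construction. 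The criterion then yields prorepresentability; equivalently, one may invoke that the formal completion of an ind-scheme which is locally almost of finite presentation (as $\mR^c_{\Ga,M}$ is when $\Ga$ is profinite, by Proposition \ref{P: trun value}) at a closed point is prorepresentable.

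For the tangent complex I would evaluate $\on{Def}^\Box_{\bar\rho}$ on $\kappa\oplus\kappa[n]$: applying the computation \eqref{E: cotang comp Rc} from the proof of Proposition \ref{E: tangent space} with $\rho=\bar\rho$, $A=\kappa$ and $V=\kappa[n]$, and using $\Ad_{\bar\rho}\otimes_\kappa\kappa[n]=\Ad_{\bar\rho}[n]$ together with the $t$-exactness of continuous cochains in the coefficient module, this space is $\tau^{\leq 0}\bigl(\overline{C}^*_{cts}(\Ga,\Ad_{\bar\rho})[n+1]\bigr)$, so that the tangent complex is $\overline{C}^*_{cts}(\Ga,\Ad_{\bar\rho})[1]$, as asserted. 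Once Proposition \ref{E: tangent space} is in place the rest is essentially bookkeeping; the two points that genuinely require care are (a) the passage through the ind-scheme structure of $\mR^c_{\Ga,M}$ in order to make sense of its formal completion at $x$, which is handled by the eventual-constancy reduction to a single $\mR^c_{\Ga,M_r}$, and (b) that the prorepresentability just obtained is a priori only in the pro-(Artinian) sense: upgrading it to prorepresentability by a complete local \emph{Noetherian} animated $\mO_E$-algebra requires in addition that the cohomology groups of the tangent complex, i.e.\ the reduced continuous cohomology groups $\overline H^{i+1}_{cts}(\Ga,\Ad_{\bar\rho})$, be finite-dimensional over $\kappa$, which holds under Mazur's finiteness condition $\Phi_\ell$ and its higher-degree analogues --- satisfied by the Weil and Galois groups relevant in the next section.
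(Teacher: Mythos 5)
Your proposal takes a genuinely different route from the paper. The paper proves prorepresentability by directly \emph{constructing} the prorepresenting pro-object: it applies \cite[8.1.2.2]{Lu3} to each of the Noetherian derived affine schemes $\mR_{\Ga_j,M_r}$ to realize $(\mR_{\Ga_j,M_r})_x^\wedge$ as a filtered colimit of $\Spec A$ with $A\in\CA^{\on{Art}}_{\mO_E,\kappa}$, and then colimits over $(j,r)$. You instead try to check the axioms of a formal moduli problem and invoke an abstract recognition criterion. Both routes feed on Proposition \ref{E: tangent space} for the tangent complex, but the representability arguments are different, and yours (once fixed) has the virtue of not needing to pass through the discrete quotients $\Ga_j$.

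There is, however, a genuine gap in your main argument. The statement that a pointed, infinitesimally cohesive functor on $\CA^{\on{Art}}_{\mO_E,\kappa}$ \emph{is prorepresentable} is false: the formal completion of a classifying stack at its basepoint is infinitesimally cohesive and pointed but is not prorepresentable, precisely because its tangent complex has nonzero $\pi_1$. The derived Schlessinger/Lurie prorepresentability criterion requires, in addition to cohesiveness, that $X(\kappa\oplus\kappa[n])$ be $n$-truncated for all $n>0$, equivalently that $\pi_i T_X=0$ for $i>0$ (the derived form of Schlessinger's $(H_4)$); this is exactly what separates the framed deformation functor from the unframed deformation stack. The condition does hold in the present case, but it must be checked: since $\overline{C}^*_{cts}(\Ga,-)$ is the fiber of the map $C^*_{cts}(\Ga,-)\to(-)$, which on $H^0$ is the inclusion $H^0_{cts}(\Ga,V)=V^\Ga\hookrightarrow V$, the reduced complex is concentrated in cohomological degrees $\geq 1$, hence $T_X=\overline{C}^*_{cts}(\Ga,\Ad_{\bar\rho})[1]$ is concentrated in degrees $\geq 0$, i.e.\ $\pi_i T_X=0$ for $i>0$, and prorepresentability follows. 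Your remark (b) addresses only the separate (and here not asserted) question of Noetherian versus pro-Artinian prorepresentability, not this one. A smaller secondary point: your fallback via Proposition \ref{P: trun value} assumes $\Ga$ profinite, while the proposition concerns $\Ga$ merely locally profinite; the paper sidesteps this by applying \cite[8.1.2.2]{Lu3} at the level of the finite-type affine pieces $\mR_{\Ga_j,M_r}$ before colimiting.
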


We finish our discussion of infinitesimal geometry of $\mR^c_{\Ga,M}$ by the following observation. Suppose $\widehat\Ga$ is the profinite completion of 
an abstract group $\Ga$.  Then we have $\mR_{\Ga,M}$ over $\on{Spec}\mO_E$ and $\mR^{c}_{\widehat\Ga,M}$ over $\on{Spf}\mO_E$. There is a natural morphism $\mR^{c}_{\widehat\Ga,M}\to \mR_{\Ga,M}$, which induces a bijection between closed points over $\kappa_E$ and isomorphisms of classical formal completions at these points.
This follows from the simple observation that for every classical Artinian local ring $A$ with residue field finite over $\kappa_E$, every homomorphism $\rho: \Ga\to M(A)$ factors through a finite quotient of $\Ga$ and therefore extends uniquely to a continuous homomorphism $\widehat\Ga\to M(A)$. By the following lemma, it still holds at the derived level under a mild assumption. We omit the proof as it is very similar to the proof of Proposition \ref{P: trun value}. 

\begin{lemma}\label{L: formal closed}
Suppose $\Ga\to \widehat\Ga$ induces an isomorphism $H^i_{cts}(\widehat\Ga, V)\cong H^i(\Ga, V)$ 
for every finite $\bF_\ell\Ga$-module $V$ (which automatically extends to a discrete $\widehat\Ga$-module) and every $i\geq 0$.
Then $\mR^{c}_{\widehat\Ga,M}\to \mR_{\Ga,M}$ induces isomorphisms of formal completions (at the derived level) at closed points over $\kappa_E$.
\end{lemma}

Before we move to the global geometry of $\mR^c_{\Ga,M}$,  
we introduce an auxiliary object, the moduli space $\mR^c_{M^\bullet/\!\!/H}$ of continuous pseudorepresentations. We assume that $\Ga$ can be written as $\Ga=\varprojlim_j \Ga_j$ as before, and assume that $(M,H)$ are as in Proposition \ref{P: fin pseudo rep}. 
\begin{definition}
We define the moduli of continuous pseudorepresentations over $\on{Spec} \mO_{E,r}$ as
\[
\mR^{c}_{\Ga,M_r^\bullet/\!\!/H_r}: \CA_{\mO_{E,r}}\to \Spc, \quad A\mapsto \varprojlim_{m}\varinjlim_{j}\mR_{\Ga_j,M_r^\bullet/\!\!/H_r}(\tau_{\leq m}A),
\]
and over $\on{Spf}\mO_E$ as $\mR^c_{\Ga,M^\bullet/\!\!/H}=\varinjlim_r\mR^{c}_{\Ga,M_r^\bullet/\!\!/H_r}$.
\end{definition}

\begin{remark}
The definition of $\mR^{c}_{\Ga,M_r^\bullet/\!\!/H_r}$ given above is somehow ad hoc but is convenient for the discussions below.
It would be more elegant to make a definition based on  
\eqref{E: hom FFMalg}. Namely,
there are $\FFM$-algebras $\FM(I)\mapsto \mO_{E,r}[M_r^I]^{H_r}$ and $\FM(I)\mapsto C_{cts}(\Ga^I,A)$.
Then one can define
\[
\widetilde\mR^{c}_{\Ga,M_r^\bullet/\!\!/H_r}: \CA_{\mO_{E,r}}\to \Spc,\quad A\mapsto \on{Map}_{\CA_{\mO_{E,r}}^{\FFM}}\bigl(\mO_{E,r}[M_r^\bullet]^{H_r}, C_{cts}(\Ga^\bullet,A)\bigr).
\]
There is an obvious morphism 
\begin{equation}\label{E: indpreps}
\mR^{c}_{\Ga,M_r^\bullet/\!\!/H_r}\to \widetilde\mR^{c}_{\Ga,M_r^\bullet/\!\!/H_r}
\end{equation}
similar to \eqref{E: indpre}, which we expect to be an isomorphism (similar to Proposition \ref{P: trun value}). If so, this new definition will be equivalent to the ad hoc one. 
One can show that\eqref{E: indpreps} induces a bijection of $\kappa$-points, for every algebraic field extension $\kappa/\kappa_E$. In addition if the $\FFM$-algebra $\mO_{E,r}[M_r^\bullet]^{H_r}$ is finitely generated (see \cite[1.1]{Wei} for this notion), then \eqref{E: indpreps} would be an isomorphism at least for the underlying classical moduli spaces.
This is indeed this case if $M=\GL_m$ by \cite{Do}.  
\end{remark}

By definition, $\mR^c_{\Ga,M^\bullet/\!\!/H}$ is an ind-affine scheme (in the sense of \cite[1.4.2]{GR}) over $\on{Spf}\mO_E$. 
If $\Ga$ is profinite, then by Proposition \ref{P: fin pseudo rep}, the underlying reduced classical ind-scheme of $\mR^{c}_{\Ga,M_r^\bullet/\!\!/H_r}$ is just union of points algebraic over $\kappa_E$. Therefore 
\begin{equation}\label{E: decom pseudo}
\mR^c_{\Ga,M^\bullet/\!\!/H}=\sqcup_{\Theta} \mR^{c,\Theta}_{\Ga,M^\bullet/\!\!/H},
\end{equation} 
where $\Theta$ range over points of $\mR^c_{\Ga,M^\bullet/\!\!/H}$ algebraic over $\kappa_E$, and each $\mR^{c,\Theta}_{\Ga,M^\bullet/\!\!/H}$ a formal scheme.
For $M=\GL_m$, this is originally proved by Chenevier \cite[3.14]{Ch}. 

\begin{remark}
Assume that $\Ga$ is profinite. As  $\mR^{c,\Theta}_{\Ga,M^\bullet/\!\!/H}$ is formal,
we may call its restriction to $\CA_{\mO_E,\kappa}^{\on{Art}}$ the pseudodeformation space of $\Theta$. Its further restriction to $\mathbf{Art}_{\mO_E,\kappa}$ is the classical pseudodeformation space of $\Theta$ studied in literature (for $M=\GL_m$).
\end{remark}

As in Remark \ref{R: formal and rigid}, for $\on{Spf} A=\varinjlim_j \on{Spec} (A/I^j)$ over $\on{Spf}\mO_E$, we have
$$\Map(\on{Spf}A,\mR^c_{\Ga,M^\bullet/\!\!/H})=\varprojlim_j\mR^c_{\Ga,M^\bullet/\!\!/H}(A/I^j)\subset \mR_{\Ga,M^\bullet/\!\!/H}(A^\wedge_I),$$
where $\Ga$ is regarded as an abstract group in $\mR_{\Ga,M^\bullet/\!\!/H}$.
The following result will be used later.

\begin{prop}\label{P: cont psrep to trrep}
Assume that $\Ga$ is profinite.
Let $\mO_K$ be a complete DVR with fractional field $K$ and maximal ideal $\frakm$.
Let $\Theta\in \Map(\on{Spf}\mO_K,\mR^c_{\Ga,M^\bullet/\!\!/H})$, giving a $\Lambda$-valued pseudorepresentation of the underlying abstract group of $\Ga$.
Then there is a finite extension $K'/K$, and a geometrically completely reducible continuous representation $\rho: \Ga\to M(K')$ such that $\Tr\rho=\Theta$.
\end{prop}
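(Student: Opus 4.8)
The plan is to reduce the statement to the case of $\GL_m$ via a faithful representation $\phi: M\hookrightarrow \GL_m$, and then to invoke the classical theory of pseudocharacters (after Taylor, Rouquier, Chenevier, Bellaïche-Chenevier). First I would observe that $\Theta\in\Map(\Spf\mO_K,\mR^c_{\Ga,M^\bullet/\!\!/H})$ by definition gives, for each finite discrete quotient $\Ga_j$ of $\Ga$ (writing $\Ga=\varprojlim_j\Ga_j$ as in the text), compatible maps to $\mR_{\Ga_j,M^\bullet/\!\!/H}(\mO_K/\frakm^n)$, hence an $\mO_K$-point of $\mR_{\Ga,M^\bullet/\!\!/H}$ for the underlying abstract group $\Ga$, i.e.\ an $H(\mO_K)$-compatible system of characteristic polynomials. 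Composing with $\phi$, we get a $\GL_m$-valued continuous pseudocharacter $T=\tr\circ\phi\circ\Theta:\Ga\to\mO_K$ in the classical sense; continuity follows because the formula \eqref{E: comp Ccts} and the factorization through $\Ga_j$ force the characteristic-polynomial coefficients of $\ga$ to vary continuously, and finiteness of $\mR^c_{\Ga,\GL_m^\bullet/\!\!/\GL_m}$ over $\mO_E$ (Proposition \ref{P: fin pseudo rep}) together with the decomposition \eqref{E: decom pseudo} pins $\Theta$ to a single formal component, so $T$ takes values in a complete Noetherian local $\mO_E$-subalgebra of $\mO_K$, forcing continuity into $\mO_K$.

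Next I would apply the main existence theorem for pseudocharacters over a field: for the pseudocharacter $T\otimes_{\mO_K}K:\Ga\to K$ of dimension $m$, there is a finite extension $K'/K$ and a semisimple continuous representation $\rho_0:\Ga\to\GL_m(K')$ with $\tr\rho_0=T\otimes K$ (this is where continuity of $T$ and profiniteness of $\Ga$ are used — one builds $\rho_0$ over the residue field first via Brauer–Nesbitt/Taylor, then lifts using the pseudodeformation ring, which is a quotient of the formal component $\mR^{c,\Theta}_{\Ga,M^\bullet/\!\!/H}$). Then I must descend $\rho_0$ from $\GL_m$ back to $M$: the key point is that a semisimple $\GL_m$-representation whose associated pseudocharacter factors through $M^\bullet/\!\!/H\to\GL_m^\bullet/\!\!/\GL_m$ (which it does, since $\Theta$ is $M$-valued) is $\GL_m(K')$-conjugate into $M(K')$, possibly after a further finite extension. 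For this I would use the characterization of $\mR^c_{\Ga,M^\bullet/\!\!/H}$-points as $M$-completely reducible representations up to $H$-conjugacy (Remark \ref{R: decom finite group R}, extended to the profinite/continuous setting via \cite[11.7]{L} and \cite[4.5]{BHKT}, \cite[3.5]{BHKT}): a $K'$-point of $\mR^c_{\Ga,M^\bullet/\!\!/H}$ over $\Theta$ is exactly an $M$-completely reducible $\rho:\Ga\to M(K')$ with $\Tr\rho=\Theta$, and $M$-complete reducibility in characteristic zero is equivalent to geometric complete reducibility after a finite extension.

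So the argument assembles as follows: (1) unwind $\Theta$ to a continuous $M$-valued abstract pseudocharacter landing in $\mO_K$; (2) push to $\GL_m$ and apply the classical theorem to get $\rho_0$ semisimple over a finite extension with the right trace; (3) use that $\Tr\rho_0=\phi_*\Theta$ factors through $M^\bullet/\!\!/H$, together with the modular interpretation of $\mR^c_{\Ga,M^\bullet/\!\!/H}(K')$, to produce after a further finite extension a continuous $\rho:\Ga\to M(K')$ with $\Tr\rho=\Theta$ that is $M$-completely reducible, hence geometrically completely reducible since $\cha K=0$. The main obstacle I anticipate is step (3): the descent from $\GL_m$ to $M$ is not formal — one needs either to know that every $K'$-point of the pseudorepresentation scheme lifts to a genuine ($M$-completely reducible) representation after a finite extension, which in the $\GL_m$ case is classical (existence of the semisimple representation with prescribed pseudocharacter) but for general reductive $M$ requires the Lafforgue / Böckle–Harris–Khare–Thorne machinery, and one must be careful that the continuity of $\rho$ (as opposed to merely of its pseudocharacter) is preserved, which again uses profiniteness of $\Ga$ and that $\rho$ factors through a finite quotient modulo each $\frakm^n$. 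A secondary technical point is ensuring only a \emph{finite} extension $K'/K$ is needed rather than an infinite algebraic one; this follows because the relevant residual representation and its lift are all defined over finite extensions, the pseudodeformation ring being Noetherian.
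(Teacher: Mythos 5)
Your route through $\GL_m$ is genuinely different from the paper's, which applies the Lafforgue/B\"ockle--Harris--Khare--Thorne existence result directly for $M$-valued pseudorepresentations to produce a geometrically completely reducible $\rho:\Ga\to M(\overline K)$ with $\Tr\rho=\Theta$ as an abstract group representation, then establishes continuity by mimicking Lafforgue's argument in \cite[11.7]{L} with a power-surjectivity modification (needed in positive residue characteristic), and finally uses profiniteness of $\Ga$ together with a Baire-category argument to descend to a finite extension $K'$. Your $\GL_m$ detour would, if completed, avoid the power-surjectivity lemma by importing continuity from the classical $\GL_m$ pseudocharacter theory, which is a legitimate advantage; but as written, step~(3) has two concrete gaps.

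First, the characterization of $\mR^c_{\Ga,M^\bullet/\!\!/H}$-points as $M$-completely reducible representations up to $H$-conjugacy, which you cite from Remark~\ref{R: decom finite group R} and \cite[11.7]{L}, \cite[4.5]{BHKT}, is valid only over \emph{algebraically closed} fields. Over a finite extension $K'$ a pseudorepresentation need not arise from a genuine representation (there are Brauer obstructions), so you cannot conclude directly that a $K'$-point of $\mR^c_{\Ga,M^\bullet/\!\!/H}$ over $\Theta$ ``is exactly an $M$-completely reducible $\rho:\Ga\to M(K')$''. What you actually get from BHKT is a representation over $\overline K$; the descent to a finite extension is where the real work happens, and your appeal to Noetherianity of the pseudodeformation ring does not supply it — the paper uses a Baire-category argument based on profiniteness of $\Ga$ and the fact that $M(\overline K)=\bigcup_{K''/K \text{ finite}}M(K'')$ with each $M(K'')$ closed.

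Second, continuity of the $M$-valued $\rho$ is asserted but not derived. The correct version of your plan would be: having produced $\rho:\Ga\to M(\overline K)$ completely reducible with $\Tr\rho=\Theta$ via BHKT, note that since we are in characteristic zero, $\phi\circ\rho$ is a semisimple $\GL_m(\overline K)$-representation with trace $T\otimes K$, hence by Brauer--Nesbitt it is $\GL_m(\overline K)$-conjugate to $\rho_0\otimes_{K'}\overline K$; the conjugate of a continuous representation is continuous, and $M\hookrightarrow\GL_m$ is a closed embedding, so $\rho$ is continuous as an $M(\overline K)$-valued map; then Baire gives the finite extension. None of this is spelled out, and the claim that a semisimple $\GL_m$-representation with $M$-valued pseudocharacter ``is $\GL_m(K')$-conjugate into $M(K')$'' is not what the conjugacy argument gives you (the conjugating element lives in $\GL_m(\overline K)$, not $\GL_m(K')$). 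So the strategy can likely be repaired, but as written the key descent step is both misattributed and incomplete.
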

\begin{proof} 
Clearly $\Theta$ gives a $\Lambda$-valued pseudorepresentation of the underlying abstract group of $\Ga$.
Recall that from \cite[11.7]{L} and \cite[4.5]{BHKT}, there is a geometrically completely reducible representation (see \cite[3.5]{BHKT} for the terminology) $\rho: \Ga\to M(\overline K)$ such that $\Tr\rho=\Theta$. To show that it is continuous, one can mimic the argument as in \cite[11.7]{L} with the following change. 
Note our $(M,H)$ correspond $(H,H^0)$ in \emph{loc. cit}. Under this notation change, choose $(g_1,\ldots,g_n)\in M(\overline K)$ as in \emph{loc. cit.} and let $C(g_1,\ldots,g_n)\subset H_{\overline K}$ be the stabilizer of $(g_1,\ldots,g_n)$ under the diagonal $H$-action on $M^n$ and
let $D(g_1,\ldots,g_n)\subset M_{\overline K}$ be the fixed points of $C(g_1,\ldots,g_n)$. Then  in \emph{loc. cit.}  the map $\overline \Lambda[M^{n+1}/\!\!/H]\to \overline \Lambda[D(g_1,\ldots,g_n)]$ (denoted by $q$ in \emph{loc. cit.}) is shown to be surjective when $\cha K=0$ since taking invariants with respect to a reductive group of a surjective ring map remains surjective. This may not be the case in positive characteristic.  But this map is power surjective as in \cite{TVDK2}. This weaker statement suffices to apply all the arguments in \emph{loc. cit.} to deduce continuity of $\rho$. As $\Ga$ is profinite, $\rho$ factors through $\Ga\to M(K')$ for some $K'/K$ finite by the standard argument using the Baire category theorem.
\end{proof}

Now we discuss the global geometry of $\mR^c_{\Ga,M}$.
By Proposition \ref{P: trun value}, there is a natural morphism
$\Tr: \mR^c_{\Ga,M}\to \mR^c_{\Ga,M^\bullet/\!\!/H}$.
Suppose $\Ga$ admits a unique maximal open compact normal subgroup $\Ga_c\subset\Ga$.
Together with \eqref{E: decom pseudo},  we obtain the decomposition
\begin{equation}\label{E: dec cont}
\mR^c_{\Ga,M}=\sqcup_{\Theta} \mR^{c,\Theta}_{\Ga,M}\to \sqcup_{\Theta}\mR^{c,\Theta}_{\Ga_c,M^\bullet/\!\!/H}
\end{equation}
where $\Theta$ range over closed points of $\mR^c_{\Ga_c,M^\bullet/\!\!/H}$,
such that $\Tr(\rho_x|_{\Ga_c})=\Theta$ for every $\overline{\kappa}_E$-point $x$ of $\mR^{c,\Theta}_{\Ga,M}$ corresponding to a continuous representation $\rho_x:\Ga\to M(\overline{\kappa}_E)$. 

\begin{example}\label{E: disc cont moduli}
Let us consider the simplest case when $\Ga=\widehat{\bZ}$. If $M=\bG_m$, then $\mR^{c}_{\Ga,M}$ is just the union of all torsion points of $\bG_m$, and therefore is isomorphic to $\sqcup_x (\bG_m)_x^\wedge$, where $x$ range over all closed points of $\bG_m\otimes\kappa_E$. 
For a slightly more complicated example, we let $M$ be a split connected reductive group over $\mO_E$, and denote $M/\!\!/M$ its adjoint quotient. Then
$\mR^{c}_{\Ga,M}\cong M\times_{M/\!\!/ M} (\sqcup_x (M/\!\!/M)_x^\wedge)$, where $x$ range over all closed points of $M/\!\!/M$.
\end{example}

\begin{remark}\label{R: negative side}
Example \ref{E: disc cont moduli}  suggests that while Definition \ref{D: cont rep sp} makes sense for any locally profinite group $\Ga$, it may not give the ``most correct" object for some purposes.
Namely, although $\mR^{c}_{\Ga,M}$ already glues various deformation spaces of $\Ga$ together,
in general it is still disconnected and has formal directions. This example also suggests in certain cases different components of $\mR^{c}_{\Ga,M}$ could be further glued. 
For example, all the components of $\mR^{c}_{\widehat\bZ,M}$ should naturally glue to $\mR_{\bZ,M}=M$. This is a special case of a general phenomenon discussed below (in particular see Proposition \ref{P: glue formal}). 
To give another example, let $F$ be a non-archimedean local field of residue characteristic $p$ with $\Ga_F$ its Galois group and $W_F$ its Weil group. Then if $p\neq \ell$,
it is more correct to consider $\mR^{c}_{W_F,M}$ than $\mR^{c}_{\Ga_F,M}$, as we shall see in the next section. If $p=\ell$, even $\mR^{c}_{W_F,M}$ is not enough, as explained to us by Emerton. Instead, one needs the construction as in \cite{EG}.
Finally, we also expect that when $\Ga$ is the \'etale fundamental group of a smooth (affine) algebraic curve over $\overline\bF_p$ (with $p\neq \ell$), there is a more sophisticated construction of its representation space.
\end{remark}

When $\widehat\Ga$ is the profinite completion of an abstract group $\Ga$ as in Lemma \ref{L: formal closed}, then under certain mild assumptions $\mR_{\Ga,M}$ glues different components of $\mR^{c}_{\widehat\Ga,M}$ (as in the decomposition \eqref{E: dec cont}) together. 

\begin{prop}\label{P: glue formal}
Let $\Ga$ be a finitely generated group of type $FP_\infty(k)$ such the map $\Ga\to \widehat\Ga$ induces an isomorphism of group cohomology $H^i_{cts}(\widehat\Ga, V)\cong H^i(\Ga, V)$ 
for every finite $\bF_\ell\Ga$-module $V$.
Then the natural morphism $\mR^{c}_{\widehat\Ga,M}\to \mR_{\Ga,M}$  induces an isomorphism
\begin{equation*}\label{E: comp along pseudo}
\mR^{c}_{\widehat\Ga,M}\cong \mR_{\Ga,M}\times_{\mR_{\Ga,M^\bullet/\!\!/H}}\bigl(\sqcup_x (\mR_{\Ga,M^\bullet/\!\!/H})_x^\wedge\bigr),
\end{equation*}
where $x$ range over all closed points of $\mR_{\Ga,M^\bullet/\!\!/H}$ over $\kappa_E$ and $(\mR_{\Ga,M^\bullet/\!\!/H})_x^\wedge$ is the formal completion of $\mR_{\Ga,M^\bullet/\!\!/H}$ at $x$.
\end{prop}
\begin{proof}
We only give the proof at the level of classical moduli problems. A similar argument as in Proposition \ref{P: trun value} will show that it is also an isomorphism at the derived level. 

By Proposition \ref{P: fin pseudo rep}, clearly $\mR^{c}_{\widehat\Ga,M}\to \mR_{\Ga,M}$ factors through the morphism $\mR^{c}_{\widehat\Ga,M}\to \mR_{\Ga,M}\times_{\mR_{\Ga,M^\bullet/\!\!/H}}(\sqcup_x (\mR_{\Ga,M^\bullet/\!\!/H})_x^\wedge)$. We need to construct the inverse map.
So let $\rho: \Ga\to M(A)$ be homomorphism, where $A$ is classical of finite type over $\mO_E$ such that the composed morphism $\on{Spec} A\to \mR_{\Ga,M}\to \mR_{\Ga,M^\bullet/\!\!/H}$ maps the topological space $|\on{Spec} A|$ to $x$. It is enough to show that $\rho$ factors through a finite quotient of $\Ga$. We may choose a faithful embedding $M\to\GL_m$ and assume that $M=\GL_m$. By our assumption, the image of the map $\Lambda[\mR_{\Ga,\GL_m^\bullet/\!\!/\GL_m}]^{\GL_m}\to A$, denoted by $B$, is artinian local. Note that for every $\ga\in \Ga$, the characteristic polynomial $\on{Char}(\rho(\ga),t)=\det(t-\rho(\ga))$ of $\rho(\ga): A^m\to A^m$ belongs to $B[t]$. The following argument is a slight variant of \cite[2.8-2.10]{de}.

First assume that $A$ is reduced so it is a finite type $\kappa_E$-algebra. Then $B$ is a finite extension of $\kappa_E$. We know that there is a finite extension $\kappa$ of $B$ and a completely reducible representation $\rho': \Ga\to \GL_m(\kappa)$ such that $\on{Char}(\rho'(\ga),t)=\on{Char}(\rho(\ga),t)$ for every $\ga\in\Ga$. In particular, there is a finite index subgroup $\Ga_1\subset\Ga$ such that $\on{Char}(\rho(\ga),t)=(t-1)^m$. By replacing $A$ by its quotient ring and by conjugation, one can assume that $\rho(\ga)$ is strictly upper triangular for every $\ga\in \Ga_1$. Note that the group of strictly upper triangular matrices with coefficient in $A$ is a nilpotent group of exponent of some power of $\ell$. By our assumption $H^1(\Ga_1,\bF_\ell)$ is a finite dimensional $\bF_\ell$-vector space. So there is a finite index subgroup $\Ga_2\subset\Ga_1$ such that $\rho|_{\Ga_2}$ is trivial.

For general finite type $\mO_E$-algebra $A$ in which $\ell$ is nilpotent, let $A_{\on{red}}$ be its quotient by the nilradical. Let $\Ga_2$ be the kernel of $\Ga\to\GL_m(A)\to\GL_m(A_{\on{red}})$, which is of finite index in $\Ga$.  As the kernel $\GL_m(A)\to \GL_m(A_{\on{red}})$ is a nilpotent group of exponent some power of $\ell$, and $H^1(\Ga_2,\bF_\ell)$ is finite dimensional, there is a finite index subgroup $\Ga_3\subset\Ga_2$ such that $\rho|_{\Ga_3}$ is trivial.  
\end{proof}

The last topic of this subsection is an extension of the moduli space $\mR^c_{\Ga,M}$ from $\on{Spf}\mO_E$ to $\on{Spec} \mO_E$.  Of course, if $\Ga$ appears to be the profinite completion of $\Ga_0$ for some abstract group $\Ga_0$ as in Proposition \ref{P: glue formal}, such extension can be given by $\mR_{\Ga_0,M}$. This is the approach we will adapt to construct the moduli of local Langlands parameters (in the $\ell\neq p$ case).
However, not every $\Ga$ arises in this way, and even it is, there is in general no canonical choice of $\Ga_0$. Therefore, it is desirable to have a more direct construction. As in general $\mR^c_{\Ga,M}$ has non-trivial formal directions, probably such extensions should be of analytic nature in general. However, for the specific situations considered in the next section, the following approach suffices. The idea is to extend the definition of $C_{cts}(S,-)$ for $\mO_{E,r}$-modules/algebras in Lemma \ref{L: cont S to dis} to a functor for $\mO_E$-modules/algebras satisfying similar properties. Then almost all the rest of the constructions go through without change.

Let $\Mod_{\mO_{E,r}}^{\heart, f.g.}$ denotes the abelian category of finite $\mO_{E,r}$-modules. The natural forgetful functor from $\Mod_{\mO_{E,r}}^{\heart, f.g.}$ to the category $\Sets_f$ of finite sets is faithful conservative, preserves finite products and is lax symmetric monoidal (where $\Sets_f$ is equipped with the Cartesian symmetric monoidal structure). 
It induces a natural functor\footnote{We learned the idea of considering such functor from Peter Scholze who developed an approach of moduli of continuous representations via condensed mathematics. Our approach here does not make use of condensed mathematics, but likely it is essentially the same as Scholze's.} 
\begin{equation}\label{E: indpro on OEmod}
\Mod^\heart_{\mO_E}=\Ind\varprojlim_r\Mod_{\mO_{E,r}}^{\heart, f.g.}\to \Ind\Pro(\Sets_f),
\end{equation}
satisfying similar properties, where $\Ind\Pro(\Sets_f)$ denotes the ind-completion of the category of profinite sets.
Note that a disjoint union of profinite sets $S$ can also be regarded as an object in $ \Ind\Pro(\Sets_f)$. 
\begin{lemma}\label{L: cont S to ind-l-adic}
Let $S$ be a disjoint union of profinite sets, regarded as an ind-profinite set. Then 
$\Mod^\heart_{\mO_E}\to \Mod^\heart_{\mO_E}, \ C_{cts}(S,V)=\Map_{\Ind\Pro(\Sets_f)}(S, V)$ 
satisfies the same properties as the one in Lemma \ref{L: cont S to dis} and therefore extends to a $t$-exact functor 
\begin{equation}\label{E:CctsM}
C_{cts}(S,-): \Mod_{\mO_E}\to \Mod_{\mO_E}, 
\end{equation}
which lifts to a nilcomplete functor 
\begin{equation}\label{E:CctsA}
   C_{cts}(S,-): \CA_{\mO_{E}}\to \CA_{\mO_{E}}
\end{equation}
preserving finite limits. If $S$ is profinite, then \eqref{E:CctsM} preserves all colimits and \eqref{E:CctsA} preserves sifted colimits.
\end{lemma}
\begin{proof}
For the first part about modules, using arguments in Lemma \ref{L: cont S to dis}, it reduces to prove surjectivity of $C_{cts}(S,M)\to C_{cts}(S,M'')$ for a surjective map $M\to M''$ of finite $\mO_E$-modules when $S$ is profinite. As every finite $\mO_E$-module is a direct sum of a finite free one and a finite torsion one, this is also clear.
As \eqref{E: indpro on OEmod} is lax monoidal, $C_{cts}(S,A)$ is an $\mO_E$-algebra if $A$ is. The argument for the rest part is the same as in Lemma \ref{L: cont S to dis}.
\end{proof}

\begin{remark}\label{R: strong top}
Note that the fully faithful functor $\Pro(\Sets_f)\to \mathbf{Top}$ by Stone duality induces a fully faithful functor $\Ind\Pro(\Sets_f)\to \mathbf{Top}$. Together with \eqref{E: indpro on OEmod}, this endows every $\mO_E$-module a topology, which we call the ind-$\varpi$-adic topology. Explicitly, for an $\mO_E$-module $V$, this is the finest topology on $V$ such that on every finitely generated submodule $U\subset V$ the subspace topology coincides with the $\varpi$-adic topology.  
In general, the ind-$\varpi$-adic topology on $V$ is stronger than some other convenient topology on $V$. For example, if $V$ is a $\varpi$-adically separated $\mO_E$-module, then the ind-$\varpi$-adic topology on $V$ is usually strictly finer than the $\varpi$-adic topology. Similarly, for an algebraic field extension $F/E$, then ind-$\varpi$-adic topology on $F$ (regarded as an $\mO_E$-module) is strictly finer than the usual $\varpi$-adic topology on $F$ unless $[F:E]<\infty$. Note that if $V$ is an $\mO_{E,r}$-module for some $r$, then the ind-$\varpi$-adic topology on $V$ is discrete. 

There is one warning. Namely, as the functor $\Ind\Pro(\Sets_f)\to \mathbf{Top}$ does not preserve finite product in general, the composed functor $\Mod_{\mO_E}^\heart\to \mathbf{Top}$ is not lax symmetric monoidal so a classical $\mO_E$-algebra $A$ equipped with ind-$\varpi$-adic topology may not be a topological algebra in the usual sense. One way to remedy this problem is by noticing $\Ind\Pro(\Sets_f)\to \mathbf{Top}$ actually factors through $\Ind\Pro(\Sets_f)\to \mathbf{CG}$, where $\mathbf{CG}\subset\mathbf{Top}$ is the full subcategory of compactly generated spaces, and the resulting functor preserves finite products.
\end{remark}

\begin{remark}
As in the case over $\mO_{E,r}$, by regarding $S$ as a discrete set, we have $C_{cts}(S,-)\to C(S,-)$. If $A$ is classical, $C_{cts}(S,A)\to C(S,A)$ is injective. In addition, note that if  $V\in\Mod^{\heart, f.g.}_{\mO_{E}}$, then
$C_{cts}(S,V)=\varprojlim_r C_{cts}(S, V/\varpi^r)$.
\end{remark}

Now given $C_{cts}(S,-)$ from Lemma \ref{L: cont S to ind-l-adic}, we can extend Definition \ref{D: cont rep sp} as follows.
\begin{definition}\label{D: sc rep}
We define the $M$-valued strongly continuous representation space over $\mO_E$ as
\[
\mR^{sc}_{\Ga,M}: \CA_{\mO_E}\to \Spc,\quad A\mapsto \on{Map}_{\CA_{\mO_E}^{\Delta}}\bigl(\mO_E[M^\bullet], C_{cts}(\Ga^\bullet,A)\bigr).
\]
and similar the representation stack $\mR^{sc}_{\Ga,M/H}$ as the quotient of $\mR^{sc}_{\Ga,M}$ by $H$.
\quash{
There is a simplicial diagram similar to \eqref{E: H action} (with $\mR_{\Ga,M}$ replaced by $\mR^{sc}_{\Ga,M}$) and therefore we define the representation stack $\mR^{sc}_{\Ga,M/H}$ as its geometric realization in $\Shv(\CA_{\mO_E})$.
We similarly define pseudorepresentation space over $\on{Spec} \mO_E$ as
\[
\mR^{sc}_{\Ga,M^\bullet/\!\!/H}: \CA_{\mO_E}\to \Spc,\quad A\mapsto \on{Map}_{\CA_{\mO_E}^{\FFM}}(\mO_E[M^\bullet]^H, C_{cts}(\Ga^\bullet,A)).
\]}
\end{definition}

By definition the restriction of $\mR^{sc}_{\Ga,M}$ to $\on{Spf}\mO_E$ is $\mR^c_{\Ga,M}$. As before, there are natural morphisms
\[
\mR^{sc}_{\Ga,M}\to \mR_{\Ga,M}, \quad \mR^{sc}_{\Ga,M/H}\to \mR_{\Ga,M/H}
\]
over $\mO_E$, where $\Ga$ in regarded as an abstract group in $\mR_{\Ga,M}$ and in $\mR_{\Ga,M/H}$. If $A$ is classical, then the induced by $\mR^{sc}_{\Ga,M}(A)\to \mR_{\Ga,M}(A)$ is injective with image consisting of those $\rho:\Ga\to M(A)$ such that for every $f\in\mO_E[M]$, $f\circ\rho: \Ga\to A$ is continuous, where $A$ is equipped with the ind-$\varpi$-adic topology. As the ind-$\varpi$-adic topology on $A$ is in general stronger than other convenient topology (see Remark \ref{R: strong top}), we call such $\rho$ a strongly continuous representation. This justifies our terminology for $\mR^{sc}_{\Ga,M}$.

The following simple observation is important for many discussions in the sequel.

\begin{lemma}\label{L: cont criterion}
Assume that $\Ga$ is profinite and $A$ is a classical $\mO_E$-algebra. Then $\rho: \Ga\to \GL_m(A)$ belongs to $\mR^{sc}_{\Ga,\GL_m}(A)$ if and only if $A^m=\cup_i V_i$ is a union of finite $\mO_E$-modules $V_i$ such that each $V_i$ is a $\Ga$-stable and that the action of $\Ga$ on $V_i$ is continuous.
\end{lemma}
\begin{proof}Indeed, if we denote the $(i,j)$-entry of $\rho(\ga)$ by $a_{ij}(\ga)$, then $\Ga\to A, \ \ga\mapsto a_{ij}(\ga)$ is a map in $\Ind\Pro(\Sets_f)$ and therefore the image is contained in a finitely generated $\mO_E$-submodule of $A$. Therefore, for every $v\in A^m$, $\rho(\Ga)v$ is contained in a $\Ga$-submodule $V$ of $A^m$ that is finite over $\mO_E$, and the action of $\Ga$ on $V$ is continuous. Conversely, if $A^m$ is a union of $\Ga$-submodules $V_i$ as in the lemma, then $a_{ij}: \Ga\to A$ takes values in a finitely generated $\mO_E$-submodule of $A$ and the map resulting map is continuous. Then $\rho$ is strongly continuous. 
\end{proof}

\begin{remark}
Using the above lemma, one can show that ${}^{cl}\mR^{sc}_{\Ga,M}$ is represented by an ind-affine scheme. As we do not make use of this fact, we skip the proof.
\end{remark}

Now for $\rho\in \mR^{sc}(\Ga,M)(A)$, and an algebraic representation $W$ of $M$ on a finite free $\mO_E$-module $W$, we also have $W_\rho=W\otimes A$ equipped a strongly continuous action of $\Ga$ (encoded by the cosimplicial module $C_{cts}(\Ga^\bullet, W_\rho)$ over $C_{cts}(\Ga^\bullet, A)$ as in Remark \ref{R: fiber action} \eqref{R: fiber action-1}). Let  $C^*_{cts}(\Ga,W_\rho)$ be the totalization of $C_{cts}(\Ga^\bullet, W_\rho)$ (in $\Mod_A$). In light of Remark \ref{R: tan compl}, we call this cochain complex the continuous group cohomology of $\Ga$ with coefficients in $W_\rho$. There is similarly the reduced version $\overline{C}^*_{cts}(\Ga,W_\rho)[1]$.
If $A$ is classical, and $\Ga$ is profinite, then by Lemma \ref{L: cont criterion}, we may write $W_\rho=\cup_i V_i$ with each $V_i$ continuous representation of $\Ga$ on a finite $\mO_E$-module. As $C_{cts}(S,-)$ commutes with filtered colimits when $S$ is profinite, we have
\begin{equation}\label{E: colim cont coh}
C^*_{cts}(\Ga,W_\rho)= \varinjlim_i C^*_{cts}(\Ga,V_i),
\end{equation}  
where $C^*_{cts}(\Ga,V_i)$ is the usual continuous group cohomology of $\Ga$ with coefficient in the continuous $\Ga$-module $V_i$. 

The following proposition summarizes the infinitesimal geometry of $\mR^{sc}_{\Ga,M}$, which is a direction generalization of corresponding statements for
$\mR^c_{\Ga,M}$. 
\begin{prop}\label{L: tang Rsc}
The functor $\mR^{sc}_{\Ga,M}: \CA_{\mO_E}\to\Spc$ is nilcomplete and preserves finite products.
Let $\rho\in \mR^{sc}_{\Ga,M}(A)$ with $A$ truncated. 
Then $\bT_\rho\mR^{sc}_{\Ga,M}(A)\cong \overline{C}^*_{cts}(\Ga, \Ad_\rho)[1]$. If $\Ga$ is profinite, then for each $m$ the restriction of $\mR^{sc}_{\Ga,M}$ to a functor ${}_{\leq m}\CA_{\mO_E}\to\Spc$  commutes with filtered colimits. 
\end{prop}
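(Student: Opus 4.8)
The plan is to deduce each assertion from the corresponding property of the functor $C_{cts}(S,-)$ established in Lemma \ref{L: cont S to ind-l-adic}, together with the fact that $\mR^{sc}_{\Ga,M}$ is built out of $C_{cts}(\Ga^\bullet,-)$ and the cosimplicial ring $\mO_E[M^\bullet]$ by a mapping-space construction that commutes with the relevant limits. First, for nilcompleteness: by Lemma \ref{L: cont S to ind-l-adic} the functor $C_{cts}(S,-):\CA_{\mO_E}\to\CA_{\mO_E}$ is nilcomplete, i.e. $C_{cts}(S,A)\xrightarrow{\sim}\varprojlim_m C_{cts}(S,\tau_{\leq m}A)$; applying this degreewise to the cosimplicial algebra $C_{cts}(\Ga^\bullet,A)$ and using that mapping spaces out of $\mO_E[M^\bullet]$ (a cofibrant cosimplicial object with each $\mO_E[M^n]$ finitely presented classical) commute with the limit over $m$, one gets $\mR^{sc}_{\Ga,M}(A)\simeq\varprojlim_m\mR^{sc}_{\Ga,M}(\tau_{\leq m}A)$. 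For finite products: $C_{cts}(S,-)$ is (lax) symmetric monoidal and exact, hence preserves finite products of algebras, so $C_{cts}(\Ga^\bullet,A\times A')\cong C_{cts}(\Ga^\bullet,A)\times C_{cts}(\Ga^\bullet,A')$ as cosimplicial rings, and $\Map_{\CA_{\mO_E}^\Delta}(\mO_E[M^\bullet],-)$ carries products of targets to products, giving $\mR^{sc}_{\Ga,M}(A\times A')\cong\mR^{sc}_{\Ga,M}(A)\times\mR^{sc}_{\Ga,M}(A')$.

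For the computation of the tangent space at a classical (or truncated) $\rho\in\mR^{sc}_{\Ga,M}(A)$, I would run essentially verbatim the argument of Proposition \ref{E: tangent space}: write the fiber of $\mR^{sc}_{\Ga,M}(A\oplus V)\to\mR^{sc}_{\Ga,M}(A)$ over $\rho$ as a limit over the twisted arrow category $\on{TwArr}(\Delta)^{\on{op}}$ of the spaces $\Map_{\Mod_{\mO_E[M^m]}}(\Omega_{M^m},C_{cts}(\Ga^n,V))$ using \cite[1.3.12]{GKRV}; substitute the cosimplicial identification \eqref{E: simp cotang} of $\Omega_{M^\bullet}$ with $(\mO_E[M^\bullet]\otimes\Ad^*)\otimes_{\mO_E[M^\bullet]}K(\mO_E[M^\bullet],-1)$; this collapses the limit to $\Map_{\mO_E[M^\bullet]}(K(\mO_E[M^\bullet],-1),C_{cts}(\Ga^\bullet,\Ad_\rho\otimes V))$, which by the dual Dold--Kan computation \eqref{E: cosim ab grp} equals $\tau^{\leq 0}\bigl(\overline{C}^*_{cts}(\Ga,\Ad_\rho\otimes V)[1]\bigr)$. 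The only point that needs checking beyond the $\mO_{E,r}$ case is that \eqref{E: simp cotang}, \eqref{E: cosim ab grp}, and \cite[1.3.12]{GKRV} hold with $\mO_{E,r}$ replaced by $\mO_E$, which is formal since those statements are about the cosimplicial object $\mO_E[M^\bullet]$ and the abstract Dold--Kan/twisted-arrow formalism, not about the coefficient ring being torsion. By the characterization of the cotangent complex via square-zero extensions (Remark \ref{R: tan compl}), this identifies $\bT_\rho\mR^{sc}_{\Ga,M}$ with $\overline{C}^*_{cts}(\Ga,\Ad_\rho)[1]$.

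For the last assertion, assume $\Ga$ profinite and fix $m$. The strategy is induction on $m$ using the Postnikov tower $A=\tau_{\leq m}A\to\cdots\to\pi_0(A)$ together with the pullback squares relating $\tau_{\leq i}A$ to $\tau_{\leq i-1}A$ and $\tau_{\leq i-1}A\oplus\pi_i(A)[i+1]$, exactly as in the proof of Proposition \ref{P: trun value}. Since $\mR^{sc}_{\Ga,M}$ preserves finite limits (just proved) and filtered colimits commute with finite limits in $\Spc$, the inductive step reduces to the base case $A$ classical and to controlling the square-zero pieces, i.e. to showing that $A\mapsto C^*_{cts}(\Ga,\Ad_\rho\otimes\pi_i(A))$ commutes with filtered colimits of classical $\mO_E$-algebras. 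For the base case, one uses Lemma \ref{L: cont criterion}: a strongly continuous $\rho:\Ga\to\GL_m(A)$ (after choosing a faithful embedding $M\hookrightarrow\GL_m$) exhibits $A^m$ as a filtered union of finite $\mO_E$-modules on which $\Ga$ acts continuously, and a filtered colimit $A=\varinjlim A_\alpha$ of classical algebras with a strongly continuous $\rho$ lands, entrywise, in some $A_\alpha$ by the $\Ind\Pro(\Sets_f)$-description of $C_{cts}(\Ga,-)$; combined with \eqref{E: colim cont coh} and the fact (Lemma \ref{L: cont S to ind-l-adic}) that $C_{cts}(S,-)$ preserves all colimits for $S$ profinite, this gives commutation with filtered colimits at the classical level. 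Alternatively, and more cleanly, I would note that by Proposition \ref{P: trun value} the underlying $\mO_{E,r}$-theory $\mR^c_{\Ga,M_r}$ restricted to ${}_{\leq m}\CA_{\mO_{E,r}}$ commutes with filtered colimits (via Proposition \ref{P: almost fin}, each $\mR_{\Ga_j,M_r}$ being almost of finite presentation), and then bootstrap to $\mO_E$ using nilcompleteness plus the Postnikov/square-zero induction. I expect the main obstacle to be the bookkeeping in the base case: verifying carefully that strong continuity genuinely forces the relevant cochains into a single stage of a filtered colimit of \emph{classical} algebras (the warning in Remark \ref{R: strong top} that $C_{cts}$ does not preserve finite products means one must argue coordinate-by-coordinate on matrix entries rather than on $\GL_m(A)$ directly), and that this interacts correctly with the derived truncations $\tau_{\leq i}$; everything else is an application of already-established functorial properties.
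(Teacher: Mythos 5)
Your plan is the intended one: the paper gives no separate proof of Proposition \ref{L: tang Rsc} and instead flags it as a direct generalization of Propositions \ref{E: tangent space} and \ref{P: trun value}, and your strategy---derive nilcompleteness and finite-limit/product preservation from Lemma \ref{L: cont S to ind-l-adic}, rerun the twisted-arrow/Dold--Kan computation of Proposition \ref{E: tangent space} with $\mO_{E,r}$ replaced by $\mO_E$, and carry out the Postnikov-tower induction of Proposition \ref{P: trun value} for the filtered-colimit statement---is exactly that generalization. Two small corrections. First, the concern you inherit from Remark \ref{R: strong top} is not an obstruction: that remark is about the forgetful functor $\Ind\Pro(\Sets_f)\to\mathbf{Top}$ failing to preserve finite products, whereas Lemma \ref{L: cont S to ind-l-adic} already guarantees that $C_{cts}(S,-):\CA_{\mO_E}\to\CA_{\mO_E}$ preserves finite limits. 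Consequently the base case of the filtered-colimit induction is simpler than you fear: for classical $A$ and $\Ga$ profinite, $\mR^{sc}_{\Ga,M}(A)$ is a finite limit of sets $\Map_{\CA^\heart_{\mO_E}}(\mO_E[M^n],C_{cts}(\Ga^n,A))$, and one concludes from the facts that $\mO_E[M^n]$ is a finitely presented classical algebra and that $C_{cts}(\Ga^n,-)$ preserves sifted colimits (Lemma \ref{L: cont S to ind-l-adic}); there is no need to route through Lemma \ref{L: cont criterion}, which would additionally require care with the possible failure of injectivity of the transition maps in a general filtered colimit. Second, the ``more cleanly'' alternative you propose---bootstrapping from the $\mO_{E,r}$-statement of Proposition \ref{P: trun value} via nilcompleteness---has a genuine gap: a filtered diagram of $m$-truncated $\mO_E$-algebras need not factor through any fixed $\CA_{\mO_{E,r}}$, since the $\varpi$-adic nilpotence exponents can be unbounded, so commutation with filtered colimits in each $\CA_{\mO_{E,r}}$ does not imply the same over $\mO_E$. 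Your primary route, the Postnikov induction carried out directly over $\mO_E$ using Lemma \ref{L: cont S to ind-l-adic}, is the one to keep.
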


We end this subsection with a result on constancy of residual pseudorepresentations of a strongly continuous representation of a profinite group. So assume that $\Ga$ is profinite and that $(M,H)$ are as in Proposition \ref{P: fin pseudo rep}. 
First, as explained in \cite[4.8]{BHKT}, for every continuous representation $\rho: \Ga\to M(E')$ with $E'/E$ finite extension, the pseudorepresentation of $\Tr\rho$ takes $\mO_{E'}$-value so its reduction mod $\varpi'$ gives a well-defined $\kappa_{E'}$-valued pseudorepresentation of $\Ga$, which we denote by $\overline{\Tr\rho}$. To unify the notion, if $\rho: \Ga\to M(\kappa')$ is continuous with $\kappa'/\kappa_E$ finite, we also denote $\Tr\rho$ by $\overline{\Tr\rho}$.

\begin{lemma}\label{L: constant}
Let $A$ be a finitely generated $\mO_E$-algebra such that $\on{Spec} A$  is connected, and $\rho: \Ga\to M(A)$ a strongly continuous representation. For every point $x\in \on{Spec} A$ whose residue field is either finite over $\kappa_E$ or finite over $E$, 
let $\rho_x$ denote the corresponding continuous representation. Then $x\mapsto \overline{\Tr\rho_x}$ is constant.
\end{lemma}
\begin{proof}
If $\varpi^nA=0$ for some $n$, this follows from Proposition \ref{P: fin pseudo rep}. Now suppose $A[\varpi^{-1}]$ is not empty. Let $\on{Spec} B\subset \on{Spec} A[\varpi^{-1}]$ be a connected component. Let $B_0$ be the subring of $B$ generated by $f(\rho(\ga_1,\ldots,\ga_n))$ for all $n\geq 1$, $f\in E[M^n]^{H}$, and $(\ga_i)\in \Ga^n$. As the $\FFM$-algebra $E[M^\bullet]^H$ is finitely generated (\cite[thm. 9]{Wei}) and $\rho$ is strongly continuous, $B_0$ is finitely generated over $E$. As each closed point of $\on{Spec} B_0$ is indeed defined over some finite extension of $\mO_E$, $B_0$ itself must be finite over $E$. As $\on{Spec} B_0$ is connected, it has a unique point. So $\overline{\Tr(\rho_x)}$ is constant. Finally, clearly if $\rho_x: \Ga\to M(E')$ comes from $\rho_x: \Ga\to M(\mO_{E'})$, then $\overline{\Tr\rho}=\Tr\bar\rho=\overline{\Tr\bar\rho}$. Now the lemma is a combination of the above facts.
\end{proof}

%%%%%%%%%%%%%%%%%%%%%%%%%%%%%%%%%%%%%%

\section{The stack of arithmetic Langlands parameters}\label{S: stack Loc}
In this section, we apply the constructions from the previous section to understand the moduli space of Langlands parameters. The situation is relatively well understood in the local case ($\ell\neq p$), which will be discussed in \S \ref{S: Loc}-\ref{SS: SPI}. Much less can be said in the global field case; however, we are still able to construct the moduli space in the global function field case in \S \ref{SS: glob par}.

First recall the $C$-group introduced by Buzzard-Gee \cite{BG}, following the construction in \cite[\S 1.1]{Z20}. Let $G$ be a connected reductive group over a field $F$. Let $\Ga_F$ denote the Galois group of $F$, and $\hat{G}$ the dual group of $G$, regarded as a group scheme over $\bZ$. It is equipped with a pinning $(\hat{B},\hat{T},\hat{e})$, and an action of $\Ga_F$ via the homomorphism $\xi:\Ga_F\to \Aut(\hat{G},\hat{B},\hat{T},\hat{e})$. Let $\hat{G}_\ad$ be the adjoint group of $\hat{G}$, and $\rho_\ad: \bG_m\to \hat{G}_\ad$ the cocharacter given by the half sum of positive coroots of $\hat{G}$. Let $\pr:\Ga_F\to \Ga_{\widetilde{F}/F}$ be the finite quotient of $\Ga_F$ by $\ker\xi$. Let 
$${}^cG:=\hat{G}\rtimes(\bG_m\times\Ga_{\widetilde{F}/F})$$ 
be the $C$-group of $G$, regarded as a group scheme over $\bZ$, where $\bG_m$ acts on $\hat{G}$ via the homomorphism $\bG_m\xrightarrow{\rho_\ad}\hat{G}_\ad\subset\Aut(\hat G)$, and $\Ga_{\widetilde F/F}$ acts via $\xi$. Let $d: {}^cG\to \bG_m\times\Ga_{\widetilde{F}/F}$
denote the natural projection. 

\begin{remark}\label{R: cG and LG}
If $F$ is a local field with residue field $\bF_q$ or a global function field with $\bF_q$ as the field of its constants, upon a choice of $q^{1/2}$, ${}^cG$ and ${}^LG\times\bG_m$ are isomorphic over $\bZ[q^{\pm 1/2}]$, where ${}^LG=\hat{G}\rtimes\Ga_{\widetilde F/F}$ is the usual Langlands dual group of $G$. 
Therefore one can replace ${}^cG$ by ${}^LG$ in most discussions below (with small modifications). However, we prefer to use the $C$-group rather than the $L$-group in our formulation. On the one hand, it is more canonical (and will be necessary when we consider local-global compatibility for number fields). 
On the other hand, using the $L$-group does not seem to simplify the formulation too much when $\widetilde F\neq F$.

Conversely, if the cocharacter $\rho_\ad$ can be lifted to a $\Ga_{\widetilde F/F}$-invariant cocharacter $\tilde\rho:\bG_m\to \hat{G}$, then one can also use ${}^LG$ instead of ${}^cG$ in the discussions below. For example, this is the case if $G=\GL_n$ or the odd unitary group. See \cite[Example 2]{Z20}.
\end{remark}

\subsection{The stack of local Langlands parameters}\label{S: Loc}
In the next two subsections, we discuss the stack of local Langlands parameters over a base in which $p$ is invertible, for a connected reductive group $G$ over a local field $F$ of residue characteristic $p$. Some results in this subsection are also obtained by Dat-Helm-Kurinczuk-Moss \cite{DHKM}, and independently by Fargues-Scholze \cite{FS}, sometimes by different methods.

Let $\kappa_F$ denote the residue field with $\sharp\kappa_F=p^r$. Let $\Ga_F$ be the Galois group of $F$. Let $P_F\subset I_F\subset \Ga_F$ be the wild inertia and the inertia, corresponding to Galois extensions $F^{t}\supset F^{\on{ur}}\supset F$. 
Recall that the tame inertia
$$I^t_F:=I_F/P_F\cong \prod_{\ell\neq p}\bZ_{\ell}(1)=:\widehat{\bZ}^p(1)$$ 
is prime-to-$p$, while $P_F$ is a pro-$p$-group. Then $\Ga_{F}^t:=\Ga_{F^t}\cong \Ga_F/P_F$ fits into the following short exact sequence
\[
   1\to I_F^t\to \Ga_F^t\to \widehat{\bZ}\to 1.
\]

Let $W_F\subset \Ga_F$ be the Weil group of $F$. We normalize the map 
\begin{equation}\label{E: Weilnorm}
\|\cdot\|: W_F\to \bZ
\end{equation}
so it is trivial on $I_F$ and $\|\Phi\|=1$ for a lifting of the \emph{arithmetic} Frobenius. Similarly, there is the tame Weil group $W_F^t:= W_F/P_F$, which is an extension of $\bZ$ by $I_F^t$. We let
\[
\chi=(q^{-\|\cdot\|},\pr): W_F\to \bZ[1/p]^\times\times\Ga_{\widetilde F/F}.
\]
Note that $q^{-\|\cdot\|}$ is the restriction of the \emph{inverse} cyclotomic character of $\Ga_F$ to $W_F$.

There are several versions of the moduli of local Langlands parameters.

First, we fix a prime $\ell\neq p$. There is the moduli $\mR^{c}_{W_F,{}^cG}$ of continuous representations of $W_F$ over $\Spf\bZ_\ell$ (Definition \ref{D: cont rep sp}). The homomorphism $d:{}^cG\to \bG_m\times\Ga_{\widetilde F/F}$ induces a morphism 
$\mR^{c}_{W_F,{}^cG}\to \mR^{c}_{W_F,\bG_m\times \Ga_{\widetilde F/F}}$. 
We may regard $\chi$ as a $\Spf \bZ_\ell$-point of $\mR^{c}_{W_F,\bG_m\times\Ga_{\widetilde F/F}}$  
and define
\begin{equation}\label{E: Locloc}
  \Loc^{\wedge,\Box}_{{}^cG, F}:=\mR^{c}_{W_F,{}^cG}\times_{\mR^{c}_{W_F,\bG_m\times\Ga_{\widetilde F/F}}}\bigl\{\chi\bigr\},\quad \Loc^{\wedge}_{{}^cG,F}=\Loc_{{}^cG,F}^{\wedge,\Box}/\hat{G}_\ell^\wedge,
\end{equation}
where $\hat{G}_\ell^\wedge$ is the $\ell$-adic completion of $\hat{G}$.
As $\Ga_F$ is the profinite completion of $W_F$, a slight variant of Lemma \ref{L: formal closed} implies that the completion of $\Loc^{\wedge,\Box}_{{}^cG, F}$ at a closed point corresponding to $\bar\rho: \Ga_F\to {}^cG(\kappa)$ is the space $\on{Def}_{\bar\rho}^{\Box,\chi}$ of framed deformations $\rho$ of $\bar\rho$ such that $d\circ\rho=\chi$.

Recall that $\mR^c_{W_F,{}^cG}$ admits an extension $\mR^{sc}_{W_F,{}^cG}$ to $\on{Spec} \bZ_\ell$ classifying strongly continuous representations of $W_F$ (Definition \ref{D: sc rep}). Therefore we may also extend \eqref{E: Locloc} to $\bZ_\ell$ as
 \begin{equation}\label{E: Locloc2}
  \Loc^{\Box}_{{}^cG, F}:=\mR^{sc}_{W_F,{}^cG}\times_{\mR^{sc}_{W_F,\bG_m\times\Ga_{\widetilde F/F}}}\bigl\{\chi\bigr\},\quad \Loc_{{}^cG,F}=\Loc_{{}^cG,F}^{\Box}/\hat{G}_{\bZ_\ell}.
\end{equation}

\begin{remark}
The analogue of $\Loc^{\wedge}_{{}^cG,F}$ over $\on{Spf}\bZ_p$ probably should the Emerton-Gee stack \cite{EG} (whose definition is much more involved). However, the analogue of $\Loc_{{}^cG,F}$ over $\on{Spec}\bZ_p$ would be more subtle.
\end{remark}

\begin{remark}\label{R: mod l inert type}
We note that the decomposition \eqref{E: dec cont} for $\Loc^{\wedge}_{{}^cG,F}$ is the decomposition according to the mod $\ell$ inertial types. Indeed, by \cite[4.5]{BHKT}, $\Theta$ from  \eqref{E: dec cont} exactly corresponds to mod $\ell$ completely reducible representation of $I_F$ (i.e. mod $\ell$ inertia type). 
\end{remark}

\begin{remark}
By Remark \ref{R: cG and LG}, $ \Loc^{\Box}_{{}^cG, F}\cong \mR^{sc}_{W_F,{}^LG}\times_{\mR^{sc}_{W_F,\Ga_{\widetilde F/F}}}\bigl\{\pr\bigr\}$ over $\bZ_\ell[q^{\pm 1/2}]$. If $G=\GL_m$, then $\Loc^{\Box}_{{}^cG, F}\cong \mR^{sc}_{W_F,\GL_m}$.
\end{remark}

Second, there is the stack 
$$\Loc^{\on{WD}}_{{}^cG,F}:=\Loc^{\on{WD},\Box}_{{}^cG,F}/\hat{G}$$ 
of Weil-Deligne representations of $F$ as an algebraic stack over $\bQ$ (see e.g. \cite[2.1]{BG}). Here $\Loc^{\on{WD},\Box}_{{}^cG,F}$ is the presheaf over $\CA_\bQ^{\heart}$ defined as follows. Let $\hat\mN_\bQ\subset \Lie \hat{G}_\bQ$ denote the nilpotent cone of $\hat{G}_\bQ$.
For a $\bQ$-algebra $A$, we equip $ {}^cG(A)$ with the discrete topology, and let
\[
 \Loc^{\on{WD},\Box}_{{}^cG,F}(A)=\Bigl\{(r,X)\mid r: W_{F}\to {}^cG(A) \mbox{ continuous}, X \in \hat{\mN}_\bQ(A)\mid d\circ \rho=\chi \ , \Ad_{r(\ga)}X=q^{\|\ga\|}X\Bigr\}.
\]
We note that there is a natural $\bG_m$ action on $\Loc^{\on{WD},\Box}_{{}^cG,F}$, by scaling the nilpotent element $X$.

One sees that 
$$\Loc^{\on{WD},\Box}_{{}^cG,F}=\varinjlim_L \Loc^{\on{WD},\Box}_{{}^cG,L/F},$$ 
where $L$ range over all finite extensions of $F^{\ur}\widetilde F$ that are Galois over $F$, and $\Loc^{\on{WD},\Box}_{{}^cG,L/F}$ is the (open and closed) subfunctor of $ \Loc^{\on{WD},\Box}_{{}^cG,F}$ consisting of those $(r,X)$ such that $r$ factors through $W_F/W_L\to {}^cG(A)$. 

As $W_F/W_L$ is a finitely generated group, namely an extension of $\bZ$ by $\Ga_{L/F^{\ur}}$, the functor $ \Loc^{\on{WD},\Box}_{{}^cG,L/F}$ is represented by an affine scheme of finite type over $\bQ$. 
Therefore, $\Loc^{\on{WD},\Box}_{{}^cG,F}$ and $\Loc^{\on{WD}}_{{}^cG,F}$
are (ind)-representable.

\begin{remark}
Here we only define $\Loc^{\on{WD},\Box}_{{}^cG,F}$ as a classical (ind-)scheme as this is what we need in the sequel. 
Of course, one can define it as a derived scheme in a natural way. But it turns out the derived structure will be trivial. In fact, we have such kind of discussions in the sequel when we discuss integral versions of $\Loc^{\on{WD},\Box}_{{}^cG,F}$.
\end{remark}

Finally, we can glue the above two moduli spaces into algebraic stacks over $\bZ[1/p]$, once we make a choice.
Recall the following basic facts (\cite{Iw}).
\begin{itemize}
\item There exists a topological splitting $\Ga_F^t\to\Ga_F$ so that $\Ga_F\cong P_F\rtimes \Ga_F^t$. 
\item Let $\Ga_q=\langle \tau,\sigma\rangle$ be as in \eqref{E: tame Galois}. Then there exists an embedding
\begin{equation}\label{E: spl}
\iota: \Ga_q\to \Ga^t_F
\end{equation}
such that $\iota(\tau)$ is a generator of the tame inertia, and that $\iota(\sigma)$ is a lifting of the Frobenius. Then $\iota$ induces an isomorphism of the profinite completion of the projection $\Ga_q\to \bZ$ with $\Ga^t_F\to \widehat{\bZ}$.
\end{itemize}
For a choice of $\iota$, we write $\Ga_{F,\iota}$ be the pullback of $\Ga_F$ via $\iota$ (we will not consider the topology on these groups). Then we have inclusions $\Ga_{F,\iota}\to W_F\to \Ga_F$. By abuse of notations, we still use $\iota$ to denote both inclusions $\Ga_{F,\iota}\subset W_F$ and $\Ga_{F,\iota}\subset \Ga_F$. 
We have the short exact sequence
\[
1\to P_F\to \Ga_{F,\iota}\to\Ga_q\to 1.
\]
The homomorphism $\|\cdot\|$ from \eqref{E: Weilnorm} restricts to $\Ga_{F,\iota}$. 
Similarly, if $L$ is finite over $F^t$ and is Galois over $F$, let $\Ga_{L/F,\iota}$ be the pullback of $\Ga_{L/F}$ (the Galois group for $L/F$) along $\iota$. We have the short exact sequence
 \[
 1\to Q_L:=\Ga_{L/F^t}\to \Ga_{L/F,\iota}\to \Ga_q\to 1,
 \]
where $Q_L$ is a finite $p$-group.

\begin{remark}\label{R: choice of gen}
(1) Note that for different choices $\iota_1,\iota_2$, there is in general no isomorphism between $\Ga_{F,\iota_1}$ and $\Ga_{F,\iota_2}$ that restricts to the identity map of $P_F$.

(2) All possible choices of $\iota$ as in \eqref{E: spl} form a torsor under $\Aut^0$, the group of continuous automorphisms of $\Ga^t_F$ that restricts to an automorphism of $I_F^t$ and induces the identity map on $\Ga_F^t/I_F^t$. The group $\Aut^0$ itself is an extension of $\widehat{\bZ}^{p,\times}:=\prod_{\ell\neq p} \bZ_\ell^\times$ by $\widehat{\bZ}^p(1)$. 
\end{remark}

Now we choose an $\iota$ as in \eqref{E: spl}.
If $L/F^t\widetilde F$ is finite such that $L/F$ is Galois, then the homomorphism $\chi\iota:\Ga_{F,\iota}\to \bZ[1/p]^\times\times\Ga_{\widetilde F/F}$ factors through $\Ga_{L/F,\iota}\to \bZ[1/p]^\times\times\Ga_{\widetilde F/F}$, denoted by the same notation, which can be regarded as a $\bZ[1/p]$-point of $\mR_{\Ga_{L/F,\iota},\bG_m\times\Ga_{\widetilde F/F}}$.
We define the scheme 
\[
   \Loc^\Box_{{}^cG,L/F,\iota}:= \mR_{\Ga_{L/F,\iota}, {}^cG} \times_{\mR_{\Ga_{L/F,\iota},\bG_m\times\Ga_{\widetilde F/F}}}\bigl\{\chi\iota\bigr\}.
\]
Explicitly, for a classical $\bZ[1/p]$-algebra $A$,
\[
   \Loc^\Box_{{}^cG,L/F,\iota}(A):=\Bigl\{\rho: \Ga_{L/F,\iota}\to {}^cG(A) \mid d\circ \rho=\chi\iota: \Ga_{L/F,\iota}\to \bG_m\times \Ga_{\tilde F/F}\Bigr\}.
\]
Now, we define the scheme of framed $\iota$-local Langlands parameters as
\[
   \Loc^\Box_{{}^cG,F,\iota}:=\underrightarrow{\lim}_{L} \Loc^\Box_{{}^cG,L/F,\iota}.
\]
Again by a (slight variant of) Lemma \ref{L: formal closed}, its formal completion at $\bar\rho$ is the framed deformation space $\on{Def}_{\bar\rho}^{\Box,\chi}$.

\begin{prop}\label{P:flat lcc}
The derived ind-scheme $\Loc^\Box_{{}^cG,F,\iota}$ is a disjoint union of classical affine schemes of finite type and flat over $\bZ[1/p]$. It is equidimensional of dimension $=\dim \hat{G}$, and is a local complete intersection with trivial dualizing complex.
\end{prop}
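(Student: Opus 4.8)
The plan is to reduce to Proposition \ref{P: tame and wild stack} and then analyze the defining fibre product. Since $\Loc^\Box_{{}^cG,F,\iota}=\varinjlim_L\Loc^\Box_{{}^cG,L/F,\iota}$, the first point is that for $L\subset L'$ the transition map $\Loc^\Box_{{}^cG,L/F,\iota}\hookrightarrow\Loc^\Box_{{}^cG,L'/F,\iota}$ is an \emph{open and closed} immersion: it is the locus where the finite $p$-group $\Ga_{L'/L}$ maps trivially, and by Proposition \ref{P: finite group good case} the representation space of a finite group of order invertible in $\bZ[1/p]$ is a disjoint union of $\hat G$-orbits. Choosing a cofinal chain $L_1\subset L_2\subset\cdots$ then exhibits $\Loc^\Box_{{}^cG,F,\iota}$ as the disjoint union of the finite-type affine $\bZ[1/p]$-schemes $\Loc^\Box_{{}^cG,L_n/F,\iota}\setminus\Loc^\Box_{{}^cG,L_{n-1}/F,\iota}$, so it suffices to prove all the assertions for each $\Loc^\Box_{{}^cG,L/F,\iota}$. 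Fixing a topological splitting $\Ga_F^t\to\Ga_F$ identifies $\Ga_{L/F}\cong Q_L\rtimes\Ga_F^t$, and pulling back along $\iota$ identifies $\Ga_{L/F,\iota}\cong Q_L\rtimes\Ga_q$ with $Q_L$ a finite $p$-group. Since $({}^cG)^\circ=\hat G\times\bG_m$ is reductive and ${}^cG/({}^cG)^\circ\cong\Ga_{\widetilde F/F}$ is finite \'etale over $\bZ[1/p]$, Proposition \ref{P: tame and wild stack} shows $\mR_{\Ga_{L/F,\iota},{}^cG}$ is classical, of finite type and flat over $\bZ[1/p]$, equidimensional of relative dimension $\dim{}^cG=\dim\hat G+1$, a local complete intersection, with trivial dualizing complex.

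Next I would study $\Loc^\Box_{{}^cG,L/F,\iota}=\mR_{\Ga_{L/F,\iota},{}^cG}\times_{\mR_{\Ga_{L/F,\iota},\bG_m\times\Ga_{\widetilde F/F}}}\{\chi\iota\}$. A direct computation (the abelianization of $\Ga_{L/F,\iota}$ has free rank one, generated by a Frobenius lift, plus torsion of order invertible in $\bZ[1/p]$) shows $\mR_{\Ga_{L/F,\iota},\bG_m\times\Ga_{\widetilde F/F}}$ is smooth over $\bZ[1/p]$ and that the component through $\chi\iota$ is a copy of $\bG_m$; hence $\{\chi\iota\}$ is a regular immersion of codimension one. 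Passing to (co)tangent complexes, the split exact sequence $\hat\frakg\to\Lie{}^cG\to\Lie(\bG_m\times\Ga_{\widetilde F/F})$, Proposition \ref{P: cotan frame}, and base change identify, for a point $\rho$, the fibre $\bT_\rho\Loc^\Box_{{}^cG,L/F,\iota}$ with $\overline{C}^*(\Ga_{L/F,\iota},\hat\frakg_\rho)[1]$, where $\hat\frakg_\rho$ is $\hat\frakg=\Lie\hat G$ with adjoint action twisted through $\rho$ and through $\chi\iota$ (via $\rho_\ad$ and $\xi$). As $\Ga_{L/F,\iota}\cong Q_L\rtimes\Ga_q$ with $|Q_L|$ invertible, $C^*(\Ga_{L/F,\iota},\hat\frakg_\rho)\simeq C^*(\Ga_q,\hat\frakg_\rho^{Q_L})$, so the length-two resolution \eqref{E: resol Gaq} gives $H^i(\Ga_{L/F,\iota},\hat\frakg_\rho)=0$ for $i>2$ and $\sum_i(-1)^i\dim H^i(\Ga_{L/F,\iota},\hat\frakg_\rho)=0$; in particular the cotangent complex of $\Loc^\Box_{{}^cG,L/F,\iota}$ has Tor-amplitude $\le 1$, i.e.\ it is quasi-smooth over $\bZ[1/p]$.

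Finally, one bounds $\dim{}^{cl}\Loc^\Box_{{}^cG,L/F,\iota}\otimes_{\bZ[1/p]}K\le\dim\hat G$ for every algebraically closed field $K$ over $\bZ[1/p]$: a parameter $\rho$ with $d\circ\rho=\chi\iota$ is pinned down by $\rho|_{Q_L}$ (finitely many conjugacy classes, $Q_L$ finite), by $\rho(\sigma)\in\hat G_K$, and by $\rho(\tau)$, which satisfies a $q$-power twisted conjugacy relation modulo $Q_L$ and hence lies in finitely many conjugacy classes of $\hat G_K$ --- this is the estimate of \cite[Prop.~E.4.2]{LTXZZ} adapted to the present (non-split, $C$-group) setting. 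Combining quasi-smoothness, this dimension bound, and the vanishing Euler characteristic, the argument of Proposition \ref{P: Classical} used in the proof of Proposition \ref{P: tame and wild stack} shows $\Loc^\Box_{{}^cG,L/F,\iota}={}^{cl}\Loc^\Box_{{}^cG,L/F,\iota}$ is a classical local complete intersection, equidimensional of relative dimension $\dim\hat G$, flat over $\bZ[1/p]$; triviality of the dualizing complex follows since the determinant of the perfect complex computing $\overline{C}_*(\Ga_{L/F,\iota},\hat\frakg^*_\rho)[-1]$ is trivial, the two copies of $k\Ga_q$ in \eqref{E: resol Gaq} cancelling in the determinant.

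The main obstacle is the interplay between the cotangent-complex bookkeeping for the derived fibre product --- identifying the twisted coefficient module $\hat\frakg_\rho$, and checking that the relative cotangent complex of $d_*\colon\mR_{\Ga_{L/F,\iota},{}^cG}\to\mR_{\Ga_{L/F,\iota},\bG_m\times\Ga_{\widetilde F/F}}$ is the shifted conormal of a codimension-one regular immersion --- and the dimension estimate; the latter, i.e.\ finiteness of the $q$-power twisted conjugacy classes in $\hat G_K$ compatible with a fixed action of $Q_L$, is the only genuinely geometric input, and is where one must invoke (or reprove) the bound from \cite[Prop.~E.4.2]{LTXZZ}. Once that dimension bound is in hand, a cleaner alternative is to deduce flatness of $d_*$ by miracle flatness (source Cohen--Macaulay, target regular, equidimensional fibres) and then obtain every assertion by base change along the regular immersion $\{\chi\iota\}\hookrightarrow\mR_{\Ga_{L/F,\iota},\bG_m\times\Ga_{\widetilde F/F}}$.
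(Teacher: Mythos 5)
Your overall approach is essentially the same as the paper's: reduce to the finite-level pieces $\Loc^\Box_{{}^cG,L/F,\iota}$ via the open-and-closed decomposition, apply Proposition~\ref{P: tame and wild stack} to $\mR_{\Ga_{L/F,\iota},{}^cG}$ (and implicitly to $\mR_{\Ga_{L/F,\iota},\bG_m\times\Ga_{\widetilde F/F}}$), compute the cotangent complex of the fibre over $\chi\iota$ via the split Lie-algebra sequence $\hat\frakg\to\Lie{}^cG\to\Lie(\bG_m\times\Ga_{\widetilde F/F})$ so that the resolution \eqref{E: resol Gaq} gives the cohomological vanishing and the vanishing Euler characteristic, invoke the dimension bound from \cite[Prop.~E.4.2]{LTXZZ}, and feed everything into Proposition~\ref{P: Classical}. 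The paper's proof is terser (it essentially says ``take the fibre over $\chi\iota$ and observe everything is preserved''), while yours spells out the cotangent-complex bookkeeping; the content is the same.

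One factual claim in your sketch is false, though it turns out to be harmless. You assert that the torsion in $\Ga_{L/F,\iota}^{\ab}$ has order invertible in $\bZ[1/p]$, hence that $\mR_{\Ga_{L/F,\iota},\bG_m\times\Ga_{\widetilde F/F}}$ is smooth and that $\{\chi\iota\}\hookrightarrow\mR_{\Ga_{L/F,\iota},\bG_m\times\Ga_{\widetilde F/F}}$ is a codimension-one regular immersion. But $\Ga_q^{\ab}\cong\bZ\times\bZ/(q-1)$, and $q-1$ is prime to $p$ but not in general invertible in $\bZ[1/p]$. So $\mR_{\Ga_{L/F,\iota},\bG_m}$ has factors of the form $\Spec\bZ[1/p][y]/(y^m-1)$ with $m\mid q-1$, which over the primes $\ell\mid m$ are non-reduced, non-smooth, and connected to the unit component; the unit section is then a codimension-zero closed immersion that is not regular. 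Consequently, your proposed ``cleaner alternative'' at the end (miracle flatness with a \emph{regular} target plus base change along a \emph{regular} immersion) does not apply as stated. Fortunately, your main argument never actually needs this: since $\{\chi\iota\}$ is a $\bZ[1/p]$-point, $\bL_{\Loc^\Box_{{}^cG,L/F,\iota}}$ is simply the pullback of the relative cotangent complex $\bL_{\mR_{\Ga,{}^cG}/\mR_{\Ga,\bG_m\times\Ga_{\widetilde F/F}}}$, and the split Lie-algebra sequence identifies that with $\overline{C}_*(\Ga_{L/F,\iota},{}_\Ga\hat\frakg^*)[-1]$ regardless of whether the section is regular. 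I would cut the regular-immersion/smoothness claim and the ``cleaner alternative,'' and rely only on the split Lie-algebra computation — this is precisely what Proposition~\ref{P: tame and wild stack} and the paper's proof do.
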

\begin{proof}
We apply Proposition \ref{P: tame and wild stack} to $\Ga=\Ga_{L/F,\iota}\simeq Q_L\rtimes\Ga_q$, and $M={}^cG$ and $M=\bG_m\times\Ga_{\widetilde F/F}$. We have the projection
$\mR_{\Ga_{L/F,\iota},{}^cG} \to \mR_{\Ga_{L/F,\iota},\bG_m\times \Ga_{\widetilde{F}/F}}$. Taking the fiber over $\chi\iota$ shows that $\Loc^\Box_{{}^cG,L/F,\iota}$ is a classical affine scheme of finite type and flat over $\bZ[1/p]$, is equidimensional of dimension $=\dim \hat{G}$, and is a local complete intersection. In addition, clearly if $L'/L$ is finite such that $L'/F$ is Galois, then $\Loc_{{}^cG,L/F,\iota}^{\Box}\subset\Loc_{{}^cG,L'/F,\iota}^{\Box}$ is an open and closed embedding. The proposition follows.
\end{proof}

Now we can define the stack of $\iota$-local Langlands parameters as
\[
   \Loc_{{}^cG,F,\iota}=\Loc^\Box_{{}^cG,F,\iota}/\hat{G}. 
\]
It is the union of open and closed substacks $\Loc_{{}^cG,L/F,\iota}=\Loc_{{}^cG,L/F,\iota}^\Box/\hat{G}$, each of which is of finite presentation over $\bZ[1/p]$. 

\begin{remark}\label{R: Coh cat}
There are two ways to view $\Loc_{{}^cG,F,\iota}$ (and $\Loc_{{}^cG,F}^{\on{WD}}$) as an algebraic stack. The first is by viewing it as a stack locally of finite type, and the second is by viewing it as an ind-finite type stack. 
We will adapt the second point of view. So its ring of regular functions (see \eqref{E: st center} below) is regarded as pro-algebra.
In addition, later on we will consider the category $\Coh(\Loc_{{}^cG,F,\iota})$ of coherent sheaves on $\Loc_{{}^cG,F,\iota}$. According our definition, these are complexes of quasi-coherent sheaves that only support on \emph{finitely} connected components of $\Loc_{{}^cG,F,\iota}$, and are coherent complexes on these components. In particular, the structure sheaf of $\Loc_{{}^cG,F,\iota}$ itself is not regarded as a coherent sheaf. It lies in the ind-completion $\Ind\Coh(\Loc_{{}^cG,F,\iota})$ of $\Coh(\Loc_{{}^cG,F,\iota})$.
\end{remark}

We have discussed three versions of moduli of local Langlands parameters: one over $\bZ_\ell$, one over $\bQ$ and one over $\bZ[1/p]$.
Our next task is to relate them and to analyze how $\Loc_{{}^cG,L/F,\iota}$ depends on the choice of $\iota$.

\begin{lemma}\label{L: l-comp} 
The map $\iota: \Ga_{F,\iota}\to W_F$ induces a natural isomorphism\footnote{\label{F: Zl}We originally only considered such isomorphism over $\on{Spf}\bZ_\ell$. We thank P. Scholze to point out it holds over $\on{Spec}\bZ_\ell$.} 
$$\phi_{\iota,\ell}: \Loc^{\Box}_{{}^cG,F}\xrightarrow{\cong} \Loc_{{}^cG, F,\iota}^\Box\otimes\bZ_\ell.$$ 
\end{lemma}
\begin{proof}
Let us first prove this at the level of classical moduli problems. Then $\phi_{\iota,\ell}$ sends a strongly continuous representation $W_F\to {}^cG(A)$ to its restriction to $\Ga_{F,\iota}$. To show it is an isomorphism, it is enough to show that every $\rho: \Ga_{L/F,\iota}\to {}^cG(A)$ extends to a strongly continuous representation of $W_F\to {}^cG(A)$.

As above, we write $\Ga_{F,\iota}\simeq P_F\rtimes \Ga_q$ by choosing a topological splitting $\Ga_F^t\to \Ga_F$. Then there is some $N$ (which might depend on the choice of the topological splitting), such that $\rho(\tau^N)\in \hat{\mU}(A)$, where $\hat\mU\subset\hat{G}$ is the unipotent variety of $\hat{G}$. Indeed, recall that the restriction $\langle\tau \rangle\subset \Ga_q$ induces $\Loc_{{}^cG, F,\iota}^\Box\to {}^cG^{[q]}$ (see the proof of Proposition \ref{P: tame stack}). So it is enough to show that there is some $N$ such that the $N$th power map ${}^cG\to {}^cG, \  g\mapsto g^N$ sends $ {}^cG^{[q]}$ to $\hat\mU$. By choosing a faithful representation ${}^cG\to \GL_m$, it is enough to show a similar statement for $\GL_m$. This amounts to show that for $X\in \GL_m$, if $\on{Char}(X^q)=\on{Char}(X)$, then for some power $X^N$, $\on{Char}(X^N)=(t-1)^m$. But this is standard. 

Now to show that $\rho$ extends, it is enough to prove that every element $X\in\hat\mU(A)$ extends to a continuous map $\bZ_\ell\to\hat\mU(A), \ a\mapsto X^a$, when $A$ is equipped with the ind-$\ell$-adic topology. Indeed, again we reduce to the $\GL_m$-case. If $\on{Char}(X)=(t-1)^m$, then for every $v\in A^m$, $\{X^iv\}_{i\geq 0}$ is contained in a finite $\bZ_\ell$-module. Then we use Lemma \ref{L: cont criterion} to conclude.

Next we show that $\phi_{\iota,\ell}$ is an isomorphism at the derived level.  
We use Proposition \ref{L: tang Rsc} and the argument as in Proposition \ref{P: trun value} to reduce to show that $C^*_{cts}(W_F,\Ad^0_\rho\otimes V)\to C^*(\Ga_{F,\iota},\Ad^0_\rho\otimes V)$ is an isomorphism, for every classical $A$, every ordinary $A$-module $V$, and every strongly continuous homomorphism $\rho: W_F\to {}^cG(A)$. Here $\Ad^0$ is the adjoint representation of ${}^cG$ on the Lie algebra of $\hat{G}$. Then it reduces to show that $C^*_{cts}(I_F^t, (\Ad^0_\rho)^{P_F})\to C^*(\bZ[1/p],(\Ad^0_\rho)^{P_F})$ is an isomorphism. By Lemma \ref{L: cont criterion}, it further reduces to show $C^*_{cts}(I_F^t, V)\to C^*(\bZ[1/p],V)$ is an isomorphism if $V$ is a continuous representation of $I_F^t$ on a finite $\bZ_\ell$-module. But this last claim is not difficult.
\end{proof}

On the other hand, we have the following.

\begin{lemma}\label{L: gen}
The map $\Ga_{F,\iota}\to W_F$ induces a natural isomorphism 
\[
\phi_{\iota,\bQ}: \Loc^{\on{WD},\Box}_{{}^cG,F}\xrightarrow{\cong} \Loc_{{}^cG, F,\iota}^\Box\otimes\bQ.
\]
\end{lemma}
\begin{proof}
The morphism $\phi_{\iota,\bQ}$ is given by send $(r,X)\in  \Loc^{\on{WD},\Box}_{{}^cG,F}(A)$ to
$$\rho: \Ga_{F,\iota}\to {}^cG(A), \quad \rho(\ga)=r(\iota\ga)\exp({|\ga|_{\iota}}X),$$ 
where $|\ga|_{\iota}\in\bZ[1/p]$ such that the image of $\ga\in \Ga_{F,\iota}$ in $\Ga_q$ can be written as $\sigma^{\|\ga\|}\tau^{|\ga|_\iota}$, and
\[
\exp: \hat\mN_\bQ\cong \hat\mU_{\bQ}
\]
is the usual exponential map inducing isomorphisms between the nilpotent variety and the unipotent variety of $\hat{G}$ (over $\bQ$). Let $\log: \hat\mU_\bQ\cong \hat\mN_\bQ$ be its inverse.

Next we define the morphism in another direction. Let $\rho: \Ga_{F,\iota}\to {}^cG(A)$ be an $A$-point of $\Loc^{\Box}_{{}^cG,F,\iota}$. We assume that it factors through some $\Ga_{L/F,\iota}$. 
Note that there is some $m$ such that the image of $\tau^m\in\Ga_q$ in $\Ga_{F,\iota}$ is independent of the choice of the splitting $\Ga_q\to \Ga_{L/F,\iota}$. In addition, by replacing $m$ by a multiple, we may assume that
$\rho(\tau)^m\in \hat{\mU}_\bQ(A)$. Then we take $X=\frac{1}{m}\log(\rho(\tau)^m)$. Clearly $X$ is independent of the choice of $m$.
Then we obtain a well-defined homomorphism 
$$r: \Ga_{F,\iota}\to {}^cG(A), \quad r(\ga)=\rho(\ga) \exp(-|\ga|_\iota X).$$ As $r(\tau^m)=1$, we may regard $r$ as a \emph{continuous} map $W_{L/F}\to {}^cG(A)$, where $A$ is equipped with the discrete topology. Then $\rho\mapsto (r,X)$ gives the inverse of $\phi_{\iota,\bQ}$.
\end{proof}

Before continuing, we observe that as a byproduct we obtain the following.
\begin{cor}\label{C: reduce}
The scheme $\Loc^{\Box}_{{}^cG,F,\iota}$ is reduced. 
\end{cor}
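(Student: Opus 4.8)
The plan is to reduce the statement to the corresponding fact about the ``generic fiber'' versus the ``special fibers'' of $\Loc^\Box_{{}^cG,F,\iota}$, using Proposition~\ref{P:flat lcc} as the backbone. By Proposition~\ref{P:flat lcc}, $\Loc^\Box_{{}^cG,F,\iota}$ is a disjoint union of classical affine schemes of finite type, flat over $\bZ[1/p]$, each of which is a local complete intersection. In particular each such component is Cohen--Macaulay, hence has no embedded associated primes. For a flat, Cohen--Macaulay $\bZ[1/p]$-scheme $X$ that is generically reduced in every fiber, reducedness of $X$ follows from reducedness of the generic fiber $X\otimes\bQ$ together with reducedness of the closed fibers $X\otimes\bF_\ell$ for $\ell\neq p$ along the closure of each generic point; more efficiently, since $X$ is Cohen--Macaulay it satisfies Serre's condition $(S_1)$, so it is reduced iff it is $(R_0)$, i.e. regular at each generic point. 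Thus it suffices to check that $\Loc^\Box_{{}^cG,F,\iota}$ is generically reduced.

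First I would handle the generic fiber. By Lemma~\ref{L: gen}, $\Loc^\Box_{{}^cG,F,\iota}\otimes\bQ\cong \Loc^{\on{WD},\Box}_{{}^cG,F}$, the scheme of framed Weil--Deligne parameters over $\bQ$. This scheme is reduced: it is a union of the finite-type affine schemes $\Loc^{\on{WD},\Box}_{{}^cG,L/F}$, and for each such $L$ one has an explicit description as the pairs $(r,X)$ with $r\colon W_F/W_L\to {}^cG(A)$ (a representation of the finitely generated group $W_F/W_L$, which is an extension of $\bZ$ by the finite group $\Ga_{L/F^{\ur}}$) and $X$ nilpotent with $\Ad_{r(\ga)}X=q^{\|\ga\|}X$. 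Since $W_F/W_L$ has the finite normal subgroup $\Ga_{L/F^{\ur}}$ of order prime to $p$ (hence invertible in $\bQ$) with quotient $\bZ$, Proposition~\ref{P: finite group good case} and the analysis of $\Ga_q$-type groups (the relevant generators-and-relations presentation, cf. \eqref{E: resol Gaq}) show the representation variety is smooth over $\bQ$, and the nilpotent Deligne piece is cut out by equations which over a characteristic-zero field define a reduced scheme (the nilpotent cone $\hat\mN_\bQ$ is reduced, and the eigenvalue condition is transverse). Alternatively, one invokes that over a field of characteristic zero all the derived structure on such representation schemes vanishes (Proposition~\ref{P: Classical}, together with the cohomological computation giving $H_i(\Ga,\Ad^*_\rho)=0$ for $i>2$) and the classical scheme is a reduced complete intersection.

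Next I would treat the closed fibers. Fix $\ell\neq p$. By Lemma~\ref{L: l-comp}, $\Loc^\Box_{{}^cG,F,\iota}\otimes\bZ_\ell\cong\Loc^{\Box}_{{}^cG,F}$, the space of strongly continuous $W_F$-parameters over $\bZ_\ell$; reducing mod $\ell$, $\Loc^\Box_{{}^cG,F,\iota}\otimes\bF_\ell$ parametrizes (via Lemma~\ref{L: cont rho}) continuous $\bar\rho\colon \Ga_{L/F,\iota}\to{}^cG$ over $\bF_\ell$-algebras with fixed $d\circ\bar\rho=\chi\iota$. Here $\Ga_{L/F,\iota}\simeq Q_L\rtimes\Ga_q$ with $Q_L$ a finite $p$-group. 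The key point is that, although over $\bF_\ell$ the group $\Ga_q$ has $\ell$ possibly dividing $q-1$ so the representation scheme can be quite singular, Proposition~\ref{P: tame and wild stack} still guarantees $\Loc^\Box_{{}^cG,F,\iota}\otimes\bF_\ell$ is a classical local complete intersection of pure dimension $\dim\hat G$, equidimensional, hence again $(S_1)$; so reducedness of the fiber reduces to $(R_0)$, i.e. smoothness at the generic point of each irreducible component. Using the free resolution \eqref{E: resol Gaq} of $\bZ$ over $\bZ\Ga_q$ (and its analogue incorporating $Q_L$ via $H_i(\Ga,\Ad^*_\rho)\cong H_i(\Ga_q,(\Ad^*_\rho)^{\rho(Q)})$ as in the proof of Proposition~\ref{P: tame and wild stack}), one sees $H_2(\Ga_{L/F,\iota},\Ad^*_{\bar\rho})$ vanishes for $\bar\rho$ in a dense open of each component (indeed $\sum_i(-1)^i\dim H_i=0$ identically and the complex has amplitude $\leq 1$, with generic point lying over a point of the smooth representation scheme of the tame quotient where the deformation is unobstructed), so by the smoothness criterion in Proposition~\ref{P: Classical} the fiber is generically smooth.

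\textbf{Main obstacle.} The technical heart is the generic-smoothness check of the mod-$\ell$ fibers when $\ell\mid q-1$: one must verify that at the generic point of each irreducible component of $\Loc^\Box_{{}^cG,F,\iota}\otimes\bF_\ell$ the obstruction space $H^2_{cts}$ vanishes, equivalently that the lci scheme provided by Proposition~\ref{P: tame and wild stack} has no generically non-reduced component. I expect this to follow from the dimension bookkeeping already implicit in Propositions~\ref{P: tame stack}, \ref{P: tame and wild stack} and \ref{P:flat lcc} (the relevant Euler characteristic vanishes, the cotangent complex has Tor-amplitude $\leq 1$, and flatness over $\bZ[1/p]$ forces the fibral dimension to be exactly $\dim\hat G$ with no room for a generically obstructed locus of full dimension), but making the ``generic'' part precise—identifying each component's generic point with a representation of the finite quotient at which the deformation is unobstructed—is where the real work lies; this is presumably why the corollary is stated as a byproduct of Lemmas~\ref{L: l-comp} and \ref{L: gen} rather than proved from scratch.
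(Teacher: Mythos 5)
Your reduction to Serre's criterion via the lci/Cohen--Macaulay/flat package from Proposition~\ref{P:flat lcc} is the right start and coincides with the paper's argument: $\Loc^\Box_{{}^cG,F,\iota}$ is $(S_1)$, so reducedness is equivalent to $(R_0)$, i.e.\ regularity (or just reducedness) at the generic points. But you then misplace where the generic points of $X=\Loc^\Box_{{}^cG,F,\iota}$ actually live and, as a result, hand yourself an unnecessary and in fact hopeless task with the closed fibers.

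The observation you are missing is that since $X$ is \emph{flat} over the domain $\bZ[1/p]$, every associated prime of $\mO_X$ contracts to the zero ideal of $\bZ[1/p]$ (a flat module over a domain is torsion-free, so no associated prime can contain a nonzero element of the base). Hence \emph{all} generic points of $X$ lie over the generic point $\Spec\bQ\hookrightarrow\Spec\bZ[1/p]$, and they are precisely the generic points of $X\otimes\bQ$. So $(R_0)$ for $X$ reduces entirely to generic reducedness of $X\otimes\bQ\cong\Loc^{\on{WD},\Box}_{{}^cG,F}$ (Lemma~\ref{L: gen}), which is supplied by the generic smoothness result of \cite{BG}. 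That is the whole proof; the closed fibers never enter.

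Your step analyzing the mod-$\ell$ fibers is therefore not just technically demanding but wrong in kind: the generic points of $X\otimes\bF_\ell$ have codimension one in $X$ and are \emph{not} generic points of $X$, so their reducedness is irrelevant for $(R_0)$ of $X$. Moreover the effort would be doomed, since --- as the paper notes immediately after the corollary --- the fibers $\Loc^\Box_{{}^cG,F,\iota}\otimes\bF_\ell$ can be non-reduced, even generically so (compare $\Spec\bZ[x]/(x^2-\ell)$: reduced and flat over $\bZ$ with a fat special fiber). Your ``main obstacle'' --- showing $H_2$ vanishes at generic points of the mod-$\ell$ fibers when $\ell\mid q-1$ --- is thus a phantom created by the misidentification of the generic points; the correct argument never needs Lemma~\ref{L: l-comp}, Proposition~\ref{P: tame and wild stack}, or any analysis in positive characteristic.
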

Note that the fiber of $\Loc^{\Box}_{{}^cG,F,\iota}$ over some prime $\ell$ could be non-reduced. 
\begin{proof}
As $\Loc^{\Box}_{{}^cG,F,\iota}$ is a local complete intersection flat over $\bZ[1/p]$ (Proposition \ref{P:flat lcc}), the statement follows from the generic smoothness of $\Loc^{\Box}_{{}^cG,F,\iota}\otimes\bQ\cong \Loc^{\on{WD},\Box}_{{}^cG,F}$ as proved in \cite{BG}, and Serre’s criterion S1.
\end{proof}

Now we can compare $\Loc^{\Box}_{{}^cG,F,\iota}$ for different choices of $\iota$. Let $\iota_1,\iota_2: \Ga_q\to \Ga_F^t$ be two embeddings. Recall from Remark \ref{R: choice of gen} that there is $\vartheta\in \Aut^0$ such that $\iota_2=\vartheta\iota_1:\Ga_q\to \Ga_F^t$, and there is a projection $\Aut^0\to \bZ_\ell^\times$. Let $\bar{\vartheta}\in \bZ_\ell^\times$ denote the image of $\vartheta$. As $\bG_m$ acts on $ \Loc^{\on{WD},\Box}_{{}^cG,F}$ by scaling the nilpotent element, $\bar{\vartheta}$, regarded as an element in $\bG_m(\bQ_\ell)$, acts on $\Loc^{\on{WD},\Box}_{{}^cG,F}\otimes\bQ_\ell$. 
\begin{prop}\label{P:comp iota12}
There is a unique isomorphism $\vartheta=\vartheta_{\iota_1,\iota_2}: \Loc_{{}^cG, F,\iota_1}^\Box\otimes\bZ_\ell\cong \Loc_{{}^cG, F,\iota_2}^\Box\otimes\bZ_\ell$ of schemes over $\bZ_\ell$ making the following diagram commutative
\[
\xymatrix{
\Loc^{\Box}_{{}^cG,F}\ar@{=}[d]\ar^-{\phi_{\iota_1,\ell}}[r] &\Loc_{{}^cG, F,\iota_1}^\Box\otimes\bZ_\ell\ar^{\vartheta}[d] & \ar_-{\phi_{\iota_1,\bQ_\ell}}[l] \Loc^{\on{WD},\Box}_{{}^cG,F}\otimes\bQ_\ell\ar^{\bar{\vartheta}}[d]\\
\Loc^{\Box}_{{}^cG,F}                \ar^-{\phi_{\iota_2,\ell}}[r] &\Loc_{{}^cG, F,\iota_2}^\Box\otimes\bZ_\ell                                                           & \ar_-{\phi_{\iota_2,\bQ_\ell}}[l] \Loc^{\on{WD},\Box}_{{}^cG,F}\otimes\bQ_\ell
}\]
\end{prop}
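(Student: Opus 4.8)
The plan is to construct $\vartheta_{\iota_1,\iota_2}$ first over $\bQ_\ell$ and then check it extends integrally, using the reducedness and flatness already established. Over $\bQ_\ell$, both $\Loc^{\Box}_{{}^cG,F,\iota_j}\otimes\bQ_\ell$ are identified with $\Loc^{\on{WD},\Box}_{{}^cG,F}\otimes\bQ_\ell$ via $\phi_{\iota_j,\bQ_\ell}$ (Lemma \ref{L: gen}), so there is a unique isomorphism over $\bQ_\ell$ making the right square commute, namely $\vartheta^{\bQ_\ell}:=\phi_{\iota_2,\bQ_\ell}\circ\bar\vartheta\circ\phi_{\iota_1,\bQ_\ell}^{-1}$; here one must unwind that the base change $\Loc^{\Box}_{{}^cG,F,\iota_2}\otimes\bQ_\ell$ is computed by the same fiber product as over $\bZ_\ell$ since $\bG_m\times\Ga_{\widetilde F/F}$ and ${}^cG$ are flat. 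First I would write down this $\bQ_\ell$-isomorphism explicitly on points: an $A$-point of $\Loc^{\Box}_{{}^cG,F,\iota_1}\otimes\bQ_\ell$ is $\rho_1$; via $\phi_{\iota_1,\bQ_\ell}^{-1}$ it becomes a pair $(r,X)$ with $r:W_F\to{}^cG(A)$ continuous and $X$ nilpotent; applying $\bar\vartheta$ rescales $X\mapsto\bar\vartheta X$; applying $\phi_{\iota_2,\bQ_\ell}$ produces $\rho_2(\ga)=r(\iota_2\ga)\exp(|\ga|_{\iota_2}\bar\vartheta X)$. The point of introducing $\bar\vartheta$ is precisely that $\iota_2=\vartheta\iota_1$ with $\vartheta\in\Aut^0$ scales the tame generator $\tau$ by (the prime-to-$p$ part, hence $\ell$-adically by) $\bar\vartheta$, so $|\ga|_{\iota_2}=\bar\vartheta^{-1}|\ga|_{\iota_1}$ on the relevant quotient, making the two formulas match up after the rescaling of $X$. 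One should also verify $r(\iota_1\ga)=r(\iota_2\ga)$ using that $\vartheta$ induces the identity on $\Ga_F^t/I_F^t$, so the Frobenius part is unchanged, and the inertia part of $r$ is absorbed into the $\exp$ factor.

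Next I would promote $\vartheta^{\bQ_\ell}$ to an isomorphism over $\bZ_\ell$. The key inputs are: (i) $\Loc^{\Box}_{{}^cG,F,\iota_j}$ is flat over $\bZ[1/p]$ and reduced (Proposition \ref{P:flat lcc} and Corollary \ref{C: reduce}), hence so is its base change to $\bZ_\ell$, and in particular $\Loc^{\Box}_{{}^cG,F,\iota_j}\otimes\bZ_\ell$ embeds in its generic fiber $\Loc^{\Box}_{{}^cG,F,\iota_j}\otimes\bQ_\ell$ via the scheme-theoretic flat-over-$\bZ_\ell$ condition (no $\ell$-torsion and reduced means global sections inject into those of the generic fiber on each affine piece); (ii) the composite $\phi_{\iota_2,\ell}\circ(\text{nothing})$ — rather, the diagram forces $\vartheta=\phi_{\iota_2,\ell}\circ\phi_{\iota_1,\ell}^{-1}$ on the left as a candidate, and one checks this $\bZ_\ell$-morphism agrees with $\vartheta^{\bQ_\ell}$ after inverting $\ell$. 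Concretely: by Lemma \ref{L: l-comp}, $\phi_{\iota_j,\ell}:\Loc^{\Box}_{{}^cG,F}\xrightarrow{\sim}\Loc^{\Box}_{{}^cG,F,\iota_j}\otimes\bZ_\ell$, so define $\vartheta:=\phi_{\iota_2,\ell}\circ\phi_{\iota_1,\ell}^{-1}$, which makes the left square commute by construction and is automatically an isomorphism of schemes over $\bZ_\ell$. It remains to check the right square commutes, i.e. that $\phi_{\iota_2,\bQ_\ell}^{-1}\circ(\vartheta\otimes\bQ_\ell)\circ\phi_{\iota_1,\bQ_\ell}=\bar\vartheta$ as automorphisms of $\Loc^{\on{WD},\Box}_{{}^cG,F}\otimes\bQ_\ell$. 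Since both sides are morphisms of schemes of finite type over $\bQ_\ell$ and $\Loc^{\on{WD},\Box}_{{}^cG,F}\otimes\bQ_\ell$ is reduced, it suffices to check equality on $\overline{\bQ_\ell}$-points, where it reduces to the explicit point-level computation of the previous paragraph: tracing an $(r,X)$ through $\phi_{\iota_1,\bQ_\ell}$, then through $\vartheta$ (which, unwound via the $\phi_{\cdot,\ell}$'s, is just "restrict from $W_F$ then re-extend along $\iota_2$"), then through $\phi_{\iota_2,\bQ_\ell}^{-1}$, yields $(r,\bar\vartheta X)$.

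Uniqueness: if $\vartheta$ and $\vartheta'$ both fit the diagram, then they agree after inverting $\ell$ (both equal $\phi_{\iota_2,\bQ_\ell}\bar\vartheta\phi_{\iota_1,\bQ_\ell}^{-1}$ by the commuting right square), and since $\Loc^{\Box}_{{}^cG,F,\iota_2}\otimes\bZ_\ell$ is reduced and $\ell$-torsion-free, a $\bZ_\ell$-morphism into it is determined by its generic fiber, so $\vartheta=\vartheta'$. The main obstacle I anticipate is purely bookkeeping: getting the normalizations right in the relation $|\ga|_{\iota_2}=\bar\vartheta^{-1}|\ga|_{\iota_1}$ and confirming that the $\ell$-adic component $\bar\vartheta\in\bZ_\ell^\times$ of $\vartheta\in\Aut^0$ is exactly the scalar by which the tame generator is rescaled — in other words, matching the action of $\Aut^0$ on $I_F^t\cong\widehat\bZ^p(1)$ with its projection to $\bZ_\ell^\times$ and the resulting scaling of the nilpotent $X=\tfrac1m\log\rho(\tau)^m$ in the proof of Lemma \ref{L: gen}. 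Everything else (flatness, reducedness, the finite-type reduction to $\overline{\bQ_\ell}$-points) is already available from the preceding results.
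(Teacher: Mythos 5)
Your proposal is correct and follows essentially the same approach as the paper: define $\vartheta=\phi_{\iota_2,\ell}\circ\phi_{\iota_1,\ell}^{-1}$ (so existence and uniqueness are immediate from the left square, since each $\phi_{\iota_j,\ell}$ is an isomorphism by Lemma \ref{L: l-comp}), then trace the constructions of Lemmas \ref{L: l-comp} and \ref{L: gen} to verify the right square, which is precisely the point-level computation you sketch showing that ``restrict to $\Ga_{F,\iota_1}$, extend $\ell$-adically, restrict to $\Ga_{F,\iota_2}$'' rescales the nilpotent datum $X$ by $\bar\vartheta$. Two small remarks: the uniqueness step is already forced by the left square alone, so the detour through the right square, reducedness, and torsion-freeness is unnecessary (though valid); and the formula ``$|\ga|_{\iota_2}=\bar\vartheta^{-1}|\ga|_{\iota_1}$'' needs to be read as a statement about the $\ell$-adic extensions $\langle\cdot\rangle_j: I_F^t\to\bZ_\ell$ of $|\cdot|_{\iota_j}$ rather than a literal identity on $\Ga_{F,\iota_1}\cap\Ga_{F,\iota_2}$ (which is a proper subgroup of both) — with that reading, your accounting of the scaling is correct.
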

\begin{proof}As $\phi_{\iota_i,\ell}$ is isomorphism and therefore there is a unique $\vartheta$ compatible with $\phi_{\iota_i,\ell}$s. By tracing the construction, we see that
$\vartheta\circ \phi_{\iota_1,\bQ_\ell} =\phi_{\iota_2,\bQ_\ell}\circ \bar\vartheta$. 
\end{proof}

\begin{cor}
The ring of regular functions on $\Loc_{{}^cG,F,\iota}$
\begin{equation}\label{E: st center}
Z_{{}^cG,F}:=H^0\Gamma(\Loc_{{}^cG,F,\iota},\mO)
\end{equation}
is independent of the choice of $\iota$ up to canonical isomorphism (so we can omit the subscript $\iota$).  
\end{cor}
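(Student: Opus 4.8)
Fix two choices $\iota_1,\iota_2$ of the embedding \eqref{E: spl}, and write $Z_\iota:=Z_{{}^cG,F,\iota}=H^0\Gamma(\Loc_{{}^cG,F,\iota},\mO)$. Since $\Loc_{{}^cG,F,\iota}=\Loc^\Box_{{}^cG,F,\iota}/\hat{G}$ with $\Loc^\Box_{{}^cG,F,\iota}$ classical, $Z_\iota=\bZ[1/p][\Loc^\Box_{{}^cG,F,\iota}]^{\hat G}$ is an honest ring of invariants; as $\Loc^\Box_{{}^cG,F,\iota}$ is flat over $\bZ[1/p]$ (Proposition \ref{P:flat lcc}), its ring of functions, and hence the submodule $Z_\iota$, is torsion free over $\bZ[1/p]$. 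Because $(-)^{\hat G}$ is an equalizer it commutes with the flat base changes $\bZ[1/p]\to\bZ_\ell$ and $\bZ[1/p]\to\bQ$, so $Z_\iota\otimes\bZ_\ell=H^0\Gamma(\Loc_{{}^cG,F,\iota}\otimes\bZ_\ell,\mO)$ and $Z_\iota\otimes\bQ=H^0\Gamma(\Loc_{{}^cG,F,\iota}\otimes\bQ,\mO)$. Everything below is to be read componentwise along the exhaustion $\Loc^\Box_{{}^cG,F,\iota}=\varinjlim_L\Loc^\Box_{{}^cG,L/F,\iota}$: each piece is of finite type (Proposition \ref{P:flat lcc}), so each $Z_{L/F,\iota}$ is a finitely generated $\bZ[1/p]$-algebra (Proposition \ref{P: fp of char var}), $Z_\iota=\varprojlim_L Z_{L/F,\iota}$, and the comparison maps below respect these exhaustions after refining $L$.

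First I would observe that $\phi_{\iota,\ell}$ of Lemma \ref{L: l-comp} and $\phi_{\iota,\bQ}$ of Lemma \ref{L: gen} are $\hat{G}$-equivariant: this is immediate from their constructions (restriction along $\Ga_{F,\iota}\hookrightarrow W_F$, resp. $\ga\mapsto r(\iota\ga)\exp(|\ga|_\iota X)$) since both commute with $\hat{G}$-conjugation. Hence $\vartheta=\vartheta_{\iota_1,\iota_2}=\phi_{\iota_2,\ell}\circ\phi_{\iota_1,\ell}^{-1}$ of Proposition \ref{P:comp iota12} is $\hat{G}$-equivariant and induces, for each prime $\ell\neq p$, an isomorphism of $\bZ_\ell$-algebras $\vartheta^*_\ell\colon Z_{\iota_2}\otimes\bZ_\ell\xrightarrow{\ \sim\ }Z_{\iota_1}\otimes\bZ_\ell$; moreover the $\phi_{\iota_i,\bQ}$ identify both $Z_{\iota_i}\otimes\bQ$ with $Z^{\on{WD}}:=H^0\Gamma(\Loc^{\on{WD}}_{{}^cG,F},\mO)=\bQ[\Loc^{\on{WD},\Box}_{{}^cG,F}]^{\hat G}$, yielding a manifestly canonical (and $\iota$-cocycle-compatible) $\bQ$-algebra isomorphism $\psi:=(\phi_{\iota_1,\bQ}^*)^{-1}\circ\phi_{\iota_2,\bQ}^*\colon Z_{\iota_2}\otimes\bQ\xrightarrow{\ \sim\ }Z_{\iota_1}\otimes\bQ$.

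The crux is the claim that the $\bG_m$-action on $\Loc^{\on{WD}}_{{}^cG,F}$ scaling the nilpotent acts trivially on $Z^{\on{WD}}$. Granting it, the right-hand square of Proposition \ref{P:comp iota12} (which says $\vartheta^*_\ell$ corresponds under the $\phi_{\iota_i,\bQ}\otimes\bQ_\ell$ to the action of $\bar\vartheta\in\bG_m(\bZ_\ell)$ on $Z^{\on{WD}}\otimes\bQ_\ell$) forces $\vartheta^*_\ell\otimes_{\bZ_\ell}\bQ_\ell=\psi\otimes_\bQ\bQ_\ell$ for every $\ell$. Thus the $\bQ$-isomorphism $\psi$ carries the lattice $Z_{\iota_2}\otimes\bZ_\ell$ onto $Z_{\iota_1}\otimes\bZ_\ell$ inside $Z_{\iota_1}\otimes\bQ_\ell$ for all $\ell\neq p$. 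Since for a torsion-free $\bZ[1/p]$-module $N$ one has $N\otimes\bZ_{(\ell)}=(N\otimes\bQ)\cap(N\otimes\bZ_\ell)$ inside $N\otimes\bQ_\ell$ and $N=\bigcap_{\ell\neq p}(N\otimes\bZ_{(\ell)})$ inside $N\otimes\bQ$, it follows that $\psi(Z_{\iota_2})=Z_{\iota_1}$, i.e. $\psi$ restricts to a canonical isomorphism $Z_{\iota_2}\xrightarrow{\ \sim\ }Z_{\iota_1}$ of $\bZ[1/p]$-algebras (run at each level $L$, then pass to $\varprojlim_L$).

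The main obstacle is the claim itself. Fix finite Galois $L/F^t\widetilde F$ and set $Y=\Loc^{\on{WD},\Box}_{{}^cG,L/F}$; it suffices to show $\bQ[Y]^{\hat G}$ sits in $\bG_m$-weight $0$. The $\bG_m$-action extends to $\tilde f\colon\bA^1\times Y\to Y$, $(t,(r,X))\mapsto(r,tX)$ (the conditions $\Ad_{r(\ga)}X=q^{\|\ga\|}X$, $X\in\hat\mN$ are preserved), a $\hat{G}$-equivariant retraction onto $Y_0=\{X=0\}$; a $\hat{G}$-invariant function pulled back from $Y_0$ is $\bG_m$-invariant, so it is enough to show every $\hat{G}$-invariant $\phi$ satisfies $\phi(r,X)=\phi(r,0)$. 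As $Y$ is reduced — it is an open subscheme of $\Loc^\Box_{{}^cG,F,\iota}\otimes\bQ$, which is reduced by Corollary \ref{C: reduce} — this may be checked at geometric points. If $X=0$ there is nothing to prove; otherwise, being in characteristic $0$, Jacobson–Morozov completes $X$ to an $\mathfrak{sl}_2$-triple and produces $\mu\colon\bG_m\to\hat{G}$ with $\Ad_{\mu(t)}X=t^2X$, and using that $q$ is invertible one checks $r(W_F)\subset P(\mu)$ (for $\ga\in I_F$, $r(\ga)\in Z_{\hat G}(X)\subset P(\mu)$; a Frobenius lift equals $s'\mu(q^{1/2})$ with $s'\in Z_{\hat G}(X)$). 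Then $\lim_{t\to0}\mu(t)\cdot(r,X)\cdot\mu(t)^{-1}=(\bar r,0)=\lim_{t\to0}\mu(t)\cdot(r,0)\cdot\mu(t)^{-1}$, where $\bar r$ is the reduction of $r$ to the Levi $L(\mu)$; since these limits lie in $\overline{\hat{G}\cdot(r,X)}$ and $\overline{\hat{G}\cdot(r,0)}$, the points $(r,X)$ and $(r,0)$ have equal image in $Y/\!\!/\hat{G}$, whence $\phi(r,X)=\phi(r,0)$. This proves the claim and finishes the argument. (That the functions on the Weil–Deligne stack do not see the monodromy — which is exactly what renders the $\bG_m$-twist of Proposition \ref{P:comp iota12} invisible on $Z^{\on{WD}}$ — is in the spirit of \cite{L} and \cite{DHKM}.)
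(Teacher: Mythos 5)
Your proof is correct and follows the same route as the paper's: reduce to showing that the twist $\bar\vartheta\in\bG_m(\bQ_\ell)$ of Proposition \ref{P:comp iota12} acts trivially on the ring of $\hat{G}$-invariant functions on $\Loc^{\on{WD},\Box}_{{}^cG,F}$, and then conclude via the compatibility of the $\phi_{\iota,\ell}$ and $\phi_{\iota,\bQ}$. The paper states this $\bG_m$-triviality without proof; your Jacobson--Morozov/degeneration-to-the-closed-orbit argument establishing it, together with the torsion-free lattice argument assembling an isomorphism over $\bZ[1/p]$ from the $\bZ_\ell$- and $\bQ$-comparisons, supplies the details the paper leaves implicit.
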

Recall that according to our convention, $\Gamma(\Loc_{{}^cG,F,\iota},-)$ standards for the derived functor, while $H^0\Gamma$ denotes its zeroth cohomology.
\begin{proof}
Indeed, the $\bG_m$-action on $\Loc^{\on{WD}}_{{}^cG,F}$ (by scaling the nilpotent element) induces the trivial action on its ring of regular functions. Therefore $\bar\vartheta$ in Proposition \ref{P:comp iota12} induces the identity map after taking $\hat{G}$-invariants.
\end{proof}
This algebra is usually called the stable center of $G^*$ (the quasi-split inner form of $G$), at least when base changed to $\bC$ (see \cite{Ha}). It admits an idempotent decomposition indexed by connected components of $\Loc_{{}^cG,F,\iota}$. For a finite union of connected components $D$, let $Z_{{}^cG,F,D}$ denote the corresponding ring of regular functions, which is a finitely generated $\Lambda$-algebra. If $D=\Loc_{{}^cG,L/F,\iota}$, we denote $Z_{{}^cG,F,D}$ by $Z_{{}^cG,L/F}$. 

As taking $\hat{G}$-invariants on $\hat{G}$-representations over $\Lambda$ is not exact if $\Lambda$ is not a field of characteristic zero, a priori the higher cohomology $H^i\Gamma(\Loc_{{}^cG,F,\iota},\mO)$ may not vanish for $i>0$. 
But Conjecture \ref{Conj: coh sheaf} suggests this is not the case. In fact, we make the following conjecture\footnote{In fact this conjecture has been proved in \cite{FS} when $\ell$ is not too small.}.

\begin{conjecture}\label{Conj: vanishing O}
For every $i\geq 1$, $H^i\Gamma(\Loc_{{}^cG,F,\iota},\mO)=0$.
\end{conjecture}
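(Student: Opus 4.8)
The conjecture asserts that the $\hat G$-invariants of the structure sheaf of $\Loc^\Box_{{}^cG,F,\iota}$, computed as a derived functor, have no higher cohomology. The plan is to reduce this to a statement about each finite-level piece $\Loc^\Box_{{}^cG,L/F,\iota}$, and then to exploit the two special features established in \S\ref{S: Loc}: that $\Loc^\Box_{{}^cG,L/F,\iota}\cong \mR_{\Ga_{L/F,\iota},{}^cG}\times_{\mR_{\Ga_{L/F,\iota},\bG_m\times\Ga_{\widetilde F/F}}}\{\chi\iota\}$ is a classical, flat, reduced local complete intersection over $\bZ[1/p]$ (Proposition \ref{P:flat lcc}, Corollary \ref{C: reduce}), and that over $\bQ$ it is identified with the smooth (generically) Weil--Deligne stack. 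Concretely, writing $A_L = \bZ[1/p][\Loc^\Box_{{}^cG,L/F,\iota}]$, one wants $H^i(\hat G, A_L)=0$ for $i>0$; here $H^\bullet(\hat G,-)$ denotes rational cohomology of the algebraic group $\hat G$ acting by conjugation. Since $\Loc_{{}^cG,F,\iota}$ is treated as an ind-finite-type stack with $\Gamma$ computed componentwise (Remark \ref{R: Coh cat}), it suffices to handle each $L$.

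First I would set up the spectral sequence computing $\Gamma(\Loc_{{}^cG,L/F,\iota},\mO)=R\Gamma(\hat G,A_L)$ and reduce to showing acyclicity of $A_L$ as a $\hat G$-module. Over $\bZ[1/p]$ the group $\hat G$ is reductive but not linearly reductive, so this is genuinely nontrivial. The natural strategy is a localization/flatness argument: by flatness of $A_L$ over $\bZ[1/p]$ and the base-change spectral sequence for $R\Gamma(\hat G,-)$, it is enough to prove the vanishing after base change to each residue field $\overline{\bF}_\ell$ (for $\ell\neq p$) and to $\overline{\bQ}$, with the caveat that $R\Gamma(\hat G,-)$ does not commute with base change in general for non-linearly-reductive groups; so one must instead argue that the higher cohomology sheaves $H^i\Gamma(\Loc_{{}^cG,L/F,\iota},\mO)$, which are coherent $\bZ[1/p]$-modules supported in finitely many components, vanish generically (by smoothness over $\bQ$, where $\hat G_{\bQ}$ is linearly reductive, so $H^i=0$ for $i>0$) and hence are torsion, and then control the torsion. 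This last point — ruling out $\ell$-torsion in $H^i\Gamma$ for the finitely many bad primes — is where the real work lies.

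For the torsion control I would use the explicit presentation of $A_L$ coming from Proposition \ref{P: tame and wild stack}: $\Ga_{L/F,\iota}\cong Q_L\rtimes\Ga_q$ with $Q_L$ a finite $p$-group, and the resolution \eqref{E: resol Gaq} of the trivial module, which exhibits $\Loc^\Box_{{}^cG,L/F,\iota}$ as cut out in a product of copies of ${}^cG$ by an explicit regular sequence of the expected codimension. Combined with the decomposition results (Proposition \ref{P: finite group good case}, Remark \ref{R: decom finite group R}) and the structure of $(M^{[q]})$ — finitely many conjugacy classes over any algebraically closed field — one can hope to reduce, component by component, to the case where the representation is, up to conjugation, valued in the normalizer $N_{\hat G}(T)$ of a maximal torus, or more generally to pieces where a concrete invariant-theory computation is available (e.g. using power-surjectivity as in \cite{TVDK2} and Donkin's results \cite{Do} for the $\GL_m$-case, which handles all $G$ via a faithful embedding as in the proof of Proposition \ref{P: fin pseudo rep}). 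The key technical input would be a statement of the form: for the specific $\hat G$-varieties arising here, $H^i(\hat G,\mO)$ vanishes over $\bZ[1/p]$ because these varieties admit a good filtration (in the sense of Donkin/Mathieu) — i.e. $\mO$ has a filtration by dual Weyl modules — which forces the higher cohomology to vanish integrally.

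\textbf{Main obstacle.} The hard part will be establishing the good filtration (or some equally strong cohomological vanishing) for $A_L$ as a $\hat G$-module over $\bZ[1/p]$ at the finitely many primes $\ell$ dividing the relevant bad quantities (e.g. $\ell\mid q-1$, or $\ell$ dividing the order of $Q_L$ or of torsion in $\pi_1(\hat G_{\mathrm{der}})$). Smoothness over $\bQ$ gives the vanishing generically for free, and the lci/flatness structure gives strong constraints, but passing from ``$H^i\Gamma$ is torsion'' to ``$H^i\Gamma=0$'' requires genuinely understanding the mod-$\ell$ geometry of these parameter schemes near the non-linearly-reductive locus — precisely the subtle phenomena flagged in Remark \ref{R: LocB}. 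A plausible fallback, sufficient for many applications, would be to prove the conjecture only after inverting the finitely many bad primes, or only for those $L$ (equivalently components) where the relevant representations are sufficiently non-degenerate; but the full integral statement at all primes is, I expect, the real content of Conjecture \ref{Conj: vanishing O} and likely needs the good-filtration technology together with the explicit resolution \eqref{E: resol Gaq}.
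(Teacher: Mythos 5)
This statement is a \emph{conjecture} in the paper, not a theorem, and the paper offers no proof of it; the only ``evidence'' supplied is the remark immediately preceding it, which says the vanishing is \emph{suggested} by Conjecture~\ref{Conj: coh sheaf} (i.e.\ by the conjectural fully faithful functor $\frakA_G$ sending the Whittaker module to $\mO_{\Loc_{{}^cG,F,\iota}}$). So there is no in-paper argument to compare your attempt against.

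That said, your proposal is a sensible \emph{direct} line of attack and is in fact a genuinely different route from the paper's motivation: the paper reasons backwards from the conjectural categorical local Langlands correspondence, whereas you try to prove the acyclicity of $A_L=\bZ[1/p][\Loc^\Box_{{}^cG,L/F,\iota}]$ as a rational $\hat G$-module by geometric representation theory. Your reductions are all correct: it does suffice to treat each finite-level component (Remark~\ref{R: Coh cat}); since $A_L$ is flat over $\bZ[1/p]$ (Proposition~\ref{P:flat lcc}) and $\mO(\hat G)$ is a flat Hopf algebra, $R\Gamma(\hat G,A_L)$ base-changes, so linear reductivity of $\hat G_\bQ$ forces the higher $H^i$ to be torsion; and the content of the conjecture is then killing torsion at the finitely many bad $\ell$. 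You also correctly identify the right technology — a good filtration for $A_L$ (Donkin/Mathieu) would finish it — and correctly flag, via Remark~\ref{R: LocB}, that this is exactly where the mod-$\ell$ geometry near the non-linearly-reductive locus becomes delicate. You do not close this gap, and you honestly say so; neither does the paper. Your write-up is an accurate assessment of the state of the problem rather than a proof, which is appropriate given that the target is an open conjecture.
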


\begin{remark}\label{R: point coarse moduli}
Let $\kappa$ be an algebraically closed field over $\bZ[1/p]$.
By \cite[11.7]{L} and \cite[4.5]{BHKT}, and Remark \ref{R: kpt of pseudo}, there is a bijection between $\kappa$-points of $Z_{{}^cG,F}$ and $\hat{G}(\kappa)$-conjugacy classes of homomorphisms $\rho: \Ga_{F,\iota}\to {}^cG(\kappa)$ satisfying
\begin{itemize}
\item $d\circ \rho=\chi$;
\item $\rho$ factors through $\Ga_{L/F,\iota}\to {}^cG(\kappa)$ for some finite extension $L/F^t\widetilde F$;
\item $\rho$ is completely reducible (in the sense of \cite[3.5]{BHKT}).
\end{itemize}
Giving Conjecture \ref{Conj: vanishing O}, one may further conjecture that a slight variant of \eqref{E: coarse to pseudo} in the current setting is an isomorphism (after taking $\pi_0$).
\end{remark}

At the end of this subsection, we discuss the behavior of these stacks under tensor induction.

Let $F'/F$ be a finite separable extension. Let $G'$ be a connected reductive group over $F'$ and $G=\on{Res}_{F'/F}G'$. As explained in \cite[5.1,4.1]{Bo}, the dual group $\hat{G}$ of $G$ equipped with an action of $\Ga_F$ is canonically isomorphic to the tensor induction $\on{Ind}_{\Ga_{F'}}^{\Ga_F}\hat{G'}$, which by definition is the space of all $\Ga_{F'}$-equivariant maps from $\Ga_F$ to $\hat{G'}$. There is the $\Ga_{F'}$-equivariant maps (\cite[4.1]{Bo})
$$
   \hat{G'}\xrightarrow{i}  \hat{G}\xrightarrow{\on{ev}_e} \hat{G'}
$$ 
whose composition is the identity, where the first map sends $g$ to the unique map $f:\Ga_F\to \hat{G'}$ that is supported on $\Ga_{F'}$ and such that $f(1)=g$, and the second map sends $f: \Ga_F\to \hat{G'}$ to $f(e)$. Then there is a canonical homomorphism ${}^c(G')\to {}^cG$ compatible with $i$ and with $\bG_m\times \Ga_{\widetilde F'/F'}\to \bG_m\times \Ga_{\widetilde F/F}$ as in \cite[5.1 (5)]{Bo}. A choice of $\iota: \Ga_q\to \Ga_F^t$ gives $\iota': \Ga_{q'}\to \Ga_{F'}^t$. Note that $\Ind_{\Ga_{F',\iota'}}^{\Ga_{F,\iota}}\hat{G'}=\Ind_{\Ga_{F'}}^{\Ga_F}\hat{G'}$.
\begin{lemma}\label{L: Shapiro}
There is the canonical isomorphism
\[
\Loc_{{}^cG,F,\iota}\cong \Loc_{{}^cG',F',\iota'},\quad \rho\mapsto \on{ev}_e\circ (\rho|_{\Ga_{F',\iota'}}).
\]
\end{lemma}
\begin{proof}
This is a geometric version of the Shapiro's lemma. We generalize the argument from \cite[4.1.2]{XZ2} to explicitly construct the inverse map.  For simplicity, we write $\Ga'=\Ga_{F',\iota'}$ and $\Ga$ for $\Ga_{F,\iota}$.
Let $s:\Ga'\backslash \Ga\to \Ga$ be a section (sending the unit coset to $1\in\Ga$) of the projection $\Ga\to\Ga'\backslash\Ga,\ \ga\mapsto \bar{\ga}$. Then we have the map 
\[
\Xi_s:\Ga\to \Ga',\quad \Xi_s(\ga):= \ga s_{\bar\ga}^{-1}.
\]
Note  that $\Xi_s(\ga'\ga)=\ga'\Xi_s(\ga)$ for $\ga'\in\Ga'$.
In addition, let 
\[
\Delta_s: \hat{G'}\to\hat{G},\quad \Delta_s(g):\Ga\to \hat{G'},\quad \Delta_s(g)(\delta)=\chi(\Xi_s(\delta))(g).
\] 

Now we construct a morphism $I_s:\Loc_{{}^cG',F',\iota'}^{\Box}\to \Loc_{{}^cG,F,\iota}^{\Box}$ as follows.
Let $\rho'=(\varphi',\chi): \Ga'\to {}^c(G')(A)=\hat{G'}(A)\rtimes (A^\times\times\Ga_{\widetilde F'/F'})$. We define 
$I_s(\rho')=(\varphi,\chi): \Ga\to {}^cG(A)=\hat{G}(A)\rtimes (A^\times\times \Ga_{\widetilde F/F})$, where
\[
\varphi(\ga): \Ga\to \hat{G'}(A),\quad \varphi(\ga)(\delta)=\varphi'(\Xi_s(\delta))^{-1}\varphi'(\Xi_s(\delta\ga)).
\]
One verifies that 
\begin{itemize}
\item $\varphi(\ga'\ga)=\chi(\ga')(\varphi(\ga))$ for $\ga'\in\Ga'$ so $\varphi(\ga)\in \hat{G}(A)$;
\item $I_s(\rho')$ is a homomorphism $\Ga\to {}^cG(A)$, and that $\on{ev}_e\circ (I_s(\rho')|_{\Ga'})=\rho'$;
\item $I_s(g^{-1}\rho'g)=\Delta_s(g)^{-1}I_s(\rho')\Delta_s(g)$ for any $g\in \hat{G'}(A)$.
\end{itemize}
Therefore we construct a morphism
$\Loc_{{}^cG',F',\iota'}\to \Loc_{{}^cG,F,\iota}$ inverse to the map in the lemma.
\end{proof}

\subsection{Duality for Tori and symmetries of $\Coh(\Loc_{{}^cG,F,\iota})$}\label{S: tori} 
Let us first we look into the stack $\Loc_{{}^cG,F,\iota}$ more carefully when $G=T$ is a torus over $F$.
It is not difficult to see from the proof of Proposition \ref{P:comp iota12} that $\Loc_{{}^cT,F,\iota}$ is independent of the choice of $\iota$. But in fact one can describe  $\Loc_{{}^cT,F,\iota}$ explicitly as follows.
Let $\widetilde F/F$ be the splitting field of $T$. By the local class field theory, there is the short exact sequence
\[1\to \widetilde{F}^\times\to W_{\widetilde{F}/F}\to \Ga_{\widetilde F/F}\to 1,\]
where $W_{\tilde F/F}$ is the Weil group of the extension $\tilde F/F$. Let $U^{(n)}$ be the $n$th unit group of $\widetilde F$ (so $U^{(0)}=\mO_{\widetilde F}^\times$ and $U^{(n)}=1+\frakm^n_{\widetilde F}$ for $n\geq 1$), and write $W^{(n)}=W_{\widetilde{F}/F}/U^{(n)}$. Then there is a natural isomorphism
\[
\Loc_{{}^cT,F,\iota}\cong \varinjlim_{n}\Loc_{{}^cT,F}^{(n)},\quad \mbox{where } \ \ \ \Loc_{{}^cT,F}^{(n)}:={}^{cl}\bigl(\mR^{\Box}_{W^{(n)},{}^cT}\times_{\mR^{\Box}_{W^{(n)},\bG_m\times\Ga_{\widetilde F/F}}}\bigl\{\chi\bigr\}\bigr)/\hat{T}.
\]
So from now on we drop the subscript $\iota$ from the notation.

\begin{example}\label{E: ur-un-ta for tori}
Assume that $\widetilde F/F$ is tamely ramified so $W^{(1)}$ is a quotient of $\Ga_F^t$. Then 
$\Loc_{{}^cT,F}^{(1)}$ is the stack $\Loc_{{}^cT,F}^{\ta}$ of tame Langlands parameters of $T$ that will be introduced later. Note that $\Loc_{{}^cT,F}^{(1)}$ is connected over $\bZ[1/p]$ but this is not the case over $\bQ$.

If $\widetilde F/F$ is unramified, then $\Loc_{{}^cT,F}^{(0)}$ can also be identified with the stack $\Loc_{{}^cT,F}^{\ur}$ of unramified parameters of $T$ that will be introduced later. In this case, let $\bar\sigma$ denote the Frobenius element in $\Ga_{\widetilde F/F}$. Then the inclusion $\Loc_{{}^cT,F}^{(0)}\subset \Loc_{{}^cT,F}^{(1)}$ is identified with
\begin{equation}\label{E: unramified}
\hat{T}\bar\sigma/\hat{T}=\{1\}\times \hat{T}\bar\sigma/\hat{T}\subset \bigl(({}^{cl}\mR_{\kappa_{\widetilde F}^\times, \hat{T}})^\sigma\times \hat{T}\bar\sigma\bigr)/\hat{T}.
\end{equation}
Here $({}^{cl}\mR_{\kappa_{\widetilde F}^\times, \hat{T}})^\sigma$ is the classical moduli of $\sigma$-equivariant homomorphisms from $\kappa_{\widetilde F}^\times$ to $\hat{T}$, and $1$ denotes the trivial homomorphism. (Note that as explained in Example \ref{Ex: Zp over Fp}, $\mR_{\kappa_{\widetilde F}^\times, \hat{T}}$ itself is not classical (over $\bF_\ell$ when $\ell\mid \sharp\kappa_{\widetilde F}-1$) so one needs to take its underlying classical scheme.)
\end{example}

We note that $\Loc_{{}^cT,F}$ is in fact a Picard stack over $\bZ[1/p]$ (e.g. see \cite[\S A]{CZ} for a general review of Picard stacks). Let $\bB\bG_m$ be the classifying stack of $\bG_m$ over $\bZ[1/p]$.
Let 
\[
\Loc_{{}^cT,F}^\vee:=\underline\Hom(\Loc_{{}^cT,F}, \bB\bG_m)
\]
be the dual Picard stack of $\Loc_{{}^cT,F}^\vee$ over $\bZ[1/p]$ (in the sense of \cite[A.3.1]{CZ}), which is still a Picard stack, classifying multiplicative line bundles on $\Loc_{{}^cT,F}$. On the other hand, let $\breve F$ be the completion of a maximal unramified extension $F^{\ur}/F$ of $F$. Then the Frobenius $\sigma$ acts on $\breve F$. Let $\Tor_{T,\iso_F}$ denote the Picard groupoid of pairs $(\mE,\varphi)$ consisting of a $T$-torsor $\mE$ on $\breve{F}$ and an isomorphism $\varphi: \mE\simeq \sigma^*\mE$ of $T$-torsors. (The pair $(\mE,\varphi)$ can be regarded as a $T$-torsor in the $F$-linear Tannakian category of $\sigma$-$\breve{F}$-spaces in the sense of \cite[\S 3]{Ko} and \cite[\S 2]{Ko2}.)
We regard $\Tor_{T,\iso_F}$ as a constant Picard stack over $\bZ[1/p]$. The following conjecture can be regarded as the local Langlands duality for tori over non-archimedean local fields. 
\begin{conjecture}\label{C: LL for tori}
There is a natural Poincare line bundle on $\Tor_{T,\iso_F}\times \Loc_{{}^cT,F}$ inducing an isomorphism of Picard stacks $\Tor_{T,\iso_F}\cong \Loc_{{}^cT,F}^\vee$.
\end{conjecture}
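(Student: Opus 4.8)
\textbf{Proof proposal for Conjecture \ref{C: LL for tori}.}

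The plan is to reduce the statement to an explicit computation of both sides in terms of the local class field theory exact sequence, matching them via the abelianized version of the geometric Satake isomorphism (or rather its tori incarnation). First I would fix notation: by local class field theory, $\Tor_{T,\iso_F}$ is the Picard groupoid of pairs $(\mE,\varphi)$ of a $T$-torsor on $\Spec\breve F$ with a Frobenius descent datum, and by a standard computation (Kottwitz) its group of isomorphism classes is $X_*(T)_{I_F}$ (the $I_F$-coinvariants of the cocharacter lattice) with automorphism group $T(F)$ of each object, while $\pi_0$ of $\Loc_{{}^cT,F}$ should match $X_*(T)_{I_F}$ via Conjecture \ref{Conj: vanishing O} (or directly, as $\Loc_{{}^cT,F}$ is a Picard stack one computes its group of components). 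The key observation is that both $\Tor_{T,\iso_F}$ and $\Loc_{{}^cT,F}^\vee$ are built functorially in $T$ out of the same data, so it suffices to construct the Poincaré line bundle and check it induces an isomorphism.

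The main steps, in order, would be: (1) Construct the Poincaré line bundle on $\Tor_{T,\iso_F}\times\Loc_{{}^cT,F}$. Here I would use the universal character: a point of $\Loc_{{}^cT,F}$ is (roughly) a continuous homomorphism $\varphi: W_F\to \hat T(A)$ with the fixed similitude, and by local class field theory this is the same as a continuous homomorphism $W_F^{\ab}\cong F^\times \to \hat T(A)$, equivalently (by duality $\hat T = \Spec k[X^*(T)]$, $X^*(\hat T)=X_*(T)$) a pairing between $F^\times$ and $X_*(T)$ valued in $A^\times$. Pairing this with the class $[\mE,\varphi]\in X_*(T)_{I_F}$ and using the evaluation $F^\times \times X_*(T)_{I_F}\to (\text{something})$ refined by the automorphism group $T(F)$, one extracts a line bundle; the multiplicative structure in both variables is built into the construction. (2) Check that the induced morphism $\Tor_{T,\iso_F}\to \Loc_{{}^cT,F}^\vee$ is a morphism of Picard stacks — this is formal from the bilinearity of the pairing. (3) Check it is an isomorphism by checking on $\pi_0$ and on automorphism groups separately: on $\pi_0$ this is the statement that the pairing $X_*(T)_{I_F}\times (\text{component group of }\Loc)\to \text{(dual)}$ is perfect, reducible to the analogous statement for $\bG_m$ and for induced tori $\Res_{F'/F}\bG_m$ by Lemma \ref{L: Shapiro} (Shapiro), where it becomes classical local class field theory; on automorphism groups one checks $T(F)$ maps isomorphically to the automorphism group of the corresponding point of $\Loc_{{}^cT,F}^\vee$, again reducing to tori of the form $\Res_{F'/F}\bG_m$ via \ref{L: Shapiro} and \ref{P: tame stack}-type explicit descriptions (e.g. Example \ref{E: ur-un-ta for tori} in the unramified case).

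The hard part will be step (1) together with the globalization/functoriality needed to make the Poincaré bundle canonical rather than depending on auxiliary choices: one must handle the full tower $\Loc_{{}^cT,F}\cong \varinjlim_n \Loc_{{}^cT,F}^{(n)}$ compatibly with the ramification filtration $U^{(n)}$ on $\widetilde F^\times$, and check that the line bundle descends through the quotient by $\hat T$ (i.e. is genuinely a \emph{multiplicative} line bundle in the sense of \cite[A.3.1]{CZ} — this is where the similitude twist by $\chi$ enters and must be tracked carefully). A secondary subtlety is that over $\bZ[1/p]$, as opposed to over $\bQ$, the relevant moduli spaces $\mR_{\kappa_{\widetilde F}^\times,\hat T}$ carry non-trivial derived/non-reduced structure at primes $\ell\mid \sharp\kappa_{\widetilde F}-1$ (cf.\ Example \ref{Ex: Zp over Fp}), so one must verify that after taking underlying classical stacks and forming the Picard-stack dual everything remains flat and the isomorphism persists integrally; I expect the cleanest route is to first establish the isomorphism over $\bQ$ (where it is essentially the classical local Langlands for tori of Langlands, repackaged) and over each $\bF_\ell$ separately, then invoke flatness (Proposition \ref{P:flat lcc}) to glue, much as Corollary \ref{C: reduce} was deduced.
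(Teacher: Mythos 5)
The statement you are trying to prove is labeled a \emph{conjecture} in the paper, and the paper offers no proof; the only justification given is the remark immediately following it, which observes that at the level of $\pi_0$ and of automorphism groups the claim recovers Kottwitz's identification $B(T)\cong \xch(\hat T^{\Ga_F})$ and Langlands's classical parametrization of characters of $T(F)$ by Langlands parameters. So there is no paper proof to compare yours against; a complete argument along your lines would be genuinely new content.

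That said, your outline expands exactly the heuristic the paper hints at (match $\pi_0$ and automorphism groups, then promote to a Picard-stack isomorphism via a Poincar\'e bundle), and the subtleties you single out — the similitude twist $\chi$, the ramification filtration $U^{(n)}$, and the non-classicality/non-reducedness of $\mR_{\kappa_{\widetilde F}^\times,\hat T}$ at primes $\ell\mid q-1$ — are the right ones to worry about. Two concrete repairs are needed before your sketch can be taken seriously. First, $\pi_0(\Tor_{T,\iso_F})=B(T)$ is $X_*(T)_{\Ga_F}$, i.e.\ $\xch(\hat T^{\Ga_F})$, not $X_*(T)_{I_F}$: the $I_F$-coinvariants compute the Kottwitz map of $T(\breve F)$, and one must still take Frobenius coinvariants to pass from $T(\breve F)$ to $B(T)$. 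Second, the reduction ``to $\bG_m$ and $\Res_{F'/F}\bG_m$ via Shapiro'' does not cover a general torus, since an arbitrary $T$ is neither an induced torus nor a direct factor of one. You would instead need to resolve $T$ by induced tori (an Ono-type two-step resolution of $X_*(T)$ by permutation $\Ga_F$-modules), show that both $T\mapsto\Tor_{T,\iso_F}$ and $T\mapsto\Loc_{{}^cT,F}^\vee$ send such short exact sequences to fiber/cofiber sequences of Picard stacks, and only then invoke Shapiro (Lemma \ref{L: Shapiro}) on the induced pieces. That compatibility is nontrivial precisely because the relevant components of $\Loc_{{}^cT,F}$ are not flat over $\bZ[1/p]$ in the naive sense (cf.\ Example \ref{E: ur-un-ta for tori}), so the dualization functor $\underline\Hom(-,\bB\bG_m)$ need not be exact on the nose, and the claimed long-exact-sequence argument has to be run at the derived level.
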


\begin{remark}
We note that the isomorphism classes of $\Tor_{T,\iso_F}$ is nothing but Kottwitz' set $B(T)$ for $T$ (see \cite{Ko,Ko2}) which is identified with $\xch(\hat{T}^{\Ga_F})$ in \emph{loc. cit}. On
the other hand, the automorphism group of every $T$-torsor is just $T(F)$, whose character group can be identified with the set of Langlands parameters for $T$ (\cite{La}). So the conjecture is an algebro-geometric refinement of these facts.
\end{remark}

We slightly extend the above conjecture to allow not necessarily connected group $Z$ of multiplicative type over $F$. 
The Picard groupoid $\Tor_{Z,\iso_F}$ still makes sense (as in \cite{Ko2}),
but now may have non-trivial derived structure (as $H^2(W_F, Z(\overline{\breve{F}}))$ may not be zero). The set of its isomorphism classes is $B(Z)=H^1(W_F,Z(\overline{\breve{F}}))$.
To study the dual side, we embed $Z$ into an $F$-torus $T$ and let $T'=T/Z$. Then we define 
\[
\hat{Z}:=\hat{T'}/\hat{T}.
\]
If $Z$ is a torus, then $\hat{Z}$ is just the dual group of $Z$ but in general it is just a Picard stack. E.g. if $Z$ is finite, then $\hat{Z}$ is the classifying stack of $\ker(\hat{T}\to\hat{T'})$. 
In any case, $\hat{Z}$ is canonically independent of the choice of the embedding $Z\to T$ and may be called the dual of $Z$. 

There is the natural action of $\bG_m\times \Ga_{\widetilde F/F}$ on $\hat{Z}$ (of course $\bG_m$ acts trivially but we keep it to unify the notation). Then we can define ${}^cZ:=\hat{Z}\rtimes(\bG_m\times\Ga_{\widetilde F/F})$, regarded as a monoid stack over $\bZ[1/p]$. Then we may define  $\Loc_{{}^cZ,F}$. This is a Picard $2$-stack. 
One can also take its dual $\Loc_{{}^cZ,F}^\vee=\underline\Hom(\Loc_{{}^cZ,F},\bB\bG_m)$. Then Conjecture \ref{C: LL for tori} can be generalized as follows.
\begin{conjecture}\label{C: LL for group of mult type}
There is a natural isomorphism of derived Picard stacks $\Tor_{Z,\iso_F}\cong  \Loc_{{}^cZ,F}^\vee$. In particular, every $\theta\in \Tor_{Z,\iso_F}$ gives a multiplicative line bundle $\mL_\theta$ on $\Loc_{{}^cZ,F}$.
\end{conjecture}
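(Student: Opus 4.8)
The plan is to reduce Conjecture \ref{C: LL for group of mult type} to Conjecture \ref{C: LL for tori} by means of a functorial embedding $Z\hookrightarrow T$ into an $F$-torus, together with the quotient $T'=T/Z$, which produces a short exact sequence of groups of multiplicative type $1\to Z\to T\to T'\to 1$. First I would record the basic structural input: the construction $H\mapsto \Loc_{{}^cH,F}$ is compatible with short exact sequences, in the sense that a short exact sequence of (possibly disconnected) groups of multiplicative type over $F$ induces a fiber sequence of Picard (higher) stacks $\Loc_{{}^cZ,F}\to \Loc_{{}^cT,F}\to \Loc_{{}^cT',F}$ over $\bZ[1/p]$. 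This is because $\Loc_{{}^cH,F}$ is built from $\mR^{sc}_{\Ga,\widehat H}$ (or $\mR^{c}$ in the formal version) and the functor $\widehat H\mapsto \mR^{sc}_{\Ga,\widehat H}$, being a mapping-space construction into an abelian target, takes short exact sequences of diagonalizable group schemes to fiber sequences; one then takes fibers over $\chi$ and quotients by $\hat H$, all of which preserve the relevant exactness in the Picard-stack setting. One must also check that $\hat Z=\hat{T'}/\hat T$ is independent of the embedding $Z\hookrightarrow T$, which follows from the standard cofinality argument over all such embeddings, exactly as with the definition of dual tori.

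Next I would dualize. Applying $\underline\Hom(-,\bB\bG_m)$ to the fiber sequence $\Loc_{{}^cZ,F}\to \Loc_{{}^cT,F}\to \Loc_{{}^cT',F}$ turns it into a (co)fiber sequence of dual Picard stacks
\[
\Loc_{{}^cT',F}^\vee\to \Loc_{{}^cT,F}^\vee\to \Loc_{{}^cZ,F}^\vee,
\]
using that internal $\underline\Hom$ into $\bB\bG_m$ is exact in the appropriate derived sense (for Picard stacks this is Deligne's duality, reviewed via \cite[\S A]{CZ}). On the Galois/isocrystal side, the short exact sequence $1\to Z\to T\to T'\to 1$ likewise produces a fiber sequence of Picard groupoids $\Tor_{Z,\iso_F}\to \Tor_{T,\iso_F}\to \Tor_{T',\iso_F}$, coming from the long exact sequence of $W_F$-cohomology of $Z(\overline{\breve F})\to T(\overline{\breve F})\to T'(\overline{\breve F})$ (this is where the possible derived structure of $\Tor_{Z,\iso_F}$, i.e. $H^2(W_F,Z(\overline{\breve F}))$, enters, and it matches the $2$-stack structure of $\Loc_{{}^cZ,F}$). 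By Conjecture \ref{C: LL for tori}, applied to both $T$ and $T'$, we have compatible Poincar\'e line bundles giving isomorphisms $\Tor_{T,\iso_F}\cong \Loc_{{}^cT,F}^\vee$ and $\Tor_{T',\iso_F}\cong \Loc_{{}^cT',F}^\vee$. I would check that these isomorphisms are compatible with the maps induced by $T\to T'$ — this compatibility should be built into the statement of Conjecture \ref{C: LL for tori} if the Poincar\'e bundle is constructed functorially in $T$. Then a diagram chase (map of fiber sequences, two-out-of-three for equivalences of Picard stacks) yields $\Tor_{Z,\iso_F}\cong \Loc_{{}^cZ,F}^\vee$, and the induced multiplicative line bundle $\mL_\theta$ on $\Loc_{{}^cZ,F}$ attached to $\theta\in\Tor_{Z,\iso_F}$ is then simply the image of $\theta$ under this equivalence.

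The main obstacle I anticipate is not the diagram chase but establishing the two exactness statements at the level of Picard \emph{higher} stacks rather than merely on isomorphism classes (where the class-field-theoretic exact sequences are classical). Concretely, one must verify that $\underline\Hom(-,\bB\bG_m)$ sends the fiber sequence of the $\Loc$-stacks to a cofiber sequence, and dually that $\Tor_{-,\iso_F}$ sends $1\to Z\to T\to T'\to 1$ to a fiber sequence of Picard groupoids including the higher homotopy — this requires handling $H^2(W_F,-)$ carefully and using that for $F$ a local field $H^i(W_F,-)$ vanishes for $i\geq 3$ on finite or diagonalizable coefficients, so both sides are genuinely $2$-truncated. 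A secondary subtlety is the independence of $\hat Z$ (hence of $\Loc_{{}^cZ,F}^\vee$) from the chosen resolution $Z\hookrightarrow T$: one should phrase everything as a colimit over the filtered category of such embeddings and check the transition maps are equivalences, which reduces to the torus case already granted. Since the whole argument is conditional on Conjecture \ref{C: LL for tori}, the remaining work is formal once its functoriality in $T$ is pinned down; I would state that functoriality explicitly as part of the construction of the Poincar\'e bundle.
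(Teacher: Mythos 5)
What you set out to prove is not a theorem in the paper; it is stated as a conjecture with no proof offered. After defining $\hat Z$ via a resolution $1\to Z\to T\to T'\to 1$ (and noting that $\hat Z$ is independent of the resolution, that $\Loc_{{}^cZ,F}$ becomes a Picard $2$-stack, and that $\Tor_{Z,\iso_F}$ may acquire derived structure from $H^2(W_F,Z(\overline{\breve F}))$), the text simply says that Conjecture \ref{C: LL for tori} "can be generalized as follows" and states Conjecture \ref{C: LL for group of mult type}. So there is no argument in the paper to measure your proposal against, and your deliverable is at best a \emph{reduction} to (a strengthened form of) Conjecture \ref{C: LL for tori}, which is itself left open; it is not an unconditional proof and should not be presented as one.

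Granting that the intended content is a reduction to the torus case, the overall strategy is sensible and consistent with how the paper sets up $\hat Z$. You also correctly identify the two points that need extra input beyond the literal statement of Conjecture \ref{C: LL for tori}: exactness of the relevant constructions at the level of (derived) Picard stacks including the $H^2$ contribution, and functoriality of the Poincar\'e bundle in the torus. But there is a substantive problem with the fiber-sequence bookkeeping. The assignment $H\mapsto\Loc_{{}^cH,F}$ is \emph{contravariant} in $H$ (a map $T\to T'$ gives $\hat{T'}\to\hat T$ and hence $\Loc_{{}^cT',F}\to\Loc_{{}^cT,F}$), whereas you write the sequence $\Loc_{{}^cZ,F}\to\Loc_{{}^cT,F}\to\Loc_{{}^cT',F}$ as if it were covariant. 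Starting from the cofiber sequence of dual group stacks $\hat{T'}\to\hat T\to\hat Z$ (the dual of $Z\to T\to T'$), the correct sequence is $\Loc_{{}^cT',F}\to\Loc_{{}^cT,F}\to\Loc_{{}^cZ,F}$, whose $\underline\Hom(-,\bB\bG_m)$-dual is $\Loc_{{}^cZ,F}^\vee\to\Loc_{{}^cT,F}^\vee\to\Loc_{{}^cT',F}^\vee$. This aligns term-by-term with the Galois-side sequence $\Tor_{Z,\iso_F}\to\Tor_{T,\iso_F}\to\Tor_{T',\iso_F}$ and makes the two-out-of-three argument meaningful. With the sequence you wrote, after dualizing you obtain $\Loc_{{}^cT',F}^\vee\to\Loc_{{}^cT,F}^\vee\to\Loc_{{}^cZ,F}^\vee$, which places $\Tor_{Z,\iso_F}$ and $\Loc_{{}^cZ,F}^\vee$ at opposite ends of the two sequences; the terms to be matched by Conjecture \ref{C: LL for tori} are $\Tor_{T',\iso_F}$ against $\Loc_{{}^cZ,F}^\vee$, which is not what the conjecture gives you, and no rotation of a fiber sequence in the stable setting fixes a mismatch of \emph{which} objects sit at corresponding positions. (I note the paper's displayed formula $\hat Z=\hat{T'}/\hat T$ looks like a transposition of $T$ and $T'$; testing with $Z=\mu_n$, $T=T'=\bG_m$ gives the cofiber of $\hat{T'}\xrightarrow{(\cdot)^n}\hat T$, i.e.\ $\hat T/\hat{T'}\cong B\mu_n$, matching the parenthetical description as a classifying stack of a kernel. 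You should sort this out rather than inherit the ambiguity.) In short: the reduction idea is fine, but the directions and shifts must be derived from the contravariance of $H\mapsto\hat H$ rather than assumed, and the identification of the two sequences needs to be set up so that $Z$ sits in the same slot on both sides before you can run the diagram chase.
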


We apply the above construction to $Z=Z_G$, the center of a connected reductive group $G$, to discuss certain symmetry of $\Coh(\Loc_{{}^cG,F,\iota})$. Let $\hat{G}_\s$ be the simply-connected cover of the derived group of $\hat{G}$ (i.e. the dual group of $G_\ad$). Let $\hat{T}_\s$ be the preimage of $\hat{T}$ in $\hat{G}_\s$. 
Then we have
$\widehat{Z_G}\cong \hat{T}/\hat{T}_\s\cong \hat{G}/\hat{G}_\s$, and therefore there is the ``determinant" map $\hat{G}\to \widehat{Z_G}$ inducing
\[
\delta:\Loc_{{}^cG,F,\iota}\to \Loc_{{}^cZ_G,F}.
\]
Conjecture \eqref{C: LL for group of mult type} implies that there is a natural action of $\Tor_{Z_G,\iso_F}$ on $\Coh(\Loc_{{}^cG,F,\iota})$, given by
\begin{equation}\label{E: BZ action}
\Tor_{Z_G,\iso_F}\times \Coh(\Loc_{{}^cG,F,\iota})\to  \Coh(\Loc_{{}^cG,F,\iota}),\quad (\theta,\mF)\mapsto \delta^*\mL_\theta\otimes \mF.
\end{equation}
This is the arithmetic analogue of some constructions in the geometric Langlands (e.g. see \cite[3.8, 5.6]{CZ}). 

We can refine this action a little bit.
By embedding $Z_G\subset T$, one obtains a map
$B(Z_G)\to B(T)\cong \xch(\hat{T}^{\Ga_F})\to \xch(Z_{\hat{G}}^{\Ga_F})$. The composed map $B(Z_G)\to \xch(Z_{\hat{G}}^{\Ga_F}),\ \theta\mapsto [\theta]$ is independent of the choice of $T$. 
On the other hand,  $\Loc_{{}^cG,F,\iota}$ is a $Z_{\hat{G}}^{\Ga_F}$-gerbe (as $Z_{\hat{G}}^{\Ga_F}\subset\hat{G}$ acts trivially on $\Loc_{{}^cG,F,\iota}^{\Box}$). It follows that there is a decomposition
\begin{equation}\label{E: gerbe dec}
\Coh(\Loc_{{}^cG,F,\iota})=\bigoplus_{\beta\in\xch(Z_{\hat{G}}^{\Ga_F})}\Coh^\beta(\Loc_{{}^cG,F,\iota}).
\end{equation}
Then the action $\mL_\theta$ will send $\Coh^\beta(\Loc_{{}^cG,F,\iota})$ to $\Coh^{\beta+[\theta]}(\Loc_{{}^cG,F,\iota})$.

There is an additional symmetry on $\Coh(\Loc_{{}^cG,F,\iota})$. 
Let $\tau\in\Aut(\hat{G},\hat{B},\hat{T},\hat{e})$ be the Cartan involution, i.e. the unique automorphism that induces 
$$\tau: \xch(\hat{T})\to \xch(\hat{T}), \quad \la\mapsto \la^*=-w_0(\la),$$ 
where $w_0$ is the longest length element in the Weyl group of $\hat{G}$. As $\tau$ is central in $\Aut(\hat{G},\hat{B},\hat{T},\hat{e})$, it induces an automorphism of ${}^cG$ and therefore an autoequivalence of $\Coh(\Loc_{{}^cG,F,\iota})$ denoted by the same notation. We let
\begin{equation}\label{E: Cartan inv} 
{}'\bD^{\on{Se}}:=\tau\circ \bD^{\on{Se}}: \Coh(\Loc_{{}^cG,F,\iota})\to  \Coh(\Loc_{{}^cG,F,\iota}).
\end{equation} 
be the modified Grothendieck-Serre duality. Note that ${}'\bD^{\on{Se}}$ preserves the decomposition \eqref{E: gerbe dec} and commutes with the action \eqref{E: BZ action}, while the original Grothendieck-Serre duality functor $\bD^{\on{Se}}: \Coh(\Loc_{{}^cG,F,\iota})\to \Coh(\Loc_{{}^cG,F,\iota})$ does not.

\subsection{Spectral parabolic induction}\label{SS: SPI}
Let $\hat{P}$ be a parabolic subgroup of $\hat{G}$ containing $\hat{B}$ and stable under the action of $\Ga_{\widetilde F/F}$ on $\hat{G}$, and let $\hat{M}$ be its standard Levi (the one containing $\hat{T}$). Then the action of $\bG_m\times\Ga_{\widetilde F/F}$ on $\hat{G}$ preserves $\hat{P}$ and $\hat{M}$, so we can form ${}^cP$ and ${}^cM$ respectively and define $\Loc_{{}^cP,F,\iota}$ and $\Loc_{{}^cM,F,\iota}$ similarly. 
Note that unlike $\Loc_{{}^cG,F,\iota}$ and $\Loc_{{}^cM,F,\iota}$,  $\Loc_{{}^cP,F,\iota}$ may not be not classical (see Remark \ref{R: LocB}), although it is still quasi-smooth. We emphasize that we need to remember the derived structure of $\Loc_{{}^cP,F,\iota}$ in the following discussions.
There is the following commutative diagram over $\bZ[1/p]$
\begin{equation}\label{E:LocMtoG}
\xymatrix{
&\ar@/_/[dl]_-{r}\Loc_{{}^cP,F,\iota}\ar^-{\pi}[dr]&  \\
\ar[d]\Loc_{{}^cM,F,\iota}\ar@/_/[ur]_-{i}&& \Loc_{{}^cG,F,\iota}\ar[d]  \\
\on{Spec} Z_{{}^cM,F}\ar[rr] && \on{Spec} Z_{{}^cG,F}.
}
\end{equation}
where $\pi,r,i$ are induced by the corresponding morphisms between $\hat{G},\hat{P},\hat{M}$, and where the bottom map is induced by $\pi\circ i: \Loc_{{}^cM,F,\iota}\to \Loc_{{}^cG,F,\iota}$. To see this diagram is commutative, it is enough to show that $r$ induces an isomorphism 
\begin{equation}\label{E: fun on P}
H^0\Gamma(\Loc_{{}^cM,F,\iota},\mO)\to H^0\Gamma({}^{cl}\Loc_{{}^cP,F,\iota},\mO).
\end{equation}
Let $2\rho_{\hat{G},\hat{M}}=2\rho-2\rho_{\hat{M}}$, where $2\rho$ (resp. $2\rho_{\hat{M}}$) is the sum of positive coroots of $\hat{G}$ (resp. $\hat{M}$). Then the conjugation action of $2\rho_{\hat{G},\hat{M}}(\bG_m)$ on ${}^cP$ contracts it into ${}^cM$. Equivalently, the weight zero part of $\Lambda[{}^cP]$ with respect to $2\rho_{\hat{G},\hat{M}}(\bG_m)$ is just $\Lambda[{}^cM]$.
It follows that \eqref{E: fun on P} is an isomorphism.

If we let $W_{{}^cG,{}^cM}$ be the quotient of the normalizer of ${}^cM\subset{}^cG$ in $\hat{G}$ by $\hat{M}$, then it follows that the map $Z_{{}^cG,F}\to Z_{{}^cM,F}$ factors through 
\begin{equation}\label{E: st center levi}
Z_{{}^cG,F}\to (Z_{{}^cM,F})^{W_{{}^cG,{}^cM}}.
\end{equation}

We have the following lemma (compare with \cite[13.2.2]{AG}).
\begin{lemma}\label{L: quasism r}
The morphism $r$ is quasi-smooth and $\pi$ is proper and schematic. 
\end{lemma}
\begin{proof}
That $\pi$ is proper and schematic is clear. For quasi-smoothness of $r$, it is enough to note that the relative cotangent complex at $\rho\in \Loc_{{}^cP,F,\iota}$ is $C_*(\Ga_{F,\iota},  \Ad^{u,*}_\rho)[-1]$ which concentrates in degree $[-1,1]$ if $\rho$ is a classical point. Here $\Ad^{u,*}$ is the coadjoint representation of ${}^cP$ on the dual of the Lie algebra of its unipotent radical. 
\end{proof}

Recall that  Arinkin-Gaitsgory (in \cite{AG}) attached, to a quasi-smooth derived algebraic stack $X$ over a field of characteristic zero, a classical stack $\on{Sing}(X)$ of singularities of $X$, and to a coherent sheaf $\mF$ on $X$, a conic subset $\on{Sing}(\mF)\subset \on{Sing}(X)$ as its singular support. These the constructions carry through for quasi-smooth stacks over $\CA_\Lambda$ with small changes (see \cite[\S 9.4]{HZ} for details). In particular, by definition
\[
\on{Sing}(\Loc_{{}^cG,F,\iota})=\Bigl\{ (\rho, \xi)\mid \rho\in {}^{cl}\Loc_{{}^cG,F,\iota},\ \xi\in H_2(\Ga_{F,\iota}, \Ad^*_\rho)\Bigr\},
\]
where $\Ad^*$ denote the coadjoint representation of ${}^cG$ on the dual of the Lie algebra of $\hat{G}$.

As explained in \cite{AG}, a particular conic subset $\hat\mN_{{}^cG,F,\iota}\subset \on{Sing}(\Loc_{{}^cG,F,\iota})$ plays an important role in the Langlands correspondence. Using \eqref{E: resol Gaq} (or a version of local Tate duality), we have
\[
H_2(\Ga_{F,\iota}, \Ad^*_\rho)\cong (\hat{\frakg}^*)^{\rho(I_{F,\iota})=1, \rho(\sigma)=q^{-1}}\subset \Ad^*_\rho.
\]
Let $\hat\mN^*\subset\hat\frakg^*$ be the nilpotent cone of $\hat\frakg^*$. We define
\begin{equation}\label{E: lgnlip}
\hat\mN_{{}^cG,F,\iota}= \Bigl\{ (\rho, \xi)\in\on{Sing}(\Loc_{{}^cG,F,\iota}),\ \xi\in \hat\mN_\rho^* \Bigr\}.
\end{equation}

The following proposition can be proved exactly the same as \cite[13.2.6]{AG}. Recall our convention of coherent sheaves on $\Loc_{{}^cG,F,\iota}$ (see Remark \ref{R: Coh cat}).
\begin{prop}\label{P: spec Eis}
There is a well-defined functor (called the spectral parabolic induction)
$$\pi_*r^!: \Coh(\Loc_{{}^cM,F,\iota})\to \Coh(\Loc_{{}^cG,F,\iota}),$$
which restricts to a functor $\pi_*r^!: \Coh_{\hat\mN_{{}^cM,F,\iota}}(\Loc_{{}^cM,F,\iota})\to \Coh_{\hat\mN_{{}^cG,F,\iota}}(\Loc_{{}^cG,F,\iota})$.
\end{prop}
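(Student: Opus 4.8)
The plan is to follow the strategy of \cite[13.2.6]{AG} almost verbatim, checking that each ingredient survives the passage to a general base ring $k$ (over $\bZ[1/p]$, and ultimately over $\mO_E$) in place of a characteristic-zero field. The inputs are all in place: by Lemma \ref{L: quasism r} the map $r$ is quasi-smooth and $\pi$ is proper and schematic, and by Proposition \ref{P:flat lcc} (applied to ${}^cG$, ${}^cM$, and ${}^cP$) all three stacks $\Loc_{{}^cG,F,\iota}$, $\Loc_{{}^cM,F,\iota}$, $\Loc_{{}^cP,F,\iota}$ are of finite presentation, quasi-smooth with trivial dualizing complex; note $\Loc_{{}^cP,F,\iota}$ may carry nontrivial derived structure (Remark \ref{R: LocB}), which is why we keep it throughout. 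First I would check that $r^!$ preserves coherence: since $r$ is quasi-smooth, $r^!$ differs from $r^*$ only by tensoring with a (shifted) line bundle — the relative dualizing complex, which by Lemma \ref{L: quasism r} and the computation of the relative cotangent complex $C_*(\Ga_{F,\iota},\Ad^{u,*}_\rho)[-1]$ is a shift of $\det$ of a perfect complex of Tor-amplitude $[-1,1]$ — so $r^!$ sends $\Coh$ to $\Coh$. Then $\pi_*$ preserves coherence because $\pi$ is proper and schematic between Noetherian stacks of finite presentation, so higher direct images of coherent sheaves are coherent; together with our convention (Remark \ref{R: Coh cat}) that coherent sheaves are supported on finitely many components and $\pi,r$ have finite-type fibers, the composite $\pi_*r^!$ is a well-defined functor $\Coh(\Loc_{{}^cM,F,\iota})\to\Coh(\Loc_{{}^cG,F,\iota})$.

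For the second assertion — that $\pi_*r^!$ carries $\Coh_{\hat\mN_{{}^cM,F,\iota}}$ into $\Coh_{\hat\mN_{{}^cG,F,\iota}}$ — I would reproduce the singular-support bookkeeping of \cite[\S 8, \S 13]{AG}. The key geometric facts are: (i) for a quasi-smooth morphism $r\colon Y\to Z$ one has a correspondence $\on{Sing}(Z)\xleftarrow{} Y\times_Z\on{Sing}(Z) \to \on{Sing}(Y)$, and $\on{Sing}(r^!\mF)$ is contained in the image of $\on{Sing}(\mF)$ under this correspondence; (ii) for a proper (schematic) morphism $\pi\colon Y\to X$, $\on{Sing}(\pi_*\mG)$ is contained in the image of $\on{Sing}(\mG)$ under the correspondence $\on{Sing}(Y)\xleftarrow{d\pi} Y\times_X\on{Sing}(X)\to\on{Sing}(X)$. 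So it suffices to verify the purely linear-algebra statement that, under the maps on singularity stacks induced by $r$ and $\pi$, the cone $\hat\mN_{{}^cM,F,\iota}$ lands inside $\hat\mN_{{}^cG,F,\iota}$. Concretely, using the identifications $H_2(\Ga_{F,\iota},\Ad^*_\rho)\cong (\hat\frakg^*)^{\rho(I_{F,\iota})=1,\,\rho(\sigma)=q^{-1}}$ (which, via \eqref{E: resol Gaq} / local Tate duality, hold over $k$ as well), this reduces to: the nilpotent cone of $\hat\frakm^*$ maps into the nilpotent cone of $\hat\frakg^*$ under $\hat\frakm^*\hookrightarrow\hat\frakp^*\twoheadleftarrow\hat\frakg^*$ with the inverse-image/direct-image along the parabolic $\hat\frakp$, which is standard (a nilpotent element of a Levi stays nilpotent in $\hat\frakg$, and the unipotent-radical directions $\hat\fraku^*$ are automatically nilpotent). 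This is exactly the argument of \cite[13.2.6]{AG}.

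The step I expect to be the main obstacle is making sure the Arinkin--Gaitsgory theory of singular support and its functoriality under $!$-pullback and proper pushforward genuinely goes through over a base that is not a field of characteristic zero — in particular over $\bZ[1/p]$ and over the $\ell$-adic ring $\mO_E$ — since \cite{AG} is written over $\bC$. The excerpt already asserts (right before Proposition \ref{P: spec Eis}) that "all the constructions carry through for quasi-smooth stacks over $\CA_k$ without change," so strictly I may invoke this; but in a careful write-up one would want to point to the now-standard references establishing $\on{Sing}(X)=\Spec_X\Sym_{\mO_X}(\pi_0(\bT_X[-1]))$ and the estimates on $\on{Sing}(f^!\mF)$, $\on{Sing}(f_*\mF)$ in this generality (coherent-complex formalism, ind-coherent sheaves, Noetherian hypotheses on $X$), and check that properness/quasi-smoothness are all one needs — not smoothness of the base. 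A secondary, more routine point is the compatibility of $r^!$ with the trivialization of dualizing complexes (Proposition \ref{P:flat lcc}) so that $\pi_*r^!$ can equally be described via $r^*$ up to a shift and a line bundle twist; this is bookkeeping but should be recorded. Given these, the proof is: "The functor $\pi_*r^!$ is well-defined and preserves coherence by Lemma \ref{L: quasism r}, Proposition \ref{P:flat lcc}, and properness of $\pi$; the singular-support estimate follows from \cite[\S 8]{AG} exactly as in \cite[13.2.6]{AG}, using the linear-algebra inclusion of nilpotent cones above."
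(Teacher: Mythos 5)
Your proposal is correct and follows exactly the route the paper has in mind: the paper's entire proof is the one-line remark that the statement "can be proved exactly the same as \cite[13.2.6]{AG}," and you have simply unpacked that argument (quasi-smoothness of $r$ and properness of $\pi$ from Lemma \ref{L: quasism r}, triviality of dualizing complexes from Proposition \ref{P:flat lcc} and the discussion around Remark \ref{R: LocB}, the singular-support estimates for $r^!$ and $\pi_*$, and the linear-algebra inclusion of nilpotent cones), while correctly flagging that the only real issue is transporting the Arinkin--Gaitsgory singular-support formalism from characteristic zero to $\CA_k$, which the paper addresses by assertion immediately before the proposition.
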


We have the following observation. 
\begin{lemma}\label{L: nilp=sing}
Over $\bQ$, $\on{Sing}(\Loc_{{}^cG,F,\iota}\otimes\bQ)=\hat\mN_{{}^cG,F,\iota}\otimes\bQ$.
\end{lemma}
However, over $\bF_\ell$ when $\ell\mid q-1$, $\on{Sing}(\Loc_{{}^cG,F,\iota})$ is strictly larger than $\hat\mN_{{}^cG,F,\iota}$.
\begin{proof}
Using the identification between $\Loc_{{}^cG,F,\iota}\otimes\bQ$ and $\Loc_{{}^cG,F}^{\on{WD}}$ as in Lemma \ref{L: gen}, we identify $H_2(\Ga_{F,\iota}, \Ad_\rho^*)$ with
\[
\bigl\{\xi\in (\hat\frakg^*)^{r(I_F)}\mid \ad^*_X(\xi)=0, r(\sigma)(\xi)=q^{-1}\xi\bigr\},
\]
where $(r,X)$ corresponds to $\rho$ as in Lemma \ref{L: gen}.
We need to show such $\xi$ is automatically nilpotent. Let $\frakh:=\hat\frakg^{r(I_F)}$, which is a reductive Lie algebra.
We can identify $(\hat\frakg^*)^{r(I_F)}$ with $\frakh$ as an $(r(\sigma),\frakh)$-module. Then $\ad_{\xi}^j(\xi)$ is an eigenvector of $r(\sigma)$ with eigenvalue $q^{-j-1}$. This will force $\ad_{\xi}^j(\xi)=0$ for some $j$ large enough. That is, $\xi$ is nilpotent.
\end{proof}

The above computation also implies the following.
\begin{lemma}\label{P: sm pt}
Let $\rho: W_F\to {}^cG(\Ql)$ be a continuous representation such that $\Ad_\rho^0:W_F\to \GL(\hat\frakg)$ is pure of weight zero (in the sense of Deligne), then $\rho$ is a smooth point in in $\Loc_{{}^cG,F}$.
\end{lemma}
\begin{proof}Indeed, in the case $H^2(W_F,\Ad^0_\rho)=0$ and we can apply Proposition \ref{P: Classical} to conclude.
\end{proof}

In the remaining part of this subsection, we assume that $\widetilde{F}/F$ is tamely ramified, i.e. the image of $P_F\subset \Ga_F\to \Ga_{\widetilde{F}/F}$ is trivial. 
Then we have
the stack $\Loc_{{}^cG,F^t/F,\iota}$, called the stack of tame Langlands parameters, also denoted as $\Loc_{{}^cG,F,\iota}^{\ta}$. This is an open and closed substack of $\Loc_{{}^cG,F,\iota}$. 

Let $\Loc_{G,F,\iota}^{\ta,\Box}$ denote the framed version.
Explicitly, if we denote the image of $\tau$ (resp. $\sigma$) under the map $\Ga_q\xrightarrow{\iota} \Ga_F^t\to \Ga_{\widetilde{F}/F}$ by $\bar{\tau}$ (resp. $\bar{\sigma}$), then 
\begin{equation}\label{E: framed tame parameter}
  \Loc_{{}^cG,F,\iota}^{\ta,\Box}\cong \bigl\{(\tau,\sigma)\in \hat{G}\bar{\tau}\times\hat{G}q^{-1}\bar{\sigma}\mid \sigma\tau\sigma^{-1}=\tau^q\bigr\}\subset {}^cG\times{}^cG.
\end{equation}

\begin{remark}
One can compare $\Loc_{{}^cG,F,\iota}^{\ta,\Box}$ with the commuting scheme of $\hat{G}$, which classifies pairs of elements in $\hat{G}$ that commute with each other. 
While these two stacks exhibit quite different geometric structures over $\bQ$, they share some similar properties over $\bF_\ell$ when $\ell\mid q-1$.
\end{remark}

We can similarly define $\Loc_{{}^cB,F,\iota}^{\ta}$ and $\Loc_{{}^cT,F,\iota}^{\ta}$. Notice that $\Loc_{{}^cT, F,\iota}^{\ta}$ is simply $\Loc_{{}^cT,F}^{(1)}$ as discussed in Example \ref{E: ur-un-ta for tori}.
There exists a diagram analogous to \eqref{E:LocMtoG}, with the superscript $(-)^{\ta}$ added throughout.
Same reasonings as in Lemma \ref{L: quasism r} show that $r^{\ta}$ is quasi-smooth and $\pi^{\ta}$ is proper and schematic.

The inclusion $\langle\tau\rangle\subset \Ga_q$ induces maps
\begin{equation}\label{E: tame inertia type}
\Loc_{{}^cG,F,\iota}^{\ta}\to \hat{G}\bar\tau/\hat{G}\to \hat{G}\bar\tau/\!\!/\hat{G}\cong \hat{A}/\!\!/W_0,
\end{equation}
where $\hat{A}=\hat{T}/\!\!/(1-\bar\tau)\hat{T}$ and $W_0=W^{\bar\tau}$ is the $\bar\tau$-invariants of the Weyl group $W$ of $\hat{G}$ (see, for example, \cite[4.2.3]{XZ2}).
The second map is the GIT quotient map, while the last isomorphism is the Chevalley restriction isomorphism. 
As shown in the proof of Proposition \ref{P: tame stack}, this morphism factors through $\Loc_{{}^cG,F,\iota}^{\ta}\to (\hat{A}/\!\!/W_0)^{[q]}$, 
where $(\hat{A}/\!\!/W_0)^{[q]}$ is the (classical) fixed point subscheme of the map $[q]:\hat{A}/\!\!/W_0\to \hat{A}/\!\!/W_0$ induces by the morphism $\hat{G}\bar\tau\to\hat{G}\bar\tau, \  g\bar\tau\mapsto \bar\sigma^{-1}(g\bar\tau)^q\bar\sigma$.
It is not hard to check that $(\hat{A}/\!\!/W_0)^{[q]}$ is finite over $\bZ[1/p]$ and is \'etale over $\bQ$. 
Let $1: \on{Spec}\bZ[1/p]\to \hat{A}/\!\!/W_0$ be the map corresponding to the unit of $\hat{A}$, and let $\{1\}^\wedge$ denote the formal completion of $\hat{A}/\!\!/W_0$ along $\{1\}$.

We define two versions of  the stack of unipotent parameters as
\begin{equation*}\label{E: unip Loc}
\Loc_{{}^cG,F,\iota}^{\un}:= \Loc_{{}^cG,F,\iota}^{\ta}\times_{\hat{A}/\!\!/W_0}\{1\}\subset \Loc_{{}^cG,F,\iota}^{\widehat\un}:= \Loc_{{}^cG,F,\iota}^{\ta}\times_{\hat{A}/\!\!/W_0}\{1\}^\wedge.
\end{equation*}

\begin{remark}\label{rmk: bad things for locsys-unip}
\begin{enumerate}
\item 
By definition, $\Loc_{{}^cG,F,\iota}^{\un}$ is an algebraic stack but is in general derived. On the other hand, $\Loc_{{}^cG,F,\iota}^{\widehat\un}$ is classical but an ind-algebraic stack. Their underlying reduced substacks coincide
\[
{}^{red}\Loc_{{}^cG,F,\iota}^{\un}={}^{red}\Loc_{{}^cG,F,\iota}^{\widehat\un}.
\]
When base changed to a field, $\Loc_{{}^cG,F,\iota}^{\widehat\un}$ is in fact an algebraic stack (see \cite[Lemma 2.14]{HZ}), but is not reduced in general. In fact even ${}^{cl}\Loc_{{}^cG,F,\iota}^{\un}$ may not be reduced (e.g. see \cite{EZ} in the case $G=\PGL_2$).

The situation is much better understood over $\bQ$. 
As $1$ is an isolated point of $ (\hat{A}/\!\!/W_0)^{[q]}\otimes\bQ$, we see that
\[
\Loc_{{}^cG,F,\iota}^{\widehat\un}\otimes\bQ=(\Loc_{{}^cG,F,\iota}^{\ta}\times_{(\hat{A}/\!\!/W_0)^{[q]}}\{1\})\otimes \bQ
\] 
is open and closed in $\Loc_{{}^cG,F,\iota}^{\ta}\otimes\bQ$. When $\bar\tau=1$, $\Loc_{{}^cG,F,\iota}^{\widehat\un}\otimes\bQ$ is a connected component of $\Loc_{{}^cG,F,\iota}^{\ta}\otimes\bQ$.  In particular, it is still a local complete intersection. 

\item Our terminology could be potentially misleading as for a (field valued) point $\rho\in \Loc_{{}^cG,F,\iota}^{\widehat\un}$, the element $\rho(\tau)\in \hat{G}\bar\tau$ may not be a unipotent element (as $\bar\tau$ may not be trivial). 
On the other hand, if $\bar\tau=1$, i.e. $\widetilde F/F$ is unramified, then
\[
\Loc_{{}^cG,F,\iota}^{\widehat\un}\cong \Loc_{{}^cG,F,\iota}^{\widehat\un,\Box}/\hat{G},\quad \mbox{where }  \Loc_{{}^cG,F}^{\widehat\un,\Box}=\{(\tau,\sigma)\in \hat{\mU}^\wedge\times\hat{G}q^{-1}\bar{\sigma}\mid \sigma\tau\sigma^{-1}=\tau^q\},
\]
where as before $\hat\mU$ is the unipotent variety of $\hat{G}$, and $\hat\mU^\wedge$ denotes its formal completion in $\hat{G}$.
So the image of $\tau$ in $\hat{G}$ is indeed unipotent. 
\end{enumerate}
\end{remark}

If $\widetilde{F}/F$ is unramified, then inside $\Loc_{G,F,\iota}^{\un}$ there is the stack of unramified parameters.
\[
   \Loc_{{}^cG,F}^{\ur,\Box}\cong \hat{G}q^{-1}\bar{\sigma}\subset{}^cG, \quad  \Loc_{{}^cG,F}^{\ur}= \Loc_{{}^cG,F}^{\ur,\Box}/\hat{G}.
\]
We note that this stack is smooth and is independent of the choice of $\iota$ (so we will drop $\iota$ from the notation). If $T$ is an unramified torus, then 
\begin{equation}\label{eq: unramified parameters for torus}
\Loc_{{}^cT,F}^{\ur}={}^{red}\Loc_{{}^cT,F}^{\un}={}^{cl}\Loc_{{}^cT,F}^{\un}
\end{equation}
coincide with $\Loc_{{}^cT,F}^{(0)}$ as discussed in \S \ref{S: tori}.

At the end of this subsection, we introduce what we call spectral Deligne-Lusztig stacks and their unipotent versions.
Recall that we assume that $\widetilde F/F$ is tamely ramified. But we suggest readers to go through the construction in the simpler situation when $\widetilde F/F$ is unramified (so $\bar\tau=1$) for the first time reading.

Let $\widetilde{\hat{G}\bar\tau}:=\hat{G}\times^{\hat{B}}\hat{B}\bar\tau\to \hat{G}\bar\tau$ be the (twisted) Grothendieck-Springer resolution of $\hat{G}\bar\tau$ (e.g. see \cite[5.3]{XZ}). Then we define the (big) Steinberg variety $\St_{\hat{G}\bar\tau}=\widetilde{\hat{G}\bar\tau}\times_{\hat{G}\bar\tau}\widetilde{\hat{G}\bar\tau}$, which is a classical, reduced, local complete intersection scheme of dimension $\dim\hat{G}$. Its irreducible components are naturally indexed by $W_0=W^{\bar\tau}$. For $w\in W_0$, let $\St_{\hat{G}\bar\tau,w}$ denote the corresponding irreducible component. For simplicity, we write 
$S=\St_{\hat{G}\bar\tau}/\hat{G}$ and $S_w=\St_{\hat{G}\bar\tau,w}/\hat{G}$. We call $S$ the big Steinberg stack.

Recall the morphism $\Loc_{{}^cG,F,\iota}^\ta\to \hat{G}\bar\tau/\hat{G}$ from \eqref{E: tame inertia type}.
Then we define
\begin{equation}\label{E: bigSpDM}
\widetilde{\Loc}_{{}^cG,F,\iota}^\ta:= \Loc_{{}^cG,F,\iota}^\ta\times_{\hat{G}\bar\tau/\hat{G}}\hat{B}\bar\tau/\hat{B}\xrightarrow{\widetilde\pi\times\pr}\Loc_{{}^cG,F,\iota}^\ta\times \hat{B}\bar\tau/\hat{B}.
\end{equation}
So ${}^{cl}\widetilde{\Loc}_{{}^cG,F,\iota}^\ta$ classifies $(\tau,\sigma, g\hat{B})$ where $(\tau,\sigma)$ is a tame Langlands parameter as in \eqref{E: framed tame parameter} and $g\hat{B}\in \hat{G}/\hat{B}$ such that $\tau\in g^{-1}(\hat{B}\bar\tau)g$. 
Note that as $\tau\in (g\sigma)^{-1}(\hat{B}\bar\tau)(g\sigma)$, there is another projection $\pr': \widetilde{\Loc}_{{}^cG,F,\iota}^\ta\to \hat{B}\bar\tau/\hat{B}$. Therefore, there is
a morphism
\[
\widetilde{\Loc}_{{}^cG,F,\iota}^{\ta}\xrightarrow{\pr\times\pr'} \hat{B}\bar\tau/\hat{B}\times_{\hat{G}\bar\tau/\hat{G}} \hat{B}\bar\tau/\hat{B}\cong S.
\]
Then we define
\begin{equation}\label{E: SpDL}
\widetilde{\Loc}_{{}^cG,F,\iota}^{\ta,w}:=\widetilde{\Loc}_{{}^cG,F,\iota}^{\ta}\times_SS_w.
\end{equation}

\begin{remark}\label{rmk: spectral DL stack}
If $w=1$ is the unit element, one can show that 
\[
\widetilde{\Loc}_{{}^cG,F,\iota}^{\ta,1}\cong \Loc_{{}^cB,F,\iota}^\ta.
\] 
Informally, $\widetilde{\Loc}_{{}^cG,F,\iota}^{\ta,w}$ classifies those $(\tau,\sigma,B')$ such that $B'$ and $\sigma B'\sigma^{-1}$ has relative position bounded by $w$. For this reason,
one may call general $\widetilde{\Loc}_{{}^cG,F,\iota}^{\ta,w}$ as spectral Deligne-Lusztig stacks.
\end{remark}

We also introduce the unipotent version of spectral Deligne-Lusztig stacks. Consider the map 
\[
\widetilde{\Loc}_{{}^cG,F,\iota}^\ta\to \hat{B}\bar\tau/\hat{B}\to \hat{T}\bar\tau/\hat{T}\to \hat{A}=\hat{T}/\!\!/(1-\bar\tau)\hat{T}.
\]
We first define
\[
(\hat{B}\bar\tau/\hat{B})^{\un}= (\hat{B}\bar\tau/\hat{B})\times_{\hat{A}}\{1\}.
\]
Note that if $\bar\tau=1$, then $(\hat{B}\bar\tau/\hat{B})^{\un}=\hat{U}/\hat{B}$, where $\hat{U}$ is the unipotent radical of $\hat{B}$. We then define the unipotent version of the Steinberg stack
\begin{equation}\label{E: SpSteinbergunip}
S^{\un}=(\hat{B}\bar\tau/\hat{B})^{\un}\times_{\hat{G}\bar\tau/\hat{G}} (\hat{B}\bar\tau/\hat{B})^{\un},
\end{equation}
and
\begin{equation}\label{E: SpDLunip}
\widetilde{\Loc}_{{}^cG,F,\iota}^\un:=\widetilde{\Loc}_{{}^cG,F,\iota}^\ta\times_{\hat{A}}\{1\}. 
\end{equation}
We similarly have the map 
\[
\widetilde{\Loc}_{{}^cG,F,\iota}^{\un}\to S^{\un}.
\]

For $w\in W_0$, we let
\[
S^{\un}_{w}:=(\hat{B}\bar\tau/\hat{B}\times_{\hat{G}\bar\tau/\hat{G}} (\hat{B}\bar\tau/\hat{B})^{\un})\cap S_w
\]
where the intersection is taken in $S$.
It is a classical stack, although it is not irreducible in general. In addition, it is easy to see that the map 
\[
S^{\un}_{w}\subset \hat{B}\bar\tau/\hat{B}\times_{\hat{G}\bar\tau/\hat{G}} (\hat{B}\bar\tau/\hat{B})^{\un}
\]
factors through 
\[
S^{\un}_{w}\subset S^{\un}\subset \hat{B}\bar\tau/\hat{B}\times_{\hat{G}\bar\tau/\hat{G}} (\hat{B}\bar\tau/\hat{B})^{\un}.
\]
Then similar to \eqref{E: SpDL}, we can define
\begin{equation}\label{E: SpDLunip}
\widetilde{\Loc}_{{}^cG,F,\iota}^{\un,w}:=\widetilde{\Loc}_{{}^cG,F,\iota}^{\un}\times_{S^{\un}}S^{\un}_w.
\end{equation}

Similar to Remark \ref{rmk: spectral DL stack}, one can show that
\[
\widetilde{\Loc}_{{}^cG,F,\iota}^{\un,1}\cong \Loc_{{}^cB,F,\iota}^\un.
\] 
where we define
\begin{equation}\label{E: unip locsys B}
\Loc_{{}^cB,F,\iota}^{\un}:= \Loc_{{}^cB,F,\iota}^{\ta}\times_{\Loc_{{}^cT,F,\iota}^{\ta}}{}^{cl}\Loc_{{}^cT,F,\iota}^{\un}.
\end{equation} 

\subsection{The stack of global Langlands parameters}\label{SS: glob par}
Now we turn to global Langlands parameters. Currently, we are not aware of how to define a stack of global Langlands parameters over $\bZ$ (or over $\bZ[1/p]$ for a function field of characteristic $p$) so we do not have the global analogue of $\Loc_{{}^cG,F,\iota}$. However, the main goal of this subsection is to show that 
 the general recipe as in Section \ref{SS: Continuous rep} provides a reasonable definition of the stack over $\on{Spec}\bZ_\ell$ in the global function field case.
 The number field case is more complicated and is an on going joint work with Emerton \cite{EZ}. We will only briefly discuss it at the end of the subsection.

We fix a few notations. Let $F$ be a global field. We regard the Galois group $\Ga_F$ as a profinite group, and in the global function field case the Weil group $W_F$ as a locally profinite group. Let $\La=\bZ_\ell$, where $\ell\neq \on{char}F$ if $F$ is a function field.
For a place $v$, let $F_v$ denote the corresponding local field, $\kappa_v$ the residue field and $q_v=\sharp\kappa_v$. Let $\Ga_v$ (resp. $W_v$) denote the Galois (resp. Weil) group of $F_v$.
Let $G$ be a connected reductive group over $F$. We write $G_v$ for either $G_{F_v}$ or $G(F_v)$. The $C$-group of $G$ is denoted by ${}^cG$ and the $C$-group of $G_v$ is denoted by ${}^cG_v$.
For a place $v$ not lying above $\ell$, let $\Loc^{?}_{v}$ denote $\Loc^{?}_{{}^cG_v,F_v}$ for simplicity, where $?\in\{\emptyset, \ta, \ur\}$, etc. 
We will fix a \emph{non-empty} finite set of places $S$ containing all the infinite places, the places above $\ell$, and the places ramified in $\widetilde F/F$ and consider the quotient $\Ga_{F,S}$ corresponding to the maximal Galois extension of $F$ that is unramified outside $S$. Similarly, we have $W_{F,S}$ in the global function field setting. Let $Y$ be the Dedekind scheme with fractional field $F$ and \'etale fundamental group $\Ga_{F,S}$.

Now let $F$ be a function field. Let $\bF_q$ be the algebraic closure of $\bF_p$ in $F$. Then $Y$ is an affine smooth curve over $\bF_q$. Let $\overline Y$ be the base change of $Y$ to $\overline\bF_q$. Let $\pi_1(\overline Y)$ denote the geometric fundamental group. (We ignore the choice of a base point on $\overline Y$ since it plays little role in the sequel.) Recall that there is the short exact sequence
\[
1\to \pi_1(\overline Y)\to W_{F,S}\xrightarrow{\|\cdot\|} \bZ=\langle\sigma\rangle\to 1.
\]
We replace the local Weil group $W_F$ in \eqref{E: Locloc} by $W_{F,S}$ and define
\begin{equation}\label{E: globstackSpfZl}
  \Loc_{{}^cG,F,S}^{\wedge,\Box}:= \mR_{W_{F,S},{}^cG}^{c}\times_{\mR_{W_{F,S},\bG_m\times\Ga_{\widetilde F/F}}^{c}}\bigl\{\chi\bigr\},\quad  \Loc_{{}^cG,F,S}^{\wedge}= \Loc_{{}^cG,F,S}^{\wedge,\Box}/\hat{G}_\ell^\wedge,
\end{equation}
Let $\Loc_{{}^cG,F,S,r}=\Loc_{{}^cG,F,S,r}^{\Box}/\hat{G}_r$ be the restriction of \eqref{E: globstackSpfZl} to $\on{Spec}\bZ/\ell^r$. Then $\Loc_{{}^cG,F,S,r}^{\Box}$ classifies, for every $\bZ/\ell^r$-algebra $A$, the space of continuous homomorphisms $\rho$ from $W_{F,S}$ to ${}^cG(A)$ such that $d\circ \rho=\chi$ (Lemma \ref{L: cont rho}). We can also extend \eqref{E: globstackSpfZl} to $\on{Spec}\bZ_\ell$ using Definition \ref{D: sc rep}
\begin{equation}\label{E: globstackZl}
  \Loc_{{}^cG,F,S}^{\Box}:= \mR_{W_{F,S},{}^cG}^{sc}\times_{\mR_{W_{F,S},\bG_m\times\Ga_{\widetilde F/F}}^{sc}}\bigl\{\chi\bigr\},\quad  \Loc_{{}^cG,F,S}= \Loc_{{}^cG,F,S}^{\Box}/\hat{G}.
\end{equation}

\begin{remark}
Another definition of the stack of global Langlands parameters over $\bQ_\ell$ for function fields  is recently proposed in \cite{AGK}. Their definition is different the one given above, but probably gives a stack isomorphic to the base change of our $\Loc_{{}^cG,F,S}$ to $\bQ_\ell$. 
\end{remark}

Here is the main result of this subsection.
\begin{theorem}\label{P: Loc global function}
Assume that $\ell>2$. Then $\Loc_{{}^cG,F,S}$ is a quasi-smooth algebraic stack over $\bZ_\ell$. 
It decomposes as a disjoint union of its open and closed substacks 
\begin{equation}\label{E: glob dec}
\Loc_{{}^cG,F,S}=\bigsqcup_{\Theta}\Loc_{{}^cG,F,S}^{\Theta},
\end{equation}
where $\Theta$ range over all closed points of $\mR^{c}_{\pi_1(\overline Y),{}^cG^\bullet/\!\!/\hat{G}}$ satisfying $d\circ \Theta=\chi$. Each $\Loc_{{}^cG,F,S}^{\Theta}$ is quasi-compact, and for every $\overline\bF_\ell$- or $\Ql$-point $x$ of $\Loc_{{}^cG,F,S}^{\Theta}$, the (residual) pseudorepresentation $\overline{\rho_x|_{\pi_1(\overline Y)}}$ is $\Theta$. 
\end{theorem}
We refer to Lemma \ref{L: constant} and discussions before it for the notation $\overline{\rho_x|_{\pi_1(\overline Y)}}$.

To prove the theorem, let us first recall that de Jong's conjecture (\cite{de}) says that if $\rho:\pi_1(Y)\to \GL_m(\kappa((t)))$ is a continuous representation of the arithmetic fundamental group, where $\kappa$ is a finite field of characteristic $\ell$ and $\kappa((t))$ is equipped with the $t$-adic topology, then $\rho(\pi_1(\overline Y))$ is finite. This was proved by Gaitsgory \cite{Ga2} under the assumption $\ell>2$ (see also \cite{BK}).\footnote{This is why we also require $\ell>2$. Certainly such restriction is expected to be removed.} Note that one can replace $\pi_1(Y)$ by the Weil group $W_{F,S}$ in the statement of de Jong's conjecture. 

We need the following consequence.
As the Frobenius $\sigma$ acts on $\pi_1(\overline Y)$ by (outer) automorphism, it acts on the space $\mR^{c}_{\pi_1(\overline Y),\GL_m^\bullet/\!\!/\GL_m}$ of pseudorepresentations of $\pi_1(\overline Y)$. 
Let $\mR^{c,\Theta}_{\pi_1(\overline Y),\GL_m^\bullet/\!\!/\GL_m}$ be a $\sigma$-stable connected component. Recall that $\mR^{c,\Theta}_{\pi_1(\overline Y),\GL_m^\bullet/\!\!/\GL_m}$ is a derived formal scheme. We write the ring of functions of the underlying classical formal scheme as
$$A^\Theta:=\Gamma({}^{cl}\mR^{c,\Theta}_{\pi_1(\overline Y),\GL_m^\bullet/\!\!/\GL_m},\mO).$$ 
Since $\pi_1(\overline Y)$ satisfies Mazur's condition $\Phi_\ell$, this is a complete noetherian local $\bZ_\ell$-algebra (\cite[3.7]{Ch}), on which $\sigma$ acts. 

\begin{lemma}\label{L: finiteness ps of geom}
The quotient ring $A^\Theta/(\sigma-1)A^\Theta$ is finite over $\bZ_\ell$.
\end{lemma}
\begin{proof}
Note that $B^\Theta=A^\Theta/(\sigma-1)A^\Theta$ is still a complete noetherian local ring with residue field $\kappa$. Therefore it is enough to show that $B^\Theta/\ell$ is artinian. Let $B^\Theta\to \kappa'[[t]]$ be local ring homomorphism with $\kappa'$ finite over $\kappa$, giving a continuous $\kappa'[[t]]$-valued pseudorepresentation of $\pi_1(\overline Y)$. Then by Proposition \ref{P: cont psrep to trrep}, such $\kappa'((t))$-valued pseudorepresentation comes from a continuous (absolutely) semisimple representation $\rho: \pi_1(\overline Y)\to \GL_m(K)$ for some finite extension $K/\kappa'((t))$. As the pseudorepresentation is $\sigma$-invariant, such $\rho$ extends to a continuous representation of $W_{F,S}\to\GL_m(K')$ for some finite extension $K'/K$. Then by de Jong's conjecture, the image of $\pi_1(\overline Y)$ is finite. Therefore the image of $B^\Theta\to \kappa'[[t]]$ is $\kappa'$. This show that $B^\Theta/\ell$ is artinian.
\end{proof}

Now we proof Theorem \ref{P: Loc global function}.

\begin{proof} 
We use the Artin-Lurie representability theorem \cite[7.5.1]{Lu4}. First we verify that  $\mR^{sc}_{\pi_1(\overline Y),{}^cG}$ satisfies Condition (1)-(5) of \emph{loc. cit}. Namely, $\mR^{sc}_{\pi_1(\overline Y),{}^cG}$ is $0$-truncated so Condition (2) holds. By Proposition \ref{L: tang Rsc}, Condition (1), (4), (5) hold. We claim that $\mR^{sc}_{\pi_1(\overline Y),{}^cG}$ satisfies fppf descent so Condition (3) also holds. Indeed, as $\mR^{sc}_{\pi_1(\overline Y),{}^cG}$ is nilcomplete, it is enough to show that 
$$\mR^{sc}_{\pi_1(\overline Y),{}^cG}(A)\to \varprojlim_{\Delta} \mR^{sc}_{\pi_1(\overline Y),{}^cG}(B^\bullet)$$ 
is an isomorphism, where $B^\bullet:\Delta\to {}_{\leq m}\CA_{\bZ_\ell}$ is the \v{C}ech nerve of a faithfully flat map $A\to B$ of $m$-truncated animated $\bZ_\ell$-algebras. In this case, we may replace the limit over $\Delta$ by the finite limit over $\Delta_{\leq m+1}\subset\Delta$ consisting of objects $[0],\ldots, [m+1]$. As $\mR^{sc}_{\pi_1(\overline Y),{}^cG}$ preserves finite limits, the claim follows.
Now it is easy to see that $\Loc_{{}^cG,F,S}\to \mR^{sc}_{\pi_1(\overline Y),{}^cG/\hat{G}}$ is relatively representable, so $\Loc_{{}^cG,F,S}$ also satisfies Condition (1)-(5) of \cite[7.5.1]{Lu4}.

Again by Proposition \ref{L: tang Rsc}, the tangent space of $\Loc_{{}^cG,F,S}$ at a point $\rho: W_{F,S}\to {}^cG(A)$ is the continuous cohomology $C_{cts}^*(W_{F,S}, \Ad^0_\rho)[1]$, where $A$ is a classical $\bZ_\ell$-algebra, and $\Ad^0$ is the adjoint representation of ${}^cG$ on the Lie algebra of $\hat{G}$. 
Recall that for a continuous representation $\pi_1(\overline Y)$ on a finite $\bZ_\ell$-module $V$, the continuous group cohomology $C_{cts}^*(\pi_1(\overline Y),V)$ is isomorphic to the \'etale cohomology of $V$ (regarded as a local system on the affine variety $\overline Y$). It follows from 
Lemma \ref{L: cont criterion} and \eqref{E: colim cont coh} that $C_{cts}^*(\overline Y,\Ad^0_\rho)$
concentrates in degree $[-1,0]$, and its cohomology groups are finite $A$-modules if $A$ is finitely generated over $\bZ_\ell$. Then the Hochschild-Serre spectral sequence implies that $C_{cts}^*(W_{F,S}, \Ad^0_\rho)[1]$ concentrates in degree $[-1,1]$ and is a finite $A$-module in each degree if $A$ is finitely generated over $\bZ_\ell$.
This verifies Condition (7) of \cite[7.5.1]{Lu4}.
In addition, it shows that if $\Loc_{{}^cG,F,S}$ is representable, then it is quasi-smooth.

It remains to verify Condition (6). We show that for a classical noetherian completed $\bZ_\ell$-algebra $(A,\frakm)$ with residue field $\kappa$ either finite over $\bF_\ell$ or over $\bQ_\ell$, the map
\[
\Loc^{\Box}_{{}^cG,F,S}(A)\to\varprojlim_i \Loc^{\Box}_{{}^cG,F,S}(A/\frakm^i)
\]
is an isomorphism. By choosing a faithful representation ${}^cG\to \GL_m$, we reduce to show that
\begin{equation}\label{E: A6}
\mR^{sc}_{W_{F,S}, \GL_m}(A)\to\varprojlim_i \mR^{sc}_{W_{F,S}, \GL_m}(A/\frakm^i)
\end{equation}
is an isomorphism. Let $\{\rho_i\}$ be a compatible family of representations $\rho_i: W_{F,S}\to \GL_m(A/\frakm^i)$, giving an element of the right hand side of \eqref{E: A6}. Note that as $A/\frakm^i$ is finite over $\bZ_\ell$ or over $\bQ_\ell$, each $\rho_i$ is just a continuous representation in the usual sense (see Remark \ref{R: strong top}). Forgetting the topology and taking the inverse limit, we obtain
a representation $\rho: W_{F,S}\to \GL_m(A)$. We need to show it is strongly continuous. By Lemma \ref{L: cont criterion}, it is enough to show that for every  $v\in A^m$,  $\rho(\pi_1(\overline Y))v$ is contained in a finite $\bZ_\ell$-module.

Let $B$ be the $\bZ_\ell$-subalgebra of $A$ generated by $\chi_j(\rho(\ga))$ for $\ga\in \pi_1(\overline Y)$, where
$\chi_i\in \bZ[\GL_m]^{\GL_m}$
is the character of the $i$th wedge representation of $\GL_m$ as before. Then  for every $\ga\in \pi_1(\overline Y)$ the characteristic polynomial $\on{Char}(\rho(\ga),t)\in B[t]$.
We extend the action of  $\pi_1(\overline Y)$ on $A^m$ to the action of its group ring $B\pi_1(\overline Y)$. Note that the characteristic polynomial of $r=\sum b_j\ga_j \in B\pi_1(\overline Y)$ also belongs to $B[t]$. As each $\rho_i$ is continuous (in the usual sense), the action extends to an action of the completed group ring $B\pi_1(\overline Y)^{\wedge}$, and then factors through the quotient 
$B\pi_1(\overline Y)^\wedge/I$, where $I$ is the ideal generated by $\on{Char}(\rho(r),r)$ for $r\in B\pi_1(\overline Y)^\wedge$. As $\pi_1(\overline Y)$ satisfies Mazur's condition $\Phi_\ell$, $B\pi_1(\overline Y)^\wedge/I$ is finite over $B$ by \cite[3.6]{WE}. 
We claim that $B$ is finite over $\bZ_\ell$, which will finish the proof that \eqref{E: A6} is an isomorphism.

Consider $\rho_0: W_{F,S}\to \GL_m(A/\frakm)=\GL_m(\kappa)$. If $\kappa$ is a finite field, let $\bar\rho=\rho_0|_{\pi_1(\overline Y)}$.  If $\kappa=E$ is of characteristic zero, then after conjugation we may assume that $\rho_0|_{\pi_1(\overline Y)}$ comes from an $\mO_E$-representation. Let $\bar\rho:\pi_1(\overline Y)\to \GL_m(\kappa_E)$ be the residual representation of $\rho_0|_{\pi_1(\overline Y)}$.  We have the usual (classical) framed deformation ring $R_{\bar\rho}^\Box$ of $\bar\rho$. The representation $\rho_0|_{\pi_1(\overline Y)}$ gives a point of $R_{\bar\rho}^\Box$, and the formal completion of $R^\Box_{\bar\rho}$ at this point prorepresents the classical framed deformations of $\rho_0$ (considered as a functor $\mathbf{Art}_{\bZ_\ell,\kappa}\to \Sets$). (If $\kappa=E$, see \cite[2.3.5]{Ki}.) 
Then $\rho|_{\pi_1(\overline Y)}:\pi_1(\overline Y)\to \GL_m(A)$ gives a map $R_{\bar\rho}^\Box\to A$.
Let
$\Theta$ be the pseudorepresentation associated to $\bar\rho$. Then we have $A^\Theta$ as in Lemma \ref{L: finiteness ps of geom}, and $B$ is just the image of $A^\Theta$ under the natural map $A^\Theta\to R_{\bar\rho}^\Box\to A$, which factors through $A^\Theta/(1-\sigma)A^\Theta\to A$. Therefore $B$ is finite over $\bZ_\ell$ by Lemma \ref{L: finiteness ps of geom}.

We have proved the representability of $\Loc_{{}^cG,F,S}$. By Lemma \ref{L: constant}, we have the decomposition \eqref{E: glob dec}. It remains to see that $\Loc_{{}^cG,F,S}^\Theta$ is quasi-compact. In fact we show that the corresponding framed version $\Loc_{{}^cG,F,S}^{\Theta,\Box}$ is represented by an affine scheme of finite type over $\bZ_\ell$.
We may reduce to $\GL_m$-case. We have the ring $B^\Theta$ as in Lemma \ref{L: finiteness ps of geom}, and
then a finite (associative) $\bZ_\ell$-algebra $B^\Theta\pi_1(\overline Y)^\wedge/I$ as above. We lift the Frobenius $\sigma$ to an element in $W_{F,S}$, so $\sigma$ acts on $B^\Theta\pi_1(\overline Y)^\wedge/I$ and
we can form the twisted $\bZ_\ell$-algebra $B^\Theta\pi_1(\overline Y)^\wedge/I[\sigma]$. Now ${}^{cl}\Loc_{{}^cG,F,S}^{\Theta,\Box}$ is nothing but the moduli space of framed $m$-dimensional representations of the finitely generated associative $\bZ_\ell$-algebra $B^\Theta\pi_1(\overline Y)^\wedge/I[\sigma]$, and therefore is represented by an affine scheme of finite type over $\bZ_\ell$.
\end{proof}

\begin{remark}
One may think the decomposition \eqref{E: glob dec} as the global analogue of the mod $\ell$ inertia types in the local case (Remark \ref{R: mod l inert type}).
Clearly, $\Loc_{{}^cG,F,S}^{\Theta}$ is non-empty if and only if $\Theta$ is fixed under the action of the Frobenius $\sigma$.
\end{remark}

\begin{remark}
One may expect that the stack $\Loc_{{}^cG,F,S}$ is classical, as in the local situation. As mentioned in Remark \ref{R: LocB}, $\Loc_{{}^cG,F,S}^{\Theta}$ is classical if and only if $\dim \Loc_{{}^cG,F,S}^{\Theta}=0$. Unfortunately, this is not always the case. 

Consider the case $G=\on{PGL}_2$ (so ${}^cG=\GL_2$), and let $\Theta$ be the pseudorepresentation corresponding to the trivial representation of $\pi_1(\overline Y)$. Then $\Loc_{{}^cG,F,S,1}^{\Theta}$ consists of those $\rho: W_{F,S}\to \GL_{2}$ such that $\rho|_{\pi_1(\overline Y)}$ is a self extension of the trivial character. Note that there is an $H^1(\overline Y, \overline\bF_\ell)$-family of self extensions of the trivial character of $\pi_1(\overline Y)$.
It follows that if the multiplicity of one Frobenius eigenvalue on $H^1(\overline Y, \overline\bF_\ell)$ is greater than one, then $\dim \Loc_{{}^cG,F,S,1}^{\Theta,\Box}> \dim\hat{G}_{\bF_\ell}$, and $\Loc_{{}^cG,F,S,1}^{\Theta}$ is non-classical.
\end{remark}

Sometimes it is convenient to consider substacks of $\Loc_{{}^cG,F,S}$ with fixed ``determinant". More precisely, let $Z^\circ_G$ be the connected center of $G$. Then ${}^c(Z^\circ_G)=\hat{G}_\ab\rtimes(\bG_m\times\Ga_{\widetilde F/F})$, where $\hat{G}_\ab$ be the abelianization of $\hat{G}$. There is the natural morphism
$\pi_\ab: \Loc_{{}^cG,F,S}\to \Loc_{{}^c(Z^\circ_G),F,S}$. 
Given a classical $\bZ_\ell$-algebra $A$ and a strongly continuous representation
$\la: W_{F,S}\to {}^c(Z^\circ_G)(A)$ (such that $d\circ \la=\chi$) corresponding to an $A$-point of $\Loc_{{}^c(Z^\circ_G),F,S}$, let 
$$\Loc_{{}^cG,F,S,A}^\la= \Loc_{{}^cG,F,S}\times_{\Loc_{{}^c(Z^\circ_G),F,S}}\on{Spec} A$$ 
denote the base change of $\pi_\ab$ along $\la$, which is an algebraic stack over $A$ classifying those representations of $\rho$ such that $\pi_\ab\circ\rho=\la$. Its tangent space at $\rho$ is given by $C_{cts}^*(W_{F,S},\Ad^{00})$, where $\Ad^{00}$ is the adjoint representation of ${}^cG$ on the Lie algebra of the derived group of $\hat{G}$. In particular, $\Loc_{{}^cG,F,S,A}^\la$ is quasi-smooth over $A$.

\begin{example}\label{E: ell part}
An elliptic Langlands parameter is a continuous semisimple representation $\rho: W_{F,S}\to {}^cG(\Ql)$ (satisfying $d\circ\rho=\chi$) such that 
$\frakS_\rho:=S_\rho/(Z_{\hat{G}})^{\Ga_F}$ is finite, where $S_\rho$ is the stabilizer of $\rho$ under the conjugation action of $\hat{G}$ on ${}^cG$, and $Z_{\hat{G}}$ is the center of $\hat{G}$, on which $W_F$ acts.
By \cite[4.1]{LZ}, an elliptic Langlands parameter $\rho$ gives an isolated smooth point in $\Loc_{{}^cG,F,S,\Ql}^\la$, where $\la= \pi_\ab\circ\rho$.
More precisely, every elliptic $\rho$ gives an open and closed embedding $(\on{Spec} \Ql)/S_\rho\to \Loc_{{}^cG,F,S,\Ql}^\la$.
\end{example}

The embedding $W_{F_v}\to W_F$ up to conjugacy induces a well-defined morphism
\begin{equation}\label{E: global-to-local}
\res: \Loc_{{}^cG,F,S}\to \prod_{v\in S} \Loc_{v} \times \prod_{w\not\in S}\Loc_w^{\on{unr}}.
\end{equation}
\begin{lemma}\label{R: Cartesian local-to-global}
The commutative square in the following diagram is Cartesian
\begin{equation}
\xymatrix{
 \Loc_{{}^cG,F,S}\ar[r]\ar[d]& \prod_{v\in S} \Loc_{v} \times \Loc_{w_0}^{\ur}\ar[d]\ar[r]& \prod_{v\in S} \Loc_{v}\\
 \Loc_{{}^cG,F,S\cup\{w_0\}}\ar[r]& \prod_{v\in S} \Loc_{v} \times \Loc_{w_0}&.
}
\end{equation}
\end{lemma}
\begin{proof}
By nilcompleteness, it is enough to prove the diagram is Cartesian when evaluated at $m$-truncated animated $\bZ_\ell$-algebras $A$.
This is obviously when $A$ is classical. Then using the Postnikov tower and arguing as in Proposition \ref{P: trun value}, one reduces to compare the tangent spaces, which then is not difficult.
We leave the details to readers. (See \cite[\S 8]{SV} for an argument in a closely related context.)
\end{proof}

For every place $v\in S$, we choose a finite extension $L_v/F^t_v\widetilde{F}_v$ that is Galois over $F_v$. 
Let
\[
\Loc_{{}^cG,F,\{L_v\}}:= \Loc_{{}^cG,F,S}\times_{\prod_{v\in S} \Loc_{v}}\prod_{v\in S} \Loc_{{}^cG_v, L_v/F_v}.
\]
As $\Loc_{{}^cG_v, L_v/F_v}$ is open and closed in $\Loc_v$, the stack $\Loc_{{}^cG,F,\{L_v\}}$ is also open closed in $\Loc_{{}^cG,F,S}$. In particular, if $G$ is tamely ramified over $F$, we have the tame stack
$\Loc_{{}^cG,F,S}^{\ta}:=\Loc_{{}^cG,F,\{F^t_v\}}$.

\begin{prop}\label{P: quasicompact}
The stack $\Loc_{{}^cG,F,\{L_v\}}$ is quasi-compact, and $\Loc_{{}^cG,F,S}=\cup_{\{L_v\}} \Loc_{{}^cG,F,\{L_v\}}$.
\end{prop}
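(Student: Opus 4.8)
The plan is to prove the two assertions separately, with most of the work going into quasi-compactness of each $\Loc_{{}^cG,F,\{L_v\}}$. For the second assertion, that $\Loc_{{}^cG,F,S}=\bigcup_{\{L_v\}}\Loc_{{}^cG,F,\{L_v\}}$, I would argue pointwise on the source: given any $\overline\bF_\ell$- or $\overline\bQ_\ell$-valued point $\rho$, its local restrictions $\rho|_{W_{F_v}}$ for $v\in S$ each factor through some $W_{F_v}/W_{L_v}$ (since $\rho$ has values in a group scheme over a finite extension of the residue field, its restriction to the local Weil group is continuous and the wild inertia acts through a finite quotient), so $\rho$ lies in the corresponding $\Loc_{{}^cG,F,\{L_v\}}$. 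Because each $\Loc_{{}^cG_v,L_v/F_v}$ is open and closed in $\Loc_v$, the $\Loc_{{}^cG,F,\{L_v\}}$ are open and closed in $\Loc_{{}^cG,F,S}$, and since a point lies in one of them, the union is everything (as topological spaces; the scheme-theoretic statement then follows because $\Loc_{{}^cG,F,S}$ is locally of finite type over the noetherian base and the $\Loc_{{}^cG,F,\{L_v\}}$ are open). More precisely I would combine this with the decomposition \eqref{E: glob dec} into quasi-compact pieces $\Loc_{{}^cG,F,S}^\Theta$: each $\Loc_{{}^cG,F,S}^\Theta$ meets only finitely many $\Loc_{{}^cG,F,\{L_v\}}$, and is covered by them, hence equals a finite union of the pieces $\Loc_{{}^cG,F,S}^\Theta\cap\Loc_{{}^cG,F,\{L_v\}}$.

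For quasi-compactness of $\Loc_{{}^cG,F,\{L_v\}}$ itself, the key point is that fixing the local conditions $L_v$ at all $v\in S$ forces the global parameter to factor through a fixed finitely generated quotient of $W_{F,S}$. I would argue this via the framed version: it is enough to show $\Loc_{{}^cG,F,\{L_v\}}^\Box$ is represented by an affine scheme of finite type over $\bZ_\ell$, since then $\Loc_{{}^cG,F,\{L_v\}}=\Loc_{{}^cG,F,\{L_v\}}^\Box/\hat{G}$ is quasi-compact. The mechanism mimics the proof of Theorem \ref{P: Loc global function}: using Cartesian-ness (Lemma \ref{R: Cartesian local-to-global}) to reduce to a fixed finite set of places, choosing a faithful representation ${}^cG\to\GL_m$, and noting that since $\rho|_{W_{F_v}}$ factors through $W_{F_v}/W_{L_v}$ for each $v\in S$, the wild inertia of $W_{F,S}$ at these places acts through a fixed finite quotient — so $\rho$ factors through the quotient $W_{F,S}\twoheadrightarrow W_{F,S}^{\{L_v\}}$ obtained by killing the corresponding open normal subgroups of wild inertia. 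Then $W_{F,S}^{\{L_v\}}$ is an extension of $\bZ$ (via $\|\cdot\|$) by a quotient of $\pi_1(\overline Y)$ which is still topologically finitely generated and satisfies Mazur's condition $\Phi_\ell$. One then runs the same finiteness argument: the relevant twisted completed group algebra $B^\Theta\pi_1(\overline Y)^{\wedge}/I[\sigma]$ (notation as in the proof of Theorem \ref{P: Loc global function}) is a finitely generated associative $\bZ_\ell$-algebra — here one uses Lemma \ref{L: finiteness ps of geom}, i.e. de Jong's conjecture via Gaitsgory, to see $B^\Theta/(\sigma-1)$ is finite over $\bZ_\ell$ — so $\Loc_{{}^cG,F,\{L_v\}}^{\Theta,\Box}$ is the representation variety of a finitely generated algebra, which is affine of finite type.

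Actually, I expect the cleanest route avoids re-deriving the $\Phi_\ell$-finiteness from scratch and instead deduces quasi-compactness of $\Loc_{{}^cG,F,\{L_v\}}$ directly from Theorem \ref{P: Loc global function}. Indeed, by that theorem $\Loc_{{}^cG,F,S}=\bigsqcup_\Theta\Loc_{{}^cG,F,S}^\Theta$ with each $\Loc_{{}^cG,F,S}^\Theta$ quasi-compact, and the restriction map \eqref{E: global-to-local} sends $\Loc_{{}^cG,F,\{L_v\}}$ into $\prod_{v\in S}\Loc_{{}^cG_v,L_v/F_v}\times\prod_{w\notin S}\Loc_w^{\ur}$; since the local spaces $\Loc_{{}^cG_v,L_v/F_v}$ have finitely many connected components (they are of finite presentation over $\bZ[1/p]$ by the discussion preceding \eqref{E: st center}), and the residual pseudorepresentation $\overline{\rho_x|_{\pi_1(\overline Y)}}$ determines the component of the source a point lies in, only finitely many $\Theta$ can contribute to $\Loc_{{}^cG,F,\{L_v\}}$. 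Hence $\Loc_{{}^cG,F,\{L_v\}}$ is a finite union of the quasi-compact substacks $\Loc_{{}^cG,F,S}^\Theta\cap\Loc_{{}^cG,F,\{L_v\}}$, each of which is closed in the quasi-compact $\Loc_{{}^cG,F,S}^\Theta$, hence quasi-compact. The main obstacle is the finiteness-of-contributing-$\Theta$ step: one must argue carefully that imposing the local type $L_v$ at every $v\in S$, combined with unramifiedness outside $S$, pins down the residual pseudorepresentation of geometric inertia up to finitely many possibilities — this is where the global tameness/unramifiedness and de Jong's theorem (through Lemma \ref{L: finiteness ps of geom}) genuinely enter, rather than being a formal consequence of the local finiteness.
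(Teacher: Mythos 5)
Your treatment of the union assertion is fine, and your overall skeleton for quasi-compactness — reduce to $\GL_m$ via a faithful representation, use the decomposition \eqref{E: glob dec} into $\Theta$-components from Theorem \ref{P: Loc global function}, and show only finitely many $\Theta$ contribute — matches the paper's strategy. But the mechanism you propose for the crucial finiteness-of-$\Theta$ step does not work, and you yourself flag it as the ``main obstacle'' without giving a correct resolution.

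The problem: neither Lemma \ref{L: finiteness ps of geom} nor de Jong's conjecture (via Gaitsgory) gives what you need. Lemma \ref{L: finiteness ps of geom} says that for a \emph{fixed} $\Theta$, the ring $A^\Theta/(\sigma-1)A^\Theta$ is finite over $\bZ_\ell$ — it controls the pseudodeformation space at one point, not the \emph{number} of components $\Theta$. Likewise, your earlier remark that ``the local spaces $\Loc_{{}^cG_v,L_v/F_v}$ have finitely many connected components'' only constrains the local restrictions $\Theta_v$; it does not bound the number of global residual pseudorepresentations $\Theta$ of $\pi_1(\overline Y)$ having prescribed local behavior, since a priori infinitely many distinct global semisimple mod-$\ell$ representations could share the same finite list of local inertial types. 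What the paper actually does after reducing to $G=\GL_m$ is: (i) observe that every contributing $\Theta$ corresponds to a continuous semisimple $\bar\rho:\Ga_{F,\{L_v\}}\to\GL_m(\overline\bF_\ell)$, where $\Ga_{F,\{L_v\}}$ is the quotient of $\Ga_{F,S}$ obtained by killing the normal subgroup generated by the $\Ga_{L_v}$; (ii) invoke de Jong's \emph{lifting} result \cite[3.5]{de} — a different statement from the conjecture proved by Gaitsgory — to lift each irreducible factor of $\bar\rho$ to a characteristic-zero representation with finite determinant; and (iii) invoke L.~Lafforgue's global Langlands correspondence for $\GL_m$ over function fields \cite{LL} to conclude there are only finitely many such characteristic-zero $\rho$ up to conjugacy (with bounded ramification and finite determinant). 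Step (iii) is the genuinely global input that you are missing: it is what converts the local boundedness data into finiteness of the discrete set of $\Theta$'s. Without it, the argument has a gap precisely at the place you identified.
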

\begin{proof}
We can ignore the derived structure.
We denote by $W_{F,\{L_v\}}$ (resp. $\Ga_{F,\{L_v\}}$) the quotient of $W_{F,S}$ (resp. $\Ga_{F,S}$) by the closed normal subgroup generated by the (conjugacy classes of) subgroups $\{\Ga_{L_v}, v\in S\}$. 
For a fixed a faithful representation ${}^cG\to\GL_m$, the induced morphism ${}^{cl}\mR^{sc}_{W_{F,\{L_v\}},{}^cG}\to {}^{cl}\mR^{sc}_{W_{F,\{L_v\}},\GL_m}={}^{cl}\Loc^\Box_{{}^c(\GL_m),F,\{L_v\}}$ is a closed embedding. Therefore, it is enough to prove the proposition for $G=\GL_m$.

Now the decomposition \eqref{E: glob dec} gives a decomposition 
\[
\Loc_{{}^c(\GL_m),F,\{L_v\}}=\sqcup_\Theta \Loc^\Theta_{{}^c(\GL_m),F,\{L_v\}}
\] 
so it is enough to show that there are only finitely many such $\Theta$ appearing in the decomposition. Every such $\Theta$ gives a continuous semisimple representations $\bar\rho$ of $\Ga_{F,\{L_v\}}\to \GL_m(\overline\bF_\ell)$, which lifts to a semisimple representation $\rho$ in characteristic zero with finite determinant, by applying \cite[3.5]{de} to each irreducible factor $\bar\rho$. (Note that as $S$ is non-empty, Assumption (iii) of \cite[3.5]{de} is unnecessary.)
By the global Langlands correspondence for $\GL_m$ over function field proved by L. Lafforgue \cite{LL}, there are only finitely many such $\rho$ up to conjugacy. 
\end{proof}

\begin{remark}\label{R: everywhere unramified loc}
Note that we always require $S$ to be a non-empty finite set in the definition of $\Loc_{{}^cG,F,S}$ (to ensure continuous group cohomology coincides with the \'etale cohomology). This a priori excludes the stack of everywhere unramified Langlands parameters. However Lemma \ref{R: Cartesian local-to-global} allows us to recover such case as follows. Assume that the action of $\Ga_F$ on $\hat{G}$ factors through the unramified Galois group, i.e. the \'etale fundamental group $\pi_1(X)$ of the smooth projective curve $X$ over $\bF_q$ with fractional field $F$.
Let $S=\{v\}$ be one place of $X$. Then we define 
$$\Loc_{{}^cG, X}:=\Loc_{{}^cG,F,\emptyset}:=\Loc_{{}^cG,F,\{v\}}\times_{\Loc_{v}}\Loc_v^{\ur}=\Loc_{{}^cG,F,\{v\}}^\ta\times_{\Loc_v^\ta}\Loc_v^{\ur}.$$ 
This is independent of the choice of $v$. For example, if $X=\bP^1$, 
\begin{equation}\label{E: case of P1}
\Loc_{{}^cG,\bP^1}=\Loc_{{}^cG,F,\{\infty\}}\times_{\Loc_\infty}\Loc_\infty^{\ur}\cong \Loc_{0}^{\ur}\times_{\Loc^{\ta}_{{}^cG,F,\{0,\infty\}}}\Loc_{\infty}^{\ur}.
\end{equation}
Clearly, $\Loc_{{}^cG,X}$ is quasi-compact by Proposition \ref{P: quasicompact}. The notion of elliptic parameters still makes sense when $S=\emptyset$ and they still give isolated smooth points in the corresponding $\Loc_{{}^cG,X,\Ql}^\la$.
\end{remark}

At the end of this subsection, let us briefly mention the situation when $F$ is a number field, which is a joint work in progress with Emerton \cite{EZ}. We still have $\chi:\Ga_{F,S}\to \bZ_\ell^\times\times\Ga_{\widetilde F/F}$, where the first component is the inverse of the cyclotomic character. We regard it as a $\on{Spf}\bZ_\ell$-point of $\mR^c_{\Ga_{F,S},\bG_m\times\Ga_{\widetilde F/F}}$. Then similar to \eqref{E: globstackSpfZl}, we let
\[
\Loc_{{}^cG,F,S}^{\wedge,\Box}:= \mR_{\Ga_{F,S},{}^cG}^{c}\times_{\mR_{\Ga_{F,S},\bG_m\times\Ga_{\widetilde F/F}}^{c}}\bigl\{\chi\bigr\}, \quad \Loc_{{}^cG,F,S}^{\wedge}=\Loc_{{}^cG,F,S}^{\wedge,\Box}/\hat{G}^\wedge_\ell.
\]
We still have $\Loc^\wedge_{{}^cG,F,S}=\varinjlim_r \Loc_{{}^cG,F,S,r}$ where $ \Loc_{{}^cG,F,S,r}$ is the restriction of $\Loc_{{}^cG,F,S}$ to $\bZ/\ell^r$. 
However, the situation is more complicated for number fields.
First even $\Loc_{{}^cG,F,S,1}$ is in general not an algebraic stack, but is only an ind-stack. In addition, in the number field case we will not try to define a stack over $\bZ_\ell$ using Definition \ref{D: sc rep} as such object may not be reasonable.
Instead, we consider the global-to-local morphism
\[
\res: \Loc_{{}^cG,F,S}^{\wedge}\to \prod_{v\in S} \Loc_v^\wedge,
\]
where $\Loc_v^\wedge$ is as in \eqref{E: Locloc} if $v$ is not above $\ell$ and is the stack from \cite{EG} if $v$ is above $\ell$ (say ${}^cG=\GL_n$). Then in \cite{EZ} we will show that under an analogue of de Jong's conjecture, this morphism is representable. Such fact should be enough for many applications, e.g. to give a conjectural formula of cohomology of Shimura varieties.
Using this morphism, one can impose $\ell$-adic Hodge theoretic conditions (e.g. crystalline with certain fixed Hodge-Tate weights) at $v\mid \ell$ to cut out closed substacks inside $\Loc_{{}^cG,F,S}^{\wedge}$, which then will be $\ell$-adic formal stacks. These substacks then might admit extensions to algebraic stacks over $\bZ_\ell$, which would be the correct analogue of $\Loc_{{}^cG,F,S}$ in the number field case.

\section{Coherent sheaves on the stack of Langlands parameters}\label{S: coh on st}
In this section, we use the stacks of Langlands parameters to formulate some conjectures in the local and global Langlands correspondence.
We also survey some known results, which provide evidences of these conjectures.
In this section, $\Lambda$ will denote a noetherian commutative ring.  

Many categories appearing in this section will be $\Lambda$-linear stable $\infty$-categories (see \cite[Chap 1]{Lu2}.) For two objects $x_1,x_2$ in such a category $\mC$, their (derived) hom space is naturally a $\Lambda$-module, denoted by $\Hom_\mC(x_1,x_2)$ (or simply by $\Hom(x_1,x_2)$ if $\mC$ is clear from the context). Then original mapping space $\Map_\mC(x_1,x_2)$ is identified with $\tau^{\leq 0}\Hom_\mC(x_1,x_2)$. By abuse of notations, we will write $\End(x)$ for $\Hom(x,x)$, which is an object in $\mathbf{Alg}(\Mod_\Lambda)$, i.e. an $E_1$-algebra. (Note that we use the same notation to denote endomorphism monoid of $x$ in Example \ref{R: monoid, segal space}. We hope the concrete meaning of this notation will be clear from the context.)

\subsection{The category of representations of $G(F)$}
Let $F$ be a non-archimedean local field, with $\mO_F$ its ring of integers, $\kappa_F$ its residue field. We also fix a uniformizer $\varpi_F\in \mO_F$. Let $q=\sharp \kappa_F=p^r$. 
Let $G$ be a connected reductive group over $F$. 
Let $\Rep(G(F),\La)^\heart$ denote the abelian category of smooth representations of $G(F)$ on $\Lambda$-modules. It is a Grothendieck abelian category (with a set of generators given below).  For a closed subgroup $K\subset G(F)$, we similarly have $\Rep(K,\La)^\heart$.
We always denote by $\mathbf{1}$ the trivial representation.
Let 
$$
c\on{-ind}_K^{G(F)}: \Rep(K,\La)^\heart \to \Rep(G(F),\La)^\heart
$$ 
denote the usual compact induction functor, and write 
$$\delta_K:=c\on{-ind}^{G(F)}_K \mathbf{1}\cong C^\infty_c(G(F)/K,\La),$$ 
which is the space of $\Lambda$-valued locally constant functions on $G(F)/K$ with compact support, on which $G(F)$ acts by left translation. 

If $K$ is open, then $c\on{-ind}_K^{G(F)}$ is the left adjoint of the forgetful functor.
By the definition of smooth representations, the collection $\bigl\{\delta_K\bigr\}_K$ with $K$ open, form a set of generators of $\Rep(G(F),\La)^\heart$. 
We say an open compact subgroup $K$ of $G(F)$ is $\Lambda$-\emph{admissible} (or just admissible if $\Lambda$ is clear from the context) if the index of \emph{any} open subgroup of $K$ is invertible in $\Lambda$. Note that if $p$ is invertible in $\Lambda$, $\Lambda$-admissible open compact subgroups always exist. E.g. the  pro-$p$ Sylow subgroup $I(1)$ of an Iwahori subgroup (sometimes also called the prop-$p$ Iwahori subgroup) of $G(F)$ is $\Lambda$-admissible. On the other hand, every open compact subgroup is $\bQ$-admissible.
If $K$ is $\Lambda$-admissible, then $\delta_K$ is a projective object in
$\Rep(G(F),\Lambda)^\heart$.

Next, let $\Rep(G(F),\La)$ denote the (unbounded) $\infty$-derived category of $\Rep(G(F),\La)^\heart$ (\cite[1.3.5]{Lu2}). 
This category behaves quite differently depending on whether $p$ is invertible in $\Lambda$ or not. For our purpose, we assume that $p$ is invertible in $\Lambda$ throughout this section.
In this case $c\on{-ind}_K^{G(F)}$ is a $t$-exact functor. 
If $K$ is a $\Lambda$-admissible open compact subgroup, then $\delta_K$ is a compact object in $\Rep(G(F),\La)$. It follows that $\Rep(G(F),\La)$ is compactly generated, with a set of generators given by  $\bigl\{\delta_K\bigr\}_K$ with $K$ being $\La$-admissible.

\begin{remark}
If $F$ is of characteristic zero and $\Lambda$ is a field of characteristic $p$ (which is not the case we consider), then $\delta_{I(1)}$ itself is a compact generator of $\Rep(G(F),\La)$ (see \cite{Sc}). 
\end{remark}

In general if an open compact subgroup $K$ is not $\Lambda$-admissible, then $\delta_K$ may not be compact in $\Rep(G(F),\La)$. 
\begin{example}
If $G=\bG_m$, $K=\mO_F^\times$, and $\La=\bF_\ell$ where $\ell$ is a prime dividing $q-1$, then $\delta_K\simeq C_c(\bZ,\bF_\ell)$ is not compact in $\Rep(F^\times,\bF_\ell)$.
\end{example}
For several reasons (e.g. see Conjecture \ref{Conj: coh sheaf}), it is convenient to modify the category $\Rep(G(F),\La)$ to force $\delta_K$ to be compact for all open compact subgroups $\Lambda$. Namely, let 
$$
   \Rep_{\on{f.g.}}(G(F),\La)\subset \Rep(G(F),\La)
$$ 
be the full subcategory generated by these $\delta_K$ under finite colimits and retracts, and let
\begin{equation*}\label{E: ren rep cate}
\Ind\Rep_{\on{f.g.}}(G(F),\Lambda)
\end{equation*}
be its ind-completion. As every $\delta_K$ is $\Lambda$-flat, there is the natural equivalence $\Rep_{\on{f.g.}}(G(F),\La)\otimes_\Lambda \La'=\Rep_{\on{f.g.}}(G(F),\La')$ when changing the coefficient rings.
Tautologically, for any open compact subgroup $K\subset G(F)$, $\delta_K$ is compact in $\Ind\Rep_{\on{f.g.}}(G(F),\Lambda)$, and there is a colimit preserving functor 
\[
\Ind\Rep_{\on{f.g.}}(G(F),\Lambda)\to \Rep(G,\Lambda).
\]
If $\Lambda$ is a field of characteristic zero, this is an equivalence, as $\Rep(G,\Lambda)^\heart$ has finite global cohomological dimension by a result of Bernstein. In general when $\Lambda$ is regular noetherian, there is a natural $t$-structure on $\Ind\Rep_{\on{f.g.}}(G(F),\Lambda)$ and this functor induces an equivalence $\Ind\Rep_{\on{f.g.}}(G(F),\Lambda)^+\cong \Rep(G,\Lambda)^+$ when restricted to the bounded from below subcategories (w.r.t. the natural $t$-structure).
See \cite[\S 3.3.3]{HZ} for more detailed discussions.

For an open compact subgroup $K\subset G(F)$, we define the corresponding \emph{derived} Hecke algebra with $\Lambda$-coefficient as
\[
    H_{G,K,\La}:= (\End(\delta_K))^{\on{op}}.
\]
So $H_{G,K,\La}$ is an object in $\mathbf{Alg}(\Mod_\Lambda)$, i.e. an $E_1$-algebra.
Sometimes we omit $G$ or $\Lambda$ from the subscript, if they are clear from the context. Note that its zeroth cohomology
$$H^0H_K\cong C_c(K\backslash G(F)/K,\Lambda)$$ 
is just the usual Hecke algebra with $\Lambda$-coefficient, with algebra structure given by convolution product. In addition, as $\Lambda$-modules,
\begin{equation*}\label{E: Hk higher coh} 
   H_K\cong \bigoplus_{g\in K\backslash G/K} C^*(K\cap gKg^{-1},\Lambda),
\end{equation*}
where the right hand side is the (pro-finite) group cohomology of $K\cap gKg^{-1}$ with trivial coefficient $\Lambda$. In particular, if $K$ is $\Lambda$-admissible, then $H_{G,K,\La}$ concentrates in cohomological degree zero. 

\begin{remark}\label{R: full Hecke}
By choosing an invariant Haar measure on $G(F)$ by assigning the volume of one (and therefore every) pro-$p$ Iwahori subgroup to be $1$, one can define the usual full Hecke algebra $H_G$ of $G(F)$. Namely, the underlying space is 
$$\delta_{\{1\}}\simeq C_c^\infty(G(F),\Lambda),$$ with the multiplication given by the usual convolution.  
If $K$ is $\Lambda$-admissible, its volume $\on{vol}(K)$ is invertible in $\Lambda$ and therefore there is an idempotent $e_K= \frac{1}{\on{vol}(K)}\on{ch}_\Lambda$ of $H_G$ as usual, where $\on{ch}_\Lambda$ is the characteristic function of $\Lambda$. 
There is an equivalence of categories between $\Rep(G(F),\Lambda)^\heart$ and the abelian category of non-degenerate $H_G$-modules.
We have $\delta_K\cong H_Ge_K$ as left $H_G$-modules, and $H_{G,K}\cong e_K H_G e_K$.
\end{remark}

Let $\Mod_{H_K}$ denote the $\infty$-category of left $H_K$-modules. 
It follows from general nonsense that there is a pair of adjoint functors
\begin{equation*}\label{E: Hk rep adj}
  \delta_K\otimes_{H_K}(-):\Mod_{H_K} \rightleftharpoons \Rep(G(F),\Lambda): \Hom(\delta_K,-).
\end{equation*}
If $K$ is $\La$-admissible, then $W\mapsto \delta_K\otimes_{H_K}W$ is fully faithful. (It is fully faithful for any $K$ if we replace $\Rep(G(F),\Lambda)$ by $\Ind\Rep_{\on{f.g.}}(G(F),\Lambda)$.)

For two open compact subgroups $K_1$ and $K_2$ of $G(F)$, there is the $(H_{K_2}\times H_{K_1})$-bi-module
\begin{equation*}\label{E: bimod}
{}_{K_1}H_{K_2}:=\Hom(\delta_{K_1}, \delta_{K_2}).
\end{equation*}
Its degree zero cohomology is given by
\[
H^0({}_{K_1}H_{K_2})\cong C_c(G(F)/K_2)^{K_1}=: C_c(K_1\backslash G(F)/K_2), 
\]
which is the space of $(K_1\times K_2)$-invariant compactly supported functions on $G(F)$. If either $K_1$ and $K_2$ is $\Lambda$-admissible, then ${}_{K_1}H_{K_2}=H^0({}_{K_1}H_{K_2})$. 

Tautologically, under the above identification, the map $\iota_{K_1,K_2}:\delta_{K_1}\to \delta_{K_2}$ sending $\on{ch}_{K_1}\in\delta_{K_1}$ to $\on{ch}_{K_1K_2}\in \delta_{K_2}$ corresponds to $\on{ch}_{K_1K_2}\in C_c(K_1\backslash G(F)/K_2)$. On the other hand, 
\[
\on{Av}_{K_1,K_2}:\delta_{K_1}\to \delta_{K_2}, \quad (\on{Av}_{K_1,K_2}f)(g)=\int_{K_2}f(gk)dk.
\]
corresponds to $\on{vol}(K_2)\on{ch}_{K_1K_2}$.

Tautologically, there is a $G(F)$-module homomorphism
\begin{equation}\label{E: Hk bimod}
\delta_{K_1}\otimes_{H_{K_1}}{}_{K_1}H_{K_2}\to \delta_{K_2}.
\end{equation}
If $K_1\subset K_2$, and $K_2$ is a $\Lambda$-admissible open compact subgroup (so is $K_1$), then \eqref{E: Hk bimod} is an isomorphism. 
But this may not be the case in general. 

\begin{example} Let $G=\SL_2$, $K_2=K=\SL_2(\mO_F)$, and  $K_1=I$ the standard Iwahori subgroup. 
Let $\La=\bF_\ell$ with $\ell>2$ and $\ell\mid p+1$. Then $I$ is $\Lambda$-admissible, but $K$ is not. In this case, \eqref{E: Hk bimod} is not an isomorphism. In fact, $\delta_I\otimes_{H_I} {}_IH_K$ does not even concentrate in degree zero.
\end{example}

Let us briefly recall Whittaker modules.
Assume that $G$ is quasi-split over $F$ and $\Lambda$ is a noetherian $\bZ[1/p]$-algebra containing all $p$-power roots of unit (e.g. $\La=W(\overline\bF_\ell)$). A Whittaker datum of $G$ consists of the unipotent radical $U$ of an $F$-rational Borel subgroup of $G$, and a non-degenerate character $\psi: U(F)\to (U/[U,U])(F)\to \La^\times$. Given a Whittaker datum $(U,\psi)$, let 
$$\on{Whit}_{U,\psi}:=c\on{-ind}_{U(F)}^{G(F)}\psi\in\Rep(G(F),\Lambda)^\heart$$
be the corresponding Whittaker module.
We note that $\on{Whit}_{U,\psi}$ is not finitely generated as $G(F)$-module. However, it can be written as a filtered colimit of finitely generated projective objects in $\Rep(G(F),\Lambda)^\heart$ (\cite[Prop. 3]{Ro}). 

At the end of this subsection, we review some internal symmetries of $\Rep(G(F),\Lambda)$. First, recall that every topological group automorphism $c: G(F)\to G(F)$ induces an auto-equivalence of categories $c: \Rep(G(F),\Lambda)^\heart\to \Rep(G(F),\Lambda)^\heart$. Namely, if $V$ is a smooth representation of $G(F)$, we define a new representation ${}^cV$ such that ${}^cV=V$ as $\Lambda$-modules but with a new $G(F)$-action given by $G(F)\times {}^cV\to {}^cV, \ (g,v)\mapsto c^{-1}(g)v$. If $K$ is an open compact subgroup, then there is a canonical isomorphism
\[
  {}^c\delta_K\cong \delta_{c(K)},\quad f\mapsto cf, \ (cf)(x)=f(c^{-1}(x)).
\]

Applying this formalism to the action of $G_\ad(F)$ on $G(F)$ by inner automorphisms, we obtain an action of $G_\ad(F)$ on $\Rep(G(F),\Lambda)^\heart$.
Note that if $c=c_h$ is given by the conjugation by an element $h\in G(F)$, then there is a canonical isomorphism ${}^{c_h}V\cong V, \ v\mapsto hv$. It follows that the action of $G_\ad(F)$ on $\Rep(G(F),\Lambda)^\heart$ factors through the action of the Picard groupoid 
\[
\Tor_{Z_G}^0:=G_\ad(F)/G(F),
\] 
which extends to an action  
\begin{equation}\label{E: sym Rep}
\Tor_{Z_G}^0\times\Rep(G(F),\La)\to \Rep(G(F),\Lambda). 
\end{equation}
Note that $\Tor_{Z_G}^0$ can be identified with the Picard groupoid of $Z_G$-torsors over $F$ such that the induced $G$-torsor is trivial. It particular, the group of isomorphism classes of $\Tor_{Z_G}^0$ is
$$
E_G:=\pi_0\Tor_{Z_G}^0=G_\ad(F)/(G(F)/Z_G(F))\cong \ker(H^1(F,Z_G)\to H^1(F,G)),
$$ 
and the automorphism group of any object in $\Tor_{Z_G}^0$ is $Z_G(F)$. 

There is also the so-called cohomological duality functor $\bD^{\on{coh}}$ of $\Rep_{\on{f.g.}}(G(F),\Lambda)$,
\begin{equation}\label{E: hom duality}
\bD^{\on{coh}}:\Rep_{\on{f.g.}}(G(F),\Lambda)\to \Rep_{\on{f.g.}}(G(F),\Lambda)^{\on{op}},\quad  V\mapsto \Hom_{G(F)}(V,H_G), 
\end{equation}
where $H_G=C_c^\infty(G(F),\Lambda)$ is full Hecke algebra regarded as a bimodule over itself.

\subsection{The groupoids $\Wh_G$ and $\TS_G$}\label{S: Aux}
In the standard formulation of the local Langlands correspondence for a general reductive group, several auxiliary choices must be made. This is also true in our formulation, which we will discuss later. In this subsection, we will explain how to carefully select these auxiliary data. Compared to the existing literature, we will introduce some groupoids that keep track of the automorphisms of these data.
Readers who are primarily interested in quasi-split groups satisfying the condition $H^1(F,Z_G)=0$ (e.g. $G=\GL_n$) may largely skip this subsection.

Let $\Pin_G$ denote the variety of pinnings of $G$. I.e. for a classical $F$-algebra $A$, $\Pin_G(A)$ consists of triples $(B_A,T_A,e_A)$, where $B_A\subset G_A$ is a Borel subgroup, $T_A\subset B_A$ is a maximal torus and $e_A: U_A\to \bG_a$ is a homomorphism with $U_A$ being the unipotent radical of $B_A$, such that after some \'etale covering $A\to A'$ so that $G_{A'}$ is split, $e_A$ restricts to an isomorphism $U_\al\to \bG_a$ for every root subgroup corresponding to simple roots (with respect to $(B_A,T_A)$). Note that $\Pin_G$ is in fact a $G_\ad$-torsor. Its cohomology class $\al\in H^1(F,G_\ad)$ corresponds to the quasi-split inner form of $G$. In particular, $\Pin_G$ admit a rational points if and only if $G$ is quasi-split, in which case $\Pin_G(F)$ is a $G_\ad(F)$-torsor. So if $G$ is quasi-split, we can define the quotient groupoid
\begin{equation}\label{E: Wh qs}
\Wh_G:= \Pin_G(F)/G(F).
\end{equation}
Note that it is a $\Tor_{Z_G}^0$-torsor, so the set of its isomorphism classes $W_G=\pi_0\Wh_G$ is an $E_G$-torsor.

Our first goal of this subsection is to canonically attach a few objects  to $(B,T,e)\in \Pin_G(F)$ in a $G_\ad(F)$-equivariant way. 

First, if we choose a non-trivial additive character $\psi_0:F\to \La^\times$ (so in particular we will assume $\Lambda$ contains enough $p$-power roots of unit), there is a well-defined $G_\ad(F)$-equivariant map from $\Pin_G(F)$ to the set of Whittaker data of $G$, sending $(B,T,e)$ to $(U,\psi: U(F)\xrightarrow{e} F\xrightarrow{\psi_0} \La^\times)$, which induces a bijection between $W_G$ and the set of $G(F)$-conjugacy classes of Whittaker data. Thus there is a well-defined $\Tor_{Z_G}^0$-equivariant functor
\begin{equation}\label{E: pin to whit}
\frakW_{\psi_0}: \Wh_G\to \Rep(G(F),\Lambda),\quad  (B,T,e)\mapsto \on{Whit}_{U,\psi}.
\end{equation}

\begin{remark}\label{R: add char}
As $\frakW_{\psi_0}$ is needed in the formulation of our conjectures, we briefly discuss how it depends on the choice of $\psi_0$.
Given $\psi_0$ and $\psi'_0$, there is a unique $a\in F^\times$ such that $\psi'_0(-)=\psi_0(a-)$. Giving a pinning $(B,T,e)$, the two Whittaker modules $\on{Whit}_{U,\psi_0e}$ and $\on{Whit}_{U,\psi'_0e}$ are isomorphic if the image of $a$ under the map $F^\times\xrightarrow{\hat\rho} T_\ad(F)\to H^1(F,Z_G)$ is trivial, where $\hat\rho$ is the half sum of positive coroots of $G$.
So if $H^1(F,Z_G)$ is trivial, then $\frakW_{\psi_0}$ is independent of the choice of $\psi_0$ (up to isomorphism). In general, it at most depends on the image of $a$ in $F^\times/(F^\times)^2$.
In addition, in the local situation, we can always assume that the conductor of $\psi_0$ is $\mO_F$ (i.e. $\psi_0|_{\mO_F}=1$ but $\psi_0|_{\varpi_F^{-1}\mO_F}\neq 1$) to reduce the ambiguity to $\kappa_F^\times/(\kappa_F^\times)^2$. 
We also mention that it should be possible to formulate everything more canonically without referring to the choice of $\psi_0$ (and to allow $\Lambda$ not to contain enough $p$-power roots of unit), although we choose not to do so. 
\end{remark}

Next we construct a $G_\ad(F)$-equivariant map from $\Pin_G(F)$ to pairs $(I\subset K)$ consisting of an Iwahori subgroup $I$ and a special parahoric $K$ of $G(F)$. 
Denote by $\breve F$ the completion of a maximal unramified extension $F^\ur$ of $F$ as before, and let 
\begin{equation}\label{E: Kotmap}
\kappa_G: G(\breve F)\to \xch(Z_{\hat{G}}^{I_F})
\end{equation}
be the Kottwitz map (\cite[\S 7]{Ko2}).
We choose a pinned Chevalley group $(H,B_H,T_H,e_H)$ over $\bZ$ and an isomorphism $\eta:(H,B_H,T_H,e_H)_{\widetilde F}\simeq (G,B,T,e)_{\widetilde F}$. Then $K=\eta(H(\mO_{\widetilde F}))^{\Ga_{\widetilde F/F}}\cap \ker\kappa_G$, 
where the intersection is taken in $G(F)$, is a special parahoric, independent of the choice of $(H,B_H,T_H,e_H,\eta)$. Let $S\subset T$ be the maximal $F$-split torus. We may identify the apartment $A(G,S)$ (in the Bruhat-Tits building of $G$) with the real vector space spanned by the coweight lattice of $S$ using the special vertex $x\in A(G,S)$ corresponding to $\Lambda$. Then $I$ is the unique Iwahori whose corresponding alcove $\mathbf{a}$ contains $x$ and is contained in the finite Weyl chamber determined by $B$. 

\begin{remark}\label{R: very special parahoric}
The special parahoric $K$ constructed above is absolutely special in the sense 
that the corresponding vertex $x$ in the Bruhat-Tits building of $G$  remains special for every finite separable extension $F'/F$ (also see the end of \cite[\S 3]{CaSh}). 
In \cite[\S 6]{Z15}, a closely related notation is introduced: a special parahoric of $G$ is called very special if the corresponding vertex remains special for every unramified extension $F'/F$.  Clearly absolutely special parahorics are very special, and therefore exist only if $G$ is quasi-split by Lemma 6.1 of \emph{loc. cit.} On the other hand, for quasi-split $G$, the above construction gives a $G_\ad(F)$-conjugacy class of absolutely special parahorics. In fact, this construction gives all absolutely special parahorics by virtual of the following fact.
\begin{lemma}
All absolutely special parahorics are conjugate under the $G_\ad(F)$-action. 
\end{lemma}
This lemma generalizes the well-known fact that all hyperspecial parahorics in an unramified group $G$ are conjugate under $G_\ad(F)$. 
To prove the lemma, we may assume $G=G_\ad$ and is quasi-split absolutely simple. Then it easily follows from the classification. Note that, however, the lemma fails for very special parahorics. In fact, for odd ramified unitary group $\on{U}_{2m+1}$ (say $\cha\kappa_F\neq 2$), there are two conjugacy classes of very special parahorics, one with reductive quotient $\SO_{2m+1}$ and the other with reductive quotient $\Sp_{2m}$ (e.g. see \cite{Z15}). Only the former is absolutely special.  
\end{remark}

Let $\widetilde W=N_G(T)(\breve F)/\ker \kappa_T$ be the Iwahori-Weyl group of $G_{\breve F}$ with respect to $T_{\breve F}$, which fits into the short exact sequence
$1\to \xch(\hat{T}^{I_F})\to \widetilde{W}\to W_0\to 1$,
where as before $W_0$ is the finite Weyl group for $G_{\breve F}$. As the vertex $x$ corresponding to $\Lambda$ remains special for $G_{\breve F}$, it gives a splitting of the above sequence so one can write
\begin{equation}\label{E: dec IW group}
\widetilde{W}=\xch(\hat{T}^{I_F})\rtimes W_0.
\end{equation}
The alcove $\mathbf{a}$ also remains to be an alcove for $G_{\breve F}$ (corresponding an Iwahori subgroup $\breve I\subset G(\breve F)$), and determines the subgroup
\begin{equation}\label{E: length zero}
\Omega\cong N_{G(\breve F)}(\breve I)/\breve I\subset \widetilde{W}
\end{equation}
that fixes this alcove. 
It is well-known that the Kottwitz map \eqref{E: Kotmap} induces an isomorphism $\Omega\simeq \xch(Z_{\hat{G}}^{I_F})$. Therefore, every $\ga\in \xch(Z_{\hat G}^{I_F})$ can be uniquely written as
\begin{equation}\label{E: decomofbeta}
\ga=\la_\ga w_\ga, \quad \mbox{for } \la_\ga\in \xch(\hat{T}^{I_F}), \ w_\ga\in W_0.
\end{equation}

Let $H_I$ be the Iwahori Hecke algebra of $I$. Note that $I\cap T(F)$ is an Iwahori subgroup of $T$ so there is the corresponding Iwahori Hecke algebra $H_{T,I}$. Similarly we have the pro-$p$ Iwahori Hecke algebras $H_{I(1)}$ and $H_{T,I(1)}$. It is known\footnote{We thank Vigneras for pointing out this.} that as $G(F)$-representations,
$$\delta_{I}\cong \Ind_{B(F)}^{G(F)} \delta_{T, I}, \quad \delta_{I(1)}\cong \Ind_{B(F)}^{G(F)} \delta_{T, I(1)}$$ 
where $\delta_{T,I}$ and $\delta_{T,I(1)}$ are the representations of $T(F)$ compactly induced from its Iwahori and pro-$p$-Iwahori subgroup. (These isomorphisms are probably well-known if $\La=\bC$, and are implicitly contained in \cite[3.6, 6.2, 6.3]{Da} for general $\Lambda$ in which $p$ is invertible.) It follows that there are canonical maps of algebras
\begin{equation}\label{E: Bernstein subalg}
H_{T,I}\to H_I, \quad H_{T,I(1)}\to H_{I(1)},
\end{equation}
which (after taking $H^0$) are injective maps. They are nothing but the commutative subalgebras of the (pro-$p$) Iwahori Hecke algebra constructed by Bernstein. On the other hand, by writing 
\[
\delta_{I}=c\on{-ind}_K^{G(F)}\on{c-ind}_I^K\mathbf{1}, \quad \delta_{I(1)}=c\on{-ind}_K^{G(F)}\on{c-ind}_{I(1)}^K\mathbf{1},
\]
we obtain canonical maps 
\begin{equation}\label{E: fin Hecke}
H_{f}:=\End_{K} (\on{c-ind}_{I}^K\mathbf{1})^{\on{op}}\to H_{I}, \quad H_{f,(1)}:=\End_{K} (\on{c-ind}_{I(1)}^K\mathbf{1})^{\on{op}}\to H_{I(1)}.
\end{equation}

\begin{remark}\label{R: Unicty}
We note that, the Iwahori-Weyl group and the decomposition \eqref{E: dec IW group} \eqref{E: decomofbeta}, and the (pro-$p$) Iwahori Hecke algebra and \eqref{E: Bernstein subalg} \eqref{E: fin Hecke}, are canonically attached to an element in $W_G$. Indeed, if $(B_1,T_1,e_1)$ to $(B_2,T_2,e_1)$ are two pinnings in the same $G(F)$-conjugacy class, then a choice of $g\in G(F)$ that conjugates the first to the second induces isomorphisms between these data, and the isomorphisms are in fact independent of the choice of $g$.  
\end{remark}

\begin{remark}
It is interesting to know whether the map
$H_{T, I}\otimes_\Lambda H_f\to H_I$ of $\Lambda$-modules induced by  \eqref{E: Bernstein subalg} and \eqref{E: fin Hecke} is an isomorphism. This is well-known to be the case after taking $H^0$.
\end{remark}

Let us also mention the following result.
\begin{prop}\label{P: Whit to pairs}
Fix an additive character $\psi_0: F\to \La^\times$ with conductor $\mO_F$.
The assignment $(B,T,e)\mapsto (U,\psi)$ and $(B,T,e)\mapsto (I\subset K)$ induces a well-defined $E_G$-equivariant map $(U,\psi)\mapsto (I\subset K)$ from the set of $G(F)$-conjugacy classes of Whittaker data
 to the set of $G(F)$-conjugacy classes of pairs consisting of an absolutely special parahoric $K$ and an Iwahori $I\subset K$. This assignment is independent of the choice of $\psi_0$. If $(U,\psi)$ maps to $(I\subset K)$, 
then $\on{Whit}_{U,\psi}^K$ is a free $H^0H_K$-module of rank one (known as the Casselman-Shalika formula \cite{CaSh}), and $\on{Whit}_{U,\psi}^I\simeq M_{\on{asp}}$ is the antispherical module of $H^0H_I$ (i.e. the representation induced from the sign representation of $H_f\subset H^0H_I$).
\end{prop}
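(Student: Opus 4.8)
The plan is to deduce the first two assertions formally from the torsor structures already in place, and the last two from Mackey theory together with the Casselman--Shalika formula. For the first, recall from the paragraphs preceding the statement that $(B,T,e)\mapsto(I\subset K)$ defines a $G_\ad(F)$-equivariant map $\Pin_G(F)\to\{\text{pairs }(I\subset K)\}$ (the auxiliary Chevalley datum $(H,B_H,T_H,e_H,\eta)$ being immaterial, and equivariance coming from the functoriality of the Chevalley model and of the Kottwitz map \eqref{E: Kotmap}, together with the insensitivity of $\kappa_G$ to inner automorphisms). Since this map is constant on $G(F)$-orbits, it descends to $W_G=\pi_0\Wh_G\to\{G(F)\text{-conjugacy classes of }(I\subset K)\}$; precomposing with the inverse of the bijection $W_G\xrightarrow{\sim}\{G(F)\text{-conjugacy classes of Whittaker data}\}$ induced by $(B,T,e)\mapsto(U,\psi_0\circ e)$ yields the desired assignment. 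It is $E_G$-equivariant: the $E_G=\pi_0\Tor_{Z_G}^0$-action on the source is the one of \eqref{E: sym Rep} transported through this bijection and on the target is induced by the $G_\ad(F)$-action on pairs, and equivariance is immediate since both $\Pin_G(F)\to\{(U,\psi)\}$ and $\Pin_G(F)\to\{(I\subset K)\}$ intertwine the $G_\ad(F)$-actions.

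For independence of $\psi_0$: two additive characters of conductor $\mO_F$ differ by $\psi_0\mapsto(x\mapsto\psi_0(ux))$ with $u\in\mO_F^\times$, which replaces the unique pinning $(B,T,e)$ lying over a fixed Whittaker datum $(U,\psi)$ by the one obtained by rescaling $e$ by $u^{-1}$. This rescaling is realized by conjugation by an element of $T_\ad(\mO_F)$, which fixes the special vertex and the alcove $\mathbf a$ attached to $(B,T)$, hence fixes the pair $(I\subset K)$; therefore the assignment $(U,\psi)\mapsto(I\subset K)$ is unchanged, and the residual $\kappa_F^\times/(\kappa_F^\times)^2$-ambiguity noted in Remark \ref{R: add char} disappears.

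Once $(U,\psi)$ corresponds to $(I\subset K)$, the parahoric $K$ is absolutely special (Remark \ref{R: very special parahoric}), so the Casselman--Shalika formula for quasi-split groups \cite{CaSh} gives that $\on{Whit}_{U,\psi}^K$ is free of rank one over $H^0H_K=C_c(K\backslash G(F)/K,k)$; the formula and its proof are written over $\bZ[1/p]$ (indeed over the standard $\bZ[q^{\pm1}]$-form), so they apply with $k$-coefficients for $p$ invertible in $k$. For the antispherical statement, I would write $\delta_I=c\on{-ind}_K^{G(F)}(c\on{-ind}_I^K\mathbf 1)$ and use Frobenius reciprocity to get $\on{Whit}_{U,\psi}^I=\Hom_K(c\on{-ind}_I^K\mathbf 1,\on{Whit}_{U,\psi}|_K)$; combining the Iwasawa decomposition $G(F)=U(F)T(F)K$ with Iwahori factorization produces an exhaustive filtration of $\on{Whit}_{U,\psi}^I$ indexed by the classes in $U(F)\backslash G(F)/I$ on which $\psi$ restricts trivially to the pertinent intersection with a conjugate of $I$, with each graded piece free of rank one over $k$.

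From this description one extracts a generator $\varphi_0$ of $\on{Whit}_{U,\psi}^I$ as an $H^0H_I$-module --- the element supported on the double coset adjacent to the special vertex, on which $\psi$ is trivial because $\psi_0$ has conductor $\mO_F$ --- and shows that the natural map $M_{\on{asp}}=H^0H_I\otimes_{H_f}\mathrm{sgn}\to\on{Whit}_{U,\psi}^I$, $h\otimes1\mapsto h\cdot\varphi_0$, is an isomorphism. The crux is to check that $H_f\subset H^0H_I$ of \eqref{E: fin Hecke} acts on $\varphi_0$ through the sign character $T_s\mapsto-1$ (rather than the spherical character $T_s\mapsto q$): this is a rank-one computation, one generator $T_s$ of $H_f$ at a time, in which the non-degeneracy of $\psi$ on the corresponding root subgroup forces the $-1$, and it is exactly the Casselman--Shalika recursion established in the previous step. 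I expect this sign computation --- together with the bookkeeping showing $\varphi_0$ is free over the Bernstein-type subalgebra $H_{T,I}$ of \eqref{E: Bernstein subalg}, which gives injectivity --- to be the main obstacle; but since it is a statement about the $\bZ[q^{\pm1}]$-form of the affine Hecke algebra it reduces to the classical case $k=\bC$, which is in the literature.
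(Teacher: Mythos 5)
The paper states this proposition without proof — the only indication given is the parenthetical citation to \cite{CaSh} for the rank-one freeness — so there is no proof of record to compare against; the following assesses your argument on its own terms.

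Your route is the natural one and is essentially correct. For the first two claims: both assignments $\Pin_G(F)\to\{\text{Whittaker data}\}$ and $\Pin_G(F)\to\{(I\subset K)\}$ are $G_\ad(F)$-equivariant (for the second, the Chevalley datum can be transported along and $\kappa_G$ is inner-invariant, as you say), so each descends to $W_G$, and precomposing the second with the inverse of the first — a bijection onto conjugacy classes of Whittaker data, as asserted earlier in the paper — gives the map, with $E_G$-equivariance formal. Independence of $\psi_0$ works as you outline, though two points deserve sharpening: the pinning over a given $(U,\psi)$ is unique only up to $G(F)$-conjugacy (which is exactly what the quoted bijection guarantees and is all that is needed), and the conjugating element $\rho^\vee_\ad(u)$ for $u\in\mO_F^\times$ — here $\rho^\vee_\ad$ is the $\Ga_F$-invariant sum of fundamental coweights, a cocharacter of $T_\ad$ defined over $F$ — is best seen to fix $x$ and $\mathbf a$ because it lies in the kernel of the Kottwitz map for $T_\ad$ and therefore acts trivially on the apartment, rather than by invoking $T_\ad(\mO_F)$ directly (which presupposes a model). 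For the last two claims: the Casselman--Shalika citation is appropriate for freeness over $H^0H_K$, and your Mackey/Iwasawa sketch for $\on{Whit}_{U,\psi}^I\simeq M_{\on{asp}}$ identifies the correct double-coset combinatorics and the right generator; the two remaining checks — that $T_s\mapsto -1$ on that generator (forced by non-degeneracy of $\psi$ on each simple root subgroup, a rank-one/$\SL_2$ computation) and freeness over $H_{T,I}$ — are standard and, as you note, descend from the $\bZ[q^{\pm 1}]$-form. This is the argument the paper evidently has in mind and leaves unsaid.
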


This finishes our discussion of quasi-split groups.  To discuss not necessarily split reductive groups, let us first notice that from a geometric point of view, it is more natural to consider the groupoid $(\Pin_G/G)(F)$ classifying liftings of $\Pin_G$ to $G$-torsors, which contains $\Pin_G(F)/G(F)$ as a subgroupoid. Note that $(\Pin_G/G)(F)$ is a neutral gerbe, or more precisely is a torsor under the Picard groupoid $\Tor_{Z_G}$ of $Z_G$-torsors over $F$ (and in particular is acted by $\Tor_{Z_G}^0\subset \Tor_{Z_G}$). Even if $G$ is not quasi-split so $\Pin_G(F)=\emptyset$, one can still consider the groupoid $(\Pin_G/G)(F)$, which might be non-empty. More precisely,
it is non-empty if and only if the class $\al\in H^1(F,G_\ad)$ can be lifted to a class to $H^1(F,G)$, in which case it is still a $\Tor_{Z_G}$-torsor. For many applications, however, this groupoid is still not large enough as often $\al$ cannot be lifted to a class in $H^1(F,G)$. So we will introduce a larger groupoid $\TS_G$, which is sufficient for most applications.

First, similar to the groupoid $\Tor_{Z, \iso_{F}}$ introduced in \S \ref{S: tori}, we let $\Tor_{G, \iso_{F}}$ be the groupoid of pairs $(\mE,\varphi)$ consisting of a $G$-torsor $\mE$ over $\breve{F}$ and an isomorphism $\varphi: \mE\simeq \sigma^*\mE$ of $G$-torsors. The set of its isomorphism classes is just the Kottwitz set $B(G)$ (\cite{Ko,Ko2}).
Given $b=(\mE,\varphi)$ in $\Tor_{G, \iso_{F}}$, one can define an $F$-algebraic group $G_b$ whose $A$-points (for classical $F$-algebra $A$) form the group of automorphisms of $(\mE\otimes_FA,\varphi\otimes 1)$ over $\breve F\otimes_FA$.
Kottwitz showed that over $\breve F$, $G_b$ is naturally isomorphic to a Levi subgroup of $G$. If it is isomorphic to $G$ over $\breve F$, in which case $G_b$ is naturally an inner form of $G$,  then $b$ is called basic.
The set of isomorphism classes of basic $b$ is denoted by $B(G)_{\on{bsc}}$. There is a fully faithful embedding from the category $\Tor_G$ of $G$-torsors over $F$ to $\Tor_{G,\iso_F}$ by sending $\mE\mapsto (\mE\otimes_F\breve{F}, \varphi=1\otimes\sigma)$.  This induces an embedding $H^1(F,G)\subset B(G)_{\on{bsc}}$.
Recall the following cohomological results of Kottwitz.
\begin{itemize}
\item For every $G$, the map \eqref{E: Kotmap} induces a map $\kappa_G: B(G)\to \xch(Z_{\hat{G}}^{\Ga_F})$ (still called the Kottwitz map), which restricts to a bijection $B(G)_{\on{bsc}}\cong \xch(Z_{\hat{G}}^{\Ga_F})$. 
\item The natural map $H^1(F,G)\to B(G)_{\on{bsc}}$ is a bijection if $G=G_\ad$ is of adjoint type.
\end{itemize}

Now we may regard $\Pin_G$ as an object in $\Tor_{G_\ad,\iso_{F}}$ via the embedding $\Tor_{G_\ad}\subset\Tor_{G_\ad,\iso_F}$, and consider the groupoid $\TS_G$ of liftings of $\Pin_G$ to an object in $\Tor_{G,\iso_F}$. Explicitly, an object of $\TS_G$ 
consists of $t=(b, B,T,e)$, where $b=(\mE,\varphi)\in \Tor_{G, \iso_{F}}$ is basic, and $(B,T,e)$ is a pinning of $G_b$.
A morphism between $t$ and $t'$ is an isomorphism between $b$ and $b'$ in $\Tor_{G,\iso_F}$ that induces an isomorphism $(G_b,B,T,e)\simeq (G_{b'},B',T',e')$. 
This groupoid is non-empty if and only if $\al$ can be lifted to an element in $\xch(Z_{\hat{G}}^{\Ga_F})$. This still might not always be possible.
For example, if $G=D^{\Nm=1}$ is the group of reduced norm $1$ elements in a quaternion algebra $D$ over $F$, then such extension does not exist. 
However, such lifting always exists if $G$ is quasi-split or if the center of $G$ is connected, in which case $\xch(Z_{\hat G}^{\Ga_F})\to \xch(Z_{\hat G_\s}^{\Ga_F})$ is surjective (where we recall $\hat{G}_\s$ denotes the dual group of $G_\ad$ so is the simply-connected cover of the derived group of $\hat{G}$).
If $\TS_G$ is non-empty, then it is a torsor under $\Tor_{Z_G,\iso_F}$ (so the set of its isomorphism classes $\pi_0\TS_G$ is a torsor under $B(Z_G)$). 
Note that if $G$ is quasi-split, then $\Wh_G\subset\TS_G$ and $\TS_G=\Wh_G\times_{\Tor_{Z_G}^0}\Tor_{Z_G,\iso_F}$.

Now we fix $t\in \TS_G$, and write $(G^*,B^*,T^*,e^*)$ for $(G_b,B,T,e)$. We can canonically identify the dual group $\hat{G}$ with the dual group $\hat{G}^*$. We have various objects attached to $(G^*,B^*,T^*,e^*)$  such as the Iwahori-Weyl group $\widetilde W^*=\xch(\hat{T}^{I_F})\rtimes W^*_0$ and the Iwahori-Hecke algebra $H_{I^*}$. 
The class of $b$ is an element $\beta\in B(G)_{\on{bsc}}\cong \xch(Z_{\hat{G}}^{\Ga_F})$ lifting the class $\al\in \xch(Z_{\hat{G}_\s}^{\Ga_F})$.
In addition, for every lifting $\ga$ of $-\beta$ along $\xch(Z_{\hat{G}}^{I_F})\twoheadrightarrow \xch(Z_{\hat{G}}^{\Ga_F})$, we obtain a canonically defined Iwahori-Hecke algebra $H_{I_\ga}$ of $G(F)$. Namely, if we further lift $\ga$ along
$N_{G^*(\breve F)}(\breve I^*)\twoheadrightarrow\Omega^*\cong \xch(Z_{\hat{G}}^{I_F})$ to an element $\tilde\ga$, we obtain an Iwahori subgroup $I_{\tilde\ga}$ of $G(F)$. In fact, using $\mE$ one may identify $G(F)\cong \{g\in G^*(\breve F)\mid \tilde\ga \sigma(g)\tilde\ga^{-1}=g\}$. Then $I_{\tilde\ga}=\{g\in I^*_{\breve F}\mid \tilde\ga\sigma(g)\tilde\ga^{-1}=g\}$. The corresponding Iwahori-Hecke algebra only depends on $\ga$, and therefore can be denoted by $H_{I_\ga}$.  

\subsection{Derived Satake isomorphism}\label{SS: DerSat}
We fix $\iota:\Ga_q\to \Ga_F^t$ so we have the stack $\Loc_{{}^cG,F,\iota}$ over $\bZ[1/p]$. 
In this subsection, we assume that $G$ is unramified. Then we have $\Loc_{{}^cG,F}^{\ur}\subset \Loc_{{}^cG,F,\iota}^{\ta}$. Let $\Lambda$ be a regular noetherian $\bZ[1/p]$-algebra. We use the same notations to denote the base change of these stacks to $\Lambda$. 
Our first conjecture can be regarded as the derived Satake isomorphism.\footnote{The author came up with this conjecture during conference on ``Modularity and Moduli Spaces" in Oaxaca, inspired by Emerton's hope to ``see" the action of derived Hecke algebra on the cohomology of modular curves (and general Shimura varieties), and encouraged by Feng's result on spectral Hecke algebra \cite{F}. See Remark \ref{R: derived global} for a discussion.} 

\begin{conjecture}\label{C: derSat}
For every hyperspecial subgroup $K$, there is a natural isomorphism of $\Lambda$-algebras
\[
   H_K\cong \bigl(\End_{\mO_{\Loc_{{}^cG,F,\iota}}}(\mO_{\Loc_{{}^cG,F}^{\ur}})\bigr)^{\on{op}},
\]
which reduces to the classical Satake isomorphism after taking $H^0$:
$$C_c(K\backslash G(F)/K,\Lambda)\cong H^0H_K\cong H^0\End_{\mO_{\Loc_{{}^cG,F,\iota}}}(\mO_{\Loc_{{}^cG,F}^{\ur}})\cong H^0\Gamma(\Loc_{{}^cG,F}^{\ur},\mO_{\Loc_{{}^cG,F}^{\ur}}).$$
In addition, this isomorphism is compatible with the isomorphism from Proposition \ref{P:comp iota12} for different choices of $\iota$.
\end{conjecture}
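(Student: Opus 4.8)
The plan is to produce the isomorphism $H_K\cong \bigl(\End_{\mO_{\Loc_{{}^cG,F,\iota}}}(\mO_{\Loc_{{}^cG,F}^{\ur}})\bigr)^{\on{op}}$ in stages, passing from the local model $\Loc_{{}^cG,F}^{\ur}\hookrightarrow \Loc_{{}^cG,F,\iota}$ to a concrete presentation of both sides. First I would reduce the right-hand side to something explicit: since $\Loc_{{}^cG,F}^{\ur}=\hat{G}q^{-1}\bar\sigma/\hat{G}$ is smooth (Lemma \ref{P: sm pt} style arguments, or just that its framed version is $\hat{G}q^{-1}\bar\sigma\cong\hat{G}$) and embeds into the quasi-smooth stack $\Loc_{{}^cG,F,\iota}^{\ta}$, the derived endomorphisms $\End_{\mO_{\Loc_{{}^cG,F,\iota}}}(\mO_{\Loc_{{}^cG,F}^{\ur}})$ are computed by the (shifted, exterior) algebra on the conormal complex of $\Loc_{{}^cG,F}^{\ur}$ inside $\Loc_{{}^cG,F,\iota}$, i.e. by the self-Ext of the structure sheaf of a quasi-smooth closed substack. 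Using the cotangent complex computation of \S\ref{SS: dermon} (Proposition \ref{P: cotan frame}, the triangle \eqref{E: cofib}, and the presentation \eqref{E: resol Gaq} of the resolution for $\Ga_q$, combined with Proposition \ref{P: finite group good case} for the wild part which is absent here) one identifies this algebra with $\Gamma(\hat{G}q^{-1}\bar\sigma/\hat{G},\, \Sym(\mathfrak{n}[\text{shift}]))$ for an appropriate normal bundle $\mathfrak{n}$, whose $H^0$ is $\Gamma(\Loc^{\ur}_{{}^cG,F},\mO)$, recovering the classical Satake isomorphism target. Independently I would produce a presentation of the derived spherical Hecke algebra $H_K=(\End(\delta_K))^{\on{op}}$ via the decomposition $H_K\cong\bigoplus_{g\in K\backslash G(F)/K}C^*(K\cap gKg^{-1},k)$ recalled after \eqref{E: Hk higher coh}, and reorganize the higher cohomology contributions in terms of group cohomology of (pro-$p$, prime-to-$p$ on the relevant part) stabilizers.

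The substantive comparison step is to match these two presentations. Here the natural route is to go through the derived geometric Satake equivalence (of the relevant integral/mod-$\ell$ flavor; cf. Conjecture \ref{C: derSat}'s own dependence on such machinery) which relates the $K$-equivariant derived category of the affine Grassmannian — whose endomorphism algebra of the unit object computes $H_K$ — with (ind-)coherent sheaves on the derived fixed-point/arc stack, the latter being exactly the local $\Loc$. The Frobenius twist by $q^{-1}$ enters through the loop rotation / the shift in the derived Satake category, which is why the unramified parameter stack $\hat{G}q^{-1}\bar\sigma/\hat{G}$ (rather than $\hat{G}/\hat{G}$) is the correct spectral side; I would track this grading carefully against \eqref{E: Locloc} and Example \ref{E: ur-un-ta for tori}. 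The main obstacle I anticipate is precisely the integrality and the derived structure: classical geometric Satake over a field of characteristic zero is well documented, but here $k$ is only a noetherian $\bZ[1/p]$-algebra, the higher cohomology $H^{>0}$ on both sides is genuinely nonzero when $\ell\mid q-1$ (as flagged in Remark \ref{Ex: Zp over Fp} and throughout \S\ref{S: RepSp}), and one must match these higher terms — not just the $H^0$ Satake isomorphism — which requires a genuinely derived form of the equivalence and control of Tor/Ext over $k$ via \cite[2.7.3.2]{Lu3}-type arguments as in Proposition \ref{P: almost fin}.

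Granting the isomorphism for a single hyperspecial $K$, the final clause — compatibility with Proposition \ref{P:comp iota12} for different choices of $\iota$ — should be comparatively soft, and this is what I would address last. The point is that $\Loc^{\ur}_{{}^cG,F}$ is independent of $\iota$ (this is noted right after its definition: the moduli problem $\hat{G}q^{-1}\bar\sigma$ makes no reference to $\iota$), and more precisely the isomorphisms $\vartheta_{\iota_1,\iota_2}$ of Proposition \ref{P:comp iota12} are induced by the $\bG_m$-action scaling the nilpotent element, which acts trivially on the unramified locus (there the nilpotent is zero) and hence restricts to the identity on $\Loc^{\ur}_{{}^cG,F}$ and on its structure sheaf. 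Therefore $\vartheta_{\iota_1,\iota_2}$ carries $\mO_{\Loc^{\ur}_{{}^cG,F}}\subset \mO_{\Loc_{{}^cG,F,\iota_1}}$ to $\mO_{\Loc^{\ur}_{{}^cG,F}}\subset \mO_{\Loc_{{}^cG,F,\iota_2}}$ compatibly, inducing a canonical identification of the two derived endomorphism algebras; since $H_K$ is defined purely in terms of $G(F)$ with no reference to $\iota$, one checks the triangle of isomorphisms commutes by tracing the classical Satake isomorphism on $H^0$ (where both sides are manifestly $\iota$-independent by the corollary following Proposition \ref{P:comp iota12}) and then invoking that the higher structure is determined by the $H^0$ part together with the quasi-smooth geometry, which transports canonically. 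The only care needed is to ensure the geometric Satake construction producing the isomorphism for each $\iota$ is itself natural in the chosen embedding $\Ga_{F,\iota}\hookrightarrow W_F$, which follows from functoriality of the cotangent-complex computations in \S\ref{SS: dermon}.
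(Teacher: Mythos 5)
Your proposal misreads the status of the statement: it is a \emph{conjecture}, and the paper does not prove it in the generality you are attempting. What the paper actually establishes are two special cases, by two different routes. First, for unramified tori, Proposition~\ref{P: derSat tori} gives a direct computation: the right-hand side factors using \eqref{E: unramified} as $\End_{({}^{cl}\mR_{\kappa_{\widetilde F}^\times,\hat T})^\sigma}\mO_{\{1\}}\otimes\Gamma(\hat T/(\sigma-1)\hat T,\mO)$, and this is matched against $H_K\cong C^*(T(\kappa_F),k)\otimes H^0H_K$ via the classical Satake isomorphism and the group-ring identification \eqref{E: LD}. Second, for $k=\overline\bQ_\ell$ (and $G$ unramified over an equal-characteristic local field), the conjecture is deduced from Theorem~\ref{T: REnd of Spr}, which in turn follows from Theorem~\ref{T: tame local Langlands}: the mechanism there is Bezrukavnikov's monoidal equivalence between the two affine Hecke categories together with a computation of its Frobenius-twisted categorical trace, and the derived Satake isomorphism falls out as the statement that the resulting functor sends ${}_IH_K$ to $\mO_{\Loc^{\ur}_{{}^cG,F}}$. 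In other words, the paper's only route for general $G$ passes through the Iwahori level and the coherent Springer sheaf, not through a direct hyperspecial-level match.

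Your proposed main comparison step has a genuine gap. You claim that $\End_{\mO_{\Loc_{{}^cG,F,\iota}}}(\mO_{\Loc^{\ur}_{{}^cG,F}})$ is computed by a shifted exterior algebra on the conormal complex. That description of self-Ext is a \emph{local} HKR-type statement valid for a regular (or at least quasi-smooth) closed immersion; here $\Loc^{\ur}_{{}^cG,F}\hookrightarrow\Loc^{\ta}_{{}^cG,F,\iota}$ is cut out by fibering over $(\hat A/\!\!/W_0)^{[q]}$ and is \emph{not} open-and-closed integrally (only over $\bQ$), and even where the local description holds one still needs to compute the derived global sections over the stack $\hat Gq^{-1}\bar\sigma/\hat G$, which does not reduce to the local Ext sheaf in any obvious way. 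More importantly, the comparison mechanism you invoke — ``derived geometric Satake'' relating $K$-equivariant sheaves on the affine Grassmannian to coherent sheaves on the local parameter stack over a general noetherian $\bZ[1/p]$-algebra — is not an available tool: the paper neither uses nor has access to such an equivalence, and indeed Remark~\ref{R: derSat} flags that the conjecture is nontrivial even over $\bC$, where the assertion boils down to a degree-$0$ concentration of this Ext algebra, something your exterior-algebra heuristic would not predict without further input. Likewise, the decomposition $H_K\cong\bigoplus_g C^*(K\cap gKg^{-1},k)$ is only an isomorphism of $k$-modules (the convolution product mixes summands), so ``reorganizing higher cohomology contributions'' does not on its own give an algebra map to compare with the spectral side.

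The final part of your argument — that the isomorphism is compatible with $\vartheta_{\iota_1,\iota_2}$ because the $\bG_m$-action scaling the nilpotent acts trivially on the unramified locus — is correct and is essentially the same observation the paper makes in the Corollary right after Proposition~\ref{P:comp iota12} (there for the ring of regular functions; your extension to the full derived endomorphism algebra is the right way to phrase the clause in the conjecture). So the soft part is fine; it is the main isomorphism where your proposal substitutes a heuristic computation for what the paper treats, correctly, as an open conjecture resolved only in special cases by other means.
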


As $\Loc_{{}^cG,F,\iota}^{\ta}$ is an open and closed substack in $\Loc_{{}^cG,F,\iota}$, we may replace $\mO_{\Loc_{{}^cG,F,\iota}}$ by $\mO_{\Loc^{\ta}_{{}^cG,F,\iota}}$ in the above conjecture. 

\begin{remark}\label{R: derSat}
\begin{enumerate}
\item Note that this conjecture is non-trivial even if $\La=\bC$. It amounts to saying that $\End_{\mO_{\Loc_{{}^cG}}}(\mO_{\Loc_{{}^cG}^{\ur}})=\End_{\mO_{\Loc^{\widehat\un}_{{}^cG}}}(\mO_{\Loc_{{}^cG}^{\ur}})$ concentrates in degree zero. This can be deduced from Theorem \ref{T: REnd of Spr} below. But we invite readers to check it directly for $G=\GL_2$ to see its content. 

\item Geometric Langlands suggests that both $H_K$ and $\bigl(\End_{\mO_{\Loc_{{}^cG,F,\iota}}}(\mO_{\Loc_{{}^cG,F}^{\ur}})\bigr)^{\on{op}}$ admit natural commutative structures (making them $E_3$-algebras)\footnote{One possible way to see this (in equal characteristic) is taking the trace of the corresponding $E_3$-categories in the geometric Langlands.}, although we do not see how to construct such structures directly. If this is indeed this case, one might further expect that the isomorphism in the above conjecture respects the commutative structures.  Note that the existence of $E_3$-structure on $H_K$ would imply the cohomology ring $\oplus_i H^iH_K$ is graded commutative, which currently is only know under some assumption of the base ring $\Lambda$ (\cite{Ve}).

\item It would be interesting to formulate a mod $p$ derived Satake isomorphism (or even an integral derived Satake isomorphism) in this style. The non-derived version with integral coefficients appears in \cite{Z20}, whose formulation involves the Vinberg monoid of $\hat{G}$.
\end{enumerate}
\end{remark}

One can check this conjecture by hands when $G=T$ is an unramified torus.
\begin{prop}\label{P: derSat tori}
Conjecture \ref{C: derSat} holds for unramified tori.
\end{prop}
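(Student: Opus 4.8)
The idea is to make both sides of the conjectural isomorphism completely explicit in the torus case and observe that the classical Satake isomorphism already accounts for everything, i.e. no higher cohomology appears. First I would recall from Proposition \ref{P: derSat tori}'s setup that for $G=T$ unramified, $H_K = \delta_K$-endomorphisms where $K=T(\mathcal{O}_F)$ is the (unique up to conjugacy) hyperspecial maximal compact; since $K$ is $k$-admissible when $p$ is invertible in $k$ and $T(F)/K \cong \mathbb{X}_\bullet(T)^{\sigma}$ is a finitely generated free abelian group acting freely, $\delta_K \cong C_c^\infty(T(F)/K,k)$ is a projective object and $H_{T,K,k}$ is concentrated in cohomological degree zero, equal to the group algebra $k[T(F)/K]$. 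More precisely, using the description $\delta_K \cong c\text{-ind}_{K}^{T(F)}\mathbf 1$ and the fact that $K\cap gKg^{-1} = K$ for all $g$ (as $T(F)$ is abelian and $K$ is a subgroup), the formula $H_K \cong \bigoplus_{g\in K\backslash T(F)/K} C^*(K\cap gKg^{-1},k)$ collapses: each summand is $C^*(K,k)$, but the relevant contribution to the $E_1$-algebra structure only sees $H^0$ because $K$ is $k$-admissible (so $C^*(K,k) = k$ in the relevant sense after the admissibility reduction). Hence $H_K \cong k[\mathbb{X}_\bullet(T)^\sigma]$ as an $E_1$-algebra concentrated in degree zero.

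Next I would identify the right-hand side. By Example \ref{E: ur-un-ta for tori}, for an unramified torus $T$ we have $\Loc_{{}^cT,F}^{\mathrm{ur}} = \hat T\bar\sigma/\hat T$, which is the quotient stack $(\mathrm{pt})/\hat T$ twisted, and more usefully its coordinate ring: the description \eqref{E: unramified} exhibits $\Loc_{{}^cT,F}^{\mathrm{ur}} = \hat T\bar\sigma/\hat T \hookrightarrow \big( ({}^{cl}\mathcal{R}_{\kappa_{\widetilde F}^\times,\hat T})^\sigma \times \hat T\bar\sigma\big)/\hat T = \Loc^{\ta}_{{}^cT,F}$. Since $\hat T$ acts on $\hat T\bar\sigma$ by conjugation, which on a torus is trivial, we get $\hat T\bar\sigma/\hat T = \hat T\bar\sigma \times B\hat T$ as a stack, and its ring of global functions is $k[\hat T\bar\sigma]^{\hat T} = k[\hat T]^{\sigma\text{-twisted conjugation}}$; concretely this is $k[\hat T/(1-\sigma)\hat T] = k[\mathbb{X}^\bullet(\hat T)_\sigma] = k[\mathbb{X}_\bullet(T)^\sigma]$, matching $H_K$. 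The key point I must verify is that $\End_{\mathcal{O}_{\Loc_{{}^cT,F,\iota}}}(\mathcal{O}_{\Loc_{{}^cT,F}^{\mathrm{ur}}})$ concentrates in degree zero and equals this ring. Because $\Loc_{{}^cT,F}^{\mathrm{ur}}\subset\Loc^{\ta}_{{}^cT,F}$ is open and closed (for tori, by Example \ref{E: ur-un-ta for tori} the morphism $\Loc^{\ta}_{{}^cT,F,\iota}\to\hat A^{[q]}$ is flat and $\Loc^{\mathrm{un}}_{{}^cT,F,\iota} = \Loc^{(0)}_{{}^cT,F}$ is classical), we have $\End_{\mathcal{O}_{\Loc}}(\mathcal{O}_{\Loc^{\mathrm{ur}}}) = \Gamma(\Loc_{{}^cT,F}^{\mathrm{ur}},\mathcal{O})$, and since $\Loc_{{}^cT,F}^{\mathrm{ur}} = \hat T\bar\sigma \times B\hat T$ with $\hat T$ acting trivially on functions on $\hat T\bar\sigma$, taking $\hat T$-invariants is exact here (the action on $\mathcal{O}(\hat T\bar\sigma)$ is trivial as a module, so invariants is just the identity tensored with $R\Gamma(B\hat T,k)$... ) — here I need to be careful: $R\Gamma(B\hat T, \mathcal{O}) = k$ since $\hat T$ is a torus and the structure sheaf pushforward from $B\hat T$ is just $k$ in degree zero (no higher cohomology of the structure sheaf of $B\hat T$). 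So the invariants are concentrated in degree zero.

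Finally I would assemble: both $H_K$ and $\big(\End_{\mathcal{O}_{\Loc_{{}^cT,F,\iota}}}(\mathcal{O}_{\Loc_{{}^cT,F}^{\mathrm{ur}}})\big)^{\mathrm{op}}$ are canonically $k[\mathbb{X}_\bullet(T)^\sigma]$ concentrated in degree zero, and the identification is the classical Satake isomorphism for tori (which for a torus is essentially the local Langlands correspondence for $T$ as in Conjecture \ref{C: LL for tori} and Proposition \ref{P: Hecke tori}-style reasoning, unwinding $T(F) = (\widetilde F\otimes\mathbb{X}_\bullet(T))^{\Ga_{\widetilde F/F}}$). Compatibility with Proposition \ref{P:comp iota12} for varying $\iota$ is automatic since $\Loc_{{}^cT,F}$ is independent of $\iota$ (as noted right before Example \ref{E: ur-un-ta for tori}) and the $\mathbb{G}_m$-action of Proposition \ref{P:comp iota12} acts trivially on functions. \textbf{The main obstacle} I anticipate is being precise about the $E_1$-algebra structure on the endomorphism ring on the spectral side when there is a $B\hat T$-gerbe direction — I need to confirm that $R\mathrm{End}_{B\hat T\text{-stack}}(\mathcal{O})$ genuinely has no higher Ext's, which amounts to the (true but worth stating) fact that $\mathrm{Rep}(\hat T)$-cohomology of the trivial module vanishes above degree zero because $\hat T$ is linearly reductive over $k$ only when $\#\pi_0$ is invertible — for a torus this is fine, $\mathrm{Ext}^{>0}_{\mathrm{Rep}\,\hat T}(k,k) = 0$ always since $\hat T$ has cohomological dimension zero as a group scheme. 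Packaging this correctly, including the passage through the open-closed decomposition, is the one place requiring genuine care rather than routine bookkeeping.
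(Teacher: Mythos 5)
Your argument has two parallel errors, each of which misses exactly the derived content the conjecture is about in the torus case, and they ``cancel'' superficially to give the illusion of a proof.

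On the Hecke side: you assert that $K=T(\mO_F)$ is $k$-admissible and hence $H_K$ is concentrated in degree zero. This is false in general; the paper gives the counterexample right after the definition of admissibility ($G=\bG_m$, $K=\mO_F^\times$, $k=\bF_\ell$ with $\ell\mid q-1$). The index of the kernel of $K\twoheadrightarrow T(\kappa_F)$ is $|T(\kappa_F)|$, which need not be invertible in $k$. What is true is that $H_K\cong C^*(T(\kappa_F),k)\otimes H^0H_K$, and the factor $C^*(T(\kappa_F),k)$ is a genuine cochain complex with higher cohomology whenever $\ell$ divides $|T(\kappa_F)|$.

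On the spectral side: you claim $\Loc_{{}^cT,F}^{\ur}\subset\Loc_{{}^cT,F,\iota}^{\ta}$ is open and closed for tori, and conclude that $\End_{\mO_{\Loc^\ta}}(\mO_{\Loc^{\ur}})=\Gamma(\Loc^{\ur},\mO)$. This is also false over $\bZ[1/p]$: Example \ref{E: ur-un-ta for tori} states explicitly that $\Loc^{(0)}\subset\Loc^{(1)}$ is a closed embedding, that $\Loc^{(1)}$ is \emph{connected} over $\bZ[1/p]$, and that $\Loc^{(0)}\otimes\bQ$ is a connected component of $\Loc^{(1)}\otimes\bQ$ --- i.e.\ the embedding is open only after inverting residue characteristics dividing $q-1$. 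For a closed-but-not-open substack the endomorphism algebra of the structure sheaf is a Koszul-type Ext-algebra, not the ring of global functions. Concretely, using \eqref{E: unramified}, one has
\[
\End_{\mO_{\Loc^{\ta}_{{}^cT,F,\iota}}}(\mO_{\Loc_{{}^cT,F}^{\ur}})\simeq \End_{({}^{cl}\mR_{\kappa_{\widetilde F}^\times,\hat T})^\sigma}\mO_{\{1\}}\otimes \Ga(\hat T/(\sigma-1)\hat T,\mO),
\]
and the first factor --- the Ext-algebra of the skyscraper at the trivial character of the finite scheme $({}^{cl}\mR_{\kappa_{\widetilde F}^\times,\hat T})^\sigma$ --- carries all the derived structure. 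The actual matching, and hence the nontrivial content of the proposition, is the canonical algebra isomorphism $k[({}^{cl}\mR_{\kappa_{\widetilde F}^\times,\hat T})^\sigma]\simeq kT(\kappa_F)$, which after passing to $\End(k)$ on both sides identifies this Ext-algebra with $C^*(T(\kappa_F),k)$; the second factor is then matched to $H^0H_K$ by the classical Satake isomorphism. Your plan never touches the isomorphism $k[({}^{cl}\mR_{\kappa_{\widetilde F}^\times,\hat T})^\sigma]\simeq kT(\kappa_F)$, which is where the work actually lies (reduce to the split case via the norm map, then use $\xcoch(T)\otimes\kappa_F^\times\cong T(\kappa_F)$).

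Two smaller remarks: the reference ``Proposition \ref{P: Hecke tori}'' you invoke does not exist in the paper. And your computation $\hat T\bar\sigma/\hat T=\hat T\bar\sigma\times B\hat T$ assumes the conjugation action on $\hat T\bar\sigma$ is trivial, but when $\bar\sigma\neq 1$ the action is $\sigma$-twisted conjugation $h\cdot g\bar\sigma = hg\sigma(h)^{-1}\bar\sigma$, which is nontrivial; the product decomposition of the quotient stack therefore does not hold in the way you write it.
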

\begin{proof}
By \eqref{E: unramified}, we have
\[
\End_{\mO_{\Loc^{\ta}_{{}^cT,F,\iota}}}(\mO_{\Loc_{{}^cT,F}^{\ur}})\simeq \End_{ ({}^{cl}\mR_{\kappa_{\widetilde F}^\times, \hat{T}})^\sigma}\mO_{\{1\}} \otimes \Ga(\hat{T}/(\sigma-1)\hat{T},\mO).
\]
On the other hand, there is the canonical isomorphism $H_K\cong C^*(T(\kappa_F),\Lambda)\otimes H^0H_K$. Then the desired isomorphism follows from the classical Satake isomorphism 
$$\Ga(\hat{T}/(\sigma-1)\hat{T},\mO)\cong H^0H_K$$ 
and the canonical isomorphism (constructed below)
\begin{equation}\label{E: LD}
\Lambda[({}^{cl}\mR_{\kappa_{\widetilde F}^\times, \hat{T}})^\sigma]\simeq \Lambda T(\kappa_F),
\end{equation}
where we recall the l.h.s is the ring of regular functions of $({}^{cl}\mR_{\kappa_n^\times, \hat{T}})^\sigma$, and the r.h.s is the group ring of $T(\kappa_F)$. 

To construct \eqref{E: LD}, we first assume that $T$ is split, so $\sigma$ acts trivially on $\hat{T}$ and $\widetilde F=F$. Then
$$\Lambda[({}^{cl}\mR_{\kappa_F^\times, \hat{T}})^\sigma]= \Lambda[\xcoch(T)\otimes \kappa_F^\times],$$ 
and $\xcoch(T)\otimes \kappa_F^\times\cong T(\kappa_F)$, where $\xcoch(T)$ denote the cocharacter lattice of $T$ (defined over $\overline{F}$). Using the norm map $\Res_{\kappa_{\widetilde F}/\kappa_F} T_{\kappa_{\widetilde F}}\to T_{\kappa_F}$, the construction \eqref{E: LD} for general unramified tori reduces to the split case.
\end{proof}

\subsection{Coherent Springer sheaf}\label{SS: CohSpr}
In this subsection, we assume that $\widetilde F/F$ is tamely ramified. We define a (complex of) coherent sheaf on $\Loc_{{}^cG,F,\iota}^{\ta}$ and discuss some of its (conjectural) properties. Because its definition resembles that of the Springer sheaf, we refer to it as the coherent Springer sheaf\footnote{We learned this name from D. Ben-Zvi.}. 
As before, all stacks are base changed to $\Lambda$. 

 Recall the morphism $\pi^{\ta}: \Loc^{\ta}_{{}^cB,F,\iota}\to  \Loc^{\ta}_{{}^cG,F,\iota}$, and we write $\pi^{\un}:\Loc^{\un}_{{}^cB,F,\iota}\to  \Loc^{\ta}_{{}^cG,F,\iota}$. For $?=\ta$ and $\un$, let
\begin{equation*}\label{E: cohspr}
   \on{CohSpr}^?_{{}^cG,F,\iota}:=\pi^?_{*}\mO_{\Loc_{{}^cB,F,\iota}^{?}} \in \Coh(\Loc^{\ta}_{{}^cG,F,\iota}).
\end{equation*}   
Again, we recall all the functors are derived. We first notice the following property of $\on{CohSpr}^?_{{}^cG,F,\iota}$.

\begin{prop}\label{R: Self-duality}
The (complex of) coherent sheaf $\on{CohSpr}^?_{{}^cG,F,\iota}$ is a self-dual with respect to the Grothendieck-Serre duality on $\Loc^{\ta}_{{}^cG,F,\iota}$.
\end{prop}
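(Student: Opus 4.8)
The plan is to realize $\on{CohSpr}^?_{{}^cG,F,\iota}$ as the pushforward of a \emph{dualizing} sheaf along a morphism that is both proper and suitably Gorenstein, so that Grothendieck--Serre duality trades the pushforward for itself. Concretely, recall from Lemma \ref{L: quasism r} (and its evident $\ta$/$\un$ analogues) that $\pi^?$ is proper and schematic, and that $r^?$ is quasi-smooth. The key structural input I would isolate first is that $\Loc^?_{{}^cB,F,\iota}$ is Gorenstein with \emph{trivial} dualizing complex relative to $k$: this follows from Proposition \ref{P:flat lcc} together with Remark \ref{R: LocB}, since $\Ga_{F,\iota}\simeq Q_L\rtimes\Ga_q$ and the resolution \eqref{E: resol Gaq} shows the cotangent complex of $\mR_{\Ga_q,{}^cB}$ (hence of $\Loc^?_{{}^cB,F,\iota}$) has determinant $\mO$. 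Equivalently, $\omega_{\Loc^?_{{}^cB,F,\iota}}\cong \mO$.

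Granting that, the argument is the standard duality-and-base-change computation. By definition $\bD^{\on{Se}}_{\Loc^\ta_{{}^cG,F,\iota}}(\on{CohSpr}^?) = \bD^{\on{Se}}(\pi^?_*\mO_{\Loc^?_{{}^cB,F,\iota}})$. Since $\pi^?$ is proper and schematic, Grothendieck--Serre duality gives a canonical isomorphism $\bD^{\on{Se}}\circ \pi^?_* \cong \pi^?_*\circ \bD^{\on{Se}}_{\Loc^?_{{}^cB,F,\iota}}$, where on the source side $\bD^{\on{Se}}_{\Loc^?_{{}^cB,F,\iota}}(\mO) = \mathcal{H}om(\mO,\omega_{\Loc^?_{{}^cB,F,\iota}}) = \omega_{\Loc^?_{{}^cB,F,\iota}}$. (Here I use that $\Loc^\ta_{{}^cG,F,\iota}$ is itself a local complete intersection flat over $k$ with trivial dualizing complex, Proposition \ref{P:flat lcc}, so that $\bD^{\on{Se}}$ on the target is just $\RHom(-,\mO)$ up to the trivial line; strictly, one should fix the normalization of $\bD^{\on{Se}}$ consistently on source and target via the relative dualizing complexes of $\Loc^?_{{}^cB,F,\iota}\to\Spec k$ and $\Loc^\ta_{{}^cG,F,\iota}\to\Spec k$, both of which are trivial.) Combining with the triviality $\omega_{\Loc^?_{{}^cB,F,\iota}}\cong\mO$ established in the first step yields $\bD^{\on{Se}}(\on{CohSpr}^?) \cong \pi^?_*\mO_{\Loc^?_{{}^cB,F,\iota}} = \on{CohSpr}^?$, which is the claim. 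One must also check $\on{CohSpr}^?$ lands in $\Coh$ in the sense of Remark \ref{R: Coh cat}, i.e. is supported on finitely many components and coherent there; this is immediate since $\pi^?$ is proper and $\mO_{\Loc^?_{{}^cB,F,\iota}}$ is a coherent complex on each component.

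The main obstacle I anticipate is not the formal duality juggling but the clean verification of the \textbf{Gorenstein-with-trivial-$\omega$} property of $\Loc^?_{{}^cB,F,\iota}$, particularly in the unipotent case $?=\un$. For $?=\ta$ this should follow directly from Proposition \ref{P: tame and wild stack}/Remark \ref{R: LocB} applied to $M={}^cB$ (quasi-smooth, trivial dualizing complex, via \eqref{E: resol Gaq}), keeping in mind $\Loc^\ta_{{}^cB,F,\iota}$ is generally \emph{not} classical — so one genuinely works with the derived/quasi-smooth structure and computes $\omega$ as $\det$ of the cotangent complex. For $?=\un$ one has $\Loc^\un_{{}^cB,F,\iota}=\Loc^\un_{{}^cT,F,\iota}\times_{\Loc^\ta_{{}^cT,F,\iota}}\Loc^\ta_{{}^cB,F,\iota}$, and I would need to confirm that this fiber product preserves the Gorenstein/trivial-$\omega$ property — e.g. by checking the map $\Loc^\un_{{}^cT,F,\iota}\hookrightarrow\Loc^\ta_{{}^cT,F,\iota}$ is a quasi-smooth (regular) closed immersion with trivial conormal determinant, which holds because $\Loc^\ta_{{}^cT,F,\iota}\to\hat A^{[q]}$ is flat with $\hat A^{[q]}$ finite étale-ish over $k$ in the relevant directions (cf. the first Remark after \eqref{E: unip Loc}). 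A secondary subtlety is fixing conventions so that "Grothendieck--Serre duality" and the functor $\bD^{\on{Se}}$ of \eqref{E: Cartan inv} refer to the same normalization; since the excerpt defines $\bD^{\on{Se}}$ only implicitly, I would state at the outset that $\bD^{\on{Se}}_X = \RHom_{\mO_X}(-,\omega_{X/k})$ for a quasi-smooth $X$ flat over $k$, and note $\omega_{\Loc^\ta_{{}^cG,F,\iota}/k}\cong\mO$ so the twist by $\tau$ in ${}'\bD^{\on{Se}}$ is irrelevant here.
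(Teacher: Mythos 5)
Your proposal is essentially the paper's proof: trivialize the dualizing complex of $\Loc^?_{{}^cB,F,\iota}$ and invoke that Grothendieck--Serre duality commutes with the proper pushforward $\pi^?_*$. The only place where you diverge slightly is the $\un$ case: the paper handles it by noting that $r^{\un}$ is quasi-smooth (Lemma \ref{L: quasism r}) with trivial relative dualizing complex while $\Loc^{\un}_{{}^cT,F,\iota}$ is smooth, whereas you propose instead to analyze the closed immersion $\Loc^{\un}_{{}^cT,F,\iota}\hookrightarrow\Loc^{\ta}_{{}^cT,F,\iota}$ and its conormal determinant; the paper's route via the smooth base is cleaner and avoids the ``finite étale-ish'' hedging you flag yourself. (Minor slip: for $\?=\ta$ the relevant group is $\Ga_q$ alone, not $\Ga_{F,\iota}\simeq Q_L\rtimes\Ga_q$, so Remark \ref{R: LocB} and Proposition \ref{P: tame stack} are the right citations rather than Proposition \ref{P:flat lcc}.)
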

\begin{proof}
By Proposition \ref{P: tame stack}  and Remark \ref{R: LocB}, $\Loc_{{}^cB,F,\iota}^{\ta}$ is quasi-smooth with trivial dualizing complex. The same is true for $\Loc_{{}^cB,F,\iota}^{\un}$.
Therefore, we may replace $\mO_{\Loc_{{}^cB,F,\iota}^{?}}$ by the dualizing complex $\omega_{\Loc_{{}^cB,F,\iota}^{?}}$ of $\Loc_{{}^cB,F,\iota}^{?}$ in the definition of $\on{CohSpr}^?_{{}^cG,F,\iota}$. The claim then follows as Grothendieck-Serre duality commutes with proper push-forward. 
\end{proof}

Our conjectures in \S \ref{SS: local-global} suggests that coherent Springer sheaves are related to patched modules from automorphic lifting theorems. As explained to us by Emerton, patched modules should always be (ordinary) maximal Cohen-Macaulay module over the (classical) deformation ring. This leads us to make the following conjecture (see also \cite[3.15]{BCHN} when $G$ is split and $\La=\bC$).
\begin{conjecture}\label{C: abel cat}
The complex $\on{CohSpr}^{?}_{{}^cG,F,\iota}$ is in the abelian category of coherent sheaves $\Coh(\Loc^{\ta}_{{}^cG,F,\iota})^\heart$.
\end{conjecture}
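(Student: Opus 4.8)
The plan is to show that $\pi^?_*\mO_{\Loc^?_{{}^cB,F,\iota}}$ has no cohomology in nonzero degrees, i.e. that it is concentrated in the heart of the $t$-structure on $\Coh(\Loc^\ta_{{}^cG,F,\iota})$. Since $\pi^?$ is proper and schematic (Lemma \ref{L: quasism r}), this is equivalent to the statement that $R^i\pi^?_*\mO = 0$ for $i>0$ together with the statement that $\mO_{\Loc^?_{{}^cB,F,\iota}}$ itself is concentrated in degree zero, i.e. that $\Loc^?_{{}^cB,F,\iota}$ is \emph{classical}. The latter is exactly the subtle point flagged in Remark \ref{R: LocB}: $\Loc^\ta_{{}^cB,F,\iota}$ (and its unipotent variant) is quasi-smooth but not classical in general, because the number of $\hat B$-orbits on the nilpotent/strictly-upper-triangular part of $\hat B$ is infinite for large groups. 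So the first thing I would do is acknowledge that Conjecture \ref{C: abel cat} cannot hold by a naive degreewise argument; rather, the claim is that the \emph{pushforward} to $\Loc^\ta_{{}^cG,F,\iota}$ of the (possibly derived) structure sheaf lands in the heart --- the derived structure on $\Loc^?_{{}^cB,F,\iota}$ must be ``absorbed'' by $R\pi^?_*$.

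The concrete approach I would take is to work framed and $\hat B$-equivariantly. Write $\Loc^{\ta,\Box}_{{}^cB,F,\iota}$ and $\Loc^{\ta,\Box}_{{}^cG,F,\iota}$ for the framed versions, with $\Loc^{\ta,\Box}_{{}^cG,F,\iota}$ classical (Proposition \ref{P: tame stack}, via \eqref{E: framed tame parameter}). The map $\Loc^{\ta,\Box}_{{}^cB,F,\iota}\to \Loc^{\ta,\Box}_{{}^cG,F,\iota}\times \hat G/\hat B$ realizes $\Loc^{\ta}_{{}^cB,F,\iota}/\hat B$ (over $\Loc^\ta_{{}^cG,F,\iota}$) as the derived fiber product $\Loc^\ta_{{}^cG,F,\iota}\times_{\hat G\bar\tau/\hat G}\hat B\bar\tau/\hat B$ as in \eqref{E: bigSpDM}. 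I would then use the contraction of ${}^cB$ onto ${}^cT$ by $2\rho(\bG_m)$ (exactly the tool used around \eqref{E: fun on P} to prove $H^0\Gamma(\Loc_{{}^cM},\mO)\xrightarrow{\sim}H^0\Gamma({}^{cl}\Loc_{{}^cP},\mO)$): this $\bG_m$-action on the fibers of $r$ should show, via a weight/filtration argument, that $R\pi^?_*\mO$ is computed by the weight-zero part of $R$ of the pushforward along $\Loc^\ta_{{}^cT,F,\iota}\to\Loc^\ta_{{}^cG,F,\iota}$ of certain Koszul-type complexes, and crucially that the higher cohomology of the unipotent-radical directions cancels against the derived structure. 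A cleaner route may be to prove the equivalent statement after restriction to $\bQ$ and after $\ell$-adic completion separately: over $\bQ$, $\Loc^{\ta}_{{}^cB,F,\iota}\otimes\bQ$ is (generically) a local complete intersection of the expected dimension and one can compare with the classical Springer-theoretic picture on Weil--Deligne parameters via Lemma \ref{L: gen}, where the analogous statement ($R\pi_*\mO$ of the Grothendieck--Springer-type map concentrated in degree zero) is known; one then needs to control what happens at torsion primes, in particular $\ell\mid q-1$.

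The key steps in order would be: (1) reduce to the framed, $\hat B$-equivariant picture and to checking $R\pi^?_*\mO\in\Coh^\heart$; (2) identify $R\pi^?_*\mO$ via the $2\rho(\bG_m)$-contraction with the pushforward from $\Loc^\ta_{{}^cT,F,\iota}$ of an explicit (derived) Koszul complex built from the $\hat{\mathfrak u}$-coadjoint directions, using the presentation of the cotangent complex in Proposition \ref{P: cotan frame} and the resolution \eqref{E: resol Gaq}; (3) prove a vanishing: the higher-degree terms of this complex, after $R\Gamma$ along the fibers of $\pi^?$, cancel --- this is where one must genuinely use that $\Loc^\ta_{{}^cG,F,\iota}$ is classical and reduced (Corollary \ref{C: reduce}), perhaps by a flatness-over-$\bZ[1/p]$ argument combined with the characteristic-zero case of \cite{AG}; (4) deduce Cohen--Macaulayness, i.e. that $R\pi^?_*\mO$ not only lies in the heart but is maximal Cohen--Macaulay, from the self-duality in Proposition \ref{R: Self-duality} (a complex in the heart that is Grothendieck--Serre self-dual on a Gorenstein/lci stack is automatically maximal Cohen--Macaulay). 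I expect step (3) --- the cancellation of the derived/higher-cohomology contributions, uniformly over $\Spec\bZ[1/p]$ including torsion primes dividing $q-1$ --- to be the main obstacle, since it is precisely the phenomenon that makes $\Loc^\ta_{{}^cB,F,\iota}$ non-classical while keeping $\pi^?_*\mO$ in the heart, and it has no counterpart in the characteristic-zero geometric Langlands setting; indeed this is why the statement is phrased as a conjecture rather than a theorem, and a full proof would likely require either new input on the cohomology of line bundles on affine flag varieties mod $\ell$ or the automorphic-side patching input alluded to in the paragraph preceding the conjecture.
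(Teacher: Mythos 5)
This statement is a \emph{conjecture} in the paper, not a theorem; the paper offers no proof, only the motivation in the preceding paragraph (coherent Springer sheaves should match patched modules, which are maximal Cohen--Macaulay) and the consequence recorded in Corollary \ref{C: CM sheaf}. You correctly recognize this at the end of your write-up, and your identification of the genuine obstacle --- that $\Loc^{?}_{{}^cB,F,\iota}$ is in general a non-classical derived scheme (Remark \ref{R: LocB}), so the claim is really that $R\pi^?_*$ ``absorbs'' both the derived structure of the source and its higher direct images, uniformly over $\bZ[1/p]$ including the torsion primes $\ell\mid q-1$ --- is exactly the right diagnosis. Your observation that once membership in the heart is granted, Proposition \ref{R: Self-duality} upgrades it to self-dual maximal Cohen--Macaulay, matches the paper's Corollary \ref{C: CM sheaf}. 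The one place your sketch is muddled is step (2): the $2\rho_{\hat G,\hat M}(\bG_m)$-contraction of \eqref{E: fun on P} acts along the fibers of $r^?:\Loc^?_{{}^cB}\to\Loc^?_{{}^cT}$ and computes $\Gamma(\Loc^?_{{}^cB},\mO)$ from $\Gamma(\Loc^?_{{}^cT},\mO)$; it does not give a factorization of the Springer pushforward $\pi^?_*$ through the torus, whose fibers (flag-variety-like) are not contracted by this $\bG_m$. A workable strategy would more likely go through the spectral Deligne--Lusztig presentation \eqref{E: bigSpDM}--\eqref{E: SpDL} and a Koszul/line-bundle-cohomology computation on $\hat B\bar\tau/\hat B$ over $\hat G\bar\tau/\hat G$, which is in the spirit of what you suggest in step (3) and is indeed where the mod $\ell$ difficulties would surface.
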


\begin{cor}\label{C: CM sheaf}
Assuming Conjecture \ref{C: abel cat}, then $\on{CohSpr}^{?}_{{}^cG,F,\iota}$ is a self-dual maximal Cohen-Macaulay sheaf on $\Loc^{\ta}_{{}^cG,F,\iota}$. In particular, it is finite locally free over the smooth locus of $\Loc^\ta_{{}^cG,F,\iota}$.
\end{cor}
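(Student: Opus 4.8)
The plan is to deduce this corollary formally from Conjecture \ref{C: abel cat} together with the structural facts about $\Loc^{\ta}_{{}^cG,F,\iota}$ already established in \S\ref{S: Loc} and \S\ref{SS: SPI}. The three assertions to prove are: (i) $\on{CohSpr}^{?}_{{}^cG,F,\iota}$ is maximal Cohen-Macaulay; (ii) it is self-dual as such a sheaf; (iii) its restriction to the smooth locus of $\Loc^{\ta}_{{}^cG,F,\iota}$ is finite locally free. Assertion (ii) is immediate from Proposition \ref{R: Self-duality}: we already know $\on{CohSpr}^{?}_{{}^cG,F,\iota}$ is Grothendieck-Serre self-dual, and once Conjecture \ref{C: abel cat} places it in the heart $\Coh(\Loc^{\ta}_{{}^cG,F,\iota})^{\heart}$, self-duality of an honest coherent sheaf on a Gorenstein (indeed, by Proposition \ref{P: tame stack}, trivial-dualizing-complex) stack is precisely the maximal Cohen-Macaulay condition plus compatibility of the duality. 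So the real content is (i), and (iii) will then follow cheaply.

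For (i), the key input is that $\Loc^{\ta}_{{}^cG,F,\iota}$ is, by Proposition \ref{P:flat lcc} applied to the tame components, a local complete intersection flat over $\bZ[1/p]$ with trivial dualizing complex, hence in particular equidimensional, Gorenstein, and Cohen-Macaulay of pure dimension equal to $\dim\hat G$ (as a stack; equivalently, $\dim\hat G + \dim\hat G$ on the framed level before quotienting). The Grothendieck-Serre dual $\bD^{\on{Se}}$ on a Gorenstein stack with trivial dualizing complex is just $\RHom(-,\mO)$ up to a shift by the dimension. Now the general criterion I would invoke: a coherent sheaf $\mF$ on a Cohen-Macaulay (in fact it suffices to be a complex, but here Gorenstein) scheme/stack $X$ of pure dimension $d$ is maximal Cohen-Macaulay if and only if $\RHom_{\mO_X}(\mF,\omega_X)$ is concentrated in a single cohomological degree (the one making it again a sheaf), i.e. iff $\bD^{\on{Se}}\mF \in \Coh(X)^{\heart}$. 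But $\bD^{\on{Se}}\on{CohSpr}^{?}_{{}^cG,F,\iota} \cong \on{CohSpr}^{?}_{{}^cG,F,\iota}$ by Proposition \ref{R: Self-duality}, and the right-hand side lies in the heart by Conjecture \ref{C: abel cat}. Hence $\on{CohSpr}^{?}_{{}^cG,F,\iota}$ is maximal Cohen-Macaulay. I would spell out the local-on-the-smooth-atlas reduction: choose a smooth presentation $\Loc^{\ta,\Box}_{{}^cG,F,\iota}\to \Loc^{\ta}_{{}^cG,F,\iota}$ by the framed space, which is lci by Proposition \ref{P:flat lcc}, pull back $\on{CohSpr}^{?}$, and note both the abelian-category condition and the MCM condition are smooth-local, so the statement on the stack follows from the statement on the scheme $\Loc^{\ta,\Box}_{{}^cG,F,\iota}$, where classical commutative-algebra MCM theory applies verbatim.

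For (iii), over the smooth locus $U \subset \Loc^{\ta}_{{}^cG,F,\iota}$ the stack is a smooth (hence regular) stack, and a maximal Cohen-Macaulay module over a regular local ring is free (Auslander-Buchsbaum: depth plus projective dimension equals dimension, and MCM forces projective dimension zero). Thus $\on{CohSpr}^{?}_{{}^cG,F,\iota}|_{U}$ is finite locally free. One should note, as the authors flag in Remark \ref{R: LocB} and the surrounding discussion, that $\Loc^{\ta}_{{}^cG,F,\iota}$ genuinely need not be smooth and $\Loc^{\ta}_{{}^cB,F,\iota}$ even carries nontrivial derived structure when the rank is large, so the smooth locus can be a proper open substack; the conclusion is only claimed over that open.

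The main obstacle is entirely upstream: everything here is conditional on Conjecture \ref{C: abel cat}, whose proof is not attempted. Given that conjecture, the corollary is a formal consequence and the only genuine work is being careful about (a) the precise normalization of $\bD^{\on{Se}}$ versus $\RHom(-,\mO)$ and the accompanying shift (using that the dualizing complex of $\Loc^{\ta}_{{}^cG,F,\iota}$ is trivial by Proposition \ref{P:flat lcc}), and (b) checking that the duality isomorphism of Proposition \ref{R: Self-duality}, which was produced by replacing $\mO$ by $\omega$ and applying proper pushforward for $\pi^{?}$, is indeed the identification needed to conclude self-duality \emph{within} the abelian category (not merely in the derived category) — i.e. that the heart-valued self-duality is compatible with the one coming from $\bD^{\on{Se}}$. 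This last compatibility is routine but should be stated. I would therefore present the proof as: (1) recall $\Loc^{\ta}_{{}^cG,F,\iota}$ is lci Gorenstein with trivial dualizing complex; (2) invoke the MCM $\Leftrightarrow$ "$\bD^{\on{Se}}$ lands in the heart" criterion on a smooth atlas; (3) apply Proposition \ref{R: Self-duality} and Conjecture \ref{C: abel cat} to conclude MCM and self-duality simultaneously; (4) apply Auslander-Buchsbaum on the regular (smooth) locus for local freeness.
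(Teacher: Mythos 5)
Your argument is correct and is exactly what the paper's one-line proof ("This follows from Proposition \ref{R: Self-duality}.") compresses: combine the derived self-duality of $\on{CohSpr}^{?}$ with the conjectural fact that it lies in the heart, use the standard criterion (valid since $\Loc^{\ta,\Box}_{{}^cG,F,\iota}$ is lci with trivial dualizing complex by Proposition \ref{P:flat lcc}) that a coherent sheaf whose Serre dual is again a sheaf is maximal Cohen-Macaulay, and finish with Auslander--Buchsbaum over the regular locus. The only spot where your wording is slightly loose is the parenthetical claim that ``self-duality of an honest coherent sheaf ... is precisely the maximal Cohen-Macaulay condition'': MCM is implied by $\bD^{\on{Se}}\mF$ landing in the heart, not by self-duality per se; your subsequent reasoning uses the correct implication, so this is only a phrasing issue, not a gap.
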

Note that we regard $\on{CohSpr}^{\un}_{{}^cG,F,\iota}$ as a coherent sheaf on $\Loc^{\ta}_{{}^cG,F,\iota}$.
\begin{proof}
This follows from Proposition \ref{R: Self-duality}.
\end{proof}

\begin{example}
Assume that $G=\PGL_2$ so $\hat{G}=\SL_2$ and ${}^cG=\GL_2$. Then over $\La=\bZ[1/2q(q+1)]$, one can show that 
\[
\on{CohSpr}^{\un}_{{}^cG,F,\iota}\simeq \mO_{{}^{red}\Loc_{{}^cG,F,\iota}^{\un}}\oplus \mO_{\Loc_{{}^cG,F,\iota}^{\ur}}.
\] 
We refer to \cite{EZ} for more details.
\end{example}

\begin{remark}
It will be proved in \cite{HZ} that Conjecture \ref{C: abel cat} holds when $G$ is umramified and $\Lambda$ is a field of characteristic zero.
\end{remark}

We have the following conjecture.\footnote{
Let us comment on the history of this conjecture, to the best of our knowledge. Some form of this conjecture was first studied by Ben-Zvi, Helm, and Nadler a few years ago as a natural continuation of their previous work. Hellmann independently proposed a similar conjecture while exploring $p$-adic automorphic forms and $p$-adic Galois representations (see his article \cite{Hel} for more details). We arrived at these ideas while attempting to generalize the work in \cite{XZ} to the Iwahori level structure (see \S \ref{SS: local-global} for discussion). The emphasis on general coefficients in our formulation reflects our hope to understand the arithmetic implications of level raising and lowering within this framework. It is quite remarkable that different considerations have led to the study of the same object.}
\begin{conjecture}\label{C: REnd of Spr}   
   Let $G$ be quasi-split over $F$ with a pinning, and let $H_{I}$ (resp. $H_{I(1)}$) be the associated Iwahori (resp. pro-$p$ Iwahori) Hecke algebra (see Remark \ref{R: Unicty}).
   Then, there are natural isomorphisms of $\Lambda$-algebras
   $$H_{I}\cong (\End_{\mO_{\Loc^{\ta}_{{}^cG,F,\iota}}}\on{CohSpr}^{\un}_{{}^cG,F,\iota})^{\on{op}},\quad H_{I(1)}\cong (\End_{\mO_{\Loc^{\ta}_{{}^cG,F,\iota}}}\on{CohSpr}^{\ta}_{{}^cG,F,\iota})^{\on{op}},$$ 
   compatible with the isomorphism from Proposition \ref{P:comp iota12}, for different choices of $\iota$.
   In particular, there is a fully faithful embedding
   \[
       \Mod_{H_{I(1)}}\to \on{IndCoh}(\Loc^{\ta}_{{}^cG,F,\iota}),\quad M\mapsto \on{CohSpr}^{\ta}_{{}^cG,F,\iota}\otimes_{H_{I(1)}}M.
   \]
   In addition, the following diagrams should be commutative
   \[\xymatrix{
   H_{T,I}\ar^{\eqref{E: Bernstein subalg}}[r]\ar[d]            &H_I \ar[d]                                                          \\      
   (\End\on{CohSpr}^{\un}_{{}^cT,F,\iota})^{\on{op}}\ar[r] &(\End\on{CohSpr}^{\un}_{{}^cG,F,\iota})^{\on{op}}   
   }
   \]
   \[
   \xymatrix{
   H_{T,I(1)}\ar^{\eqref{E: Bernstein subalg}}[r]\ar[d]      &H_{I(1)} \ar[d] \\
     (\End\on{CohSpr}^{\ta}_{{}^cT,F,\iota})^{\on{op}}\ar[r] &(\End\on{CohSpr}^{\ta}_{{}^cG,F,\iota})^{\on{op}},
   }
   \]
   where bottom maps are induced by the morphism $\Loc_{{}^cB,F,\iota}^\ta\to \Loc_{{}^cT,F,\iota}^\ta$.
\end{conjecture}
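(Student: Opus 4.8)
\textbf{Proof strategy for Conjecture \ref{C: REnd of Spr}.} The plan is to reduce the computation of the derived endomorphism algebra of $\on{CohSpr}$ to a calculation on the Grothendieck–Springer side, and then match it term by term with the (pro-$p$) Iwahori Hecke algebra using the presentation of $\Loc^{\ta}_{{}^cG,F,\iota}$ by the tame group $\Ga_q$ (and its wild extensions) from \S\ref{S: Loc}. First I would unwind
\[
\End_{\mO_{\Loc^{\ta}_{{}^cG,F,\iota}}}\bigl(\pi^{?}_{*}\mO_{\Loc_{{}^cB,F,\iota}^{?}}\bigr)
\cong \Gamma\Bigl(\Loc_{{}^cB,F,\iota}^{?}\times_{\Loc^{\ta}_{{}^cG,F,\iota}}\Loc_{{}^cB,F,\iota}^{?},\ \mO\Bigr),
\]
using proper base change and that $\pi^?$ is proper schematic (Lemma \ref{L: quasism r}); so the target is the ring of functions on a ``spectral Steinberg'' stack. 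Here $\Loc_{{}^cB,F,\iota}^{\un}\times_{\Loc^{\ta}_{{}^cG,F,\iota}}\Loc_{{}^cB,F,\iota}^{\un}=\widetilde{\Loc}_{{}^cG,F,\iota}^{\un}\times_{\hat B\bar\tau/\hat B}\widetilde{\Loc}_{{}^cG,F,\iota}^{\un}$ in the notation of \eqref{E: SpDLunip}, and similarly in the tame case. The $2\rho_{\hat G,\hat M}$-contraction argument already used for \eqref{E: fun on P} shows the fibers of $r^?$ are affine spaces along the unipotent radical direction, so the relevant ring of functions is governed by $\widetilde{\hat G\bar\tau}\times_{\hat G\bar\tau}\widetilde{\hat G\bar\tau}=\St_{\hat G\bar\tau}$, i.e. the classical big Steinberg variety, whose components $\St_{\hat G\bar\tau,w}$ are indexed by $W_0=W^{\bar\tau}$.

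The main technical input I would prove is a ``derived cellular'' or standard-filtration statement: the object $\Gamma(\St_{\hat G\bar\tau}/\hat G,\mO)$, computed as a $\hat G$-equivariant complex after incorporating the $\Loc$-structure (i.e. imposing the relations $\sigma\tau\sigma^{-1}=\tau^q$ and $d\circ\rho=\chi$ and the wild part $Q_L$), concentrates in cohomological degree zero on the $H_{T,I(1)}$-side and acquires higher cohomology only from $C^*$ of finite groups, matching the formula $H_K\cong \bigoplus_g C^*(K\cap gKg^{-1},k)$ for $K=I(1)$. Concretely, I would filter by the Bruhat order on $W_0$, realize each graded piece $\St_{\hat G\bar\tau,w}/\hat G$ (intersected with the $\Loc$-conditions) as an affine fibration over a twisted conjugacy locus, and compute its function ring via \eqref{E: resol Gaq}; the Euler-characteristic bookkeeping there ($H_i(\Ga_q,\Ad^*_\rho)=0$ for $i>2$ with vanishing alternating sum) is what forces the absence of unexpected derived structure. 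On the Hecke side I would use the Bernstein–Lusztig presentation: $H_{I(1)}$ has the commutative subalgebra $H_{T,I(1)}$ from \eqref{E: Bernstein subalg} together with braid/quadratic generators attached to simple reflections, and I would identify $H_{T,I(1)}$ with $\End\on{CohSpr}^{\ta}_{{}^cT,F,\iota}=\Gamma(\Loc^{\ta}_{{}^cT,F,\iota}\times_{\Loc^{\ta}_{{}^cT,F,\iota}}\Loc^{\ta}_{{}^cT,F,\iota},\mO)$ via the torus computation (this is essentially Proposition \ref{P: derSat tori} extended from the unramified/hyperspecial to the Iwahori case, using $\delta_{I(1)}\cong \Ind_{B(F)}^{G(F)}\delta_{T,I(1)}$ and the local Langlands correspondence for tori behind \eqref{E: st cen spl tor}). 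The simple-reflection generators are matched with the classes of the components $S_{s}$ of the Steinberg stack; here the quadratic relation and the parameter $q$ appear through the $\rho(\sigma)=q^{-1}\bar\sigma$ normalization visible in \eqref{E: framed tame parameter}. The finite Hecke subalgebra $H_f\subset H^0H_I$ of \eqref{E: fin Hecke} corresponds, via the diagonal $S_1=\Loc^{\ta}_{{}^cB,F,\iota}$, to functions on $\hat B\bar\tau/\hat B$ pulled back along $\pr$, which is how I would set up the commutativity of the last two squares: the bottom maps are pullback along $\Loc_{{}^cB,F,\iota}^{\ta}\to\Loc_{{}^cT,F,\iota}^{\ta}$, and the top maps are \eqref{E: Bernstein subalg}, so compatibility is the statement that the Bernstein embedding corresponds geometrically to restricting functions on the ``$B$-Steinberg'' to the closed substack cut out by the torus, which is immediate once the two identifications of $H_{T,I(1)}$ are in place.

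Independence of $\iota$ would follow by the same mechanism as Proposition \ref{P:comp iota12}: the isomorphism $\vartheta_{\iota_1,\iota_2}$ acts through the $\bG_m$-scaling on the Weil–Deligne nilpotent direction, which induces the identity on all rings of regular functions and on $\on{CohSpr}$ (since $\on{CohSpr}$ is pushed forward from a substack on which the nilpotent has been set to a tame generator), so the Hecke-algebra isomorphisms are canonically compatible with $\phi_{\iota,\ell}$ and $\phi_{\iota,\bQ}$; one checks this on the level of generators, which is routine. The fully faithful embedding $\Mod_{H_{I(1)}}\to \on{IndCoh}(\Loc^{\ta}_{{}^cG,F,\iota})$ is then formal from the algebra isomorphism together with compactness of $\on{CohSpr}^{\ta}_{{}^cG,F,\iota}$ (it is a bounded coherent complex supported on finitely many components) and the fact that it generates under colimits the image of $\Mod_{H_{I(1)}}$: one invokes Barr–Beck / Schwede–Shipley for the $E_1$-algebra $H_{I(1)}=\End(\on{CohSpr}^{\ta})^{\on{op}}$ acting on $\on{IndCoh}$.

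\textbf{Expected main obstacle.} The hard part will be the degree-zero concentration of $\End_{\mO}\on{CohSpr}^{\ta}$ (equivalently Conjecture \ref{C: abel cat}, which this essentially subsumes): the Steinberg stack $\St_{\hat G\bar\tau}/\hat G$ is itself singular and its function ring, even before imposing the $\Loc$-relations, is delicate to control equivariantly, and over $\bF_\ell$ with $\ell\mid q-1$ the stack $\Loc^{\ta}_{{}^cG,F,\iota}$ genuinely has extra singularities (Lemma \ref{L: nilp=sing}) and $\Loc^{\un}_{{}^cG,F,\iota}$ may carry nontrivial derived structure (Remark \ref{R: LocB}, via the failure of finiteness of $B_n$-orbits), so a naive transversality/Koszulness argument will not suffice in bad characteristic. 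I expect one must instead argue on a case-by-case basis along the Bruhat stratification, using the explicit resolution \eqref{E: resol Gaq} and a careful analysis of which $\St_{\hat G\bar\tau,w}$ meet the derived locus, possibly combined with a deformation-to-the-asymptotic-cone degeneration of $\hat G$ (as alluded to after the excised conjecture on $Z^{\ta}_{{}^cG,F}$) to reduce to a torus-level computation where everything is flat. Matching the precise quadratic relations of the pro-$p$ Iwahori Hecke algebra — whose structure constants involve the characters of the pro-unipotent radical and depend subtly on $q \bmod \ell$ — with the intersection-theoretic classes on the spectral side is the second delicate point, and I would handle it by first establishing the isomorphism rationally (where Lemma \ref{L: gen} identifies everything with Weil–Deligne parameters and one can use the known $\bC$-coefficient statements, e.g. Bezrukavnikov's geometric realization of the affine Hecke algebra), then using flatness over $\bZ[1/p]$ (Proposition \ref{P:flat lcc}) and reducedness (Corollary \ref{C: reduce}) to propagate integrally, treating the bad primes $\ell\mid q$ or $\ell\mid\#W$ by a separate specialization argument.
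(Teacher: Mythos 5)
This statement is a \emph{conjecture} in the paper; the paper contains no proof of it in the generality stated. What the paper does offer is: (a) the torus case, deduced from (the proof of) Proposition \ref{P: derSat tori}; (b) Theorem \ref{T: REnd of Spr}, announced from forthcoming work \cite{HZ}, which proves only the $\on{CohSpr}^{\un}$ isomorphism for $G$ unramified over an equal-characteristic local field with $k=\overline{\bQ}_\ell$; and (c) pointers to partial results of Hellmann \cite{Hel} and of Ben-Zvi--Chen--Helm--Nadler \cite{BCHN}. The route the paper indicates for (b) is not the one you outline: per Theorem \ref{T: tame local Langlands} and the remark following it, the proof in \cite{HZ} proceeds by taking the Frobenius-twisted categorical trace of Bezrukavnikov's monoidal equivalence between the two affine Hecke categories \cite{Be}, which then formally identifies $\delta_I$ on the automorphic side with $\on{CohSpr}^{\un}$ on the spectral side. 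Your proposal is a direct, hands-on computation of $\End(\pi^{?}_*\mO)$ via a Steinberg/Bruhat stratification. That is a genuinely different strategy, and it is worth saying what each buys: the categorical-trace argument is clean and leverages a deep existing equivalence, but it is hostage to the current scope of Bezrukavnikov's theorem (equal characteristic, $\overline{\bQ}_\ell$, reductive $\mG$), whereas a direct Steinberg computation, if it could be carried out integrally, would in principle reach the full conjecture. However, your proposal has several gaps that would need to be addressed before it becomes a proof.

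First, the opening identification
$\End_{\mO}(\pi^{?}_*\mO_{\Loc^{?}_{{}^cB}}) \cong \Gamma(\Loc^{?}_{{}^cB}\times_{\Loc^{\ta}_{{}^cG}}\Loc^{?}_{{}^cB},\mO)$
is not a consequence of proper base change alone. Adjunction and base change give $\End(\pi_*\mO_X)\cong \Hom(\mO_{X\times_Y X},\pr_1^!\mO_X)=\Gamma(X\times_Y X,\omega_{X\times_Y X/X})$, so one must carefully show that the relative dualizing complex of $\pr_1$ (equivalently, of $\pi^?$) is trivial, not merely that $\omega_{\Loc^{?}_{{}^cB}}$ and $\omega_{\Loc^{\ta}_{{}^cG}}$ are each trivial (Proposition \ref{R: Self-duality} only records absolute triviality). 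You do implicitly use self-duality, but the reduction to a global sections computation on a ``spectral Steinberg'' stack needs to be stated with the correct dualizing twist, and the claim that this stack is identified with $\widetilde{\Loc}^{\un}\times_{\hat B\bar\tau/\hat B}\widetilde{\Loc}^{\un}$ is not checked (the $\Loc$-relations $\sigma\tau\sigma^{-1}=\tau^q$, $d\circ\rho=\chi$, and the wild $Q_L$-factors do not disappear after the $2\rho_{\hat G,\hat M}$-contraction — that contraction only controls the ring of regular functions on $\Loc_{{}^cP}$ relative to $\Loc_{{}^cM}$, per \eqref{E: fun on P}, not the full geometry of the fiber product).

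Second, your framing of the ``hard part'' as degree-zero concentration of $\End\on{CohSpr}^{?}$ is only half right. For $?=\ta$, yes: $H_{I(1)}$ is classical because $I(1)$ is $k$-admissible, so one must show $\End\on{CohSpr}^{\ta}$ is classical. But for $?=\un$, $H_I$ is the \emph{derived} Hecke algebra and genuinely has higher cohomology when $\ell$ divides $\sharp W_0$; the task there is not to kill higher cohomology but to match derived structures on both sides. You acknowledge this parenthetically (``acquires higher cohomology only from $C^*$ of finite groups'') but do not say how to produce a multiplicative identification between that group cohomology on the Hecke side and the derived structure of the spectral Steinberg ring. Also, Conjecture \ref{C: abel cat} is about $\on{CohSpr}^{?}$ lying in the heart, not about its endomorphism algebra concentrating in degree zero; these are related but not equivalent, since a Cohen--Macaulay sheaf on a singular Gorenstein stack can have higher self-Exts.

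Third, the proposed propagation from $\overline{\bQ}_\ell$ to $\bZ[1/p]$ ``by flatness (Proposition \ref{P:flat lcc}) and reducedness (Corollary \ref{C: reduce})'' will not work as stated. Flatness of $\Loc^{\Box}_{{}^cG,F,\iota}$ over $\bZ[1/p]$ does not imply flatness of $\End\on{CohSpr}^{?}$ as a $\bZ[1/p]$-module, and even if it did, an isomorphism of the generic fibers of two $\bZ[1/p]$-flat algebras with a fixed map between them does not propagate integrally without a separate argument (e.g. that the map is already surjective mod $\ell$ for all $\ell$). The derived structure on $\Loc^{\un}_{{}^cG,F,\iota}$ and the extra components of $\on{Sing}(\Loc^{\ta}_{{}^cG,F,\iota})$ over $\bF_\ell$ with $\ell\mid q-1$ (Lemma \ref{L: nilp=sing}, Remark \ref{R: LocB}) mean the modular fibers are not merely a deformation of the rational picture; the specialization argument you gesture at is exactly the open content of the conjecture at these primes.

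Finally, a small but telling point: your appeal to a torus computation behind an equation labeled as ``\eqref{E: Bernstein subalg}'' and a deformation-to-the-asymptotic-cone idea refer in part to material that is not actually present in this paper. The torus case \emph{is} established here (Proposition \ref{P: derSat tori}), and the compatibility with the Bernstein embedding \eqref{E: Bernstein subalg} is precisely what the last part of Conjecture \ref{C: REnd of Spr} asserts — so it cannot be assumed as input. Your Barr--Beck argument for the fully faithful embedding is fine once the algebra isomorphism is in hand, but that is the formal part.

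In summary: you have the right objects and the right expectation of where the difficulties lie (bad $\ell$, derived structure, the pro-$p$ quadratic relations), but the proposal as written would not constitute a proof, and it departs from the categorical-trace method that the paper actually invokes for the case it announces a proof of.
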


Note that in the conjecture, when computing the endomorphisms, $ \on{CohSpr}^{\un}_{{}^cG,F,\iota}$ is still considered as a coherent sheaf on  $\Loc^{\ta}_{{}^cG,F,\iota}$, similar to the unramified case as in Conjecture \ref{C: derSat}.

\begin{remark}
The conjecture in particular implies that there should exist a natural morphism
\begin{equation}\label{E: stable center pro-p-Iw}
  Z_{{}^cG,F}^{\ta}:=H^0\Gamma(\Loc_{G,F,\iota}^{\ta},\mO)\to Z(H_{I(1)}),
\end{equation}
where $Z(H_{I(1)})$ is the center of $H_{I(1)}$, which should fit into the following commutative diagram
\begin{equation}\label{E: center tame}
\xymatrix{
Z_{{}^cG,F}^{\ta}\ar[r]\ar[d]& Z(H_{I(1)})\ar^-\cong[d]\\
(Z_{{}^cT,F}^{\ta})^{W_{\on{rel}}}\ar^-\cong[r]& (H_{T, I(1)})^{W_{\on{rel}}}.
}
\end{equation}
Here $T$ denotes the abstract Cartan of $G$ (e.g. see \cite[1.4]{Z20} for the meaning), and $W_{\on{rel}}$ is the relative Weyl group of $G$. The left vertical map is from \eqref{E: st center levi}. (Note that $W_{\on{rel}}\cong W_{{}^cG,{}^cT}$.) The right vertical isomorphism comes from \cite[5.1]{Vi}, and the bottom isomorphism is induced by Conjecture \ref{C: REnd of Spr} for tamely ramified tori (in this case $\on{CohSpr}^\ta_{{}^cT,F,\iota}\cong \mO_{\Loc^{\ta}_{{}^cT,F,\iota}}$). 
\end{remark}

We mention that proof of Proposition \ref{P: derSat tori} already verifies the conjecture for unramified tori. In addition,
in a forthcoming work with Hemo (\cite{HZ}), we will prove the following result. 
\begin{theorem}\label{T: REnd of Spr}
Let $\La=\overline\bQ_\ell$. 
Assume that $G$ is unramified with a pinning $(B,T,e)$ and let $(U,\psi)$ and $I\subset K$ be associated to $(B,T,e)$ as in Proposition \ref{P: Whit to pairs}.
Then there is a natural isomorphism
\begin{equation}\label{E: endo of CohSprunip}
H_{I}\cong \End_{\mO_{\Loc^{\ta}_{{}^cG,F}}}\on{CohSpr}_{{}^cG,F}^{\un}
\end{equation}
inducing a fully faithful embedding
\[
\Mod_{H_I}\to \on{IndCoh}(\Loc_{{}^cG,F}^{\ta}),\quad M\mapsto \on{CohSpr}_{{}^cG,F}^{\un}\otimes_{H_I}M.
\]
This functor sends
\begin{itemize}   
 \item the antispherical module $M_{\on{asp}}$  of $H_I$ (see Proposition \ref{P: Whit to pairs}) to  $\mO_{\Loc_{{}^cG,F}^{\widehat\un}}$.
 \item ${}_IH_K$ to $\mO_{\Loc_{{}^cG,F}^{\ur}}$. In particular, Conjecture \ref{C: derSat} holds when $\La=\overline\bQ_\ell$.
\end{itemize}
\end{theorem}

The theorem in fact follows from Theorem \ref{T: tame local Langlands} stated below.
We remark that Hellmann has obtained partial results in this direction (see \cite{Hel}). In addition,
Ben-Zvi-Chen-Helm-Nadler also proved the isomorphism \eqref{E: endo of CohSprunip} when the group $G$ is split (\cite{BCHN}).

We end up this subsection by discussing the relation between $\on{CohSpr}^{\ta}_{{}^cG,F,\iota}$ and $\on{CohSpr}^{\un}_{{}^cG,F,\iota}$ when $G$ is unramified.
First in this case as we just mentioned, by (the proof of) Proposition \ref{P: derSat tori}, the group algebra $\La T(\kappa_F)\subset H_{T,I(1)}$ acts on $\on{CohSpr}^{\ta}_{{}^cG,F,\iota}$.
\begin{lemma}
There is a natural isomorphism $\on{CohSpr}^{\ta}_{{}^cG,F,\iota}\otimes_{\La T(\kappa_F)} \La\cong \on{CohSpr}^{\un}_{{}^cG,F,\iota}$,
where $\La T(\kappa_F)\to \La$ is the augmentation map.
\end{lemma}
\begin{proof} By (the proof of) Proposition \ref{P: derSat tori}, the right square in the following diagram is Cartesian
\[
\xymatrix{
\Loc_{{}^cB,F,\iota}^{\un}\ar[r]\ar[d] & \Loc_{{}^cT,F,\iota}^{\ur}\ar[r] \ar[d] & \{1\}\ar[d]\\
\Loc_{{}^cB,F,\iota}^{\ta}\ar[r]  & \Loc_{{}^cT,F,\iota}^{\ta}\ar[r] &({}^{cl}\mR_{\kappa_n^\times, \hat{T}})^\sigma.
}\]
The left square is also Cartesian by the definition (see \eqref{E: unip locsys B}). So 
\[
\mO_{\Loc_{{}^cB,F,\iota}^{\un}}=\mO_{\Loc_{{}^cB,F,\iota}^{\ta}}\otimes_{\Lambda[({}^{cl}\mR_{\kappa_n^\times, \hat{T}})^\sigma]}\La=\mO_{\Loc_{{}^cB,F,\iota}^{\ta}}\otimes_{\La T(\kappa_F)}\La.
\]
As the push-forward along $\pi^{\ta}$ commutes with colimits, the lemma follows.
\end{proof}

\subsection{Conjectural coherent sheaves}\label{SS: coh sheaf}
With the conjectures in the previous two subsections in mind, it is natural to go one step further to conjecture that for every open compact subgroup $K\subset G(F)$, there is a coherent sheaf $\frakA_{G,K}$ on $\Loc_{{}^cG,F,\iota}$, whose (opposite) endomorphism algebra $\End \frakA_{G,K}$ in $\Coh(\Loc_{{}^cG,F,\iota})$ in $H_K$. The goal of this subsection is to formulate the conjecture precisely.\footnote{When $G$ is split, a closely related conjecture also appeared in \cite{Hel}.} We fix once for all an additive character $\psi_0: F\to \La^\times$ with conductor $\mO_F$. (See Remark \ref{R: add char} for the discussion of the dependence on this choice.) All stacks are base changed to $\Lambda$.

Recall our convention of the category of coherent sheaves on $\Loc_{{}^cG,F,\iota}$ in Remark \ref{R: Coh cat}. Recall the decomposition of this category \eqref{E: gerbe dec}.
It is acted by $\Tor_{Z_G,\iso_F}$ via \eqref{E: BZ action}, and therefore each direct summand is acted by $\Tor_{Z_G}^0\subset \Tor_{Z_G,\iso_F}$.   
On the other hand, $\Tor_{Z_G}^0$ also acts on $\Rep(G(F),\Lambda)$ as in \eqref{E: sym Rep}. Recall the $\Tor_{Z_G}^0$-torsor $\Wh_G$ if $G$ is quasi-split and the $\Tor_{Z_G,\iso_F}$-torsor $\TS_G$ for general $G$ from \S \ref{S: Aux}. 
In addition, recall that if $\widetilde F/F$ is tame, we have the spectral Deligne-Lusztig stacks \eqref{E: SpDL} and \eqref{E: SpDLunip}. We will also use the following notation. 

\begin{notation}\label{N: line bundle}
Note that every weight $\la\in\xch(\hat{T}^{\bar\tau})$ gives a line bundle on $\hat{T}\bar\tau/\hat{T}$, and therefore a line bundle on
$\widetilde{\Loc}_{{}^cG,F,\iota}^{\ta}$ by pullback along 
$\widetilde{\Loc}_{{}^cG,F,\iota}^{\ta}\xrightarrow{\pr}\hat{B}\bar\tau/\hat{B}\to \hat{T}\bar\tau/\hat{T}$.
We denote this line bundle by $\mO(\la)$. If $\mF$ is a (complex of) coherent sheaf on $\widetilde{\Loc}_{{}^cG,F,\iota}^{\ta}$, we write $\mF(\la)$ for $\mF\otimes\mO(\la)$ for simplicity.
\end{notation}

\begin{conjecture}\label{Conj: coh sheaf}
We fix $t\in \TS_G$, and let $\beta\in \xch(Z_{\hat{G}}^{\Ga_F})$ be the element determined by $t$.
\begin{enumerate}
\item\label{Conj: coh sheaf-1} 
There is a $\Tor_{Z_G}^0$-equivariant fully faithful embedding 
$$\frakA_G: \Rep_{\on{f.g.}}(G(F),\Lambda)\to \Coh^{-\beta}(\Loc_{{}^cG,F,\iota}),$$
compatible with the isomorphism in Proposition \ref{P:comp iota12} for different choices of $\iota$. 
There should be a natural isomorphism of functors
\[
 \frakA_G\circ \bD^{\on{coh}}\cong {}'\bD^{\on{Se}}\circ \frakA_G: \Rep_{\on{f.g.}}(G(F),\Lambda)\to \Coh^{-\beta}(\Loc_{{}^cG,F,\iota}),
\]
where $\bD^{\on{coh}}$ is from \eqref{E: hom duality} and ${}'\bD^{\on{Se}}$ is from \eqref{E: Cartan inv}.

\item\label{Conj: coh sheaf-2} 
The induced colimit preserving functor $\Ind\Rep(G(F),\Lambda)\to \Ind\Coh(\Loc_{{}^cG,F,\iota})$ is still denoted by $\frakA_G$. If $\beta=0$ (so in particular $G$ is quasi-split with a pinning), then 
$$\frakA_{G} (\on{Whit}_{U,\psi})\cong \mO_{\Loc_{{}^cG,F,\iota}},$$ 
where $\on{Whit}_{U,\psi}$ is the Whittaker module determined by the pinning (see \eqref{E: pin to whit}).

For every open compact subgroup $K$ of $G(F)$, let $\frakA_{G,K}:=\frakA_G(\delta_K)$. 
Then $\frakA_{G,K}$ should belong to $ \Coh(\Loc_{{}^cG,F,\iota})^\heart$.
Let
$$\frakA_{G,\{1\}}:=\frakA_G(\delta_{\{1\}})\simeq \frakA_G(\varinjlim_K\delta_K)=\varinjlim_K \frakA_{G,K}.$$
Then it is an ordinary ind-coherent sheaf on $\Loc_{{}^cG,F,\iota}$, equipped with an action of $G(F)$ (as $\delta_{\{1\}}$ is a $G(F)\times G(F)$-representation via the left and right regular representation).  
Then the restriction of $\frakA_{G,\{1\}}$ to each connected component $D$ of $\Loc_{{}^cG,F,\iota}$ should be finitely generated over $\mO_D[G(F)]$.

\item\label{Conj: coh sheaf-3} Assume that $G$ splits over a tamely ramified extension $\widetilde F/F$. 
Let $\gamma$ be a lifting of $-\beta$ to $\xch(Z_{\hat{G}}^{I_F})$, and write $\gamma=w_{\ga}\la_{\ga}$ as in \eqref{E: decomofbeta}. Let $I_\ga$ (resp. $I_\ga(1)$) be the corresponding Iwahori (resp. pro-$p$ Iwahori) subgroup. Then
\[
\frakA_G(\delta_{I_\ga(1)})\simeq \widetilde\pi_*\mO_{\Loc_{{}^cG,F,\iota}^{\ta,w_{\ga}}}(\la_{\ga}), \quad \frakA_G(\delta_{I_\ga})\simeq \widetilde\pi_*\mO_{\Loc_{{}^cG,F,\iota}^{\un,w_{\ga}}}(\la_{\ga}).
\]
If $G=G^*$ is unramified and $K$ is the hyperspecial subgroup determined by $t$, then 
$$\frakA_{G,K}\simeq \mO_{\Loc_{G,F}^{\ur}}.$$

\item\label{Conj: coh sheaf-1/2}  Let $P\subset G$ be a rational parabolic subgroup and $M$ its Levi quotient.
The functor $\frakA_M$ and $\frakA_G$ should also be compatible with parabolic induction in the representation side and spectral parabolic induction from Proposition \ref{P: spec Eis}.
\end{enumerate}
\end{conjecture}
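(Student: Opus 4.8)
\emph{Proof strategy.} The plan is to build $\frakA_G$ one Bernstein block at a time, matching the Bernstein decomposition of $\Rep^{\on{ren}}(G(F),k)$ against the inertial-type decomposition \eqref{E: dec cont} of $\Loc_{{}^cG,F,\iota}$, and to take part \eqref{Conj: coh sheaf-3} as the \emph{definition} of $\frakA_G$ on the depth-zero (tame) blocks. First I would settle the tame case over $k=\overline\bQ_\ell$, where Theorem \ref{T: REnd of Spr} supplies the ring isomorphism $H_I\cong(\End_{\mO_{\Loc^{\un}_{{}^cG,F}}}\on{CohSpr}^{\un}_{{}^cG,F})^{\on{op}}$ and the fully faithful embedding $\Mod_{H_I}\hookrightarrow\on{IndCoh}(\Loc^{\un}_{{}^cG,F})$ sending $M_{\on{asp}}\mapsto\mO_{\Loc^{\un}}$ and ${}_IH_K\mapsto\mO_{\Loc^{\ur}}$ (and for split $G$ with simply connected derived group this is also due to \cite{BCHN}). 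The pro-$p$ refinement $H_{I(1)}\cong(\End\on{CohSpr}^{\ta}_{{}^cG,F,\iota})^{\on{op}}$ would be deduced from the Iwahori statement by a Clifford-theoretic equivariance argument for the $kT(\kappa_F)$-action, using the identification $\on{CohSpr}^{\ta}_{{}^cG,F,\iota}\otimes_{kT(\kappa_F)}k\cong\on{CohSpr}^{\un}_{{}^cG,F,\iota}$. Declaring $\frakA_G$ on the subcategory generated by $\delta_{I(1)}$ to be $\on{CohSpr}^{\ta}_{{}^cG,F,\iota}\otimes_{H_{I(1)}}(-)$, I would then extend to all $\delta_{I_\ga(1)}$ and $\delta_{I_\ga}$ by twisting with $\la_\ga$ and restricting to the Steinberg component $w_\ga$ as in \eqref{E: decomofbeta}, \eqref{E: bigSpDM}--\eqref{E: SpDLunip}, and to general $\delta_K$ with $K\supset I_\ga(1)$ via the bimodules ${}_{K_1}H_{K_2}$ and the map \eqref{E: Hk bimod} of \S 4.1. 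The normalization $\frakA_G(\on{Whit}_{U,\psi})\cong\mO_{\Loc_{{}^cG,F,\iota}}$ would be read off from the $w=1$ identification $\widetilde{\Loc}_{{}^cG,F,\iota}^{\un,1}\cong\Loc_{{}^cB,F,\iota}^{\un}$ and the Casselman--Shalika and antispherical statements of Proposition \ref{P: Whit to pairs}; that $\frakA_{G,K}$ lies in $\Coh(\Loc_{{}^cG,F,\iota})^\heart$ would follow from Conjecture \ref{C: abel cat}, hence Corollary \ref{C: CM sheaf}, applied to the pushforwards $\widetilde\pi_*\mO_{\Loc^{\ta,w}_{{}^cG,F,\iota}}(\la)$.

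To pass from depth zero to arbitrary depth I would invoke Bushnell--Kutzko type theory: each block of $\Rep(G(F),k)$ is equivalent to modules over an affine-Hecke-type algebra which, after the standard unipotent reduction, is a (pro-$p$) Iwahori--Hecke algebra of a smaller reductive group $G'$ over a possibly larger residue field; dually, the connected components of $\Loc_{{}^cG,F,\iota}$ carrying the matching inertial type $\Theta$ should, after an appropriate \'etale cover, be identified with a tame- (or unramified-) parameter stack for $\hat{G'}$. Making this ``spectral type theory'' precise and uniform --- in particular showing the two Hecke algebras agree on the nose, including all derived structure --- is one of the principal obstacles. Granting it, $\frakA_G$ on each block is transported from the depth-zero case of $G'$, and the remaining global assertions of parts \eqref{Conj: coh sheaf-1} and \eqref{Conj: coh sheaf-2} --- $\Tor_{Z_G}^0$-equivariance relating \eqref{E: sym Rep} to \eqref{E: BZ action}, independence of $\iota$ via Proposition \ref{P:comp iota12}, the finite generation of $\frakA_{G,\{1\}}=\varinjlim_K\frakA_{G,K}$ over $\mO_D[G(F)]$ on each component $D$, and the fixed-central-character refinement via the gerbe decomposition \eqref{E: gerbe dec} --- reduce to their depth-zero incarnations.

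The duality compatibility $\frakA_G\circ\bD^{\on{coh}}\cong{}'\bD^{\on{Se}}\circ\frakA_G$ would be checked on the generators $\delta_K$: the functor $\bD^{\on{coh}}$ of \eqref{E: hom duality} carries $\delta_K$ to a twist of itself, while on the spectral side Grothendieck--Serre duality acts on $\on{CohSpr}$ by Proposition \ref{R: Self-duality}, and precomposing with the Cartan involution $\tau$ of \eqref{E: Cartan inv} precisely accounts for the flip $w\mapsto w_0ww_0$, $\la\mapsto\la^*$, on Steinberg components and on the line bundles $\mO(\la)$. Part \eqref{Conj: coh sheaf-1/2} would follow by comparing, for a rational parabolic $P\subset G$ with Levi $M$, the parabolic induction functor on the representation side with the spectral parabolic induction $\pi_*r^!$ of Proposition \ref{P: spec Eis}, both being governed by the diagram \eqref{E:LocMtoG} and by the contraction of ${}^cP$ onto ${}^cM$ that underlies the isomorphism \eqref{E: fun on P}.

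The two hardest points, beyond spectral type theory, are integral coefficients and genuine full faithfulness. Over a general noetherian $\bZ[1/p]$-algebra $k$ both $\delta_K$ (through the higher cohomology of $H_K$) and $\Loc_{{}^cG,F,\iota}$ (Proposition \ref{P:flat lcc}, Remark \ref{R: LocB}) carry essential derived structure, so the characteristic-zero global-dimension arguments must be replaced by a direct comparison of the $E_1$-algebras $\End(\delta_K)\cong H_K^{\on{op}}$ and $\End(\frakA_{G,K})$; the Vinberg-monoid integral Satake isomorphism of \cite{Z20} indicates the expected shape, but a uniform such comparison is missing. And proving full faithfulness on all of $\Rep_{\on{f.g.}}(G(F),k)$, rather than on a fixed generating set, appears to require knowing that $\on{IndCoh}(\Loc_{{}^cG,F,\iota})$ computes the correct mapping spaces, which one expects only as a consequence of the categorical local Langlands conjecture \ref{Conj: cat LLC}; thus a self-contained proof would have to establish enough of that categorical equivalence, or else match the relevant $\Ext$-algebras block by block. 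I expect this combination --- spectral type theory together with the integral derived comparison of Hecke and coherent $\Ext$-algebras --- to be the real bottleneck.
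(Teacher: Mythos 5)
The statement you are addressing is a conjecture, not a theorem, and the paper does not prove it; it only discusses evidence and special cases. So there is no ``paper's own proof'' to compare against, and what you have written is appropriately a research program rather than an argument. That said, the program you sketch is close in spirit to what the paper itself hints at, and it is worth recording how it lines up.

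Your bottom-up, block-by-block strategy --- take part \eqref{Conj: coh sheaf-3} as the definition on depth-zero blocks via $\on{CohSpr}^{\ta}_{{}^cG,F,\iota}$ and Theorem~\ref{T: REnd of Spr}, then propagate to general depth by Bushnell--Kutzko type theory matched against a hoped-for ``spectral type theory'' on components of $\Loc_{{}^cG,F,\iota}$ --- is essentially the route flagged in the remarks immediately following the conjecture, where the paper notes that certain components of $\Loc_{{}^cG,F,\iota}$ ``look like'' tame parameter stacks of auxiliary groups (cf.\ the proof of Proposition~\ref{P: tame and wild stack}) and points to \cite{BCHN} for a $\GL_n$ version of this idea. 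The paper's own \emph{preferred} conceptual framework, however, is top-down: the categorical equivalence $\bL_G$ of Conjecture~\ref{Conj: cat LLC} (together with the renormalized version $\bL_G^{\on{ren}}$) would produce $\frakA_G$ as the restriction of $\bL_G$ to the subcategory $i_!\Rep_{\on{f.g.}}(G(F),k)\subset\Shv(\frakB(G),k)$, and full faithfulness and the duality/parabolic-induction compatibilities would then follow from standard compatibilities of $\bL_G$, not be checked by hand. You acknowledge this at the end, but it changes the status of full faithfulness: in the top-down approach it is automatic, whereas in your bottom-up approach it is, as you correctly say, the real bottleneck, essentially equivalent to establishing enough of Conjecture~\ref{Conj: cat LLC}.

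Three places where you should be more guarded. First, the Clifford-theoretic passage from the Iwahori isomorphism to the pro-$p$ Iwahori one (from $H_I$ to $H_{I(1)}$) using $\on{CohSpr}^{\ta}\otimes_{kT(\kappa_F)}k\cong\on{CohSpr}^{\un}$ is only a base-change statement; upgrading it to an identification of the full $H_{I(1)}$ with the endomorphism algebra of $\on{CohSpr}^{\ta}$ requires controlling the $kT(\kappa_F)$-equivariant structure on both sides, and in the paper that identification is stated as part of the open Conjecture~\ref{C: REnd of Spr}, not deduced. Second, extending from (pro-$p$) Iwahori to arbitrary $K$ via the bimodules ${}_{K_1}H_{K_2}$ and \eqref{E: Hk bimod} is genuinely problematic over $\bZ_\ell$ precisely because \eqref{E: Hk bimod} fails to be an isomorphism when $K_2$ is not $k$-admissible (the paper's $\SL_2$, $\ell\mid p+1$ example), so this step does not reduce cleanly to the Iwahori block even in depth zero. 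Third, the $\Coh(\cdot)^\heart$ claim for $\frakA_{G,K}$ in part \eqref{Conj: coh sheaf-2} rests on Conjecture~\ref{C: abel cat}, which is itself open; Corollary~\ref{C: CM sheaf} is only conditional on it. None of this is a ``wrong approach'' --- it matches the paper's own stance --- but a reader should not come away thinking these steps are available.
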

We will discuss how the functor $\frakA_G$ depends on the choice of $t\in \TS_G$ below. 
But let us first make Part \eqref{Conj: coh sheaf-3} of the conjecture more explicit in some cases.
\begin{example}\label{E: cohspr as special}
Assume that $G=G^*$ and is tamely ramified and $\beta=0$.
We take $\ga=0\in \xch(Z_{\hat{G}}^{I_F})$ so $\la_{\ga}=0$ and $w_{\ga}=1$. In this case Part \eqref{Conj: coh sheaf-3} of the conjecture says that
\[
\frakA_{G,I(1)}\simeq \on{CohSpr}^\ta_{{}^cG,F,\iota},\quad \frakA_{G,I}\simeq \on{CohSpr}^\un_{{}^cG,F,\iota},
\]
which is consistent with Conjecture \ref{C: REnd of Spr}. In addition, the expected commutative diagrams in Conjecture \ref{C: REnd of Spr} are also consistent with Part \eqref{Conj: coh sheaf-1/2}. 
\end{example}
\begin{example}
Let $G=D^\times/F^\times$, where $D$ is a degree $n$ central division algebra over $F$ of invariant $1/n$. Then $G$ is an inner form of $\PGL_n$ so $\hat{G}=\SL_n$. Note that 
$$\ga=-\beta=-\alpha=1/n\in \xch(Z_{\hat{G}})\cong \bZ/n.$$
Let $w=(12\cdots n)\in W=S_n$ be the cyclic permutation.
Let $\omega_i: \hat{T}\to \bG_m$ be the $i$th fundamental weight $\hat{T}$. 
Then
\[
\frakA_{G,I(1)}\simeq\widetilde\pi_* \mO_{\widetilde{\Loc}_{{}^cG,F,\iota}^{\ta,w}}(\omega_1),\quad \frakA_{G,I}\simeq\widetilde\pi_* \mO_{\widetilde{\Loc}_{{}^cG,F,\iota}^{\un,w}}(\omega_1).
\]
One can show that when $D$ is a quaternion algebra over $F$ and $\La=\bZ_\ell$ with $\ell>2$ and $\ell\mid q-1$, the completion of $\frakA_{G,I}$ at the point of $\Loc_{{}^cG,F,\iota}^\ta$ given by the trivial representation coincides with a module over the local deformation ring studied by Manning \cite{Man}. We refer to \cite{EZ} for more discussions.
\end{example}

\begin{remark}\label{R: CM self dual}
Part \eqref{Conj: coh sheaf-1} and \eqref{Conj: coh sheaf-2} of the conjecture would imply that $\frakA_{G,K}$ is a maximal Cohen-Macaulay sheaf. If $\beta=0$ (so $G$ is quasi-split), we further conjecture that it is self-dual with respect to the usual (a.k.a. non-modified) Grothendieck-Serre duality.
See Corollary \ref{C: CM sheaf} for the case of coherent Springer sheaves.
\end{remark}

\begin{remark}\label{R: EH sheaf}
We let $\La=W(\overline\bF_\ell)$.
When $G=\GL_n$, the sheaf $\frakA_{\GL_n,\{1\}}$ should be isomorphic to the Emerton-Helm sheaf $\frakA_{\on{EH}}$ interpolating local Langlands correspondence for $\GL_n$ in families (see \cite{EH, He16, HM1, HM, Hel} for the constructions and in particular \cite{Hel} for a discussion of this point).
On the other hand, inspired by a conjecture of Braverman-Finkelberg in the geometric Langlands (\cite{BF}), we have the following conjectural description of $\frakA_{\GL_n,\{1\}}$. Consider the derived stack $\mW_n$ classifying chains $\{V_1\to V_2\to\cdots\to V_n\}$, where $V_i$ is an $i$-dimensional representation of $W_{F}$ (i.e. $V_i\in \Loc_{{}^c\GL_i,F}$). There is a natural morphism $\pi: \mW_n\to \Loc_{{}^c\GL_n,F}$ by only remembering $V_n$. Then the arithmetic analogue of Braverman-Finkelberg's conjecture is
\[
\frakA_{\GL_n,\{1\}}\cong \frakA_{\on{BF}}:=\pi_*\omega_{\mW_n}.
\]
Combining these two conjectural descriptions of $\frakA_{\GL_n,\{1\}}$, we arrive at the following conjecture.
\end{remark}

\begin{conjecture}\label{C: EH=BF}
There is a natural isomorphism between $\frakA_{\on{EH}}$ and $\frakA_{\on{BF}}$ as quasi-coherent sheaves on $\Loc_{{}^c\GL_n,F}$.
\end{conjecture}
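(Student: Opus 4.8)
\textbf{Proof proposal for Conjecture \ref{C: EH=BF}.}

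The plan is to compare the two candidate sheaves $\frakA_{\on{EH}}$ and $\frakA_{\on{BF}}=\pi_*\omega_{\mW_n}$ on $\Loc_{{}^c\GL_n,F}$ by pinning down each of them via a universal property and then matching the universal properties. First I would set up the chain stack $\mW_n$ carefully as a derived stack, checking that the forgetful maps $\mW_{i}\to \mW_{i-1}$ (remembering $\{V_1\to\cdots\to V_{i-1}\}$) and the maps $p_i:\mW_n\to\Loc_{{}^c\GL_i,F}$ are quasi-smooth and schematic of finite type; this follows by an iterated fiber-product description of $\mW_n$ over the $\Loc_{{}^c\GL_i,F}$ together with the quasi-smoothness statements of \S\ref{SS: glob par}/\S\ref{S: Loc} (the relevant fibers being affine spaces of extensions $\Ext^\bullet_{W_F}(V_{i-1},-)$). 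With $\pi:\mW_n\to\Loc_{{}^c\GL_n,F}$ proper and schematic, $\pi_*\omega_{\mW_n}$ lies in $\QCoh$, and by Grothendieck--Serre duality along $\pi$ it is the Serre dual of $\pi_*\mO_{\mW_n}$; combined with the triviality of the dualizing complex of $\mW_n$ (again from \S\ref{SS: glob par}, the relevant groups being of the shape controlled by \eqref{E: resol Gaq} and its global analogue) this gives a clean handle on $\frakA_{\on{BF}}$.

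Next I would produce the comparison map. The natural route is through the Whittaker normalization: by Conjecture \ref{Conj: coh sheaf}\eqref{Conj: coh sheaf-2}, $\frakA_{\GL_n}(\on{Whit})\cong\mO_{\Loc_{{}^c\GL_n,F}}$, and on the $\on{BF}$-side the ``full-flag'' open substack of $\mW_n$ where all the maps $V_i\to V_{i+1}$ are injective with one-dimensional cokernel maps isomorphically to a space of ``spectral Whittaker data''; restricting $\omega_{\mW_n}$ there and pushing forward should recover $\mO_{\Loc_{{}^c\GL_n,F}}$ as well. So both sheaves are equipped with a canonical section after tensoring with $\frakA_{\GL_n}$ applied to the Whittaker module, and the plan is to build the map $\frakA_{\on{EH}}\to\frakA_{\on{BF}}$ (or its inverse) by matching these Whittaker vectors, using the Bernstein--Zelevinsky filtration of $\delta_{\{1\}}=C^\infty_c(\GL_n(F),k)$ by successive parabolic-induction pieces of mirabolic type against the filtration of $\mW_n$ by the loci where the ranks of the maps $V_i\to V_{i+1}$ drop. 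Concretely one would induct on $n$: on $\mW_{n-1}$ assume the isomorphism, then analyze the single step $\mW_n\to\mW_{n-1}$ (a relative space of extensions of the universal $(n-1)$-dimensional representation by a character) and compare it with the $\GL_{n-1}\subset P_n\subset\GL_n$ parabolic/mirabolic induction step defining $\frakA_{\on{EH}}$ via Emerton--Helm's interpolation, using Conjecture \ref{Conj: coh sheaf}\eqref{Conj: coh sheaf-1/2} (compatibility with parabolic induction and spectral parabolic induction, Proposition \ref{P: spec Eis}).

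The main obstacle, I expect, is twofold. First, Conjecture \ref{C: EH=BF} is stated as a consequence of the (conjectural) identifications $\frakA_{\GL_n,\{1\}}\cong\frakA_{\on{EH}}$ and $\frakA_{\GL_n,\{1\}}\cong\frakA_{\on{BF}}$, so a fully unconditional proof would require establishing Conjecture \ref{Conj: coh sheaf} for $\GL_n$ (at least the Whittaker normalization and parabolic-induction compatibility) and the Braverman--Finkelberg-type statement; the realistic target is therefore a \emph{conditional} theorem, deducing \ref{C: EH=BF} from those inputs. Second, even granting the inputs, the genuinely hard analytic point is controlling the \emph{derived} structure of $\mW_n$ and of the mirabolic strata: the maps $V_i\to V_{i+1}$ need not be injective, so $\mW_n$ has boundary strata along which $\pi_*\omega_{\mW_n}$ could, a priori, acquire higher cohomology, and one must show this does not happen (equivalently, that $\pi_*\omega_{\mW_n}$ is a maximal Cohen--Macaulay sheaf concentrated in a single degree, matching Remark \ref{R: CM self dual}). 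I would attack this by a local model computation: étale-locally near a point $\rho\in\Loc_{{}^c\GL_n,F}$ the fiber of $\pi$ is a ``quiver-flag'' variety attached to $\Ext^\bullet_{W_F}(\rho,\rho)$, and one reduces to a statement about the cohomology of line bundles on (partial) flag-type spaces over the local complete intersection $\Loc$, which can be checked using the $\GL_m$-case of \S\ref{S: Loc} and a degeneration/Koszul argument. Matching this with the known maximal-Cohen--Macaulay property of $\frakA_{\on{EH}}$ from Emerton--Helm then forces the two sheaves to agree, since a map of MCM sheaves on a normal scheme that is an isomorphism over the smooth (dense) locus is an isomorphism.
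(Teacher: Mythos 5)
The paper does not prove Conjecture \ref{C: EH=BF}. It is stated as a conjecture with no argument beyond the heuristic sketched in Remark \ref{R: EH sheaf}: both $\frakA_{\on{EH}}$ and $\frakA_{\on{BF}}$ are conjecturally isomorphic to the (as-yet-hypothetical) sheaf $\frakA_{\GL_n,\{1\}}$, and combining these two conjectural identifications gives the statement. There is therefore no proof in the paper to compare your proposal against; what you have written is a proposed strategy for a result the paper does not establish.

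That said, it is worth remarking that your strategy is genuinely different from the paper's motivation: you attempt a \emph{direct} comparison of $\frakA_{\on{EH}}$ with $\frakA_{\on{BF}}$ that bypasses the sheaf $\frakA_{\GL_n,\{1\}}$, via Whittaker normalization, parabolic/mirabolic induction, and a reflexivity argument. You are right that any such argument is at present conditional on substantial parts of Conjecture \ref{Conj: coh sheaf}, and you say so explicitly. However, there are concrete gaps in the outline as written. First, the claim that pushing forward $\omega_{\mW_n}$ from the open locus of injective chains recovers $\mO_{\Loc_{{}^c\GL_n,F}}$ cannot be literally correct: that open locus is not proper over the base, so $\pi_*$ of $\omega$ restricted there is not what you want, and the normalization of the ``Whittaker vector'' on the BF side needs a different mechanism. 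Second, the reflexive-sheaf step at the end requires not just that $\Loc_{{}^c\GL_n,F}$ be normal (which the paper does not establish — it shows reducedness and LCI, but Serre's $(R_1)$ is not addressed) but that the locus on which you have matched the two sheaves have complement of codimension $\geq 2$; density alone does not suffice to extend an isomorphism of reflexive or maximal Cohen--Macaulay sheaves. Third, the assertion that $\pi_*\omega_{\mW_n}$ is concentrated in a single cohomological degree (hence a genuine sheaf, not a complex) is essentially the hard content of the arithmetic Braverman--Finkelberg conjecture itself; the proposed local-model/Koszul degeneration is exactly where the difficulty lives, since the boundary strata of $\mW_n$ (where some $V_i\to V_{i+1}$ drop rank) are where higher pushforward cohomology could arise, and no vanishing mechanism is identified. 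So the proposal is a sensible sketch of a direct approach, but none of the three steps above is close to complete, and the conjecture remains open in the paper as well.
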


\begin{remark}\label{R: dep on t}
To discuss the dependence of $\frakA_G$ on $t$, we write it by $\frakA_G^{t}$ in this remark.
If $\theta\in \Tor_{Z_G,\iso_F}$ that sends $t_1\in \TS_G$ to $t_2\in \TS_G$, then there should exist a canonical isomorphism of functors 
\begin{equation}\label{E: change local normalization}
   \frakA^{t_2}_{G}(-)\simeq \frakA^{t_1}_{G}(-)\otimes\mL_\theta,
\end{equation}
where $\mL_\theta$ is as in Conjecture \ref{C: LL for group of mult type}. 
More precisely,
there should exist a $\Tor_{Z_G,\iso_F}$-equivariant exact fully faithful functor 
\[
\frakA_G: \Rep_{\on{f.g.}}(G(F),\Lambda)\times^{\Tor_{Z_G}^0}\TS_G\to \Coh(\Loc_{{}^cG,F,\iota}).
\]
If $G$ is quasi-split, $\frakA_G$ is induced from a canonical fully faithful functor 
$$\Rep_{\on{f.g}}(G(F),\Lambda)\times^{\Tor_{Z_G}^0}\Wh_G\to \Coh(\Loc_{{}^cG,F,\iota}).$$ 
\end{remark}

Let us record the following consequence of the conjecture. Recall the stable center $Z_{{}^cG,F}$ as in \eqref{E: st center}, and the Hecke algebra $H_G$ of $G$ as in Remark \ref{R: full Hecke}. Let $Z_{G,F}:=Z(H_G)$ denote the center of $H_G$ (the Bernstein center of $G(F)$).
\begin{cor}
Assuming the conjecture, there exists a natural map 
\begin{equation}\label{E: stcen to cen}
Z_{{}^cG,F}\to Z_{G,F},
\end{equation} 
independent of the choice  of $t\in \TS_G$. In addition, this map should be compatible with parabolic induction (which would in particular imply \eqref{E: center tame}).
For a connected component $D$ of $\Loc_{{}^cG,F,\iota}$, let
$Z_{{}^cG,F,D}$ and $Z_{G,F,D}$ be the corresponding idempotent components. Then $Z_{G,F,D}$ is finite over $Z_{{}^cG,F,D}$. 
If $G=G^*$, then \eqref{E: stcen to cen} is split injective.
\end{cor}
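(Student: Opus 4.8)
The plan is to derive each assertion as a formal consequence of the fully faithful embedding $\frakA_G$ of Conjecture \ref{Conj: coh sheaf}, using basic properties of $\End$-algebras and the structure of $\Loc_{{}^cG,F,\iota}$ as an ind-finite type stack. First I would construct the map \eqref{E: stcen to cen}. Recall that $Z_{G,F} = Z(H_G)$ and, by Remark \ref{R: full Hecke}, $H_G \simeq \varinjlim_K H_{G,K}$ in an appropriate sense, with $H_{G,K} \cong e_K H_G e_K$ for $K$ admissible; equivalently $H_{G,K} = (\End_{\Rep(G(F),k)}(\delta_K))^{\on{op}}$. The center $Z_{G,F}$ acts on every $\delta_K$ and hence on each $H_{G,K}$ compatibly with the transition maps. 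On the spectral side, $Z_{{}^cG,F} = H^0\Gamma(\Loc_{{}^cG,F,\iota},\mO)$ acts on every object of $\on{IndCoh}(\Loc_{{}^cG,F,\iota})$, in particular on $\frakA_{G,K} = \frakA_G(\delta_K)$, compatibly with morphisms. Since $\frakA_G$ is fully faithful, it identifies $\End(\delta_K)^{\on{op}} \cong \End(\frakA_{G,K})^{\on{op}}$ as $E_1$-algebras; taking $\pi_0$ (or $H^0$) and then centers produces a map $Z_{{}^cG,F} \to Z(H^0\End(\frakA_{G,K})) \to Z(H^0 H_{G,K})$ for each $K$. Passing to the limit over $K$ and using that $Z_{G,F}$ is the center of $\varinjlim_K H_{G,K}$ gives \eqref{E: stcen to cen}. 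Independence of $t \in \TS_G$ follows from Remark \ref{R: dep on t}: replacing $t$ by $t'$ twists $\frakA_G$ by $\otimes\,\mL_\theta$ for a \emph{multiplicative} line bundle $\mL_\theta$, and tensoring by a line bundle induces the identity on $\End$-algebras, hence on centers. Compatibility with parabolic induction follows from Part \eqref{Conj: coh sheaf-1/2} of the conjecture together with the functoriality of centers under the pair $(\pi_*r^!, \text{parabolic induction})$, which in the tame case specializes to the commutative square \eqref{E: center tame} via Example \ref{E: cohspr as special}.

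Next I would address the finiteness statement. Fix a connected component $D$ of $\Loc_{{}^cG,F,\iota}$; it is of finite type over $k$ by Proposition \ref{P:flat lcc}, so $Z_{{}^cG,F,D} = \Gamma(D,\mO)$ is a finitely generated $k$-algebra (here one uses $H^i\Gamma(D,\mO)=0$ for $i>0$, or simply works with $H^0$ as in the statement). The corresponding idempotent piece $Z_{G,F,D}$ acts on the category of those smooth representations whose associated coherent sheaves are supported on $D$; equivalently, $Z_{G,F,D}$ is the center of the idempotent-truncated Hecke algebra $e_D H_G e_D$, where $e_D \in Z_{G,F}$ is the image of the idempotent of $Z_{{}^cG,F}$ cutting out $D$. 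The key point is that $\frakA_{G,\{1\}}|_D$ is, by Part \eqref{Conj: coh sheaf-2}, a quasi-coherent sheaf on $D$ finitely generated over $\mO_D[G(F)]$. The Hecke action of $H_{G,D} := e_D H_G e_D$ on $\frakA_{G,\{1\}}|_D$ then factors through $\End_{\mO_D}(\frakA_{G,\{1\}}|_D)$, and $Z_{G,F,D}$ maps to the $Z_{{}^cG,F,D}$-algebra $\End_{\mO_D[G(F)]}(\frakA_{G,\{1\}}|_D)$. Because $\frakA_{G,\{1\}}|_D$ is finitely generated over the (possibly non-commutative) noetherian ring $\mO_D[G(F)]$ — here one would need $D$ noetherian, which holds, and a noetherianity input for the twisted group ring, analogous to the arguments in Proposition \ref{P: fin pseudo rep} and the proof of Theorem \ref{P: Loc global function} — its endomorphism ring is a finite $Z_{{}^cG,F,D}$-module, and hence so is the image of $Z_{G,F,D}$. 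One then argues that $Z_{G,F,D} \to \End_{\mO_D[G(F)]}(\frakA_{G,\{1\}}|_D)$ is injective on the relevant piece: an element of $Z_{G,F,D}$ killing $\frakA_{G,\{1\}}|_D$ must kill every $\delta_K$-piece (since $\delta_K = \delta_{\{1\}} \otimes_{H_G} H_G e_K$ and $\frakA_{G,K} = \frakA_G(\delta_K)$ is a summand of a colimit built from $\frakA_{G,\{1\}}$), hence acts by zero on $\Rep_{\on{f.g.}}(G(F),k)$-objects supported on $D$, hence is zero in $Z_{G,F,D}$ by a density/faithfulness argument for the Bernstein center. This gives $Z_{G,F,D}$ finite over $Z_{{}^cG,F,D}$.

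Finally, for the split injectivity of \eqref{E: stcen to cen} when $G = G^*$: here I would use the Whittaker normalization. By Part \eqref{Conj: coh sheaf-2}, with $\beta = 0$, $\frakA_G(\on{Whit}_{U,\psi}) \cong \mO_{\Loc_{{}^cG,F,\iota}}$, so $\frakA_G$ identifies the $Z_{{}^cG,F}$-action on $\mO_{\Loc_{{}^cG,F,\iota}}$ (which is tautologically faithful — it is the action of $H^0\Gamma(\Loc,\mO)$ on the structure sheaf) with the $Z_{G,F}$-action on $\on{Whit}_{U,\psi}$ pulled back along \eqref{E: stcen to cen}. This shows \eqref{E: stcen to cen} is injective. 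To get a splitting, one uses that the functor $V \mapsto \Hom_{G(F)}(\on{Whit}_{U,\psi}, V)$ (the "Whittaker coefficient") is faithful enough that $Z_{G,F}$ acts faithfully on the Whittaker module, together with the fact that $\End_{G(F)}(\on{Whit}_{U,\psi})$, after applying $\frakA_G$, becomes $\End_{\mO}(\mO_{\Loc}) = \Gamma(\Loc,\mO)$; composing $Z_{G,F} \to \End_{G(F)}(\on{Whit}_{U,\psi}) \xrightarrow{\sim} Z_{{}^cG,F}$ (the last isomorphism on $H^0$, on each finite union of components) yields a retraction of \eqref{E: stcen to cen}. The main obstacle I anticipate is making the "Whittaker coefficient is faithful on the Bernstein center" step and the finiteness/noetherianity input for the twisted group ring $\mO_D[G(F)]$ fully rigorous — the former is essentially a known property of the Bernstein center for quasi-split groups (genericity of generic representations in each block) but needs care with general noetherian coefficients $k$, and the latter requires the kind of power-surjectivity and finite-generation arguments already used in Proposition \ref{P: fin pseudo rep} and Theorem \ref{P: Loc global function}, now in the context of twisted group algebras of $p$-adic groups.
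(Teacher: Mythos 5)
The paper states this corollary without proof, treating it as a formal consequence of Conjecture \ref{Conj: coh sheaf}, so there is nothing to compare against beyond the conjecture itself. Your construction of \eqref{E: stcen to cen} (central action of $Z_{{}^cG,F}=H^0\Gamma(\Loc_{{}^cG,F,\iota},\mO)$ on every $\frakA_{G,K}$, transported to compatible central endomorphisms of every $\delta_K$ via full faithfulness), the independence of $t$ via Remark \ref{R: dep on t}, the compatibility with parabolic induction via Part \eqref{Conj: coh sheaf-1/2}, and the split-injectivity argument are all along the expected lines. On the last point you actually over-engineer it: once one knows that for $z\in Z_{{}^cG,F}$ the image of $z$ in $Z_{G,F}$ acts on $\on{Whit}_{U,\psi}$ as $\frakA_G^{-1}$ of multiplication-by-$z$ on $\mO_{\Loc_{{}^cG,F,\iota}}$ (by naturality of the central element and the fact that $\on{Whit}_{U,\psi}$ is a filtered colimit of finite sums of $\delta_K$'s by Rodier), the composite
$$Z_{{}^cG,F}\to Z_{G,F}\to H^0\End_{G(F)}(\on{Whit}_{U,\psi})\cong H^0\End_{\IndCoh}(\mO_{\Loc_{{}^cG,F,\iota}})\cong Z_{{}^cG,F}$$
is the identity with no further faithfulness input needed.

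Your finiteness argument, however, has a genuine gap. You want to deduce that $\End_{\mO_D[G(F)]}(\frakA_{G,\{1\}}|_D)$ is a finite $Z_{{}^cG,F,D}$-module from finite generation of $\frakA_{G,\{1\}}|_D$ over $\mO_D[G(F)]$, appealing to noetherianity of that ring. But $\mO_D[G(F)]$, the group algebra of a $p$-adic group, is nowhere near noetherian ($G(F)$ is an uncountable, non-finitely-generated group); the arguments in Proposition \ref{P: fin pseudo rep} and Theorem \ref{P: Loc global function} are about finite and profinite groups and do not transfer here. Even granting noetherianity, ``finitely generated module has finite endomorphism ring'' would give finiteness over $\mO_D[G(F)]$, not over the subring $Z_{{}^cG,F,D}$, so the desired conclusion would still not follow. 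The correct route is to bypass $\frakA_{G,\{1\}}$ and use the coherence of each $\frakA_{G,K}$ directly: Part \eqref{Conj: coh sheaf-2} places $\frakA_{G,K}$ in $\Coh(\Loc_{{}^cG,F,\iota})^\heart$, so by the convention of Remark \ref{R: Coh cat} it is a coherent sheaf on a finite union of finite-type, hence noetherian, components $D_K$ (Proposition \ref{P:flat lcc}). Then $H^0H_K\cong H^0\End_{\Coh}(\frakA_{G,K})$ is a finite $\Gamma(D_K,\mO)$-algebra, and so is its center and each idempotent piece over a single component $D$. What is then still missing — both in your argument and in the elementary version above — is the existence of a single admissible $K$ with $e_D\delta_K$ a projective generator of the block of $\Rep(G(F),k)^\heart$ cut out by $D$, so that $Z_{G,F,D}\hookrightarrow Z(e_DH^0H_Ke_D)$ and the finiteness transfers. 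This requires either a Bernstein-decomposition input over the coefficient ring $k$ or an argument that the essential image of $\frakA_G$ on the $D$-block is generated by a single $\frakA_{G,K}$; it is worth flagging this as a nontrivial hypothesis rather than a formality.
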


\begin{remark}
In the case of $\GL_n$ over a $p$-adic field and $\La=\overline\bQ$, the map in the corollary is constructed earlier by Scholze \cite{Schllc}. Using the local Langlands for $\GL_n$, such map is constructed by Helm and Helm-Moss \cite{He16, HM1, HM} for $\La=\overline\bZ_\ell$. Note that for $\GL_n$, \eqref{E: stcen to cen} is an isomorphism. For general $G$, a map from the excursion algebra (see Remark \ref{R: point coarse moduli}) to $Z_{G,F}$ is constructed by Genestier-Lafforgue \cite{GL} (in equal characteristic and after $\ell$-adic completion). The map \eqref{E: stcen to cen} in general  (for $\La=\bZ_\ell$) appears in the work of Fargues-Scholze \cite{FS}, without the construction of $\frakA_G$. Then finiteness of $Z_{{}^cG,F}\to Z_{G,F}$ (when restricted to each component $D$ of $\Loc_{{}^cG,F,\iota}$) is proved recently in \cite{DHKM2}.
\end{remark}

\begin{remark}\label{E: A for torus}
If $G=T$ is a torus, the existence of \eqref{E: stcen to cen} should follow from Conjecture \ref{C: LL for tori},
which in turn would induce the functor 
$$\Rep(T(F),\Lambda)\cong \Mod_{Z_{{}^cT,F}}\subset \on{Qcoh}(\Loc_{{}^cT,F,\iota}),$$ sending $\Rep_{\on{f.g.}}(T(F),\Lambda)$ to $\Coh(\Loc_{{}^cT,F})$. This should be the desired functor $\frakA_T$. 
\end{remark}

Unfortunately, we do not have explicit conjectural descriptions of $\frakA_{G,K}$ in general at the moment. Here are some expectations and remarks.
\begin{enumerate}
\item  We expect that if $K$ is the pro-unipotent radical of a parahoric subgroup, then $\frakA_{G,K}$ is supported on $\Loc_{{}^cG,F,\iota}^\ta$. In particular, there should exist a map
\begin{equation}\label{E: stable center K1}
 Z_{{}^cG,F}^{\ta}\to Z(H_{G, K}).
\end{equation}
generalizing \eqref{E: stable center pro-p-Iw}.

\item Assume that $G$ is quasi-split. We expect that for a cofinal set of open compact subgroups $K\subset G(F)$, there exist a quasi-smooth derived stack $\widetilde{\Loc}_{{}^cG,F,\iota}^K$ and a proper schematic morphism $\pi^K: \widetilde{\Loc}_{{}^cG,F,\iota}^K\to \Loc_{{}^cG,F,\iota}$ such that 
\[
\frakA_{G,K}\cong\pi_!^K\mO_{\widetilde{\Loc}_{{}^cG,F,\iota}^K}\cong \pi_!^K\omega_{\widetilde{\Loc}_{{}^cG,F,\iota}^K}.
\]
Note that this would in particular imply that $\frakA_{G,K}$ is self-dual with respect to the Grothendieck-Serre duality (see Remark \ref{R: CM self dual}).  

\item Using the fact that some connected component of $\Loc_{{}^cG,F,\iota}$ ``looks like" the tame stack of local Langlands parameters for another group (see the proof of Proposition \ref{P: tame and wild stack}), it might be possible to relate the restriction of $\frakA_G$ to this component with the coherent Springer sheaf of the other group. For $G=\GL_n$, this might give a construction of $\frakA_G$ ``by hand". We refer to \cite{BCHN} for an approach along this line.

\item Even if we understand $\{\frakA_{G,K}\}_K$ for various $\Lambda$ (so knowing that the functor $\frakA_G$ is well-defined), it is still important (and sometimes challenging) to understand the (ind)-coherent sheaves on $\Loc_{{}^cG,F,\iota}$ corresponding to specific $G(F)$-representations. To give an example, let $X$ be a $G$-variety over $F$. Then $C_c(X(F))$ is a natural $G(F)$-representation, and therefore should correspond to an ind-coherent sheaf $\frakA_{X}:=\frakA_G(C_c(X(F)))$ on $\Loc_{{}^cG,F,\iota}$. The recent conjectures of Ben-Zvi-Sakellaridis-Venkatesh in relative Langlands program should have analogue in the current setting, giving conjectural construction of $\frakA_{X}$ (for some $X$) purely from the Galois side (at least for $\Lambda$ being a field of characteristic zero).
\end{enumerate}

\subsection{Categorical arithmetic local Langlands correspondence}\label{SS: cat LLC}
In this subsection, we explain how the conjectural sheaf $\frakA_{G}$ fits into a hypothetical categorical form of the local Langlands conjecture. More detailed discussions will appear in \cite{HZ}. Let
$\Lambda$ be over $\bZ_\ell$ where $\ell\neq p$. For simplicity, we write $\Loc_{{}^cG}$ for $\Loc_{{}^cG,F}\otimes_{\bZ_\ell} k$ in this subsection. We fix a non-trivial character $\psi_0: F\to \La^\times$ with conductor $\mO_F$.

A general wisdom shared among various people is that in local Langlands it is better not to just study representation theory of a single $p$-adic group $G$, but simultaneously to study representation theory of a collection of groups closely related to $G$. There are various ways to formulate the idea precisely by appropriately choosing such collection, such as Vogan's pure inner forms, Kottwitz-Kaletha's extended pure inner forms, or Kaletha's rigid inner forms. It should be clear from previous discussion that the collection $\{G_b, b\in B(G)_{\on{bsc}}\}$, i.e. extended pure inner forms of $G$, is most relevant to us. But
it turns out one can go one step further to consider the representation theory of $G_b$ (for all $b\in B(G)$) altogether. The representation categories of these groups glue nicely together to a category which is conjecturally equivalent to the category of (ind-)coherent sheaves on $\Loc_{{}^cG}$, as we now explain.

The basic idea is that these representation categories glue to the category of sheaves on some stack. Indeed, individual $\Rep(G_b(F),\Lambda)$ can be thought as the category of sheaves with $\Lambda$-coefficient on the classifying stack $[*/G_b(F)]$ of the locally profinite group $G_b(F)$ in appropriate sense. 
Note that $B(G)$ underlies the category $\Tor_{G,\iso_F}$ (as introduced in \S \ref{S: Aux}), and the automorphism group of every $b\in \Tor_{G,\iso_F}$ is $G_b(F)$.
Then it is natural to expect $B(G)$ is the set of $\overline{\kappa}_F$-points of some stack, whose automorphism group $\Aut_b$ at $b$ is $G_b(F)$ (or some closely related group),
so the sought after glued category is the category of sheaves $\Shv(B(G),\Lambda)$ on this stack in appropriate sense. In particular, for each $b\in B(G)$, there should exist a pair of adjoint functors
\begin{equation}\label{E: emb Jb}
  i_{b,!}: \Rep(G_b(F),\Lambda)\cong \Shv([*/\Aut_b],\Lambda)\rightleftharpoons \Shv(B(G),\Lambda): i^!_{b}
\end{equation}
where $i_b: [*/\Aut_b]\to B(G)$ is the corresponding embedding.

As far as we know, there are two ways to make this idea precise.
One is due to Fargues-Scholze. In this approach, $B(G)$ is regarded as the set of points of the $v$-stack $\Bun_G$ of $G$-bundles on the Fargues-Fontaine curve and $\Shv(B(G),\Lambda)$ is defined as category $D(\Bun_G,\Lambda)$ of appropriately defined \'etale sheaves on $\Bun_G$ \cite{FS}. The definition in this way is quite sophisticated, relying on Scholze's work on $\ell$-adic formalism of diamond and condensed mathematics. 

In another approach\footnote{This approach has been the folklore among the geometric Langlands community for a while.}, which might be less sophisticated and stays in the realm of traditional $\ell$-adic formalism of schemes\footnote{But this approach probably is insufficient for some purposes such as the $p$-adic local Langlands program.}, $B(G)$ is regarded as the set of points of the quotient stack 
$$\frakB(G):=LG/\Ad_\sigma LG,$$ where $LG$ denotes the loop group of $G$, which is a (perfect) group ind-scheme over $\kappa_F$, and $\Ad_\sigma$ denotes the Frobenius twisted conjugation given by 
$\Ad_\sigma: LG\times LG\to LG,\quad  (h,g)\mapsto hg\sigma(h)^{-1}$
 (e.g. see \cite[2.1]{Z18} for a review). 
Then $\Shv(B(G),\Lambda)$ is defined as the category of $\Lambda$-sheaves $\Shv(\frakB(G)_{\overline\kappa_F},\Lambda)$ in appropriate sense. 

More precisely, this category can be also realized  (via ``$h$-descent") as the category of sheaves on the moduli $\Sht^\loc$ of local Shtukas (with the leg at the closed point $0\in\on{Spec} \mO_F$) with morphisms given by cohomological correspondences.
A discussion is sketched at the end of \cite{Z18} (see also \cite[4.1]{Ga}), and a detailed study of this category will appear in \cite{HZ}. 
Here we repeat the outline given in \cite{Z18}. All geometric objects below are defined over $\overline\kappa_F$ even some of them can be originally defined over $\kappa_F$. 

First we consider a simpler situation to define an $\infty$-category $\Shv([*/G(F)],\Lambda)$ of sheaves on the classifying stack of $G(F)$, which is equivalent to the category $\Rep(G(F),\Lambda)$ of smooth representations of $G(F)$.
Let $K\subset G(F)$ be an open compact subgroup. As we can write $K=\varprojlim K_i$ with each $K_i$ finite, we can regard $\Lambda$ as an affine group scheme over $\kappa_F$.
We consider the groupoid of stacks $K\backslash G(F)/K\cong [*/K]\times_{[*/G(F)]}[*/K]\rightrightarrows [*/K]$, which 
extends to a simplicial diagram of stacks (with degeneracy maps omitted)
\begin{equation}\label{E: G(F)}
\cdots\ \substack{\longrightarrow\\[-1em] \longrightarrow \\[-1em] \longrightarrow \\[-1em] \longrightarrow}\ K\backslash G(F)/K\times_{[*/K]}K\backslash G(F)/K \ \substack{\longrightarrow\\[-1em] \longrightarrow \\[-1em] \longrightarrow} \  K\backslash G(F)/K \rightrightarrows [*/K],
\end{equation}
Although $[*/K]$ and $K\backslash G(F)/K$ (and each term in the above diagram) are not algebraic, they can be nevertheless approximated by nice (perfect) Deligne-Mumford stacks (perfectly) of finite type over $\kappa_F$, and one can associate the $\infty$-category of $\Lambda$-sheaves $\Shv(-,\Lambda)$ to them. For example, we can define $\Shv([*/K],\Lambda)=\varinjlim \Shv([*/K_i],\Lambda)$, with connecting functors given by pullback of sheaves along the classifying stacks of finite groups $[*/K_i]\to [*/K_j]$. Then $\Shv([*/K],\Lambda)=\Rep(K,\Lambda)$. For $K\backslash G(F)/K$, we may write $G(F)$ as an increasing union of $K\times K$-stable subsets $G(F)=\varinjlim_i G(F)_i$ (so regarding $G(F)$ as an ind-scheme over $\kappa_F$). Then we can first define the category $\Shv(K\backslash G(F)_i/K,\Lambda)$ in a way as above and then define $\Shv(K\backslash G(F)/K,\Lambda)=\varinjlim \Shv(K\backslash G(F)_i/K,\Lambda)$. 

All the morphisms in the above simplicial diagrams are ind-representable (in fact ind-finite). Then we can define $\Shv([*/G(F)],\Lambda)$ as the geometric realization of a simplicial $\infty$-category
\[
\cdots\ \substack{\longrightarrow\\[-1em] \longrightarrow \\[-1em] \longrightarrow \\[-1em] \longrightarrow} \ \Shv(K\backslash G(F)/K\times_{[*/K]}K\backslash G(F)/K,\Lambda)\ \substack{\longrightarrow\\[-1em] \longrightarrow \\[-1em] \longrightarrow} \  \Shv(K\backslash G(F)/K,\Lambda)\rightrightarrows\Shv([*/K],\Lambda),
\]
with connecting functors given by proper push-forward (\cite[Remark 6.2]{Z18}). One then shows that $\Shv([*/G(F)],\Lambda)$ defined in this way is independent of the choice of $\Lambda$ and is indeed equivalent to $\Rep(G(F),\Lambda)$.

To define $\Shv(\frakB(G),\Lambda)$, we following the same strategy, with $\Lambda$ replaced by the positive loop group $L^+\mG$ of an Iwahori model $\mG$ of $G$ over $\mO_F$ (in fact one can use any parahoric model of $G$), and with $[*/K]$ replaced by
\begin{equation}\label{E: Shtloc}
\Sht^\loc:=\frac{LG}{\Ad_\sigma L^+\mG},
\end{equation}
the moduli of local $\mG$-Shtukas (with the leg at $0\in\on{Spec}\mO_F$, see \cite[(4.1.1)]{Z18}). Then let
\begin{equation}\label{E: Hk groupoid}
\Hk(\Sht^\loc):=\Sht^\loc\times_{\frakB(G)}\Sht^\loc
\end{equation}
be the Hecke stack of local Shtukas (see \cite[(4.1.2)]{Z18} with $s=t=1$). We similarly have a simplicial diagram 
\begin{equation}\label{E: mer Sht}
\cdots\ \substack{\longrightarrow\\[-1em] \longrightarrow \\[-1em] \longrightarrow \\[-1em] \longrightarrow} \ \Hk(\Sht^\loc)\times_{\Sht^\loc}\Hk(\Sht^\loc)\ \substack{\longrightarrow\\[-1em] \longrightarrow \\[-1em] \longrightarrow} \  \Hk(\Sht^\loc)\rightrightarrows\Sht^\loc
\end{equation}
with morphisms ind-(perfectly) proper. Again, each term in the above diagram is not algebraic, but can be approximated by nice (perfect) algebraic stacks (perfectly) of finite type over $\kappa_F$ (see \cite{XZ} for a detailed discussion and \cite[4.1]{Z18} for a summary). Then one can associate the $\infty$-category of $\Lambda$-sheaves to each term and define $\Shv(\frakB(G),\Lambda)$ as the geometric realization of the corresponding simplicial $\infty$-category. By definition, there is a natural functor $\Shv(\Sht^\loc,\Lambda)\to \Shv(\frakB(G),\Lambda)$. This is nothing but the proper push-forward along the Newton map $\on{Nt}: \Sht^\loc\to \frakB(G)$.

There is a closed embedding of the simplicial diagram \eqref{E: G(F)} into \eqref{E: mer Sht} induced by the embedding
\begin{equation}\label{E: geom deltaK}
[*/I]\cong \frac{L^+\mG}{\Ad_\sigma L^+\mG}\subset \Sht^\loc,
\end{equation}
where $I=\mG(\mO_F)$.
 This gives a fully faithful embedding
\[
i_!: \Rep(G(F),\Lambda)\cong \Shv([*/G(F)],\Lambda)\to \Shv(\frakB(G),\Lambda).
\]
Then for every open compact subgroup $K'$, the object $\delta_{K'}\in \Rep(G(F),\Lambda)$ gives a corresponding object in $\Shv(\frakB(G),\Lambda)$, denoted by the same notation. If $K'\subset I$, geometrically $\delta_{K'}$ is given by the proper push-forward of the constant sheaf $\Lambda$ along the morphism $[*/K']\to [*/I]\to \Sht^\loc\to \frakB(G)$.

\begin{remark}
As explained in \cite{Z18}, the homotopy category of $\Shv(\frakB(G),\Lambda)$ can be expressed as the category of sheaves on $\Sht^\loc$ with morphisms given by cohomological correspondences supported on $\Hk(\Sht^\loc)$. The latter was constructed in details in \cite{XZ}, and is very useful for global applications. Using this interpretation, there is a more elementary way to show that the  endomorphism algebra of the sheaf
$\delta_{K'}$ (defined as the proper push-forward of $\Lambda$ along $[*/K']\to \frakB(G)$) is the derived Hecke algebra $H_K$ (see \cite[Remark 5.4.5]{XZ}). 
\end{remark}

More generally, for a basic $b$, we lift it to an element $\tilde{b}\in G(\breve F)$ that normalizes $\mG(\mO_{\breve F})$, where as before $\breve F$ denotes the completion of
maximal unramified extension of $F$. There is a closed embedding similar to \eqref{E: geom deltaK}
\begin{equation}\label{E: delta on b}
[*/I_b]\cong \frac{L^+\mG\cdot\tilde{b}}{\Ad_\sigma L^+\mG}\subset \Sht^\loc.
\end{equation} 
Here $I_{b}$ is the twisted centralizer of $\tilde b$ in $\mG(\mO_{\breve F})$, which is an Iwahori subgroup of $G_b(F)$. Then there is a simplicial diagram similar to \eqref{E: G(F)} associated to the groupoid $[*/I_b]\times_{[*/G_b(F)]}[*/I_b]\rightrightarrows[*/I_b]$ with a closed embedding into \eqref{E: mer Sht}. This gives us the embedding $i_{b,!}$ in \eqref{E: emb Jb} as promised. 

\begin{remark}
The optimal guess would be the category $D(\Bun_G,\Lambda)$ defined by Fargues-Scholze and $\Shv(\frakB(G),\Lambda)$ outlined above are equivalent.  
A striking feature is in the above two interpretations of $B(G)$, the partial order on $B(G)$ gets reversed.
\end{remark}

\begin{remark}\label{R: ad quot}
As mentioned in \cite{Z18}, exactly the same construction allows one to define and study the category of sheaves on the adjoint quotient space $LG/\Ad LG$.
\end{remark}

Now we formulate our conjecture.
Let $\hat{\mN}_{{}^cG}$ denote the subset of $\on{Sing}(\Loc_{{}^cG})$ as in \eqref{E: lgnlip}. Recall our convention of the category of coherent sheaves on $\Loc_{{}^cG}$ in Remark \ref{R: Coh cat}.

\begin{conjecture}\label{Conj: cat LLC}
Assume that $(G,B,T,e)$ is a quasi-split reductive group equipped with a pinning over $F$.
Then there is a natural $\Tor_{Z_G,\iso_F}$-equivariant equivalence of $\infty$-categories
\[\bL_G:\Shv(\frakB(G),\Lambda)\to \Ind\Coh_{\hat\mN_{{}^cG}}(\Loc_{{}^cG})\]
sending $\on{Whit}_{(U,\psi)}$ (see \eqref{E: pin to whit}) to the structural sheaf $\mO_{\Loc_{{}^cG}}$. 

In addition, for every basic element $b\in B(G)$, the conjectural functor $\frakA_{G_b}$ in Conjecture \ref{Conj: coh sheaf}, when tensored with $\Lambda$, fits into the following commutative diagram
\[\xymatrix{
  \Rep_{\on{f.g.}}(G_b,\Lambda)\ar_-{i_{b,!}}[d]\ar^{\frakA_{G_b}}[r]& \Coh(\Loc_{{}^cG})\ar[d]\\
  \Shv(\frakB(G),\Lambda)\ar^-{\bL_G}[r]& \Ind (\Coh_{\hat{\mN}_{{}^cG}}(\Loc_{{}^cG})).
}\]
\end{conjecture}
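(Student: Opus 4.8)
The plan is to construct $\bL_G$ by a Whittaker normalization and to establish the equivalence by recognizing both $\Shv(\frakB(G),k)$ and $\Ind\Coh_{\hat\mN_{{}^cG}}(\Loc_{{}^cG})$ as module categories over a common ``affine Hecke'' category: the automorphic affine Hecke category sitting inside $\Shv(\Hk(\Sht^\loc),k)$ on the one hand, and the spectral affine Hecke category built from the Steinberg-type stacks of $\hat G$ on the other. Having matched these two Hecke categories, one transports the module structure across, and then $\bL_G$ is pinned down as the unique Hecke-linear functor sending the distinguished object on each side to the distinguished object on the other — $\on{Whit}_{U,\psi}$ (see \eqref{E: pin to whit}) on the automorphic side and $\mO_{\Loc_{{}^cG}}$ on the spectral side. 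A (suitably interpreted) Barr--Beck--Lurie monadicity argument then upgrades an identification of the endomorphism $E_1$-algebras of these two objects to the desired equivalence, and the $\Tor_{Z_G,\iso_F}$-equivariance is built into the construction.

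First I would set up the two affine Hecke categories and prove they correspond. On the automorphic side this means describing $\Shv(\frakB(G),k)$ through the ind-(perfectly) proper approximations of $\Sht^\loc$ and $\Hk(\Sht^\loc)$ from \cite{XZ, Z18}, together with the monoidal action of the Iwahori--Hecke category $\Shv(L^+\mG\backslash LG/L^+\mG,k)$. On the spectral side this means using the correspondences of \S\ref{SS: SPI} — in particular the spectral parabolic induction of Proposition \ref{P: spec Eis} and the spectral Deligne--Lusztig stacks — to make $\Ind\Coh_{\hat\mN_{{}^cG}}(\Loc_{{}^cG})$ a module over $\Coh$ of the spectral Steinberg stack. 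The comparison of the two Hecke categories is a form of Bezrukavnikov's tame local geometric Langlands: its Iwahori-level shadow is the content of Theorem \ref{T: REnd of Spr} and of the forthcoming \cite{HZ}, and at hyperspecial level it is the derived Satake isomorphism of Conjecture \ref{C: derSat}.

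Second, with the Hecke categories identified, $\bL_G$ is forced once one knows that $\on{Whit}_{U,\psi}$ generates $\Shv(\frakB(G),k)$ under the Hecke action and that $\mO_{\Loc_{{}^cG}}$ generates $\Ind\Coh_{\hat\mN_{{}^cG}}(\Loc_{{}^cG})$. The first is a Whittaker-genericity statement extracted from the Casselman--Shalika picture (Proposition \ref{P: Whit to pairs}), which identifies Whittaker vectors at parahoric level with (anti)spherical modules; the second is where the nilpotent singular-support condition $\hat\mN_{{}^cG}$ of \eqref{E: lgnlip} enters, via Lemma \ref{L: nilp=sing} over $\bQ$ together with a refinement over $\bZ[1/p]$. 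To see that $\bL_G$ is an equivalence I would compute both sides of $\End(\on{Whit}_{U,\psi})\simeq \End_{\mO_{\Loc_{{}^cG}}}(\mO_{\Loc_{{}^cG}})=\Gamma(\Loc_{{}^cG},\mO)$ — the degree-zero concentration on the right being Conjecture \ref{Conj: vanishing O} and the identification of the Whittaker--Hecke algebra with the stable center $Z_{{}^cG,F}$ — and then run monadicity on the adjunction $\Hom(\on{Whit}_{U,\psi},-)\dashv \on{Whit}_{U,\psi}\otimes_{\End(\on{Whit}_{U,\psi})}(-)$. The $\Tor_{Z_G,\iso_F}$-equivariance follows by matching the twist actions \eqref{E: BZ action} and \eqref{E: sym Rep} through the determinant map and Conjecture \ref{C: LL for group of mult type}; and the commuting square with $\frakA_{J_b}$ reduces, after unwinding the geometric description \eqref{E: delta on b} of the embeddings $i_{b,!}$, to the fact that $\bL_G(i_{b,!}\delta_{K'})$ is computed by the same Hecke modifications that define $\frakA_{J_b}(\delta_{K'})$ via the spectral Deligne--Lusztig stacks of Conjecture \ref{Conj: coh sheaf}; the split-$G$ and $\GL_n$ cases of such compatibilities are already available through \cite{BCHN, HZ}.

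The main obstacle is that essentially every structural input above is itself a deep open problem: the Bezrukavnikov-type comparison of affine Hecke categories is presently available over $\Ql$ (the regime of \cite{HZ} and Theorem \ref{T: REnd of Spr}) but not for a general noetherian $\bZ[1/p]$-algebra $k$, and the ``arithmetic'' local geometric Langlands for $\frakB(G)$ over $\overline\kappa_F$ is known only in fragments. Even granting these, the delicate point is the singular-support bookkeeping in characteristic $p$: over $\bZ[1/p]$ one has $\on{Sing}(\Loc_{{}^cG,F,\iota})\supsetneq \hat\mN_{{}^cG,F,\iota}$ whenever $\ell\mid q-1$ (Lemma \ref{L: nilp=sing}), so the assertion that $\Shv(\frakB(G),k)$ matches the \emph{nilpotent} singular-support subcategory — equivalently, that $\mO_{\Loc_{{}^cG}}$ generates $\Ind\Coh_{\hat\mN_{{}^cG}}$ rather than all of $\Ind\Coh$ — is the crux, and is exactly where the torsion phenomena are hardest to control. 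A further, foundational difficulty is that the two ambient categories are only partially understood as stand-alone objects: $\Shv(\frakB(G),k)$ needs the ind-proper approximations of \cite{XZ} even to be defined, and $\Ind\Coh(\Loc_{{}^cG})$ lives on an a priori non-classical, non-quasi-compact derived stack, so that carrying out the monadicity argument rigorously in the stated generality requires the systematic development that is the subject of \cite{HZ}.
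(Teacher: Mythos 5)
The statement you have been asked to prove is a \emph{conjecture} in the paper, not a theorem: it is Conjecture~\ref{Conj: cat LLC}, which the author explicitly frames as a categorical form of the arithmetic local Langlands correspondence, closely related to Fargues--Scholze's announced program, and which the paper does \emph{not} prove. The only things the paper supplies are evidence: Theorem~\ref{T: spectral hs} (the functor $\Coh(\Loc_{{}^cG}^{\ur})\to\Shv(\frakB(G),k)$), Theorem~\ref{T: tame local Langlands} (a fully faithful embedding $\Coh(\Loc_{{}^cG}^{\un})\hookrightarrow\Shv(\frakB(G),\Ql)$ for unramified equal-characteristic $G$, to appear in \cite{HZ}, proved by ``an exercise of calculation of the Frobenius-twisted categorical trace of the two versions of affine Hecke categories''), and the analogy with geometric Langlands à la Arinkin--Gaitsgory. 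So there is no proof in the paper to compare your argument against, and you should not present your proposal as a proof either.

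That said, your sketch is an accurate description of the expected strategy, and it is consistent with the hints the paper gives: matching the automorphic and spectral affine Hecke categories via Bezrukavnikov's equivalence, taking Frobenius-twisted categorical traces, Whittaker normalization with $\on{Whit}_{U,\psi}\mapsto\mO_{\Loc_{{}^cG}}$, $E_1$-endomorphism identification ($\End(\on{Whit})\cong\Gamma(\Loc_{{}^cG},\mO)=Z_{{}^cG,F}$ granting Conjecture~\ref{Conj: vanishing O}), relative monadicity under the Hecke action, and the $\Tor_{Z_G,\iso_F}$-equivariance through Conjecture~\ref{C: LL for group of mult type}. You are also right that the genuinely delicate point is the singular-support bookkeeping over torsion coefficients where, by the remark preceding Lemma~\ref{L: nilp=sing}, $\on{Sing}(\Loc_{{}^cG,F,\iota})\supsetneq\hat\mN_{{}^cG,F,\iota}$ whenever $\ell\mid q-1$, so that the nilpotent condition genuinely cuts down $\Ind\Coh$. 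The one imprecision worth flagging is that Proposition~\ref{P: spec Eis} (spectral parabolic induction along $\hat P\subset\hat G$) is not quite the same as the monoidal action of the spectral Steinberg category --- the latter is what the spectral Deligne--Lusztig stacks $\widetilde{\Loc}^{\ta,w}_{{}^cG,F,\iota}$ of \eqref{E: SpDL} are set up for --- but this is a detail of exposition rather than a mathematical error. In short: your proposal correctly identifies the program and its obstructions; it does not, and cannot at present, constitute a proof, and neither does the paper.
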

\begin{remark}\label{R: AJb} 
Note that the conjecture 
implies that for every $b$ (not necessarily basic), there should exist an ind-coherent sheaf 
$$\frakA_{G_b,\{1\}}:=\bL_G(i_{b,!}(\delta_{G_b,\{1\}})),\quad \delta_{G_b,\{1\}}:=C_c(G_b(F),\Lambda),$$ 
on $\Loc_{{}^cG}$, where $i_{b,!}$ is the functor from \eqref{E: emb Jb}, and $\nu_b$ is Newton point of $b$ (which is a dominant rational character of $\hat{G}$ \cite[4.2]{Ko2}). 
The sheaf $\frakA_{G_b,\{1\}}$ should not be an ordinary coherent sheaf in general, unlike the basic case considered in Conjecture \ref{Conj: coh sheaf} \eqref{Conj: coh sheaf-2}. However,  
we conjecture that $\frakA_{G_b,\{1\}}$ concentrates in cohomological degrees $[-(2\rho,\nu_b),0]$.
\end{remark}

\begin{remark}
In Fargues-Scholze' approach where $\Shv(B(G),\bZ_\ell)$ is defined as as $D(\Bun_G,\bZ_\ell)$, this conjecture formally looks like the global geometric Langlands conjecture as proposed by Arinkin-Gaitsgory \cite{AG}. Indeed, Fargues-Scholze independently announced the same conjecture using $D(\Bun_G,\bZ_\ell)$ in the formulation.
\end{remark}

\begin{remark}
For $\bZ_\ell$-coefficient and $\ell$ the so-called non banal prime, the existence of $\frakA_{G_b}$ does not follow directly from the existence of $\bL_G$, as $\Rep_{\on{f.g.}}(G_b,\bZ_\ell)$ does not belong to the subcategory of compact objects of $\Shv(\frakB(G),\bZ_\ell)$.
However, there is a renormalized version $\Ind\Shv_{\on{f.g.}}(\frakB(G),\bZ_\ell)$ of $\Shv(\frakB(G),\bZ_\ell)$, which will contain $\Rep_{\on{f.g.}}(G_b,\bZ_\ell)$ inside its subcategory of compact objects (the definition is similar to \cite[12.2.3]{AG} and will be given in \cite{HZ}). We expect that $\bL_G$ extends to an equivalence
$$\bL_G^{\Ind\on{f.g.}}:\Ind\Shv_{\on{f.g.}}(\frakB(G),\bZ_\ell)\cong \Ind\Coh(\Loc_{{}^cG}),$$
which would imply the existence of $\frakA_{G_b}$.  
Note that when $\bZ_\ell$ is replaced by $\bQ_\ell$, we have $\Ind\Shv_{\on{f.g.}}(\frakB(G),\bQ_\ell)=\Shv(\frakB(G),\bQ_\ell)$, and the nilpotent singular support condition is automatic by Lemma \ref{L: nilp=sing}. So $\bL_G^{\Ind\on{f.g.}}$ would coincide with $\bL_G$.
\end{remark}

\begin{remark}
It would be interesting to formulate a ``motivic" (i.e. independent of $\ell$) version of the above equivalence. When the coefficient $\La=\bQ_\ell$, Proposition \ref{P:comp iota12} suggests that in the Galois side instead of considering $\Coh(\Loc_{{}^cG,F}\otimes\bQ_\ell)$, one may consider $\Coh(\Loc_{{}^cG,F}^{\on{WD}}/\bG_m\otimes\bQ_\ell)$. On other other hand, we expect that $\Shv(\frakB(G),\bQ_\ell)$ admits a mixed version $\Shv^m(\frakB(G),\bQ_\ell)$. Then $\bL_G$ might be lifted to an equivalence of mixed categories which might then have a chance to descend to $\bQ$.
\end{remark}

\begin{remark}
The conjectural equivalence is supposed to satisfy a set of compatibility conditions similar to those in the global geometric Langlands correspondence (\cite{AG,Ga3}). 
For example, it should be compatible with parabolic induction on both sides, and should be compatible with cohomological duality on $\Shv(\frakB(G),\Lambda)$ (a generalization of \eqref{E: hom duality}) and the modified Grothendieck-Serre duality \eqref{E: Cartan inv}. 
As discussing these compatibilities would require introducing additional constructions related to $\Shv(\frakB(G),\Lambda)$, we skip them here and refer to \cite{HZ} for more details.

On the other hands, the conjectural equivalence predict that there should exist an action of the category $\on{Perf}(\Loc_{{}^cG})$ of perfect complexes on $\Loc_{{}^cG}$ on $\Shv(B(G),\Lambda)$, usually called the spectral action. Fargues-Scholze have announced a construction of such action in their setting. But the existence of such spectral action on $\Shv(\frakB(G),\Lambda)$  is not known. 
\end{remark}
An evidence that $\Shv(\frakB(G),\bZ_\ell)$ might also be the correct input for the conjecture, we first recall the following result from \cite{XZ,Z18,Yu2}.

\begin{theorem}\label{T: spectral hs}
Assume that $\mG$ is reductive. Then there is a natural functor $\Coh(\Loc_{{}^cG}^{\ur})\to \Shv(\frakB(G),\Lambda)$ making the following diagram commutative
\[
\xymatrix{
\Rep(\hat{G},\Lambda)^{\heart}\ar^-{\Sat}[r]\ar[d] & \Shv(\Sht^\loc,\Lambda)\ar[d]\\
\Coh(\Loc_{{}^cG}^{\ur}) \ar[r] & \Shv(\frakB(G),\Lambda)
}\]
where $\Sat$ is induced by the geometric Satake equivalence (\cite{MV, Z17a,Yu1}), and the left vertical functor is the natural pullback functor along $\Loc^{\ur}_{{}^cG}\to \bB\hat{G}$.
\end{theorem}
More convincingly, we have the following statement which will be established by Hemo and the author in \cite{HZ}. 
\begin{theorem}\label{T: tame local Langlands}
Assume that $(G,B,T,e)$ is a pinned unramified group over a local field $F$, and that $\La=\overline\bQ_\ell$.
Then the functor in Theorem \ref{T: spectral hs} extends to a fully faithful embedding
\[
\Coh(\Loc_{{}^cG}^{\widehat\un})\to \Shv(\frakB(G),\overline\bQ_\ell)
\]
into the subcategory of compact objects of $\Shv(\frakB(G),\overline\bQ_\ell)$. It sends $\on{CohSpr}^{\un}_{{}^cG}$ to $\delta_{I}$. 
More generally, for every element $b\in B(G)$, let $H_{I_b}$ the corresponding Iwahori-Hecke algebra of $G_b$. Then
there is the following commutative diagram
\[
\xymatrix{
\Mod_{H_{I_b}}\ar@{^{(}->}[r] \ar@{^{(}->}[d] & \Rep(G_b(F),\overline\bQ_\ell)\ar^{i_{b,!}}[d]\\
\Ind\Coh(\Loc_{{}^cG}^{\un})\ar[r] &  \Shv(\frakB(G),\overline\bQ_\ell)
}\]
\end{theorem}
Further properties of the embedding in the theorem will be studied in \cite{HZ}.

\subsection{Cohomology of modular varieties and local-global compatibility}\label{SS: local-global}
In this last subsection, we formulate conjectural formulas for the cohomology of moduli of Shtukas and to give some evidences.
We will mainly consider the function field case as the picture is more complete. 
But we will also discuss a conjectural geometric realization of Jacquet-Langlands transfer via cohomology of Shimura varieties, generalization the main construction of \cite{XZ}.

Let $F$ be a global field, and $G$ a connected reductive group over $F$.
Let $\Lambda$ be a noetherian $\bZ_\ell$-algebra, where $\ell\neq \on{char}F$ if $F$ is a function field.
We will use notations from \S \ref{SS: glob par}. 

We first discuss function field case.
Let $F=\bF_q(X)$ be a global function field, where $X$ is a geometrically connected smooth projective curve over $\bF_q$. We denote the Weil group of $F$ by
$W_F$. Let $\eta=\on{Spec} F$ be the generic point of $X$, and $\overline\eta$ a geometric point over $\eta$. Let $\bO=\prod_{v\in|X|}\mO_v$ be the integral ad\`eles, where $\mO_v\subset F_v$ is the ring of integers.
We extend the group $G$ to a Bruhat-Tits integral model $\mG$ over $X$, by which we mean a smooth affine group scheme over $X$ such that $\mG|_{\mO_v}$ is a parahoric group scheme of $G_v$ in the sense of Bruhat-Tits.
We will consider the compactly supported cohomology of the moduli of $\mG$-Shtukas. For basic constructions and facts about the moduli of $\mG$-Shtukas, we refer to \cite{L}.

We fix a level $K\subset\mG(\bO)$. Let $S_K$ be the set of places $v$ such that $K_v\neq \mG(\mO_v)$, and $S\supset S_K$ the set of places where $K_v$ is not hyperspecial.
For a finite set $I$, let $\Sht_{(X-S_K)^I,K}$ denote the moduli of $\mG$-shtukas on $X$ with $I$-legs in $X-S_K$ and with $K$-level structure.
This is an ind-Deligne-Mumford stack over $(X-S_K)^I$. 
The base change of it along the diagonal map $\overline\eta\to(X-S_K)^I$ is denoted by $\Sht_{\Delta(\overline\eta),K}$. 
For every representation $V$ of $({}^cG)^I$ on a finite projective $\Lambda$-module, the geometric Satake correspondence provides a perverse sheaf $\on{Sat}(V)$ on $\Sht_{\Delta(\overline\eta),K}$ (in fact, it is defined on $\Sht_{(X-S)^I,K}$). 
Let 
$$C_c\bigl(\Sht_{\Delta(\overline\eta),K},\Sat(V)\bigr)\in\Mod_{H_K}$$ denote the (cochain complex of the) total compactly supported cohomology of $\Sht_{\Delta(\overline\eta),K}$ with coefficient in $\Sat(V)$, on which the corresponding global (derived) Hecke algebra (with coefficients in $\Lambda$) $H_K=C_c(K\backslash G(\bA)/K,\Lambda)$ acts. 
When $V=\mathbf{1}$ is the trivial representation, we have
$$C_c\bigl(\Sht_{\Delta(\overline\eta),K}, \on{Sat}(\mathbf{1})\bigr)=\bigsqcup_{\xi\in \ker^1(F,G)} C_c(G^\xi(F)\backslash G(\bA)/K,\Lambda).$$
Here $ \ker^1(F,G)\subset H^1(F,G)$ consisting of those classes that are locally trivial, and for $\xi \in\ker^1(F,G)$, $G^\xi$ denotes the corresponding pure inner form of $G$; $G^\xi(F)\backslash G(\bA)/K$ is regarded as a discrete DM stack over $\overline\eta$, and $C_c(G^\xi(F)\backslash G(\bA)/K,\Lambda)$ denotes its compactly supported cohomology.
When $\La=\bQ_\ell$ and $G$ satisfies the Hasse principle (e.g. $G$ is quasi-split), this is the space of compactly supported functions on $G(F)\backslash G(\bA)/K$.

Let $H^i_{I,V}=H^iC_c\bigl(\Sht_{\Delta(\overline\eta),K},\Sat(V)\bigr)$.
By  \cite{Xu,Xu2,Xu3}, the natural Galois action and the partial Frobenii action together induce a canonical $W_{F,S}^I$-action on $H^i_{I,V}$.  
The following statement can be regarded as a generalization of the main construction of \cite{LZ}.
\begin{theorem}\label{T: gen LZ}
Assume that $\La=\bQ_\ell$ and regard $\Loc_{{}^cG,F,S}$ as an algebraic stack over $\bQ_\ell$.
Then for each $i$, there is a quasi-coherent sheaf $\frakA^i_\Lambda$ on ${}^{cl}\Loc_{{}^cG,F,S}$, equipped with an action of $H_K$, such that for every finite dimensional representation $V$ of $({}^cG)^I$, there is a natural $(H_K\times W_{F,S}^I)$-equivariant isomorphism
\begin{equation}\label{E: coh Sht}
H^i_{I,V}\cong \Gamma\big({}^{cl}\Loc_{{}^cG,F,S},({}_{W_{F,S}}V)\otimes \frakA^i_\Lambda\big),
\end{equation}
where ${}_{W_{F,S}}V$ is the vector bundle on $\Loc_{{}^cG,F,S}$ equipped with an action by $W_{F,S}^I$ as in  Remark \ref{R: fiber action}.
\end{theorem}
\begin{proof}
As explained in \cite[\S 5]{LZ}, for a representation $V$ of $\hat{G}\times ({}^cG)^I$, we can define $H_{\{0\}\cup I,V}^i$, which admits an action of $H_K\times W_{F}^I$, such that if the restriction of $V$ to the $\hat{G}$-factor is trivial then $H_{\{0\}\cup I,V}^i=H_{I,V}^i$.
In particular, we have the $H_K$-module $H_{\{0\},\on{Reg}}^i$, where $\on{Reg}$ denotes the regular representation of $\hat{G}$. 

We regard $W_{F,S}$ as an abstract group and consider ${}^{cl}\mR_{W_{F,S},{}^cG}$. The construction of \cite[\S 6]{LZ} gives a homomorphism $\bQ_\ell[{}^{cl}\mR_{W_{F,S},{}^cG}]\to \End(H_{\{0\},\on{Reg}}^i)$. Let $A^i$ be the image of the map.
For $f\in \bQ_\ell[{}^cG]$ and $\ga\in W_{F,S}$, we have the regular function $F_{f,\ga}$ on ${}^{cl}\mR_{W_{F,S},{}^cG}$ given by
$F_{f,\ga}(\rho)=f(\rho(\ga))$. Let $\bar{F}^i_{f,\ga}$ be the image of $F_{f,\ga}$ in $A^i$. Note that when it is regarded as a representation of $\pi_1(\overline Y)^I$, $H^i_{I,V}$ is a union of finite dimensional continuous subrepresentations.
Then the argument as in \cite[6.2]{LZ} and in Lemma \ref{L: cont criterion} shows that the map $\pi_1(\overline Y)\to A^i,\ \ga\mapsto \bar{F}_{f,\ga}$ is continuous, if $A^i$ is equipped with the ind-$\ell$-adic topology. Therefore, we have the factorization
\[
\on{Spec} A^i \to {}^{cl}\mR^{sc}_{W_{F,S},{}^cG}\to {}^{cl}\mR_{W_{F,S},{}^cG}.
\]
So $H_{\{0\},\on{Reg}}^i$ can be regarded as a quasi-coherent sheaf on  ${}^{cl}\mR^{sc}_{W_{F,S},{}^cG}$. As explained in \cite{LZ},
there is also $\hat{G}$-action on $H_{\{0\},\on{Reg}}^i$ compatible with the action of $A^i$, so $H_{\{0\},\on{Reg}}^i$  descends to a quasi-coherent sheaf $\frakA_\Lambda^i$ on ${}^{cl}\mR^{sc}_{W_{F,S}, {}^cG/\hat{G}}$. 
It follows from construction that $\frakA_\Lambda^i$ is supported on ${}^{cl}\Loc_{{}^cG,F,S}$ and the argument as in \cite{LZ} shows that \eqref{E: coh Sht} holds.
\end{proof}

\begin{remark}As explained in \cite{LZ}, the sheaf $\frakA^i_\Lambda$ is in fact the pullback of a quasi-coherent sheaf on $\bigl({}^{cl}\Loc_{{}^cG,F,S}^\Box/ (\hat{G}/Z_{\hat{G}}^{\Ga_F})\bigr)\otimes \bQ_\ell$. We expect that each $\frakA^i_\Lambda$ is coherent. 
\end{remark}

\begin{example}\label{E: coh elliptic}
Assume that $G$ is semisimple (for simplicity), and recall elliptic Langlands parameters from Example \ref{E: ell part}. It follows that the localization of $\frakA_\Lambda^i$ at an elliptic $\rho$, denoted by $\frakA_{K,\rho}^i$, is an $\Ql$-vector space equipped with an action of $H_K\times S_\rho$. Then the localization of $H^i_{I,W}$ at $\rho$ is isomorphic to $(\frakA_{K,\rho}^i\otimes W_\rho)^{S_\rho}$. Therefore, Theorem \ref{T: gen LZ} recovers the main result of \cite{LZ} (except the finite dimensionality of $\frakA_{K,\rho}^i$). We refer to \emph{loc. cit.} for the relation between this formula and the Arthur-Kottwitz multiplicity formula.
\end{example}

\begin{remark}
\begin{enumerate}
\item The idea that something like \eqref{E: coh Sht} should exist is due to Drinfeld, as an interpretation of certain construction of \cite{L}. As explained in \cite{Ga,GKRV,AGK}, (the derived version of) the isomorphism \eqref{E: coh Sht} should follow by taking categorical trace of a categorical geometric Langlands correspondence. 

\item We do not expect Theorem \ref{T: gen LZ} holds in general when $\La=\bZ_\ell$. The problem is that neither the functor $V\mapsto H^i_{I,V}$ nor the functor $\Ga({}^{cl}\Loc_{{}^cG,F,S},-)$ is $t$-exact for integral coefficients. However, we do expect a derived version of \eqref{E: coh Sht} holds when individual cohomology groups in the formula are put together as the total cochain complex $C_c\bigl(\Sht_{\Delta(\overline{\eta}),K},\Sat(V)\bigr)$, and individual $\frakA_\Lambda^i$s are put together as a quasi-coherent complex on $\Loc_{{}^cG,F,S}$. A precise conjecture is given below.
\end{enumerate}
\end{remark}

In \cite{LZ}, in light of the Arthur-Kottwitz conjecture, we conjecture that $\frakA^i_\Lambda$ factorizes as a tensor product of local factors. 
Now we further conjecture that these local factors should exactly be the coherent sheaves appearing in Conjecture \ref{Conj: coh sheaf}.  For simplicity, we will assume from now until the end of this subsection that the center $Z_G$ of $G$ is connected.

To formula the precisely conjecture, first note that 
we can define analogous $\Wh_G$, $\Tor_{G,\iso_F}$ and $\TS_G$ (as introduced in \S \ref{S: Aux}) in the global setting, by the same construction with the completion of a maximal unramified extension of a local field there replaced by the maximal unramified extension of $F$ in the global case.
The set of isomorphism classes of $\Tor_{G,\iso_F}$ is still denoted by $B(G)$. The subset of basic elements $B(G)_{\on{bsc}}$ is defined analogously. A global basic element of $G$ gives a local basic element for $G_v$ at every place (whose image in $\xch(Z_{\hat{G}}^{\Ga_v})$ is zero for almost all $v$) and there is following exact sequence of pointed sets
\begin{equation*}\label{E: local-global B(G)}
B(G)_{\on{bsc}}\to \oplus_v B(G_v)_{\on{bsc}}\to \xch(Z_{\hat{G}}^{\Ga_F}).
\end{equation*}

Now we fix a non-trivial character $\psi_0: F\backslash \bA\to \La^\times$, and
 fix a global element $t\in \TS_G$. These data induce the corresponding data at every local place. 
Then we have the functor $\frakA_{G_v}$ at every place $v$ as in Conjecture \ref{Conj: coh sheaf}. If $K_v\subset G_v$ is an open compact subgroup, we sometimes write $\frakA_{K_v}$ instead of $\frakA_{G_v,K_v}$ for simplicity.

Recall that we fix a level structure $\Lambda$. By enlarging the set $S$ if necessary, we may assume that for every $v\not\in S$, $t_v\in \Wh_{G_v}$,
$K_v$ is hyperspecial determined by the pinning (up to $G(F_v)$-conjugacy). 
We denote by $\boxtimes_{v\in S} \frakA_{K_v}$ the external tensor product of those coherent sheaves on $\prod_{v\in S}\Loc_{v}$, and by $\res^!(\boxtimes_{v\in S} \frakA_{K_v})$ its  $!$-pullback to $\Loc_{{}^cG,F,S}$ via \eqref{E: global-to-local}. 
By our expectation \eqref{E: change local normalization}, $\res^!(\boxtimes_{v\in S} \frakA_{K_v})$ should be independent of the choices of $t\in \TS_G$ (and $\psi_0$) and descends to a quasi-coherent sheaf on $\Loc_{{}^cG,F,S}^\Box/ (\hat{G}/Z_{\hat{G}}^{\Ga_F})$.

\begin{conjecture}\label{Con: local-global}
For every representation $V$ of $({}^cG)^I$ on free $\Lambda$-module,
there is a canonical $(H_{K}\times W_{F,S}^I)$-equivariant isomorphism
\[
C_c\bigl(\Sht_{\Delta(\overline\eta),K}, \on{Sat}(V)\bigr)\cong \Gamma\bigl(\Loc_{{}^cG,F,S}, ({}_{W_{F,S}}V)\otimes \res^!(\boxtimes_{v\in S} \frakA_{K_v})\bigr).
\]
\end{conjecture}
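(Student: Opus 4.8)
The plan is to deduce the conjecture from a categorical geometric Langlands correspondence over $X$ by taking Frobenius‑twisted categorical traces, feeding in the local Conjectures~\ref{Conj: coh sheaf} and~\ref{Conj: cat LLC} to identify the local factors; the argument will therefore be conditional on those statements (and, for function fields, on the de Jong–type input already used in Theorem~\ref{P: Loc global function}). The first step is to set up the relevant equivalence with ramification: for the curve $X$ with parahoric ramification along the finite set $S$ prescribed by $\mG$ and the level $K$, one expects an equivalence between a suitable $k$‑linear category of sheaves on $\Bun_{\mG,K}$ — equivalently the ``local'' category $\Shv(\frakB(\mG)_K,k)$ built as in \S\ref{SS: cat LLC} — and $\Ind\Coh$ (with nilpotent singular support) of the \emph{geometric} stack $\Loc^{\mathrm{geom}}_{{}^cG,X\setminus S}$ of ${}^cG$‑local systems on $X\setminus S$. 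Here the geometric Satake functor should intertwine the Hecke action of $\Rep(({}^cG)^I)$ with tensoring by the universal bundles ${}_{X^I}V$ on $\Loc^{\mathrm{geom}}$; this is the globalization over $X\setminus S$ of Theorem~\ref{T: spectral hs}, and it is what will ultimately make $\Sat(V)$ correspond to the bundle ${}_{W_{F,S}}V$.

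The second step is to take the $\sigma$‑twisted categorical trace of this equivalence, in the sense sketched in~\cite{Ga} and~\cite{GKRV}. On the automorphic side, the trace of $\Shv(\frakB(\mG)_K,k)$ should compute the total compactly supported cohomology $C_c(\Sht_{\Delta(\overline\eta),K},-)$ of the moduli of $\mG$‑Shtukas with all legs brought to the generic point, together with its $H_K$‑action; the partial‑Frobenii structure of~\cite{Xu,Xu2,Xu3} upgrades this to the $W_{F,S}^I$‑action on the Hecke‑modified pieces. On the spectral side, the trace of $\Ind\Coh(\Loc^{\mathrm{geom}}_{{}^cG,X\setminus S})$ is $\Ind\Coh(\Loc_{{}^cG,F,S})$: the arithmetic parameter stack of \S\ref{SS: glob par} is the derived Frobenius‑fixed‑point stack of its geometric counterpart, and it is here that non‑emptiness of $S$ and the de Jong bound enter to keep things quasi‑compact and to control the singular support (cf. Lemma~\ref{L: nilp=sing}). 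Comparing the two traces yields a derived refinement of Theorem~\ref{T: gen LZ}: a quasi‑coherent complex $\frakA_K$ on $\Loc_{{}^cG,F,S}$ with $C_c(\Sht_{\Delta(\overline\eta),K},\Sat(V))\cong\Gamma(\Loc_{{}^cG,F,S},({}_{W_{F,S}}V)\otimes\frakA_K)$, equivariantly for $H_K\times W_{F,S}^I$.

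The third step is to identify $\frakA_K$ with $\res^!(\boxtimes_{v\in S}\frakA_{K_v})$. The crucial point is that $\Shv(\frakB(\mG)_K,k)$ factorizes over the places of $X$: at $v\notin S$ the local factor is the unramified (spherical) category, at $v\in S$ it is $\Shv(\frakB(G_v),k)$ carrying the distinguished object $\delta_{K_v}$, and the global category is the restricted tensor product of these along the generic point. Since $S$ is finite, taking the $\sigma$‑twisted trace commutes with this restricted tensor product, so the spectral trace of the global category is the $!$‑pullback along the global‑to‑local morphism~\eqref{E: global-to-local} of the external product of the \emph{local} traces of the $\delta_{K_v}$. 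By the function‑field case of Conjecture~\ref{Conj: cat LLC} — concretely Theorem~\ref{T: tame local Langlands} and its anticipated extensions beyond Iwahori level, i.e. the Frobenius‑twisted trace of Bezrukavnikov's tame local equivalence~\cite{Be} — the local trace of $\delta_{K_v}$ is precisely $\frakA_{G_v,K_v}=\frakA_{G_v}(\delta_{K_v})$. Assembling these, and checking that the change‑of‑normalization relation~\eqref{E: change local normalization} for the local data $t_v,\psi_{0,v}$ glues into the global choice of $t\in\TS_G$ and $\psi_0$, gives the stated isomorphism. As a sanity check, the $k=\Ql$, cohomology‑group‑by‑cohomology‑group specialization recovers Theorem~\ref{T: gen LZ}, and its localization at an elliptic parameter (Example~\ref{E: coh elliptic}) recovers the Arthur–Kottwitz‑type formula of~\cite{LZ}.

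\textbf{Main obstacle.} The hard part is the first two steps with \emph{integral} $k=\bZ_\ell$‑coefficients and with genuine parahoric ramification. The categorical geometric Langlands equivalence with the correct (nilpotent) singular‑support condition, and the accompanying ``trace $=$ cohomology of Shtukas'' statement, are at present available only in fragments and largely with $\Ql$‑coefficients; the version needed here, at parahoric level and for a general Bruhat–Tits group scheme $\mG$, is not known. Closely tied to this is the failure of $t$‑exactness: neither $V\mapsto C_c(\Sht_{\Delta(\overline\eta),K},\Sat(V))$ nor $\Gamma(\Loc_{{}^cG,F,S},-)$ is $t$‑exact integrally, so the whole comparison must be carried out at the level of complexes and the matching of $t$‑structures is delicate — this is exactly why the conjecture is phrased for the total complex rather than for individual cohomology groups. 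A realistic intermediate target is therefore the $k=\Ql$ statement, obtained by upgrading the sheaf $\frakA^i_K$ of Theorem~\ref{T: gen LZ} with the local factorization supplied by Theorem~\ref{T: tame local Langlands}; the integral statement, and the passage through the local categorical conjectures, would then follow once those inputs become available.
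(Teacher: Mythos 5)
This is stated in the paper as Conjecture~\ref{Con: local-global}: the paper offers no proof, only evidence — Theorem~\ref{T: gen LZ} (the cohomology‑degree‑by‑degree statement over $\Ql$, without the local factorization), the two $\bP^1$ examples worked out after the conjecture, and the remark after Theorem~\ref{T: gen LZ} pointing to a categorical trace of geometric Langlands as in \cite{Ga,GKRV,AGK}. Your proposal reconstructs exactly the strategy the paper indicates: a categorical geometric Langlands equivalence for the ramified curve, a $\sigma$‑twisted categorical trace producing $C_c(\Sht_{\Delta(\overline\eta),K},\Sat(V))$ on one side and $\Gamma(\Loc_{{}^cG,F,S},({}_{W_{F,S}}V)\otimes\frakA_K)$ on the other, and then the identification $\frakA_K\cong\res^!(\boxtimes_{v}\frakA_{K_v})$ via the local Conjecture~\ref{Conj: cat LLC} (the Frobenius‑twisted trace of local equivalences), which is also what the paper packages as the more general Conjecture~\ref{C: gen coh}. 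You also correctly list the conditional inputs (Conjectures~\ref{Conj: coh sheaf}, \ref{Conj: cat LLC}, de Jong) and identify the principal obstacle (integral coefficients; failure of $t$‑exactness of both sides), as well as the realistic intermediate target of upgrading Theorem~\ref{T: gen LZ} by the local factorization supplied by Theorem~\ref{T: tame local Langlands}. One caution worth flagging: your heuristic that $\Loc_{{}^cG,F,S}$ arises as a derived Frobenius‑fixed‑point stack of a geometric $\Loc$ stack, and that the spectral trace of $\Ind\Coh$ of the latter is $\Ind\Coh$ of the former, is not how \S\ref{SS: glob par} constructs the stack (that construction goes through continuous representations of $W_{F,S}$, with representability resting on de Jong); the paper's own remark says only that the \cite{AGK} definition is ``a priori different'' from its $\Loc_{{}^cG,F,S}\otimes\Ql$ but ``probably'' isomorphic. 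So this identification must be treated as an additional hypothesis of the same order as the others, not a formal step.
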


Note that the conjecture is consistent with enlarging $S$, as $\frakA_{K_w}\cong \mO_{\Loc^{\on{unr}}_w}$ when $K_w$ is hyperspecial (and is determined by $t_w$), and we have the Cartesian diagram by Lemma \ref{R: Cartesian local-to-global}. 

\begin{remark}\label{R: mid deg}
Suppose (for simplicity) $G$ is of adjoint type. Let $\rho$ be an elliptic Langlands parameter as in Example \ref{E: ell part}.
As $\rho$ is isolated smooth, the localization of  $({}_{W_{F,S}}V)\otimes \res^!(\boxtimes_{v\in S} \frakA_{K_v})$ at $\rho$ is a complex of vector spaces given by  $V\otimes (\otimes_{v\in S} \frakA_{K,v}^!)$, where $\frakA_{K,v}^!$ denotes the $!$-fiber of $\frakA_{K,v}$ at $\rho_v:=\rho|_{W_v}$. As $\Ad_\rho$ is pure of weight zero, each $\rho_v$ is a smooth point of $\Loc_v$ (Proposition \ref{P: sm pt}).
Note that $\frakA_{K_v}$ should be a maximal Cohen-Macaulay ordinary coherent sheaf (Conjecture \ref{Conj: coh sheaf} \eqref{Conj: coh sheaf-2}). This would imply that $\frakA_{K,v}^!$ sits in cohomological degree zero. It follows that $\frakA_{K,\rho}^i$ from Example \ref{E: coh elliptic} should vanish unless $i=0$. This is consistent with the general expectation.
\end{remark}

\begin{example}
We make this conjecture more explicit in the everywhere unramified case, i.e. $\mG$ is reductive over $X$ and $K=\mG(\bO)$. In this case, we can consider $\Loc_{{}^cG,X}=\Loc_{{}^cG,F,\emptyset}$ as in Remark \ref{R: everywhere unramified loc}. As $\frakA_{K_v}\cong \mO_{\Loc^{\ur}_v}\cong \omega_{\Loc^{\ur}_v}$, Conjecture \ref{Con: local-global} in this case reduces to
\[
C_c\bigl(\Sht_{\Delta(\overline\eta),K}, \on{Sat}(V)\bigr)\cong \Gamma\bigl(\Loc_{{}^cG,X},({}_{W_F}V)\otimes \omega_{\Loc_{{}^cG, X}}\bigr).
\]
We note that when $G$ is split and $\La=\overline\bQ_\ell$, this formula is also independently conjectured in \cite{AGK}\footnote{Except that the definition of $\Loc_{{}^cG,X}$ in \emph{loc. cit.} is a priori different.}. 

We further specialize to the case where $X=\bP^1$, and $V=\mathbf{1}$ is the trivial representation of ${}^cG$. In this case, $G$ necessarily is quasi-split and split over an extension of the field of constant $\bF_{q'}/\bF_q$. Then as mentioned before,
$C_c\bigl(\Sht_{\Delta(\overline\eta),K}, \on{Sat}(\mathbf{1})\bigr)$ is just the compactly supported cohomology of $G(F)\backslash G(\bA)/G(\bO)$, regarded as a discrete DM stack. If $\La=\bQ_\ell$, this is the space of compactly supported functions on $G(F)\backslash G(\bA)/G(\bO)$.

We regard the characteristic function the double coset $G(F)\backslash G(F)G(\bO)/G(\bO)$ as a map $k\to C_c(G(F)\backslash G(\bA)/G(\bO),\Lambda)$. 
The action of the derived Hecke algebra $H_{K_0}=\End \delta_{K_0}$ at $0\in\bP^1$ on $H_{\emptyset}(\mathbf{1})=C_c(G(F)\backslash G(\bA)/G(\bO),\Lambda)$ induces a derived version of the Radon transform
\[
H_{K_0}\cong H_{K_0}\otimes k\to H_{K_0}\otimes C_c(G(F)\backslash G(\bA)/G(\bO),\Lambda)\to C_c(G(F)\backslash G(\bA)/G(\bO),\Lambda),
\]
which is an isomorphism by an argument similar to the underived version (see \cite{HZ} for details). Then we have the following commutative diagram
\[
\xymatrix{
H_{K_0}\ar^-\cong[r]\ar_{\mathrm{Conj.}\ \ref{C: derSat}}^\cong[d]& C_c(G(F)\backslash G(\bA)/G(\bO),\Lambda)\ar^{\mathrm{Conj.}\ \ref{Con: local-global}}_\cong[d]\\
\End_{\Loc_0^\ta}\mO_{\Loc_0^\ur}\ar^-\cong[r]&  \Gamma\bigl(\Loc_{{}^cG,\bP^1},\omega_{\Loc_{{}^cG, \bP^1}}\bigr),
}\]
where the bottom isomorphism follows from \eqref{E: case of P1}.
Therefore, Conjecture \ref{C: derSat} implies Conjecture \ref{Con: local-global} in this special case. As Conjecture \ref{C: derSat} holds when $\La=\bQ_\ell$ (see Remark \ref{R: derSat}), so is Conjecture \ref{Con: local-global} in this special case.
As also mentioned in Remark \ref{R: derSat}, this in particular implies that over $\bQ_\ell$, $\Gamma\bigl(\Loc_{{}^cG,\bP^1},\omega_{\Loc_{{}^cG, \bP^1}}\bigr)$ concentrates in degree zero (however one can show that the cohomological amplitude of the sheaf $\omega_{\Loc_{{}^cG,\bP^1}}$ is unbounded from above.) 
\end{example}

\begin{example}
We still assume $\mG$ is reductive but with $K_v$ Iwahori subgroup of $\mG(\mO_v)$ for $v\in S$. Then $\frakA_{K_v}\cong \pi^{\un}_*\mO_{\Loc_{{}^cB,F_v}^{\un}}\cong \pi^{\un}_*\omega_{\Loc_{{}^cB,F_v}^{\un}}$ when $v\in S$.
We consider 
\[
\widetilde{\Loc}^{\un}_{{}^cG,X,S}:=\Loc^{\ta}_{{}^cG,X,S}\times_{\prod_v \Loc^{\ta}_v}\prod \Loc^{\un}_{{}^cB,F_v}.
\]
Then Conjecture \ref{Con: local-global} in this case reduces to
\[
C_c\bigl(\Sht_{\Delta(\overline\eta),K},\on{Sat}(V)\bigr)\cong \Gamma\bigl(\widetilde{\Loc}^{\un}_{{}^cG,X,S},({}_{W_F}V)\otimes\omega_{\widetilde{\Loc}^{\un}_{{}^cG, X,S}}\bigr).
\]
Again, in the special case when $X=\bP^1$, $S=\{0,\infty\}$ and $W=\mathbf{1}$, Conjecture \ref{Con: local-global} follows from Conjecture \ref{C: REnd of Spr}. In particular, it holds when $\La=\overline\bQ_\ell$. We refer to \cite{HZ} for details.
\end{example}

To make analogy between moduli of Shtukas and Shimura varieties, we generalize the above conjecture, using the 
formalism of the conjectural categorical local Langlands correspondence from \S \ref{SS: cat LLC}.
Fix a finite set $T$ of places. 
For a (possibly empty) finite set $I$,
let $\Sht_{(X-T)^I,T}$ be the moduli of $\mG$-shtukas on $X$ with $I$-legs in $X-T$ and extra legs at every $v\in T$. We simply write $\Sht_T$ instead of $\Sht_{(X-T)^\emptyset, T}$.
For each $v\in T$, we choose a uniformizer $\varpi_v\in \mO_v$, and regard $\mG_{\mO_v}$ as a parahoric group scheme over $\bF_q[[\varpi_v]]$, denoted by $\mG_v$. Then we have the moduli of local $\mG_v$-shtukas \eqref{E: Shtloc}.
There is a natural a morphism
\[
\Sht_{(X-T)^I,T}\xrightarrow{\res} \prod_{v\in T} \Sht^{\loc}_{v}
\]
by restricting global Shtukas on $X$ to local Shtukas with legs at $v\in T$. As before, let $\Sht_{\Delta(\overline\eta),T}$ denote the base change of $\Sht_{(X-T)^I,T}$ along $\overline\eta\to X-T\xrightarrow{\Delta}(X-T)^I$.

Now let $T=S$ be a set of places such that if $v\not\in S$ then $\mG(\mO_v)$ is reductive and is determined by $t_v$. At each place $v\in S$ we choose $\mK_{v}\in \Shv(\Sht^{\loc}_{v})$.  This collection of sheaves will serve as the chosen ``generalized level structure" at $v\in S$.
Proper push-forward of $\mK_{v}$ along the Newton map $\on{Nt}_{v}:\Sht^{\loc}_{v}\to \frakB(G_{v})$ should correspond a(n ind-)coherent sheaf $\frakA_{\mK_{v}}$ on $\Loc_{v}$ via Conjecture \ref{Conj: cat LLC}. 

\begin{conjecture}\label{C: gen coh}
For $V\in \Rep({}^cG^I)$, we have
\[
C_c\bigl(\Sht_{\Delta(\overline\eta),S}, \Sat(V)\otimes \res^!(\boxtimes_{v\in S}\mK_{v})\bigr)\cong \Gamma\bigl(\Loc_{{}^cG,F,S},({}_{W_F}V)\otimes  \res^!(\boxtimes_{v\in S} \frakA_{\mK_{v}})\bigr).
\]
\end{conjecture}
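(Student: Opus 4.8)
The plan is to follow the strategy of \cite{LZ} (and its generalization in Theorem \ref{T: gen LZ}), upgrading it from individual cohomology groups over $\bQ_\ell$ to the full cochain complex over a general noetherian $\bZ_\ell$-algebra $k$, and then to identify the resulting abstract coherent complex with the explicit tensor product $\res^!(\boxtimes_{v\in S}\frakA_{\mK_v})$ using the local inputs of Conjectures \ref{Conj: coh sheaf} and \ref{Conj: cat LLC}. First I would establish the ``global'' side of the isomorphism purely formally: by the $h$-descent picture behind $\Shv(\frakB(G),k)$ sketched in \S \ref{SS: cat LLC}, the complex $C_c\bigl(\Sht_{\Delta(\overline\eta),S},\Sat(V)\otimes\res^!(\boxtimes_{v\in S}\mK_v)\bigr)$, together with its Hecke and partial-Frobenius symmetries, should be expressible as a categorical trace (over $\overline\bF_q$, with the $I$-legs) of the tensor product of the local categories $\Shv(\frakB(G_v),k)$ equipped with the objects $\mK_v$, after pushing forward along the Newton maps. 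This is the function-field incarnation of Drinfeld's heuristic, and it converts the left-hand side into a problem about $\Ind\Coh$ of the \emph{local} stacks $\frakB(G_v)$, one place at a time.

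Next I would import the local categorical equivalence. For each $v\in S$, Conjecture \ref{Conj: cat LLC} furnishes $\bL_{G_v}:\Shv(\frakB(G_v),k)\xrightarrow{\ \sim\ }\Ind\Coh_{\hat\mN_{{}^cG_v}}(\Loc_v)$ carrying $\on{Nt}_{v,*}\mK_v$ to $\frakA_{\mK_v}$; for $v\notin S$ the sheaf is $\omega_{\Loc_v^{\ur}}\cong\mO_{\Loc_v^{\ur}}$ (Conjecture \ref{Conj: coh sheaf}\eqref{Conj: coh sheaf-3}), matched with the unit object via Theorem \ref{T: spectral hs}. Assembling these and using that $\res:\Loc_{{}^cG,F,S}\to\prod_{v\in S}\Loc_v\times\prod_{w\notin S}\Loc_w^{\ur}$ is the ``restriction of global to local'' morphism, one rewrites the trace of the product over $S$ as $\res^!$ of the external box product, glued along $\Loc_{{}^cG,F,S}$; the Cartesian squares of Lemma \ref{R: Cartesian local-to-global} guarantee compatibility with enlarging $S$, so the answer is independent of the auxiliary choices of $t\in\TS_G$ and $\psi_0$ (via the expected \eqref{E: change local normalization}). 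The leg variable $V$ enters through the geometric Satake functor at $0$, which under $\bL_G$ becomes the pullback of $\Rep(\hat{G},k)$ along $\Loc_{{}^cG,F,S}\to\bB\hat{G}$, i.e.\ tensoring by the vector bundle ${}_{W_F}V$ as in Remark \ref{R: fiber action}. Matching the $W_{F,S}^I$-action on the two sides then follows from the compatibility of partial Frobenii with the Satake functor, exactly as in \cite{Xu,Xu2,Xu3} and \cite{LZ}.

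The main obstacle is that essentially every ingredient above is itself conjectural: Conjecture \ref{C: gen coh} is a consequence of the \emph{conjectural} categorical local Langlands equivalence $\bL_{G_v}$ (Conjecture \ref{Conj: cat LLC}), the \emph{conjectural} coherent sheaves $\frakA_{\mK_v}$ (Conjecture \ref{Conj: coh sheaf}), and the \emph{conjectural} trace form of the categorical geometric Langlands correspondence for function fields. So a genuine proof in full generality is out of reach; what is realistic is to prove it in cases where these local inputs are theorems. Concretely, I would carry out the argument unconditionally in the following regimes, which I expect to be the real content: (i) $k=\overline\bQ_\ell$, $G$ unramified with a pinning, $\mK_v$ ranging over the objects hit by the embedding $\Coh(\Loc_v^{\un})\hookrightarrow\Shv(\frakB(G_v),\overline\bQ_\ell)$ of Theorem \ref{T: tame local Langlands} (so $\frakA_{\mK_v}$ is a coherent Springer-type sheaf), recovering Conjecture \ref{Con: local-global} at Iwahori level as in the last Example; (ii) the everywhere-unramified, $X=\bP^1$, trivial-$V$ case, where the statement collapses to the derived Satake isomorphism Conjecture \ref{C: derSat}, which holds over $\overline\bQ_\ell$ by Theorem \ref{T: REnd of Spr} (Remark \ref{R: derSat}); and (iii) $k=\bQ_\ell$ with an elliptic parameter localization, where smoothness and maximal Cohen--Macaulayness of the local factors (Proposition \ref{P: sm pt}, Remark \ref{R: mid deg}) force concentration in a single cohomological degree and the statement reduces to Theorem \ref{T: gen LZ}. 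The secondary technical difficulty, even in these cases, is bookkeeping the normalization twists (the shift $[(2\rho,\nu_b)]$ of Remark \ref{R: AJb}, the line bundles $\mL_\theta$, the dependence on $\psi_0$) so that the two sides match on the nose rather than up to an undetermined twist; this I would handle by reducing to the Whittaker-normalized case $\beta=0$, where Conjecture \ref{Conj: coh sheaf}\eqref{Conj: coh sheaf-2} pins down $\frakA_{G_v}(\on{Whit}_{U_v,\psi_v})\cong\mO_{\Loc_v}$ and hence fixes all twists uniquely.
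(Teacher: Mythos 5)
The statement you were asked to prove is stated in the paper as a \emph{conjecture}, and the paper offers no proof of it — it only reformulates it (in the remark immediately following, as the agreement of two functors $\prod_v\Shv(\frakB(G_v),k)\to \Ind\Coh(\Loc_{{}^cG,F,S})$, one built from moduli of Shtukas and one from Conjecture \ref{Conj: cat LLC}, with details deferred to \cite{HZ}) and then derives consequences in the examples that follow. Your proposal correctly identifies this: you recognize that the statement depends on the conjectural categorical local Langlands equivalence (Conjecture \ref{Conj: cat LLC}), the conjectural coherent sheaves $\frakA_{\mK_v}$ (Conjecture \ref{Conj: coh sheaf}), and a categorical-trace formulation of geometric Langlands — which is exactly the paper's implicit mechanism — and your list of unconditional special cases (unramified/Iwahori over $\overline\bQ_\ell$ via Theorems \ref{T: REnd of Spr} and \ref{T: tame local Langlands}, the everywhere-unramified $\bP^1$ case, the elliptic-localization case via Theorem \ref{T: gen LZ}) lines up with the examples the paper itself gives after the conjecture. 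In short, there is no proof in the paper to compare against; your proposal is a faithful and honest reconstruction of the paper's intended reduction strategy and its current status, rather than a proof.
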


\begin{remark}
There is a more conceptual formulation of this conjecture, saying two functors 
$\prod_v\Shv(\frakB(G_v),\Lambda)\to \Ind\Coh(\Loc_{{}^cG,F,S})$, one constructed using cohomology of moduli of Shtukas and one obtained from Conjecture \ref{Conj: cat LLC}), are canonically isomorphic. We refer to \cite{HZ} for details.
\end{remark}

We discuss this conjecture in some special cases. 
\begin{example}\label{Ex: gen coh}
Let $K\subset \mG(\bO)$ be a level structure as in Conjecture \ref{Con: local-global}. Assume that $S\supset S_K$.
If at each $v\in S$, we take $\mK_{v}$ to be the push-forward of the constant sheaf along $[*/K_{v}]\to [*/\mG(\mO_{v})]\hookrightarrow \Sht^{\loc}_{v}$ (see \eqref{E: geom deltaK}), then Conjecture \ref{C: gen coh} gives back to Conjecture \ref{Con: local-global}, as $\Sat(V)\otimes \res^!(\boxtimes\mK_{v})$ is just the push-forward of $\Sat(V)$ along $\Sht_{\Delta(\overline\eta),K}\to \Sht_{\Delta(\overline\eta),S}$ and  $\frakA_{\mK_{v}}$ should exactly be $\frakA_{K_{v}}$ as predicted in Conjecture \ref{Conj: cat LLC}.
\end{example}

\begin{example}\label{Ex: nearby cycle}
Keep the above situation and specialize to $I=\{1\}$ so $V\in\Rep({}^cG)$. In addition, fix $v_0\in S$. Consider the following diagram
\[
\Sht_{X-S,S}\hookrightarrow\Sht_{X-(S-\{v_0\}),S-\{v_0\}}\hookleftarrow \Sht_S.
\]
Taking the nearby cycles of the sheaf $\Sat(V)\otimes \res^!(\boxtimes_{v\in S}\mK_{v})$ on $\Sht_{X-S,S}$ with respect to the above diagram gives a sheaf 
$R\Psi(\Sat(V)\otimes \res^!(\boxtimes\mK_{v}))$ on $\Sht_S\otimes\overline\bF_q$. It is known that there is a sheaf on $\Sht^\loc_{v_0}\otimes\overline\bF_q$, denoted by $\Sat(V)\star \mK_{v_0}$ such that 
$$R\Psi(\Sat(V)\otimes \res^!(\boxtimes\mK_{v}))\simeq \res^!(\boxtimes_{v\neq v_0}\mK_{v}\boxtimes(\Sat(V)\star \mK_{v_0})).$$
In addition, under Conjecture \ref{Conj: cat LLC}, $\Sat(V)\star \mK_{v_0}$ should correspond to $({}_{W_{v_0}}V)\otimes \frakA_{K_{v_0}}$. Now Conjecture \ref{C: gen coh} predicts a canonical isomorphism
\begin{multline*}
C_c\bigl(\Sht_{S}\otimes\overline\bF_q,  \res^!(\boxtimes_{v\neq v_0}\mK_{v}\boxtimes(\Sat(V)\star \mK_{v_0}))\bigr)\\
\cong \Gamma\bigl(\Loc_{{}^cG,F,S},  \res^!(\boxtimes_{v\neq v_0} \frakA_{K_{v}}\boxtimes(({}_{W_{v_0}}V)\otimes\frakA_{K_{v_0}}))\bigr).
\end{multline*}
In particular, the conjecture would imply that 
\[
C_c\bigl(\Sht_{\Delta(\overline\eta),S}, \Sat(V)\otimes \res^!(\boxtimes\mK_{v})\bigr)\cong C_c\bigl(\Sht_{S}\otimes\overline\bF_q, R\Psi(\Sat(V)\otimes \res^!(\boxtimes\mK_{v}))\bigr).
\]
\end{example}

\begin{example}\label{E: nearby cycle}
Suppose $G$ is quasi-split with a pinning.
Suppose $T\subset S$ is a collection of finite places with $\mG_v$ Iwahori given by the pinning for $v\in T$. For each $v$, choose $w_v\in\Omega_v$ (see \eqref{E: length zero}) in the Iwahori-Weyl group $\widetilde W_v$ of $G(\breve F_v)$, such that the sum of their images in $\xch(Z_{\hat{G}}^{\Ga_F})$ under the Kottwitz map is zero. Then the collection $\{w_v\}$ gives an inner form $G'$ of $G$ with an integral model $\mG'$ such that $\mG'_{\mO_v}=\mG_{\mO_v}$ for $v\not\in T$. 
We have the moduli of $\mG$-Shtukas $\Sht_S$ with legs at $S$ and the moduli of $\mG'$-Shtukas $\Sht'_S$ with legs at $S$.
Choose $\mK_v$ at $v\in T$ to be the push-forward of the constant sheaf along the closed embedding $L^+\mG_{v}\cdot w_v/\Ad_\sigma L^+\mG_{v}\to \Sht^\loc_{v}\otimes\overline\bF_q$ (see \eqref{E: delta on b}), and $\mK_v$ at $v\in S-T$ to be the sheaf associated to the level $\mG(\mO_v)$ as in Example \ref{Ex: gen coh}. 
Then 
$$C_c(\Sht_S\otimes\overline\bF_q, \res^!\boxtimes\mK_v)=C_c(\Sht'_S\otimes\overline\bF_q, \Lambda).$$
In this way, we see that the space of automorphic forms of $G'$ appears in the cohomology of Shtukas of $G$. One can use this to realize Jacquet-Langlands transfer via the cohomology of moduli of Shtukas, generalizing \cite{XZ}. We will not discuss details here as we shall formulate a conjecture in the Shimura variety setting. 
\end{example}

\begin{example}
Let us consider the Drinfeld modular varieties associated to $G$, which would be the analogue of Shimura varieties over function fields. We fix a place of $X$ degree one called $\infty$.
For simplicity, we assume that $G$ is split (with a pinning), and 
suppose $\mG$ is the group scheme over $X$ such that $\mG|_{X-\{\infty\}}=G\times (X-\{\infty\})$ and that
$\mG_\infty$ is the Iwahori group scheme (determined by the pinning). 

Let $V_\mu$ be a minuscule representation of $\hat{G}$ of highest weight $\mu$. The central character of $V_\mu$ is denoted by $[\mu]\in \xch(Z_{\hat{G}})$.
Let $w_\mu\in\Omega_\infty$ (see \eqref{E: length zero}) be the unique element in the Iwahori-Weyl group of $G(\breve F_\infty)$ such that its image in $\xch(Z_{\hat{G}})$ under the Kottwitz map is $-[\mu]$. 
We choose a level structure $K\subset G(\bO)$ for a finite set $S_K$ away from $\infty$. Then we define the Drinfeld modular variety $\on{Dr}_{K}(G,\mu)$ associated to $(G,\mu, K)$ as the moduli of $\mG$-Shtukas on $X$ with a leg at $\overline\eta$ of singularity bounded by $V_\mu$, a leg at $\infty$ with singularity bounded by $w_\mu$, and level structure $\Lambda$. For example, when $G=\GL_2$, $V_\mu$ is the dual standard representation of $\hat{G}=\GL_2$ (in which case we can take a representative of $w_\mu$ in $\GL_2(\breve F_\infty)$ as $\begin{pmatrix}  & 1\\ \varpi_\infty & \end{pmatrix}$ where $\varpi_\infty$ is a uniformizer of $F_\infty$),
 this gives back to the original Drinfeld modular curve.

The compactly supported cohomology $C_c(\on{Dr}_\Lambda(G,\mu),\Lambda)$ is a special case of the cohomology considered in Conjecture \ref{C: gen coh}. Namely, let $I=\{1\}$, $S=\{\infty\}\cup S_K$.
Let $\mK_\infty$ be the push-forward of the constant sheaf $\Lambda$ along
$[*/I_b]\cong L^+\mG_{\infty} \cdot w_\mu/\Ad_\sigma L^+\mG_{\infty}\subset \Sht^{\loc}_{\infty}$ (see \eqref{E: delta on b}), and let $\mK_{v}$ at other places $v\neq \infty$ in $S$ as in Example \ref{Ex: gen coh}. Then
\[
C_c\bigl(\on{Dr}_\Lambda(G,\mu),k\bigr)\cong C_c\bigl(\Sht_{\Delta(\overline\eta), S}, \Sat(V)\otimes \res^!(\boxtimes_{v\in S_K}\mK_{v} \boxtimes \mK_\infty)\bigr)
\]
On the other hand, we should have $\frakA_{\mK_\infty}\simeq \frakA_{G_b,I_b}$ by Conjecture \ref{Conj: cat LLC}. 
Then Conjecture \ref{C: gen coh} predicts
\[
C_c\bigl(\on{Dr}_\Lambda(G,\mu),k\bigr)\cong \Gamma\bigl(\Loc_{{}^cG,F,S}, {}_{W_{F,S}}V\otimes \res^!(\boxtimes_{v\in S_K}\frakA_{K_{v}}\otimes \frakA_{G_b,I_b})\bigr).
\]
\end{example}

\begin{example}\label{E: Igusa}
We can also consider the compactly supported cohomology of the so-called Igusa varieties. For simplicity, we assume that $G$ is split and $\mG=G\times X$. 
We fix a place $v_0$. 
Let $\Sht_{v_0,K}$ be the moduli of $\mG$-Shtukas on $X$ with a leg at $v_0$ and $K$-level structure at a set of finite places $S_K$ disjoint with $v_0$. We have
$\res: \Sht_{v_0,K}\to \Sht^{\loc}_{v_0}$.
Let $x$ be an $\overline\bF_q$-point of $\Sht^{\loc}_{v_0}$, i.e. a local Shtuka with leg at $v_0$. Let $b$ be the associated element in $B(G_{v_0})$. Then the automorphism $\Aut_x$ is an affine group scheme over $\overline\bF_q$, and we have  
$[*/\Aut_x]\to \Sht^{\loc}_{v_0}$. The central leaf $C_{v_0,K,x}$ in $\Sht_{v_0,K}$ is defined as the fiber product 
$$C_{v_0,K,x}:=\Sht_{v_0,K}\times_{\Sht^\loc_{v_0}}[*/\Aut_x],$$ 
while the Igusa variety is defined as the fiber product
\[
\on{Ig}_{v_0,K,x}:=\Sht_{v_0,K}\times_{\Sht^\loc_{v_0}} \{x\},
\]
which is an $\Aut_x$-torsor over $C_{v_0,K,x}$. 
The dimension of both are $d=(2\rho,\nu_b)$, where $\nu_b$ is the Newton point of $b$ (as in Remark \ref{R: AJb}).
Its compactly supported cohomology also appears in Conjecture \ref{C: gen coh}. Namely, let $I=\emptyset$ and $S=\{v_0\}\cup S_K$. Let $\mK_{v_0}=\varinjlim_m x_{m,!}\Lambda[d]$, where $x_{m}: [*/\Aut_{x,m}]\to [*/\Aut_x]\to\Sht^{\loc}_{v_0}$ and $\Aut_{x,m}\subset \Aut_x$ is a system of normal subgroups such that $\Aut_x/\Aut_{x,m}$ is (perfectly) of finite type.
Let $\mK_{v}$ ($v\in S_K$) be the sheaf associated to the level structure $K_v$ as in Example \ref{Ex: gen coh}. 
Then
\[
C_c\bigl(\on{Ig}_{v,K,x},\Lambda[d]\bigr)\cong C_c\bigl(\Sht_{S}, \res^!((\boxtimes_{v\in S_K}\mK_{v})\otimes \mK_{v_0})\bigr).
\]
Let $\frakA_{G_b,\{1\}}$ be the ind-coherent sheaf from Remark \ref{R: AJb}. Then Conjecture \ref{C: gen coh} predicts that
\[
C_c\bigl(\on{Ig}_{v,K,x},\Lambda[d]\bigr)\cong  \Gamma\bigl(\Loc_{{}^cG,F,S}, \res^!((\boxtimes_{v\in S_K}\frakA_{K_{v}})\boxtimes \frakA_{G_b,\{1\}})\bigr).
\]
\end{example}

Now we turn to the number field case. In fact, the work \cite{XZ} on the Jacquet-Langlands transfer via the cohomology of Shimura varieties motivated all the conjectures discussed here. Therefore, we should expect analogous conjectural formulas for the cohomology of Shimura varieties\footnote{It would be quite interesting to explore whether the cohomology of locally symmetric spaces admits similar descriptions.}, although we currently lack a description of $\frakA_{K_v}$ at places above $\ell$ and $\infty$. (In particular, the sheaf at $\ell$ or $\infty$ is expected to encode information about the "weights".) Additionally, we do not yet have a stack of global Langlands parameters in the number field case. Consequently, we defer a precise formulation of the analogues of Conjecture \ref{Con: local-global} and \ref{C: gen coh} for number fields to \cite{EZ}.

Here we formulate a conjecture, which would be a generalization of one of the main results of \cite{XZ}, and would imply the geometric realization of the Jacquet-Langlands correspondence between inner forms that are different at $\{p,\infty\}$ (the work \cite{XZ} only gives JL transfers between inner forms that are different at $\infty$). 
Let $(G,X)$ be a Shimura datum. Let $V_\mu$ denote the irreducible representation of $\hat{G}$ of highest weight $\mu$ associated to the Shimura cocharacter of $G$ in the usual way.
Let $p$ be a prime, and $\mG_p$ a parahoric model of $G_{\bQ_p}$. Let $K=K_pK^p$ be a level with $K_p=\mG_p(\bZ_p)$. Recall that we (for simplicity) assume that the center $Z_G$ of $G$ is connected. In addition, we make the following assumptions:
\begin{itemize}
\item The maximal anisotropic torus in $Z_G$ is anisotropic over $\bR$;
\item The group $G$ satisfies the Hasse principle;
\item The $G(\bR)$-conjugacy class $X$ of $h:\bS\to G_\bR$ is in fact a $G_\ad(\bR)$-conjugacy class.
\end{itemize}
The first assumption is essential in order to relate Shimura varieties with moduli of local shtukas.
The last two assumptions are imposed to simplify the exposition. They can be dropped if one considers certain union of Shimura varieties in the sequel. 

Let $\Sh_K(G,X)$ be the corresponding Shimura variety (defined over the reflex field $E$), and we assume that it has a canonical reduction mod $p$. 
Let $\Sh_{G,\mu,K}$ denote the perfection of the mod $p$ fiber base changed to $\overline\bF_p$. Let $\Sht^\loc_p$ denote the corresponding moduli of local $\mG_p$-shtukas with leg at $p$, also base changed to $\overline\bF_p$. We assume that there is a perfectly smooth morphism
\[
\res: \Sh_{G,\mu,K}\to \Sht^\loc_{p,\mu},
\]
where $\Sht^\loc_{p,\mu}\subset\Sht^{\loc}_p$ is the closed substack consisting of those local $\mG_p$-shtukas with singularities bounded by $\mu$ in appropriate sense.
We note that when $(G,X)$ is of abelian type, such mod $p$ fiber $\Sh_{G,\mu,K}$ is constructed in \cite{KP} and the morphism $\res$ is constructed in \cite{SYZ} under some mild restrictions.

Now for $\mK_p\in\Shv(\Sht^\loc_{p,\mu})$, we obtain a sheaf $\res^!\mK_p$ on $\Sh_{G,\mu,K}\otimes\overline\bF_p$. 
As in Conjecture \ref{C: gen coh}, we may consider the compactly supported cohomology $C_c(\Sh_{G,\mu,K}, \res^!\mK_p)$.
One can keep the following two examples in mind.
\begin{itemize}
\item If $\res^!\mK_p=R\Psi$ is the nearby cycles of the shifted constant sheaf $\Lambda[d]$ on the generic fiber $\Sh_K(G,X)$, where $d=\dim \Sh_{G,\mu,K}$,
then $C_c(\Sh_{G,\mu,K}, \res^!\mK_p)$ is isomorphic to the (shifted) compactly supported cohomology of $\Sh_{K}(G,X)$ by \cite[5.20]{LS}, and $\frakA_{\mK_p}$ should be $({}_{W_p}V)\otimes \frakA_{K_p}$ as in Example \ref{E: nearby cycle}. 
\item If $\res^!\mK_p$ is the push-forward to $\Sh_{G,\mu, K}$ of the shifted constant sheaf $\Lambda[d]$ on an Igusa variety $\on{Ig}_{p,x,K}$, where $d$ is the dimension of  $\on{Ig}_{p,x,K}$, then $C_c(\Sh_{G,\mu,K}, \res^!\mK_p)$ is isomorphic to $C_c(\on{Ig}_{p,x,K},\Lambda[d])$ and $\frakA_{\mK_p}$ should be $\frakA_{G_b,\{1\}}$ as in Example \ref{E: Igusa}.
\end{itemize}

Now $(G,X)$ and $(G',X')$ be two Shimura data satisfying the above conditions, and we fix auxiliary choices for each of them. Let $p$ be a prime.
We assume that there is an inner twist $\Psi: G\to G'$ (which identifies the dual group of $G$ and $G'$ via $\Psi$) such that $\beta_v=\beta'_v$ for all $v\neq p$. 
This in particular implies there is a well-defined isomorphism $\theta: G(\bA^p_f)\cong G'(\bA^p_f)$ up to $G(\bA_f^p)$-conjugacy. We fix such an isomorphism.
Let $\mu$ and $\mu'$ denote the corresponding Shimura cocharacters, giving irreducible representation $V_\mu$ and $V_{\mu'}$ of $\hat{G}$.

We choose a prime-to-$p$ level $K^p\subset G(\bA_f^p)$, and let ${K'}^p=\theta(K^p)$.
Let $K_p\subset G(\bQ_p)$ and $K'_p \subset G'(\bQ_p)$ be parahoric subgroups. Write $H_{K^p}=H_{{K'}^p}$ for the corresponding prime-to-$p$ Hecke algebra. Choose $\mK_p\in\Shv(\Sht^{\loc}_{p,\mu})$ and $\mK'_p\in\Shv(\Sht^{\loc}_{p,\mu'})$.
Conjecture \ref{C: gen coh} suggests the following.

\begin{conjecture}
There is a natural map
\[
\Hom_{\Coh(\Loc_{p})}\bigl(({}_{W_p}V)\otimes \frakA_{\mK_p}, ( {}_{W_p}V')\otimes \frakA_{\mK'_p}\bigr)\to \Hom_{H_{K^p}}\bigl(C_c(\Sh_{G,\mu,K}, \res^!\mK_p), C_c(\Sh_{G',\mu',K'},\res^!\mK'_p)\bigr),
\]
compatible with compositions. 
In the particular case when $G=G'$ and $\Psi, \theta$ are the identity map, and $\res^!\mK_p=\res^!\mK'_p=R\Psi$ as above, we obtain an action
$$S:\End_{\Coh(\Loc_{p})}\bigl( ({}_{W_p}V)\otimes \frakA_{K_p}\bigr)\to \End_{Z_{p}^{\ta}\otimes H_{K^p}}\bigl(C_c(\Sh_K(G,X),\Lambda)\bigr),$$
where $Z_{p}^{\ta}=H^0\Gamma(\Loc_p^{\ta},\mO)$ be the tame stable center \eqref{E: stable center pro-p-Iw}, which should act on $C_c(\Sh_K(G,X),\La)$  
through the map $Z_{p}^{\ta}\to Z(H_{K_p})$ (see \eqref{E: stable center K1}).  
The composition 
$$H_{K_p}\cong \End(\frakA_{K_p})\to \End\bigl( ({}_{W_p}V)\otimes \frakA_{K_p}\bigr)\xrightarrow{S} \End_{Z_{p}^{\ta}}\bigl(C_c(\Sh_K(G,X),\La)\bigr)$$
should coincide with the natural Hecke action of $H_{K_p}$ on $C_c(\Sh_K(G,X),\La)$.
\end{conjecture}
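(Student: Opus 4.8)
The plan is to deduce the conjecture from the conjectural categorical local Langlands correspondence at $p$ (Conjecture \ref{Conj: cat LLC}) together with the geometry relating the mod $p$ fibres of Shimura varieties to moduli of local shtukas, extending the strategy of \cite{XZ} (which treats the case where $K_p$ is hyperspecial and $G,G'$ differ only at $\infty$). First I would use the equivalence $\bL_{G_p}\colon \Shv(\frakB(G_p),k)\xrightarrow{\sim}\Ind\Coh_{\hat\mN_{{}^cG_p}}(\Loc_p)$ to translate a morphism $({}_{W_p}V)\otimes\frakA_{\mK_p}\to ({}_{W_p}V')\otimes\frakA_{\mK'_p}$ in $\Coh(\Loc_p)$ into a morphism in $\Shv(\frakB(G_p),k)$. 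By Example \ref{Ex: nearby cycle} and the spectral action predicted by Conjecture \ref{Conj: cat LLC}, the object $({}_{W_p}V)\otimes\frakA_{\mK_p}$ (with $V=V_\mu$) corresponds to a sheaf $\Sat(V)\star\mK_p$ on $\Sht^{\loc}_{p,\mu}$, and similarly for the primed data; so a morphism in $\Coh(\Loc_p)$ is the same datum as a cohomological correspondence between $\Sat(V)\star\mK_p$ and $\Sat(V')\star\mK'_p$ supported on the Hecke stack $\Hk(\Sht^{\loc}_p)$ of local shtukas.

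Next I would propagate such a correspondence to the Shimura varieties. Since $\beta_v=\beta'_v$ for $v\neq p$ and $\theta\colon G(\bA_f^p)\cong G'(\bA_f^p)$ identifies the prime-to-$p$ data, while at $p$ both $\Sh_{G,\mu,K}$ and $\Sh_{G',\mu',K'}$ carry the perfectly smooth maps $\res,\res'$ to $\Sht^{\loc}_{p,\mu}$ and $\Sht^{\loc}_{p,\mu'}$, I would form the fibre product of $\res$ and $\res'$ against the two legs of $\Hk(\Sht^{\loc}_p)$; this produces a geometric correspondence between $\Sh_{G,\mu,K}$ and $\Sh_{G',\mu',K'}$ compatible with the diagonal $H_{K^p}$-action. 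Perfect smoothness of $\res,\res'$ then allows one to $!$-pull back the cohomological correspondence on $\Sht^{\loc}_p$ to this geometric correspondence, and proper pushforward along its legs — here one uses boundedness by $\mu$, that is, that the support lies over $\Sht^{\loc}_{p,\mu}$, to obtain the finiteness needed on compactly supported cohomology — yields the desired $H_{K^p}$-linear map $C_c(\Sh_{G,\mu,K},\res^!\mK_p)\to C_c(\Sh_{G',\mu',K'},\res^!\mK'_p)$. Compatibility with composition follows from functoriality of $!$-pullback along smooth maps together with the fact that convolution on $\Hk(\Sht^{\loc}_p)$ matches composition in $\Shv(\frakB(G_p),k)$, hence, under $\bL_{G_p}$, composition in $\Coh(\Loc_p)$; the relevant formalism of categories of correspondences is developed in \cite{XZ}.

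For the last assertion I take $G=G'$ with $\Psi,\theta$ the identity and $\res^!\mK_p=R\Psi$ the nearby cycles of $k[d]$ on the generic fibre $\Sh_K(G,X)$. By \cite[5.20]{LS} this identifies $C_c(\Sh_{G,\mu,K},\res^!\mK_p)$ with the shifted compactly supported cohomology of $\Sh_K(G,X)$, and the construction above gives the action $S$ of $\End_{\Coh(\Loc_p)}\bigl(({}_{W_p}V)\otimes\frakA_{K_p}\bigr)$. The action of $Z_p^{\ta}=H^0\Gamma(\Loc_p^{\ta},\mO)$ factors through the centre $Z(H_{K_p})$ of the parahoric Hecke algebra via \eqref{E: stable center K1} because it already does so at the local level: $Z_p^{\ta}$ acts on $\Sat(V)\star\mK_p$ through the centre of the local Hecke category. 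Finally, the claim that $H_{K_p}\cong\End(\frakA_{K_p})$ acts by the usual Hecke operators at $p$ amounts to saying that $\bL_{G_p}$, composed with nearby cycles, intertwines $\End(\frakA_{K_p})$ with $\End(\delta_{K_p})$ acting on cohomology; this is the content of the unramified/tame cases of Conjecture \ref{C: REnd of Spr} (Theorem \ref{T: tame local Langlands} for Iwahori level) combined with the compatibility of nearby cycles with Hecke correspondences at $p$, and is established unconditionally for $K_p$ hyperspecial in \cite{XZ} and for $K_p$ Iwahori in \cite{HZ}.

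The main obstacle is that the argument rests almost entirely on conjectural inputs: the equivalence $\bL_{G_p}$ and the spectral action of $\on{Perf}(\Loc_p)$ on $\Shv(\frakB(G_p),k)$, the existence and the maximal Cohen--Macaulay and self-duality properties of $\frakA_{\mK_p}$, and the identification $({}_{W_p}V)\otimes\frakA_{\mK_p}\simeq\Sat(V)\star\mK_p$. Even granting these, the genuinely new technical points are: first, showing that the dictionary between $\Coh(\Loc_p)$-morphisms and cohomological correspondences on $\Sht^{\loc}_p$ is compatible with $!$-pullback along the non-quasi-compact, perfectly smooth maps $\res,\res'$, which requires a careful six-functor formalism on the relevant ind-(perfect) stacks along the lines of \cite{XZ}; and second, verifying the properness and finiteness needed to integrate the correspondence over compactly supported cohomology in the parahoric case, where $\Sht^{\loc}_{p,\mu}$ and its Hecke stack are genuinely singular. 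In the hyperspecial and Iwahori cases these are handled by \cite{XZ} and \cite{HZ} respectively, so there the conjecture should become a theorem once the comparison with $R\Psi$ is in place.
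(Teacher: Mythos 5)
This statement is labeled \texttt{\textbackslash begin\{conjecture\}} in the paper and is not proved there; the only surrounding discussion is Remark~\ref{R: derived global}, which records that \cite{XZ}, \cite{Yu2}, \cite{Z3} confirm a weak form (when $G\otimes\bA_f\cong G'\otimes\bA_f$ and $K_p$ is hyperspecial). So there is no proof in the paper to compare your proposal against, and a review in the intended sense is not really possible.

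That said, the strategy you sketch is consistent with the philosophy the paper sets up: use the conjectural equivalence $\bL_{G_p}$ of Conjecture~\ref{Conj: cat LLC} to turn morphisms in $\Coh(\Loc_p)$ into cohomological correspondences supported on $\Hk(\Sht^{\loc}_p)$, then pull back along the perfectly smooth maps $\res,\res'$ and push forward, exactly as in \cite{XZ}. Two caveats worth recording. First, you are (correctly) honest that essentially every ingredient is conjectural; in particular the identification $({}_{W_p}V)\otimes\frakA_{\mK_p}\simeq\bL_{G_p}(\Sat(V)\star\mK_p)$ is itself a compatibility of the not-yet-constructed spectral action with the geometric Satake action, which the paper introduces only by analogy in Example~\ref{Ex: nearby cycle}. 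Second, the paper presents this conjecture as a degenerate consequence of the number-field analogue of Conjecture~\ref{C: gen coh}, which it explicitly declines to formulate precisely (deferring to \cite{EZ}) because the coherent sheaves at places above $\ell$ and at $\infty$, and the global stack of Langlands parameters over number fields, are missing; your plan sidesteps that by working purely at $p$, which is the pragmatic route and matches what \cite{XZ} and \cite{HZ} actually prove in the hyperspecial and Iwahori cases. Nothing in your plan looks wrong, but it does not constitute a proof, and the paper does not claim to have one.
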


\begin{remark}\label{R: derived global}
The works of \cite{XZ,Yu2} confirm a weak form of this conjecture in the case
$G\otimes\bA_f\cong G'\otimes\bA_f$ and $K_p$ is hyperspecial.
But we note that even in this case, the conjecture is stronger. Namely,
the derived Hecke algebra $H_{K_p}$ acts on $C_c(\Sh_K(G,X),\La)$, when $C_c(\Sh_K(G,X),\La)$ is regarded as a $Z_{p}^{\ta}$-module\footnote{Unlike the cohomology of general locally symmetric space as considered in \cite{Ve,F}, 
the derived Hecke action is invisible when $C_c(\Sh_V, k)$ is merely regarded as a $\Lambda$-module.}. So the conjecture includes a derived $S=T$ statement.
\end{remark}

Finally, let us briefly discuss the local analogue of the above conjectures, which is a conjectural formula of cohomology of (generalized)
Rapoport-Zink spaces. In fact, such conjectural formula is more or less built into the conjectural properties of the equivalence $\bL_G$ from Conjecture \ref{Conj: cat LLC}.

We assume that $G$ is over a local field $F$ and let $\mG$ be a parahoric model of $G$ over $\mO$.
Let $(G,b,\mu)$ be a local Shimura datum in the sense of \cite[5.1]{RV}. I.e. $b\in B(G)$ and $\mu$ is a minuscule dominant weight of $\hat{G}$ such that $\kappa_G(b)=\mu|_{Z_{\hat{G}}^{\Ga_F}}\in\xch(Z_{\hat{G}}^{\Ga_F})$. In this case, Rapoport and Viehmann expect that there is a tower of rigid analytic varieties $\{\on{RZ}_{G,b,\mu,K}\}_\Lambda$ (denoted by $\{\bM^K\}$ in \cite[\S 5]{RV}) over $\breve E$ indexed by open compact subgroups $K\subset\mG(\mO_F)$, as the local analogue of Shimura varieties. Here $\breve E$ is the completion of a maximal unramified extension of the reflex field $E$ of $\mu$. For certain $(G,b,\mu)$ and $K=\mG(\mO_F)$, $\on{RZ}_{G,b,\mu,K}$ can be realized as the rigid generic fiber of the corresponding Rapoport-Zink space. (This tower in general has been constructed in \cite[\S 24]{SW}.)
We refer to \cite{RV} for some expected properties of this tower, except mentioning that the compactly supported cohomology $C_c(\on{RZ}_{G,b,\mu,K}\otimes \overline{\breve E}, k)$ should afford the action of $H_K\times  W_{E}\times G_b(F)$, and as a $G_b(F)$-representation, it should belong to $\Rep_{\on{f.g.}}(G_b(F),\La)$. Let $\frakA_{G_b,\{1\}}$ be the ind-coherent sheaf from Remark \ref{R: AJb}.

\begin{conjecture}
We have an $H_K\times  W_{E}\times G_b(F)$-equivariant isomorphism
\[
C_c(\on{RZ}_{G,\mu,b,K}\otimes \overline{\breve E}, \Lambda[(2\rho,\mu)])\cong \Hom_{\Loc_{{}^cG,F}}\bigl(\frakA_{G_b,\{1\}},({}_{W_F}V_{\mu})\otimes\frakA_{G,K}\bigr).
\]
\end{conjecture}

One easily check that this formula holds when $b=1$ and $\mu=0$. We end with a few remarks.

\begin{remark}
\begin{enumerate}
\item First, similar to the global case, 
this conjecture can be regarded as a refinement of Kottwitz' and Harris-Viehmann's conjecture on the cohomology of Rapoport-Zink spaces (\cite{RV}). 

\item Assume that $b$ is basic. One can apply ${}'\bD^{\on{Se}}$ to the right hand side of the formula and see that the that the cohomology of RZ spaces for $(G,\mu,b)$ and $(G_b,-\mu,-b)$ should become isomorphic at the infinity level.
This is consistent with the fact that the two towers for $(G,\mu,b)$ and $(G_b,-\mu,-b)$ become isomorphic at infinite level (\cite[5.8]{RV} and \cite[23.3.2]{SW}). Also note that we conjecture that $\frakA_{J_b,\{1\}}$ is a connective (ind-)coherent sheaf (Remark \ref{R: AJb}) and $\frakA_{G,K}$ is an ordinary coherent sheaves (Conjecture \ref{Conj: coh sheaf} \eqref{Conj: coh sheaf-2}), so r.h.s. only concentrates in non-negative degrees. This means that the compactly supported cohomology of (basic) Rapoport-Zink spaces should vanish below the middle degree, which is consistent with the general expectation.  In addition, similar to Remark \ref{R: mid deg}, we expect that over isolated smooth points of $\Loc_{{}^cG,F}$, the right hand side should only  concentrate in degree zero.

\item Finally, the generalization of this conjectural formula to non-minuscule and multiple leg situation (i.e. the generalized Rapoport-Zink spaces as introduced in \cite[\S 23]{SW}) is immediately.
\end{enumerate}
\end{remark}

%    Bibliographies can be prepared with BibTeX using amsplain,
%    amsalpha, or (for "historical" overviews) natbib style.
\bibliographystyle{amsplain}
%    Insert the bibliography data here.

\end{document}